\numberwithin{equation}{section}
\newtheorem{thmx}{Theorem}
\newtheorem{corx}[thmx]{Corollary}
\newenvironment{subproof}[1][\proofname]{%
\begin{proof}[Proof of claim]%
	}{%
\end{proof}%
}
\newcommand{\sg}[1]{\langle #1 \rangle} 
\newcommand{\emp}{\varnothing}
\newcommand{\subg}[1]{\langle #1 \rangle}
\theoremstyle{definition}
\newtheorem{thm}{Theorem}[section]
\newtheorem{cor}[thm]{Corollary}
\newtheorem{prop}[thm]{Proposition}
\newtheorem{lem}[thm]{Lemma}
\newtheorem*{lem*}{Lemma}
\newtheorem*{cor*}{Corollary}
\newtheorem*{thm*}{Theorem}
\newtheorem*{fact*}{Fact}
\theoremstyle{definition}
\newtheorem{defn}[thm]{Definition}
\newtheorem{eg}[thm]{Example}
\newtheorem*{eg*}{Example}
\newtheorem*{examples*}{Examples}
\newtheorem{notn}[thm]{Notation}
\newtheorem{conj}[thm]{Conjecture}
\theoremstyle{remark}
\newtheorem{rem}[thm]{Remark}
\newtheorem*{claim*}{\textbf{Claim}}
\newtheorem*{term*}{Terminology}
\DeclareMathOperator{\Age}{Age}
\DeclareMathOperator{\Aut}{Aut}
\DeclareMathOperator{\centr}{C}
\DeclareMathOperator{\dom}{dom}
\DeclareMathOperator{\fcl}{fcl}
\DeclareMathOperator{\Fix}{Fix}
\DeclareMathOperator{\Fx}{Fx}
\DeclareMathOperator{\im}{im}
\DeclareMathOperator{\FrLim}{FrLim}
\DeclareMathOperator{\id}{id}
\DeclareMathOperator{\Norm}{N}
\DeclareMathOperator{\NFr}{NFr}
\DeclareMathOperator{\St}{St}
\DeclareMathOperator{\sSt}{sSt}
\DeclareMathOperator{\syl}{syl}
\DeclareMathOperator{\sylc}{sylc}
\DeclareMathOperator{\tp}{tp}
\DeclareMathOperator{\Inc}{I}
\DeclareMathOperator{\Dec}{D}
\DeclareMathOperator{\Fixed}{F}
\DeclareMathOperator{\Undef}{U}
\DeclareMathOperator{\SInc}{\Sigma_I}
\DeclareMathOperator{\SDec}{\Sigma_D}
\DeclareMathOperator{\supp}{supp}
\DeclareMathOperator{\Nx}{N}
\DeclareMathOperator{\qftp}{qftp}
\newcommand{\N}{\mathbb{N}}
\newcommand{\Q}{\mathbb{Q}}
\newcommand{\Z}{\mathbb{Z}}
\newcommand{\mb}[1]{\mathbb{#1}}
\newcommand{\mc}[1]{\mathcal{#1}}
\newcommand{\eps}{\varepsilon}
\newcommand{\sub}{\subseteq}
\newcommand{\tld}[1]{\tilde{#1}}
\newcommand{\fin}{\subseteq_{\text{fin\!}}}
\newcommand{\fg}{\subseteq_{\text{f.g.\!}}}
\newcommand{\la}{\leftarrow}
\newcommand{\ra}{\rightarrow}
\newcommand{\pl}{$(+)$}
\newcommand{\mi}{$(-)$}
\theoremstyle{definition}
\newtheorem{definition}[thm]{Definition}
\newtheorem{corollary}[thm]{Corollary}
\newtheorem{lemma}[thm]{Lemma}
\newtheorem{remark}[thm]{Remark}
\newtheorem*{remark*}{Remark}
\newtheorem{observation}[thm]{Observation}
\newtheorem{fact}[thm]{Fact}
\newtheorem{proposition}[thm]{Proposition}
\newtheorem{question}[thm]{Question}
\newtheorem*{notation}{Notation}
\newcommand{\D}[0]{\mathcal{D}(\Theta)}
\newcommand{\M}[0]{\mathcal{M}}
\newcommand{\Fr}{Fra\"{\i}ss\'{e} }
\def\Ind#1#2{#1\setbox0=\hbox{$#1x$}\kern\wd0\hbox to 0pt{\hss$#1\mid$\hss}
\lower.9\ht0\hbox to 0pt{\hss$#1\smile$\hss}\kern\wd0}
\def\ind{\mathop{\mathpalette\Ind{}}}
\def\Notind#1#2{#1\setbox0=\hbox{$#1x$}\kern\wd0\hbox to 0pt{\mathchardef
\nn="3236\hss$#1\nn$\kern1.4\wd0\hss}\hbox to
0pt{\hss$#1\mid$\hss}\lower.9\ht0
\hbox to 0pt{\hss$#1\smile$\hss}\kern\wd0}
\colorlet{dlnColor}{-green!40!yellow}
\colorlet{RobColor}{pink}
\newcommand{\sym}[0]{\mathtt{S}}
\newcommand{\alt}[0]{\mathtt{A}}
\newcommand{\cyc}[0]{\mathtt{C}}
\newcommand{\dih}[0]{\mathtt{D}}	
\newcommand{\fr}{\mathtt{F}}
\newcommand{\Wr}[0]{\mathcal{W}_r}
\newcommand{\Wvr}[0]{\mathcal{W}_{vr}} 
\def\saveenum{\xdef\@savedenum{\the\c@enumi\relax}}
\def\resetenum{\global\c@enumi\@savedenum}
\newcommand{\Act}{\mathsf{A}}  
\newcommand{\Se}{\mathsf{Se}}
\newcommand{\Ta}{\mathsf{T}}
\newcommand{\STTa}{\mathsf{STT}}
\newcommand{\SHTa}{\mathsf{SHT}}
\newcommand{\TaF}{\mathsf{T}_{\mathcal{F}}(\mathcal{M}, G)}
\newcommand{\SHTaF}{\mathsf{SHT}_{k, \mathcal{F}}(\mathcal{M}, G)}
\newcommand{\Sh}{\mathsf{S}}
\newcommand{\ShHomog}{\mathsf{SH}}
\newcommand{\ShChar}{\widetilde{\mathsf{S}}}    
\newcommand{\ShHChar}{\widetilde{\mathsf{SH}}}
\newcommand{\ShTChar}{\widetilde{\mathsf{ST}}}
\let\phi\varphi
\author{J. de la Nuez Gonz\'{a}lez}
\email{jnuezgonzalez@gmail.com}
\thanks{The first author is supported by the Mid-Career Researcher Program (RS-2023-00278510) through the National Research Foundation funded by the government of Korea and by the KIAS individual grant SP084001.}
\author{Rob Sullivan}
\email{robertsullivan1990+maths@gmail.com}
\thanks{The second author is funded by Project 24-12591M of the Czech Science Foundation (GA\v{C}R), as well as by the Deutsche Forschungsgemeinschaft (DFG, German Research Foundation) under Germany’s Excellence Strategy EXC 2044–390685587, Mathematics M\"{u}nster: Dynamics–Geometry–Structure and CRC 1442 Geometry: Deformations and Rigidity.}
\subjclass[2020]{20B22, 05E18, 03C15, 20B27}
\keywords{sharply transitive, ultrahomogeneous, independence relation}
\begin{document}

\title{Two generalisations of sharp \texorpdfstring{$k$}{k}-transitivity}

\begin{abstract}
    An action $U \curvearrowleft G$ of a group $G$ on a set $U$ is \emph{sharply $k$-transitive} if, for any two $k$-tuples $\bar{a}, \bar{b} \in U^k$ of distinct elements, there is a unique $g \in G$ with $\bar{a} \cdot g = \bar{b}$. We consider two generalisations of this notion.

    Firstly, given a subgroup $\Theta$ of the symmetric group $\mathbb{S}_k$, we define a \emph{sharply $\Theta$-transitive} action $U \curvearrowleft G$ of a group $G$ on a set $U$ to be a $k$-set-transitive action where the restricted action on each $k$-set of its setwise-stabiliser is isomorphic to the permutation action $\mathbf{k} \curvearrowleft \Theta$. It is immediate that an action is sharply $\mathbb{S}_k$-transitive if and only if it is sharply $k$-transitive. We characterise for which $\Theta \leq \mathbb{S}_k$ there is a sharply $\Theta$-transitive action on an infinite set, and show that if such an action exists, then the acting group $G$ can be taken to be a finitely generated non-abelian virtually free group. As a consequence, we obtain for $k = 2, 3$ the first examples of non-split finitely-presented groups admitting sharply $k$-transitive actions on an infinite set, answering a question of Andr{\'{e}} and Tent, and we obtain a strengthening of the well-known result of Tits that, for $k \geq 4$, no group admits a sharply $k$-transitive action on an infinite set.

    Secondly, we consider a generalisation of sharp $k$-transitivity to first-order relational structures. Given an action $\mathcal{M} \curvearrowleft G$ of a group $G$ on a relational structure $\mathcal{M}$, we say that the action is \emph{sharply $k$-homogeneous} if, for any two $k$-tuples $\bar{a}, \bar{b}$ of distinct elements of $\mathcal{M}$ where $\bar{a} \mapsto \bar{b}$ is an isomorphism, there exists a unique $g \in G$ with $\bar{a} \cdot g = \bar{b}$. We show that, for $1 \leq k \leq 3$, a wide range of countable ultrahomogeneous structures admit sharply $k$-homogeneous actions by finitely generated non-abelian virtually free groups, giving a substantial answer to a question of Cameron from 1990 in the book \emph{Oligomorphic Permutation Groups}. We also determine, for each reduct $\mathcal{M}$ of $(\mathbb{Q}, <)$, the integers $k$ such that $\mathcal{M}$ admits a sharply $k$-homogeneous action, and we show that the generic $k$-hypertournament admits a sharply $k$-homogeneous action for $3 \leq k \leq 5$.
\end{abstract}

\maketitle

\section{Introduction}

An action $U \curvearrowleft G$ of a group $G$ on a set $U$ with $|U| \geq k$ is said to be \emph{sharply $k$-transitive} if for all $k$-tuples $\bar{a}, \bar{b}$ of distinct elements of $U$, there is a unique $g \in G$ with $\bar{b} = \bar{a} \cdot g$. Sharply $k$-transitive actions on sets have been extensively studied; we give more background below.

In this paper, we generalise the notion of a sharply $k$-transitive action in two ways. Firstly, we define a new notion of a \emph{sharply $\Theta$-transitive} action of a group on a set, where $\Theta$ is a subgroup of the symmetric group $\sym_k$:

\begin{defn} \label{d:theta-transitive}
    Let $k \geq 1$ and let $U \curvearrowleft G$ be a group action on a set $U$ with $|U| \geq k$. For $\Theta \leq \sym_k$, we say that the action $U \curvearrowleft G$ is \emph{sharply $\Theta$-transitive} if it is transitive on $k$-sets and if, for each $k$-set $A \sub U$, the action $A \curvearrowleft G_{\{A\}}$ of the setwise-stabiliser $G_{\{A\}}$ is isomorphic to the permutation action $\mathbf{k} \curvearrowleft \Theta$.
\end{defn}

(An isomorphism of actions $X \curvearrowleft G$, $X' \curvearrowleft G'$ is a bijection $X \to X'$ equivariant for the actions via a group isomorphism $G \to G'$: see Section \ref{s: notation}.) Note that if an action $U \curvearrowleft G$ is sharply $\Theta$-transitive for some $\Theta \leq \sym_k$, then it is \emph{$k$-sharp}: for any two $k$-tuples $\bar{a}, \bar{b}$ of distinct elements of $U$, there is at most one $g \in G$ with $\bar{b} = \bar{a} \cdot g$. Also note that an action is sharply $k$-transitive if and only if it is sharply $\sym_k$-transitive. See \Cref{s: intro robust subgps} for the results we prove regarding sharply $\Theta$-transitive actions: in particular, as an application of our results, we give the first examples of non-split finitely presented groups admitting sharply $2$-transitive/sharply $3$-transitive actions on an infinite set.

The second generalisation of sharp $k$-transitivity considered in this paper is the notion of a \emph{sharply $k$-homogeneous action} of a group on a \emph{relational structure} in the sense of first-order logic (for example, a graph, a digraph or a poset):

\begin{defn} \label{d: sharply k-homog}
    Let $k \geq 1$, and let $\mc{M}$ be a structure in a first-order relational language whose domain is of size $\geq k$. We say that a group action $\mc{M} \curvearrowleft G$ (where $G$ acts by automorphisms of $\mc{M}$) is \emph{sharply $k$-homogeneous} if, for any two $k$-tuples $\bar{a}, \bar{b}$ of distinct elements of $M$ such that $\bar{a}, \bar{b}$ have the same quantifier-free type, there is a unique $g \in G$ with $\bar{b} = \bar{a} \cdot g$. (Equivalently: for any isomorphism $f : \mc{A} \to \mc{B}$ of substructures $\mc{A}, \mc{B} \sub \mc{M}$ where $\mc{A}$, $\mc{B}$ are each of size $k$, there is a unique $g \in G$ whose action on $\mc{M}$ extends $f$.)\footnote{Some authors call an action which is transitive on $k$-sets a ``$k$-homogeneous action"; this conflicting terminology can be found in the permutation group literature in the context of actions of groups on sets (see for example \cite[Section 2.1]{Cam90}). We refer to such actions as \emph{$k$-set-transitive actions}. In this paper, we only use the term \emph{$k$-homogeneous action} for a group action on a \emph{structure}.}
\end{defn}

Unlike sharply $\Theta$-transitive actions on sets, sharply $k$-homogeneous actions on structures have been considered before by a number of authors: see Subsection \ref{ss: background on sh k homog}. Note that an action of a group on a pure set (that is, a set considered as a relational structure in the empty language) is sharply $k$-homogeneous if and only if it is sharply $k$-transitive.

We ask the following general questions:
\begin{question} \label{intro sh Theta tr q}
    For which $k \geq 1$, $\Theta \leq \sym_k$ and groups $G$ does there exist a sharply $\Theta$-transitive action of $G$ on an infinite set?
\end{question}
\begin{question} \label{intro general q}
    For which infinite relational structures $\mc{M}$, integers $k\geq 1$ and groups $G$ is there a sharply $k$-homogeneous action of $G$ on $\mc{M}$?    
\end{question}

Our first restriction in this paper is that we only consider actions on countably infinite sets and structures. Concerning Question \ref{intro general q}, we note that if $G$ has a sharply $k$-homogeneous action on a structure $\mc{M}$, then it is immediate that the action is faithful (so $G$ is isomorphic to a subgroup of $\Aut(\mc{M})$), and furthermore $\mc{M}$ is \emph{$k$-homogeneous}: any isomorphism between substructures of $\mc{M}$ of size $k$ can be extended to an automorphism of $\mc{M}$. As the class of relational structures which are $k$-homogeneous for some $k$ is rather broad, we will restrict our attention further and only consider \emph{ultrahomogeneous} structures: those structures which are $k$-homogeneous for all $k$. We will assume that the reader is well-acquainted with these in the second half of the paper (Sections \ref{s:seed actions on structures}--\ref{s:main theorem}) -- see \cite{Mac11} for a survey and \cite[Chapter 7]{Hod93} for further background. We refer to countably infinite ultrahomogeneous structures as \emph{\Fr structures}. We will also only consider relational \Fr structures with strong amalgamation (for example, the Rado graph or $\Q$ as a linear order). With these restrictions, we have the following (slightly paraphrased) question from \cite[pg 101]{Cam90}, a classic reference on oligomorphic permutation groups:

\begin{question} \label{Cameron qn}
    Let $\mc{M}$ be a relational \Fr structure with strong amalgamation, and let $k \geq 1$. Is there a group $G$ which acts sharply $k$-homogeneously on $\mc{M}$?
\end{question}

This paper began as an attempt to answer \Cref{Cameron qn}. We find that many well-known \Fr structures admit, for various $k$, a sharply $k$-homogeneous action by a finitely generated non-abelian virtually free group (a \emph{virtually free} group is a group with a free subgroup of finite index), as described in the results at the end of this introduction. We focus mostly on the cases $k = 2, 3$. In the course of answering \Cref{Cameron qn}, we noticed that the tools we developed also applied to the case of pure sets, and this led us to Question \ref{intro sh Theta tr q}. 

Before stating the main results, we give some background and motivation.
 
\subsection{Background: sharply \texorpdfstring{$k$}{k}-transitive actions on sets}

A group $G$ is said to be sharply $k$-transitive if it admits a sharply $k$-transitive action on a set. There is a 150-year history of results classifying sharply $k$-transitive groups.

Each sharply $1$-transitive action $U \curvearrowleft G$ is isomorphic as a $G$-action to the right multiplication action of $G$ on itself (pick a point $u_0 \in U$, and send each $u \in U$ to the unique $g \in G$ with $u = u_0 \cdot g$), and the right multiplication action is sharply $1$-transitive. Thus the problem of classifying sharply $k$-transitive groups is only interesting for $k \geq 2$.

There are several well-known classical examples of sharply $k$-transitive groups (in each case below, showing sharp $k$-transitivity is straightforward):
\begin{enumerate}[label=(\roman*)]
    \item \label{i: sf AGL} for a skew-field $K$, the affine group $\mathrm{AGL}(1, K)$ acts sharply $2$-transitively on $K$;
    \item \label{i: f PGL} for a field $K$, the projective group $\mathrm{PGL}(2, K)$ acts sharply $3$-transitively on the projective line over $K$ (a particularly well-known example of this is that the group of M\"{o}bius transformations acts sharply $3$-transitively on the Riemann sphere);
    \item for each $k \geq 4$, the symmetric groups $\sym_k$, $\sym_{k+1}$ and the alternating group $\alt_{k+2}$ are sharply $k$-transitive.
\end{enumerate}

There is a full classification of finite sharply $k$-transitive groups: see \cite{Pas68} for an exposition of this, and \cite[Section 7.6]{DM96} for an introduction to the topic. Zassenhaus classified the finite sharply $k$-transitive groups for $k = 2, 3$ (see \cite{Zas35K, Zas35U}, \cite{Pas68}, \cite{DM96} -- this classification of course includes examples \ref{i: sf AGL}, \ref{i: f PGL} above in the case where $K$ is finite, and the other cases in the classification are closely related to these two examples). Jordan classified the finite sharply $k$-transitive groups for $k \geq 4$ (\cite{Jor72}): for each $k \geq 4$, the symmetric groups $\sym_k$, $\sym_{k+1}$ and the alternating group $\alt_{k+2}$ are sharply $k$-transitive, and there are two additional sporadic cases: the Mathieu group $M_{11}$ is sharply $4$-transitive, and similarly $M_{12}$ is sharply $5$-transitive.

For infinite $G$, much less is known. The only possible cases are where $k = 2, 3$: Tits (\cite{Tit49}, \cite[Chapitre IV, Th\'{e}or\`eme 1]{Tit52}) proved that for $k \geq 4$ there are no infinite sharply $k$-transitive groups. Yoshizawa strengthened this in \cite{Yos79}, showing that if a group acts $4$-transitively on an infinite set, then the stabiliser of any $4$-tuple must be infinite (this generalises a result of Hall from \cite{Hal54} showing that any group with a $4$-transitive action on an infinite set does not have a stabiliser of a $4$-tuple with finite odd order).

A key property of finite sharply $2$-transitive groups is that they are \emph{split}, which we now define. Recall that, given a group action $X \curvearrowleft G$, a subgroup $H \leq G$ is \emph{regular} if the restriction of the action to $H$ is regular. We say that a group $G$ is \emph{split} if it has a regular normal subgroup $N$: in this case, it is straightforward to see from the definitions that $G$ can be written as the semidirect product of $N$ with a point-stabiliser. The importance of this lies in the following: if a sharply $2$-transitive group $G$ (finite or infinite) is split with regular normal subgroup $N$, then $N$ is abelian and one can define a near-field $F$ on $N$ such that $N$ is the additive group of $F$ (here, a \emph{near-field} is a skew-field where we only require right-distributivity: namely $(a + b) \cdot c = (a \cdot c) + (b \cdot c)$). The sharply $2$-transitive action $X \curvearrowleft G$ is then isomorphic to the action $F \curvearrowleft \mathrm{AGL}(1, F)$. See \cite[Section 7.6]{DM96} for more details of this.

To see that each finitely sharply $2$-transitive group $G$ is split, one can show the existence of a regular normal subgroup of $G$ via the structure theorem for finite Frobenius groups (\cite{Fro02} -- see \cite{Pas68} for an exposition), or via a direct elementary argument.\footnote{The following argument was provided to us by Peter Cameron. Let $X \curvearrowleft G$ be sharply $2$-transitive with $|X| = n$. Let $K$ be the set of elements of $G$ which act freely. It is straightforward that $|G| = n(n-1)$, $|K| = n$ and $|G_x| = n-1$ for each $x \in X$. For each $g \in G \setminus \{1\}$ with $\mathrm{order}(g) \mid n$, we have $\mathrm{order}(g) \nmid n-1$ and thus $g \in K$. Let $p$ be prime with $p \mid n$; take $g \in G$ of order $p$, so $g \in K$. By $2$-sharpness, the centraliser $\centr_G(g)$ is contained in $K$, so the conjugacy class $g^G$ has size $ \geq n-1$; thus $K = g^G \cup \{1\}$ and so $n$ is a power of $p$. Let $H$ be a Sylow $p$-subgroup of $G$. Then $|H| = n$ and $H \sub K$, so $H = K$, and as $K$ is a union of conjugacy classes we have $K \trianglelefteq G$. As each point-stabiliser of $K$ is trivial and $|X| = |K|$, the action $X \curvearrowleft K$ is transitive, so $K$ is regular. Also $K$ is a finite $p$-group, so has non-trivial centre via the class equation; all elements of $K \setminus \{1\}$ are conjugate, so $K$ is abelian.} It was a long-standing open question in the area whether all infinite sharply $2$-transitive groups are likewise also split. Many classes of infinite sharply $2$-transitive groups are indeed split (see \cite{Tit52b}, \cite{Tit56}, \cite{Ker74}, \cite{Wah86}, \cite{Ten00}, \cite{Tur04}, \cite{May06}, \cite{GG14}; for the case of finite Morley rank see \cite{BN94}, \cite{ABW18}, \cite{CT20}, \cite{CT23}). However, this question was answered in the negative in a relatively recent paper of Rips, Segev and Tent (\cite{RST17}), where the authors gave the first examples of non-split infinite sharply $2$-transitive groups. This sparked further interest in the area in recent years: see, for example, \cite{RT19}, \cite{TZ16}, \cite{Ten16I}, \cite{Ten16}, \cite{ABW18}, \cite{GG21}, \cite{AAT23}, \cite{AG24}, \cite{Ame25}, \cite{AA26}.

\begin{rem}
    In some of the above literature (for example \cite{AT23}), a sharply $2$-transitive group $G$ is defined to be split if it has a non-trivial \emph{abelian} normal subgroup. This is equivalent to our definition, as follows.
    
    Let $X \curvearrowleft G$ be a sharply $2$-transitive action, and let $N \trianglelefteq G$, $N \neq \mathtt{1}$. Suppose that $N$ is abelian. Consider the partition of $X$ into $N$-orbits: as $N \trianglelefteq G$, this partition is $G$-invariant, so by sharp $2$-transitivity of $X \curvearrowleft G$ the partition consists of a single part, and thus $X \curvearrowleft N$ is transitive. Freeness of $X \curvearrowleft N$ is straightforward to check (one uses sharp $2$-transitivity and the fact that $N$ is abelian), and so $N$ is regular. For the opposite direction: given a regular normal subgroup $N \trianglelefteq G$, one can define a near-field $F$ on $N$ (see \cite[Section 7.6]{DM96}); one may define a near-field without assuming commutativity of the additive group $F^+$, and then commutativity of $F^+$ follows from the other axioms (see \cite{Neu40} or \cite[Exercise 7.6.5]{DM96}), so $N$ is abelian.
\end{rem}

\subsection{Background: structural actions on \Fr structures} \label{ss: background on sh k homog}

One of the questions motivating this paper, \Cref{Cameron qn}, has been previously investigated in the case $k = 1$ in \cite{Che15}, \cite{JK04}, \cite{Cam00}, \cite{Hen71}. Sharply $1$-transitive actions on sets are usually referred to as \emph{regular} actions, and so we will refer to sharply $1$-homogeneous actions as \emph{structurally regular actions}.

In \cite{Che15}, Cherlin shows that any transitive \Fr structure with free amalgamation in a finite binary relational language has a regular action by a countable group $G$, and sufficient conditions are given for $G$ to have such an action (these are satisfied, for example, by the free nilpotent group of class 2 with $\omega$-many generators). This generalises results of Cameron in \cite{Cam00}, where it is shown that the random graph has a regular action by any countable group with a homomorphism onto $\Z$. In \cite{Cam00} other particular examples of \Fr structures with regular actions are given, and in some cases there are results regarding which groups $G$ admit such actions. In \cite{JK04}, it is shown that any countable group without involutions has a regular action on the random tournament, and in \cite{Hen71}, Henson showed that the generic triangle-free graph $\Gamma_3$ has a regular action by $\Z$. (The papers \cite{Cam00, Che15} also give results on regular normal actions, which we will not consider.)

The case $k > 1$ was considered by Macpherson for $(\Q, <)$: \cite[Theorem 1.1]{Mac96} states that a free group of rank $\omega$ admits a sharply $k$-homogeneous action on $(\Q, <)$ for all $k \geq 1$. Macpherson also showed in Proposition 3.2 of the same paper that if $\mc{M}$ is an $\omega$-categorical \Fr structure with an involutary automorphism fixing at least two points, then $\mc{M}$ does not admit a sharply $4$-homogeneous action. As far as the authors are aware, prior to the current paper these were the only results in the literature regarding \Cref{Cameron qn} for $k > 1$.

\subsection{Background: a sharply \texorpdfstring{$2$}{2}-homogeneous action on the Rado graph due to Amelio and Winkel} \label{ss: split sh 2 homog on rado}

Marco Amelio and Jeroen Winkel gave the initial impetus for this project. They had considered sharply $2$-homogeneous actions on the random graph, and provided us with the following example (in informal communication). Let $p$ be a prime with $p \equiv 1 \pmod{4}$. By the L\"{o}wenheim-Skolem theorem, there is a countably infinite field $\mc{F}$ which is an elementary submodel of the ultraproduct of all finite fields of characteristic $p$ (taken up to isomorphism) over a non-principal ultrafilter on $\N$. As each finite field $\mb{F}_q$ of characteristic $p$ satisfies:
\begin{enumerate}[label=(\roman*)]
    \item \label{i: -1 sq} $-1$ is a square,
    \item \label{i: sqs index 2 subgp} the subgroup of non-zero squares has index $2$ in the multiplicative group of the field;
\end{enumerate}
by {\L}o{\'{s}}'s theorem we have that $\mc{F}$ also satisfies \ref{i: -1 sq}, \ref{i: sqs index 2 subgp}. Define a binary relation $E$ on the underlying set $F$ of $\mc{F}$ by: $uEv$ if $u - v$ is a non-zero square in $\mc{F}$. By \ref{i: -1 sq} the relation $E$ is symmetric, so the structure $\Gamma := (F, E)$ is a graph (called the Paley graph of $\mc{F}$). By \cite[Theorem 3]{BT81} (or an easy adaptation of \cite{GS71}), the graph $\Gamma$ is isomorphic to the random graph.

Consider the affine general linear group $\mathrm{AGL}(1, \mc{F}) = \{g : F \to F \mid g(x) = ax + b \text{ for some } a, b \in \mc{F} \text{ with } a \neq 0\}$. Let $\Sigma \leq \mathrm{AGL}(1, \mc{F})$ consist of those $g : x \mapsto ax + b$ in $\mathrm{AGL}(1, \mc{F})$ for which $a$ is a square. Let $\mu : F \curvearrowleft \Sigma$ be the natural action of $\Sigma$ on $F$. By \ref{i: sqs index 2 subgp}, the map $\rho : \mc{F}^\times \to \{\pm 1\}$ sending squares/non-squares to $+1$/$-1$ respectively is a homomorphism, and so $\mu$ is an action by automorphisms of $\Gamma$. It is straightforward to check that $\mu$ is sharply $2$-homogeneous using the fact that $\rho$ is a homomorphism.

We do not know how to generalise this example. Note that \cite{BT81}, \cite{GS71} rely on difficult results for finite fields.

\subsection{Background: stationary weak independence relations} \label{intro: swir background}

Our main results concern \Fr structures with a certain kind of independence between finite substructures known as a \emph{stationary weak independence relation (SWIR)}, a generalisation of the well-known notion of a stationary independence relation (SIR) due to Tent and Ziegler. Let $\mc{M}$ be a \Fr structure. A SWIR on $\mc{M}$ is a ternary relation $\ind$ on finitely generated substructures of $\mc{M}$, where we write $B \ind_A C$ for $A, B, C \fg \mc{M}$ and say that ``$B$ and $C$ are $\ind$-independent over $A$", such that $\ind$ satisfies certain axioms: automorphism-invariance, existence, stationarity and monotonicity (see \Cref{d:swir}). Equivalently, the age of $\mc{M}$ (the class of finitely generated structures embeddable in $\mc{M}$) has a \emph{standard amalgamation operator (SAO)}: a particularly ``nice" notion of amalgamation which, again, satisfies certain desirable axioms. For example, any free amalgamation class has a SAO, as do the classes of finite tournaments, finite linear orders and finite partial orders. Thus, the \Fr limits of these classes have SWIRs and our main theorem applies to them.

A stationary independence relation (SIR) as defined by Tent and Ziegler (\cite{TZ13}) has the same definition as a SWIR but additionally assumes symmetry ($B \ind_A C$ iff $C \ind_A B$). The paper \cite{TZ13} pioneered the use of SIRs to prove simplicity of automorphism groups of symmetric \Fr structures, extending results in \cite{MT11} for free amalgamation classes. Li originally defined SWIRs in the later preprint \cite{Li19} by removing the symmetry assumption in the definition of a SIR, and used these to prove simplicity of the automorphism groups of certain asymmetric \Fr structures (see also \cite{Li20}, \cite{Li21}, \cite{BSWY26}).

We also note that a related, weaker notion of a \emph{canonical independence relation (CIR)} (unfortunately, the terminology in this area is not yet standardised) is defined in \cite{KS19}. Though we phrase our results in terms of SWIRs, we make use of a proof idea from \cite{KS19} found in the proof of Theorem 3.12 in that paper, and we consider the paper as a key general source of inspiration. The authors do not know any example of a strong relational \Fr structure with a CIR but without a SWIR.

\subsection{Main results for actions on sets} \label{s: intro robust subgps}

The below definition of \emph{robustness} of a permutation group enables us to completely characterise for which $\Theta \leq \sym_k$ there exists a sharply $\Theta$-transitive action on an infinite set.

\begin{defn} \label{d:non-free part}
    Let $\mu : U \curvearrowleft G$ be an action of a group on a set, and let $\Omega \leq G$. We define $\NFr_\mu(\Omega)$ to be the union of the non-free orbits of the action given by the restriction of $\mu$ to $\Omega$. For $\Omega \neq \mathtt{1}$, we define $\Fx_\mu(\Omega)$ to be the set of elements of $U$ which are fixed by each non-trivial element of $\Omega$, and we also define $\Fx_\mu(\mathtt{1}) = \emp$.  
\end{defn}

\begin{defn}\label{d:robust permutation group}
    Let $\Theta \leq \sym_k$. We write $\pi: \mathbf{k} \curvearrowleft \Theta$ for the permutation action of $\Theta$ on $k$. We say that $\Theta$ is \emph{robust} if for each $\Omega \leq \Theta$ and each non-empty $\Omega$-invariant subset $X \sub \NFr_\pi(\Omega)$, we have that $|\Omega|$ does not divide $|X|$. 
\end{defn}

Note that robustness is a property of $\Theta$ as a permutation group, and that if $\Theta$ is robust, so is any subgroup of $\Theta$.

The first main result of this paper is the following:

\begin{thmx} \label{t: sharply Theta-transitive}
    Let $k\geq 1$ and $\Theta\leq\sym_{k}$. Then there exists a sharply $\Theta$-transitive action of a group on an infinite set if and only if $\Theta$ is robust. If such an action exists, the acting group can be taken to be a finitely generated non-abelian virtually free group. 
\end{thmx}

In \cref{p: Theta not robust}\ref{i: no sh Theta-tr}, we show the ``only if" direction, and we prove a more detailed version of the ``if" direction in \cref{p:generic action on sets}.

The symmetric group $\sym_k$ is not robust for $k \geq 4$: a transposition fixes at least two points. Thus Theorem \ref{t: sharply Theta-transitive} immediately implies the well-known result of Tits that, for $k \geq 4$, there is no sharply $k$-transitive action of a group on an infinite set. We shed further light on this result: in \Cref{l:examples} we show that $\alt_4 \leq \sym_4$ and $\alt_5 \leq \sym_5$ are robust, and so in particular Theorem \ref{t: sharply Theta-transitive} implies that there \emph{do} exist sharply $\alt_4$-transitive and sharply $\alt_5$-transitive actions of a group on an infinite set. 

The groups $\sym_2$, $\sym_3$ are robust (see \Cref{l:examples}). The finitely generated non-abelian virtually free groups that we take in Theorem \ref{t: sharply Theta-transitive} in the cases $\Theta = \sym_2, \sym_3$ (see \cref{ex:s2 s3}) are finitely presented and non-split, and the sharply $\Theta$-transitive actions they admit are of characteristic $2$ (see the end of Section \ref{s:constructing actions on sets}), so we have the following corollary of Theorem \ref{t: sharply Theta-transitive}:

\begin{corx} \label{c: non split fin pres}
    Let $k = 2, 3$. Then there exists a non-split finitely presented group $G$ with a sharply $k$-transitive action on an infinite set, where the sharply $k$-transitive action has characteristic $2$.

    If $k = 2$, one may take the said group $G$ to be $G = (\sym_2 \times \fr_2) \ast \fr_2$. If $k = 3$, letting $\sigma \in \sym_3$ be an involution and $\tau \in \sym_3$ an element of order $3$, one may take $G = ((\sym_{3}\ast_{\sg{\sigma}}(\sg{\sigma}\times \fr_2))\ast_{\sg{\tau}}(\sg{\tau}\times \fr_2))\ast \fr_2$.
\end{corx}

(A sharply $2$-transitive action is of characteristic $2$ if all involutions act freely, and a sharply $3$-transitive action is of characteristic $2$ if all involutions have exactly one fixed point -- see for example \cite{Ten16I}.)

We prove Corollary \ref{c: non split fin pres} in a stronger form (including a genericity result) as Corollary \ref{c: non-split fin pres from prop}. This corollary answers a question of Andr\'{e} and Tent which asked whether finitely presented non-split sharply $2$-transitive groups exist (see \cite[Question 1.12]{AT23} -- though note that we do not answer the part of the question regarding simplicity).

\subsection{Main results for actions on structures} \label{ss: intro main str results}

We now give a summary of the main results for actions on structures in a manner which avoids too many technicalities. We actually prove a stronger result than Theorem \ref{t:main_intro}: see Theorem \ref{t:main}, where we also give genericity results in spaces of actions.

\begin{term*} \hfill
    \begin{itemize}
        \item Recall that a structure $\mc{A}$ is \emph{rigid} if it has trivial automorphism group, and \emph{transitive} if all its one-element substructures are isomorphic.
        \item Recall that a \Fr class $\mc{K}$ has \emph{strong amalgamation} if, for all pairs of embeddings $B_0 \la A \ra B_1$ in $\mc{K}$, there exists an amalgam $B_0 \ra C \la B_1$ in $\mc{K}$ such that the images of $B_0$, $B_1$ intersect exactly in the image of $A$. (Some authors refer to this as \emph{disjoint amalgamation}.) We also say that a \Fr limit has strong amalgamation if its age does.
        \item See \Cref{d:swir} for the definition of a SWIR, and recall that a SIR is a symmetric SWIR. See \cref{intro: swir background} for a brief introduction to SWIRs.
    \end{itemize}
\end{term*} 

\begin{thmx}\label{t:Theta only if structural}
    Let $\Theta \leq \sym_k$. Suppose that $\Theta$ is not robust. Then for any $\omega$-categorical relational \Fr structure $\mc{M}$ such that there is $\mc{A}_0 \sub \mc{M}$ of size $k$ with $A_0 \curvearrowleft \Aut(\mc{A}_0)$ isomorphic to the standard permutation action $\pi : \mathbf{k} \curvearrowleft \Theta$, there is no sharply $k$-homogeneous action of a group on $\mc{M}$.
\end{thmx}

We prove Theorem \ref{t:Theta only if structural} as Proposition \ref{p: Theta not robust}\ref{i: no sh k-homog}.

We prove the below Theorem \ref{t:main_intro} in a stronger form as Theorem \ref{t:main}.

\begin{thmx}[see Theorem \ref{t:main}] \label{t:main_intro}   
    Let $\M$ be a relational \Fr structure with strong amalgamation.
    \begin{enumerate}[label=(\Roman*)]
        \item Let $k \geq 1$. If $\M$ has a SWIR and substructures of $\M$ of size $k$ are rigid, then $\M$ admits a sharply $k$-homogeneous action by any finitely generated non-abelian free group.  
        \item If $\M$ has a SIR, then $\M$ admits a sharply $2$-homogeneous action by a finitely generated non-abelian virtually free group. If in addition $\M$ is transitive, then $\M$ admits a sharply $3$-homogeneous action by a finitely generated non-abelian virtually free group.
        \item The random tournament admits a sharply $3$-homogeneous action by a finitely generated non-abelian virtually free group.
    \end{enumerate}
    We also have the following for particular \Fr structures:
    \begin{enumerate}[label=(\Roman*), resume]
        \item The following holds for the reducts of $(\mathbb{Q},<)$: 
  	\begin{enumerate}[label=(\roman*), ref=(\Roman{enumi}.\roman*)]
            \item For $k \geq 1$, the dense linear order $(\Q,<)$ admits a sharply $k$-homogeneous action by any finitely generated non-abelian free group. 
            \item For $k \geq 1$, the betweenness structure $(\Q,B^{(3)})$ admits a sharply $k$-homogeneous action by a finitely generated non-abelian virtually free group. 
            \item For $k \geq 1$, the circular order $(\Q, C^{(3)})$ admits a sharply $k$-homogeneous action by a finitely generated non-abelian virtually free group.
            \item For $k = 1, 2$ and odd $k \geq 3$, the separation structure $(\Q, S^{(4)})$ admits a sharply $k$-homogeneous action by a finitely generated non-abelian virtually free group. For even $k \geq 4$, there is no sharply $k$-homogeneous action of a group on $(\Q, S^{(4)})$.
        \end{enumerate}
        \item Let $3 \leq k \leq 5$ and let $\M$ be the random $k$-hypertournament. Then $\M$ admits a sharply $k$-homogeneous action by a finitely generated non-abelian virtually free group. 
    \end{enumerate}
\end{thmx}

For each \Fr structure $\mc{M}$ to which Theorem \ref{t:main_intro} applies, we shall see that there is a robust permutation group $\Theta\leq\sym_{k}$ such that, for each substructure $\mc{A} \sub \mc{M}$ with $|A|=k$, the action $A \curvearrowleft \Aut(\mc{A})$ is isomorphic to the permutation action $\pi : \mathbf{k} \curvearrowleft \Theta$ or the permutation action of a subgroup of $\Theta$ with a free orbit.

\begin{eg} \label{ex: examples for main str thm}
    We note the following examples of \Fr structures with SWIRs and SIRs to which the above theorem can be applied. (See \Cref{ex: structures with SWIRS}.)
    \begin{itemize}
        \item Let $1 \leq k \leq 3$. Let $\M$ have free amalgamation, and in the case $k = 3$ additionally assume that $M$ is transitive. Then $\M$ has a SIR (given by the free amalgam), and so admits a sharply $k$-homogeneous action. Examples of such $\M$ include the random graph, the random $K_n$-free graph, the Henson digraphs and the random $n$-hypergraph. The generic poset also has a SIR, and so admits a sharply $k$-homogeneous action for $1 \leq k \leq 3$.
        \item Let $1 \leq k \leq 3$. The generic $n$-linear order (the free superposition of $n$ linear orders) and the generic order expansion of a \Fr structure with free amalgamation (e.g.\ the generic ordered graph) have a SWIR, and all $k$-substructures are rigid, and thus they admit a sharply $k$-homogeneous action.
    \end{itemize}
\end{eg}

For a description of the reducts of $(\Q, <)$, see \cite[Example 2.3.1]{Mac11}. These were originally classified in \cite{Cam76}. We now define the $k$-hypertournaments appearing in the above theorem.
\begin{defn} \label{d: ht def}
    Let $k \geq 2$. We define a \emph{$k$-hypertournament} to be a structure $\mc{A}$ in a language consisting of a single $k$-ary relation symbol, such that any substructure of $\mc{A}$ of size $k$ has automorphism group $\alt_k$. For instance, in the case $k = 2$ this gives the usual notion of a tournament, and in the case $k = 3$ a $3$-hypertournament is a set together with a cyclic orientation of each triple of vertices. It is clear that finite $k$-hypertournaments form a strong amalgamation class, and we call the \Fr limit the \emph{generic $k$-hypertournament}. In this paper, for brevity we usually just say \emph{$k$-tournament} instead of $k$-hypertournament.
\end{defn}
 
\subsection{Structure of the paper}

We have attempted to structure this paper so that it can be read by two distinct audiences. A reader only interested in sharply $k$-transitive actions on sets can read Sections \ref{s: notation}--\ref{s:constructing actions on sets} for Theorem \ref{t: sharply Theta-transitive} and Corollary \ref{c: non split fin pres} without any knowledge of model theory or ultrahomogeneous structures, and they may also assume $H = \Theta$ when considering seed groups (see Definition \ref{d:seed group} -- the full generality of the definition of $H$ is only used in the latter half of the paper regarding structures). A reader interested in actions on structures but without much background in geometric group theory (in particular Bass-Serre theory) can treat several of the lemmas in Section \ref{s:completions} requiring this background as black boxes.

\cref{s: notation} establishes some notation and conventions. \cref{s:robust groups and seed groups} elaborates on the notion of a robust subgroup. It also defines the more general notions of a seed group $H$ and of a seed action of such a group on an infinite set. Seed actions are the initial actions which we then extend to actions of larger groups $G$ called completions. Completions and their partial actions are the main subject of \cref{s:completions}, which contains some basic results in combinatorial group theory needed further on. \cref{s:extending sharp partial actions on sets} is a core section of the paper and establishes sufficient criteria for extensions of a $k$-sharp partial action of a completion to remain $k$-sharp when the partial action is extended. This will be enough to be able to prove \cref{t: sharply Theta-transitive} as \cref{p:generic action on sets} in the following \cref{s:constructing actions on sets}.
 
In \cref{s:seed actions on structures} we examine the arguments in the previous sections and provide an abstract outline of a set of sufficient conditions for a strong relational \Fr structure $\M$ to admit sharply $k$-homogeneous actions. We phrase this in terms of a notion of a pleasant seed action of a seed group on $\M$. In \cref{s:structures and independence} we recall some basic definitions and results around the notion of a stationary weak independence relation (SWIR). \cref{s:making independent} is rather technical and is centered around the notion, inspired by \cite{KS19}, of a family of partial actions with the escape property. We show how to apply these ideas in \cref{s:ind use section}. At this point it still remains to show that in some cases it is possible to construct actions of seed groups on $\M$ for which certain groups have large centralisers (in some precise sense), which is done in \cref{s:construction good finite actions}.
 
To conclude, in \cref{s:main theorem} we collect all the results from the previous Sections \ref{s:seed actions on structures}--\ref{s:construction good finite actions} into \cref{t:main}, a stronger version of \cref{t:main_intro}, and in \cref{s:questions} we propose some directions for further research. 

\subsection*{Acknowledgements}

The authors would like to thank Marco Amelio and Jeroen Winkel for posing the initial questions that led to this project, for allowing us to include in Subsection \ref{ss: split sh 2 homog on rado} their argument giving a sharply $2$-homogeneous action on the random graph, as well as for numerous helpful conversations throughout.

\section{Notation and conventions} \label{s: notation}

We collect the most frequently used notation here for the convenience of the reader. Most of our notation will be relatively standard.

Let $k \in \N$ and let $U$ be a set. We write $[U]^k$ for the set of subsets of $U$ of size $k$. We refer to the elements of $[U]^k$ as \emph{$k$-sets}. We write $(U)^k$ for the set of $k$-tuples (elements of $U^k$) with all elements distinct. We sometimes write $\bar{a} \sub U$ to mean that $\bar{a} \in U^k$ for some $k \in \N$.

Let $G$ be a group. We write $1$ for the identity of $G$, and write $G^\ast = G \setminus \{1\}$. We write $\mathtt{1}$ for the trivial group. When considering a generating set of a group, we always assume it does not contain the identity.

\subsection{Actions on sets}

We work with right actions in this paper. We denote a right action $\mu$ of a group $G$ on a set $U$ by $\mu : U \curvearrowleft G$, writing $\mu_g$ for the $\mu$-action of $g \in G$. Given a $G$-invariant set $V \sub U$ in the action $\mu$, we write $\mu|_V$ for the restriction of $\mu$ to $V$. Given a subgroup $H \leq G$, we also write $\mu|_H$ for the restriction of $\mu : U \curvearrowleft G$ to $H$. (Whether restriction is to an invariant set or a group will be clear from context.) We write $\mu|_{V, H}$ for the restriction of $\mu$ to the $G$-invariant set $V$ and the subgroup $H$ (so $\mu|_{V, H}$ is an action $V \curvearrowleft H$).

For $k \in \N$, we write $\mathbf{k}$ when we wish to emphasise that we are considering $k$ as the set $\{0, \cdots, k-1\}$. We often use this notation in the context of actions of subgroups of the symmetric group $\sym_k$, the group of bijections of $\mathbf{k} = \{0, \cdots, k-1\}$.

Throughout this paper, given a permutation group $\Theta \leq \sym_k$, we write $\pi : \mathbf{k} \curvearrowleft \Theta$ for the standard permutation action of $\Theta$ on $\mathbf{k}$.

Let $U \curvearrowleft G$ be an action of a group $G$ on a set $U$. We call $U$ a \emph{$G$-set}, and we say \emph{``let $U$ be a $G$-set"}, to mean ``take an action $U \curvearrowleft G$".

Let $U \curvearrowleft G$, $U' \curvearrowleft G'$ be two actions, and let $f : G \to G'$ be a group isomorphism. Let $\alpha : U \to U'$ be a map of sets. We say that $\alpha$ is an \emph{action isomorphism} (via $f$) if it is a bijection and $\alpha(u \cdot g) = \alpha(u) \cdot f(g)$. If an action isomorphism exists, we write $(U \curvearrowleft G) \simeq (U' \curvearrowleft G')$ (with $f$ usually clear from context). In the frequent situation that $G = G'$ and $f = \id_G$, we say that $\alpha$ is a \emph{$G$-isomorphism} and refer to the property $\alpha(u \cdot g) = \alpha(u) \cdot g$ as \emph{$G$-equivariance}.

Given an action $\lambda : U \curvearrowleft G$ and $A \sub U$,  as usual we write $G_{\{A\}}$ for the setwise-stabiliser of $A$ and $G_{(A)}$ for the pointwise-stabiliser of $A$, but we will also often write $\St(A) = G_{(A)}$ and $\sSt(A) = G_{\{A\}}$. We say that $A \sub U$ is $G$-invariant if $G_{\{A\}} = G$.

For $G_0 \sub G$, $G_0 \neq \mathtt{1}$, we write $\Fx_\lambda(G_0)$ for the set of elements of $U$ fixed (pointwise) in the action $\lambda$ by each non-trivial element of $G_0$. We define $\Fx_\lambda(\mathtt{1}) = \emp$. We may write $\Fx(G_0)$ if $\lambda$ is clear from context. For $G$-invariant $A \sub U$, we write $\Fx_\lambda^A(G_0) = \Fx_\lambda(G_0) \cap A$.

\subsection{Actions on structures}

We write first-order structures $\mc{M}$ in calligraphic script and sets $U$ in normal script. (We do not strictly adhere to this convention when working with SWIRs from Section \ref{s:structures and independence} onwards: see Remark \ref{r: SWIR notn conv for substructures}.) Given a structure $\mc{M}$, we write $M$ for its domain. 

For $G$ a group and $\mc{M}$ a first-order structure, a \emph{structural action} $\mu : \mc{M} \curvearrowleft G$ is an action $\mu : M \curvearrowleft G$ where for each $g \in G$ we have $\mu_g \in \Aut(\mc{M})$. When we write $\mc{M} \curvearrowleft G$ in this paper, rather than $M \curvearrowleft G$, we refer to a structural action (sometimes we emphasise this explicitly for presentational clarity). A \emph{structural action isomorphism} of structural actions $\mc{M} \curvearrowleft G$, $\mc{M}' \curvearrowleft G'$ is an action isomorphism of the underlying set actions $M \curvearrowleft G$, $M' \curvearrowleft G'$ which is also an isomorphism $\mc{M} \to \mc{M}'$, and as in the case of sets, if a structural action isomorphism exists then we write $(\mc{M} \curvearrowleft G) \simeq (\mc{M}' \curvearrowleft G')$. We usually just refer to a structural action isomorphism as an action isomorphism, as the calligraphic script notation makes this clear from context; the same applies to $G$-isomorphisms $\mc{M} \to \mc{M}'$, where we shall assume without comment that these are also isomorphisms of structures.

\section{Robust groups and seed groups} \label{s:robust groups and seed groups}

Recall \cref{d:robust permutation group} of a robust subgroup $\Theta \leq \sym_k$, and recall that $\pi$ denotes the natural permutation action $\mathbf{k} \curvearrowleft \sym_k$.

\subsection{Consequences of non-robustness}

The proof of the below Proposition \ref{p: Theta not robust} is inspired by the proof of \cite[Proposition 3.2]{Mac96}.

\begin{prop} \label{p: Theta not robust}
    Let $\Theta \leq \sym_k$. Suppose that $\Theta$ is not robust. Then:
    \begin{enumerate}[label=(\roman*)]
    	\item \label{i: no sh Theta-tr} there is no sharply $\Theta$-transitive action of a group on an infinite set;
    	\item \label{i: no sh k-homog} for any $\omega$-categorical relational \Fr structure $\mc{M}$ such that there is $\mc{A}_0 \in [\mc{M}]^k$ with $(A_0 \curvearrowleft \Aut(\mc{A}_0)) \simeq (\pi : \mathbf{k} \curvearrowleft \Theta)$, there is no sharply $k$-homogeneous action of a group on $\mc{M}$.
    \end{enumerate}
\end{prop}
\begin{proof}
    \ref{i: no sh Theta-tr}: Suppose for a contradiction that there exists a sharply $\Theta$-transitive action $\mu: U \curvearrowleft G$ of a group $G$ on an infinite set $U$. Let $A_0 \in [U]^k$. As $\mu$ is sharply $\Theta$-transitive, we have $(\mu : A_0 \curvearrowleft G_{\{A_0\}}) \simeq (\pi : \mathbf{k} \curvearrowleft \Theta)$, and as $\Theta$ is not robust we have some $\Omega \leq G_{\{A_0\}}$ and some $\Omega$-invariant non-empty $X \sub \NFr_\mu(\Omega) \cap A_0$ such that $|\Omega|$ divides $|X|$. Let $Y = (\NFr_\mu(\Omega) \cap A_0) \setminus X$. Then $Y$ is a proper subset of $\NFr_\mu(\Omega) \cap A_0$ and there is $m > 0$ with $k = |Y| + m|\Omega|$. If $\mu|_\Omega$ were to have infinitely many non-free orbits in $U$, then in each of the non-free orbits there is a point fixed by a non-identity element of $\Omega$, and so by the pigeonhole principle there is a non-identity element of $\Omega$ fixing each of infinitely many points of $U$, which contradicts sharp $\Theta$-transitivity. So there are finitely many non-free orbits of $\mu|_\Omega$ in $U$, and thus infinitely many free orbits, as $U$ is infinite.

    Let $E$ be the union of $Y$ with $m-1$ free $\Omega$-orbits, and let $\Sigma$ be the set of the remaining free $\Omega$-orbits which are not subsets of $E$. For $F \in \Sigma$, let $B_F = E \cup F$. Let $F_0 \in \Sigma$. For each $F \in \Sigma$ let $\alpha_F : B_{F_0} \to B_F$ be a map given by extending $\id_E$ by an $\Omega$-equivariant bijection $F_0 \to F$.

    For each $F \in \Sigma$, there is some $g_F \in G$ with $B_{F_0} = B_F \cdot g_F$, by the transitivity of $\mu$ on $[U]^k$. So for all $F \in \Sigma$ the map $\mu_{g_F} \circ \alpha_F$ is a permutation of $B_{F_0}$, and thus by the pigeonhole principle there exists an infinite subset $\Sigma_0 \sub \Sigma$ such that for all $F, F' \in \Sigma_0$ we have $\mu_{g_{F \vphantom{F'}}} \circ \alpha_F = \mu_{g_{F'}} \circ \alpha_{F'}$. Let $F_1 \in \Sigma_0$. For each $F \in \Sigma_0$, let $h_F = g_{F_1}^{-1} g^{\vphantom{-1}}_{F_{} \vphantom{F_1}}$, so $\mu_{h_F}|_{B_F} = \alpha_{F_1}^{\vphantom{-1}} \alpha_{F_{}}^{-1}$ and thus $\mu_{h_F}|_{B_F}$ is $\Omega$-equivariant and extends $\id_E$. Hence for each $\omega \in \Omega$ and $F \in \Sigma_0$ we have $[h_F, \omega] = 1$ by $k$-sharpness applied on $B_F$, so $h_F \in \centr_G(\Omega)$, and thus $h_F$ fixes $\NFr_\mu(\Omega)$ setwise. So there is an infinite $\Sigma_1 \sub \Sigma_0$ such that the $\mu$-action of $h_F$ on $\NFr_\mu(\Omega)$ is the same for each $F \in \Sigma_1$, and thus for $F, F' \in \Sigma_1$ we have $h_{F'}^{\vphantom{-1}} h_F^{-1}$ fixing $\NFr_\mu(\Omega) \cup E$ pointwise. As $k = |Y| + m|\Omega| = |\NFr_\mu(\Omega) \cap A_0| + r|\Omega|$, where $r$ is the number of free $\Omega$-orbits inside $A_0$, and as $Y \subsetneq \NFr_\mu(\Omega) \cap A_0$, we have $r < m$ and thus 
    $|\NFr_\mu(\Omega) \setminus Y| \geq |\Omega|$. So $|\NFr_\mu(\Omega) \cup E| \geq k$, and thus, by $k$-sharpness, for all $F, F' \in \Sigma_1$ we have $h_F = h_{F'}$, which implies $g_F = g_{F'}$ and thence $B_F = B_{F'}$, contradiction.

    \ref{i: no sh k-homog}: Suppose for a contradiction that there exists a sharply $k$-homogeneous action $\mu: \mc{M} \curvearrowleft G$. As $\mu$ is sharply $k$-homogeneous, we have $(\mu : A_0 \curvearrowleft G_{\{A_0\}}) \simeq (A_0 \curvearrowleft \Aut(\mc{A}_0))$, and the latter action is isomorphic to $\pi : \mathbf{k} \curvearrowleft \Theta$ by assumption. The remainder of the argument is a straightforward adaptation of the proof of \ref{i: no sh Theta-tr} and we leave this to the reader, noting the following changes: we replace each mention of sharp $\Theta$-transitivity with sharp $k$-homogeneity, we replace $\Omega$-equivariant bijections with $\Omega$-isomorphisms, and when defining $\Sigma$, we use the $\omega$-categoricity of $\mc{M}$ to take $\Sigma$ such that the substructures induced on $E \cup F$ for each $F \in \Sigma$ are $\Omega$-isomorphic (by $\omega$-categoricity we have that there are finitely many $\Omega$-isomorphism classes for substructures induced on $E \cup F$, and as there are infinitely many free orbits $F$ we have such $\Sigma$ by the pigeonhole principle).
\end{proof}

This completes the proof of Theorem \ref{t:Theta only if structural} and the ``only if" direction of Theorem \ref{t: sharply Theta-transitive}.

\begin{rem}
    In fact a stronger statement than \ref{i: no sh k-homog} is true. If $\Theta$ is not robust, then for any $\omega$-categorical relational structure $\mc{M}$ such that there is $A_0 \in [M]^k$ with $(A_0 \curvearrowleft \Aut(\mc{M})_{\{A_0\}}) \simeq (\pi : \mathbf{k} \curvearrowleft \Theta)$, there is no action $\mc{M} \curvearrowleft G$ with the following property: for all $\bar{a}, \bar{b} \in (M)^k$ with $\tp(\bar{a}) = \tp(\bar{b})$, there is a unique $g \in G$ with $\bar{a} \cdot g = \bar{b}$. The proof is essentially that of \ref{i: no sh k-homog}, replacing $\Omega$-isomorphisms with $\Omega$-elementary isomorphisms.
\end{rem}

\subsection{More on robust permutation groups}

\begin{defn} \label{d: docile and unruly}
    Let $\Theta \leq \sym_k$ be robust. We say that a subgroup $\Omega \leq \Theta$ is \emph{docile} if its permutation action on $k$ has at least one free orbit. Otherwise we say that $\Omega$ is \emph{unruly}.
\end{defn}

Note that if $\Omega$ is a docile subgroup of a robust permutation group $\Theta \leq \sym_k$, then so is any conjugate of $\Omega$ in $\Theta$ and any subgroup of $\Omega$. Also note that docile and unruly subgroups of a robust permutation group are themselves robust.

\subsubsection{Strong robustness}
We now define stronger notions than robustness, docility and unruliness, which we use in the second half of the paper concerned with structural actions (in particular, in Subsection \ref{subs:larger seeds}).

\begin{defn} \label{d: strongly docile permutation group} 
    We say that $\Omega \leq \sym_{k}$ is \emph{strongly docile} if it satisfies the following for the permutation action $\pi : \mathbf{k} \curvearrowleft \Omega$:
    \begin{enumerate}[label=(\roman*)]
        \item \label{c: docile free orb} $\Omega$ acts freely on each non-trivial orbit;
        \item \label{c: docile fixed pts} $|\Fx_\pi(\Omega)| < |\Omega|$.
    \end{enumerate}
	
    We say that $\Omega \leq \sym_k$ is \emph{strongly unruly} if, for each non-empty $\Omega$-invariant subset $X \sub \mathbf{k}$, we have that $|\Omega|$ does not divide $|X|$.
\end{defn}

\begin{defn} \label{d: strongly robust permutation group}
    We say that $\Theta\leq\sym_{k}$ is \emph{strongly robust} if it satisfies the following: 
    \begin{enumerate}[label=(\roman*)]
        \item \label{theta is transitive} the action $\mathbf{k} \curvearrowleft \Theta$ is transitive;
        \item \label{robust dichotomy} every $\Omega \leq \Theta$ is strongly docile or strongly unruly;
        \item \label{robust cyclic} every cyclic subgroup of $\Theta$ is strongly docile.
    \end{enumerate}
\end{defn}

We now justify the above terminology:

\begin{lem}
    Let $\Theta \leq \sym_k$ be strongly robust. Then:
    \begin{enumerate}[label=(\roman*)]
        \item \label{i: sr implies r} $\Theta$ is robust;
        \item \label{i: sd exactly d} the strongly docile subgroups of $\Theta$ are exactly the docile subgroups of $\Theta$;
        \item \label{i: su exactly u} the strongly unruly subgroups of $\Theta$ are exactly the unruly subgroups of $\Theta$.
    \end{enumerate} 
\end{lem}
\begin{proof}
    \ref{i: sr implies r}: Suppose for a contradiction there are $\Omega \leq \Theta$ and non-empty $\Omega$-invariant $X \sub \NFr_\pi(\Omega)$ with $|\Omega|$ dividing $|X|$. As $\NFr_\pi(\Omega) \neq \emp$ we have $\Omega \neq \mathtt{1}$. By Definition \ref{d: strongly robust permutation group}\ref{robust dichotomy}, we have that $\Omega$ is strongly docile or strongly unruly, and immediately we have by definition that $\Omega$ is not strongly unruly, so $\Omega$ is strongly docile. So by Definition \ref{d: strongly docile permutation group}\ref{c: docile free orb} and the fact that $\Omega \neq \mathtt{1}$ we have $\NFr_\pi(\Omega) = \Fx_\pi(\Omega)$. As $|\Omega|$ divides $|X|$ and $\emp \neq X \sub \NFr_\pi(\Omega) = \Fx_\pi(\Omega)$, we have $\Fx_\pi(\Omega) \geq |\Omega|$, contradicting Definition \ref{d: strongly docile permutation group}\ref{c: docile fixed pts}.
    
    \ref{i: sd exactly d}: Suppose we are given strongly docile $\Omega \leq \Theta$. The trivial group is docile, so assume $\Omega \neq \mathtt{1}$. So $\Omega$ does not fix $\mathbf{k}$ pointwise, and so by Definition \ref{d: strongly docile permutation group}\ref{c: docile free orb} has a free orbit. Now suppose we are given docile $\Omega \leq \Theta$. As $\Omega$ has some free orbit $X$, and $|X| = |\Omega|$, we have that $\Omega$ cannot be strongly unruly. So by Definition \ref{d: strongly robust permutation group}\ref{robust dichotomy} we have that $\Omega$ is strongly docile.  
    
    \ref{i: su exactly u}: this follows immediately from \ref{i: sd exactly d} and Definition \ref{d: strongly robust permutation group}\ref{robust dichotomy}.
\end{proof}

Most of the robust permutation groups encountered in this paper are strongly robust, with the exception of the dihedral groups $\dih_n$ for $n$ odd and composite (these are robust but not strongly robust). See Lemma \ref{l:examples}.

\begin{lem} \label{docile same order}
    Let $\Theta$ be strongly robust, and let $\Omega, \Omega' \leq \Theta$ be strongly docile subgroups of the same order. Then $|\Fx_\pi(\Omega)| = |\Fx_\pi(\Omega')|$ and $\Omega$, $\Omega'$ have the same number of free orbits in the permutation action $\pi$.
\end{lem}
\begin{proof}
    For $\Omega = \Omega' = \mathtt{1}$ the statement is trivial, so suppose $\Omega, \Omega'$ have order $l > 1$. By \Cref{d: strongly docile permutation group}\ref{c: docile free orb}, each non-trivial orbit of $\Omega$ and of $\Omega'$ is free, so \[k = |\Fx_\pi(\Omega)| + ml = |\Fx_\pi(\Omega')| + m'l\] for some $m, m' > 0$. By \Cref{d: strongly docile permutation group}\ref{c: docile fixed pts} we have $|\Fx_\pi(\Omega)|, |\Fx_\pi(\Omega')| < l$, so $|\Fx_\pi(\Omega)| = |\Fx_\pi(\Omega')|$ and $m = m'$.    
\end{proof}

\subsubsection{Examples of robust permutation groups}




\begin{defn}	  
    Henceforth, when given a robust subgroup $\Theta\leq\sym_{k}$, we also assume that we have chosen some set $\mc{D}(\Theta)$ of representatives of the conjugacy classes in $\Theta$ of the docile subgroups of $\Theta$ (where we mean the action by conjugation on the set of subgroups of $\Theta$).
\end{defn}

\begin{lem} \label{l:examples} \hfill 
    \begin{enumerate}[label=(\roman*)]
        \item \label{robust Sk} The symmetric group $\sym_k$ is robust iff $k \leq 3$, in which case it is strongly robust.
        \item \label{robust Ak} The alternating group $\alt_k \leq \sym_k$ is robust iff $k \leq 5$, in which case it is strongly robust.
        \item \label{robust Ck} The group $\cyc_k \leq \sym_k$ is strongly robust for all $k \geq 1$.
        \item \label{robust Dk} Let $k \geq 3$. The dihedral group $\dih_k \leq \sym_k$ is robust if and only if $k$ is odd, and strongly robust if and only if $k$ is an odd prime. (Here, we consider $\dih_k$ as a subgroup of $\sym_k$ by labelling the vertices of a regular polygon anticlockwise as $0, \cdots, k-1$.)
    \end{enumerate}
\end{lem}
\begin{proof}
    \ref{robust Sk}: It is straightforward to check that $\sym_2$, $\sym_3$ are strongly robust. Let $k \geq 4$. Let $\Omega = \sg{(1\,2)} \leq \sym_k$. Taking $X=\{3, 4\}$, we have $X \sub \Fx_\pi(\Omega) = \NFr_\pi(\Omega)$ and $|\Omega| = |X|$, so $\sym_k$ is not robust. 

    \ref{robust Ak}: The case $k \leq 3$ is immediate. We now sketch the details that $\alt_{4}$ and $\alt_{5}$ are strongly robust. We start by listing $\mathcal{D}(\alt_{4})\setminus\{\mathtt{1}\}$: 
    \begin{itemize}
        \item $\sg{(0\,1)(2\,3)}$, with no fixed points and two free orbits,
        \item $\sg{(1\,2\,3)}$, with one fixed point and one free orbit,
        \item $\sg{(0\,1)(2\,3),(0\,2)(1\,3)}$, with no fixed points and one free orbit.
    \end{itemize}
    The only non-trivial subgroup of $\alt_{4}$ not conjugate to one of the groups above is $\alt_4$ itself, which is strongly unruly.
    
    Similarly, as $\mathcal{D}(\alt_{5})\setminus\{\mathtt{1}\}$ we take:
    \begin{itemize}
        \item $\sg{(1\,2)(3\,4)}$, with one fixed point and two free orbits,
        \item $\sg{(1\,2\,3)}$, with two fixed points and one free orbit,
        \item $\sg{(1\,2)(3\,4),(1\,3)(2\,4)}$, with one fixed point and one free orbit,
        \item $\sg{(0\,1\,2\,3\,4)}$, with no fixed points and one free orbit.
    \end{itemize} 
    It is easy to check every non-trivial subgroup of $\alt_{5}$ which is not conjugate to one of the subgroups above must be conjugate to one of the following groups: $\sg{(0\,1\,2\,3\,4),(1\,2)(3\,4)}\cong D_{5}, \sg{(0\,1\,2),(1\,2)(3\,4)}\cong \sym_{3}, \alt_{5}$, all of which have order strictly larger than $5$.
	
    Now suppose $k \geq 6$. Let $\Omega = \sg{(0\,1)(2\,3)}$. Then $|\Fx_\pi(\Omega)| = k - 4 \geq 2$, so taking $X \sub \Fx_\pi(\Omega)$ with $|X| = 2$, we have that $|\Omega|$ divides $|X|$, so $\alt_k$ is not robust.

    \ref{robust Ck}: Clear.

    \ref{robust Dk}: If $k$ is even, then $\dih_{k}$ contains an involution $\sigma$ with exactly two fixed points $a, b$ (namely, a reflection whose axis is through two antipodal points), and taking $X = \{a, b\}$, we have that $|\sg{\sigma}|$ divides $|X|$, so $\dih_k$ is not robust. 
    
    Suppose $k$ is odd. Let $\rho$ be a rotation by $\frac{2\pi}{k}$ and let $\sigma$ be a reflection with $\dih_k = \sg{\rho, \sigma}$. The subgroups $\Omega \leq \dih_k$ and a description of their actions on $\mathbf{k}$ are as follows (see, for example, \cite[Theorem 3.1]{Con09}):
    \begin{enumerate}[label=(\roman*)]
        \item $\Omega = \sg{\rho^i}$ where $i$ divides $k$, with free action $\mathbf{k} \curvearrowleft \Omega$;
        \item $\Omega = \sg{\rho^j \sigma}$ where $j < k$: here $\rho^j\sigma$ is a reflection with a single fixed point, so the action $\mathbf{k} \curvearrowleft \Omega$ consists of free orbits of size $2$ and a fixed point;
        \item \label{i: dih third type} $\Omega = \sg{\rho^i, \rho^j\sigma}$ where $i$ divides $k$, $i < k$ and $j < i$: letting $m = k / i$, we have $\Omega \cong \dih_m$, and the action $\pi : \mathbf{k} \curvearrowleft \Omega$ has exactly one non-free orbit $X$, with $X \curvearrowleft \Omega$ isomorphic to the standard permutation action $\mathbf{m} \curvearrowleft \dih_{m}$. 
    \end{enumerate} 
    
    It is straightforward to see that in the first two cases $\Omega$ is robust and strongly docile. In the third case, as $|X| < |\Omega|$ we have that $\Omega$ is robust. So $\dih_k$ is robust. If $k$ is prime, then the only group of the third type is $\dih_k$ itself, and $\dih_k$ is strongly unruly. So if $k$ is prime then $\dih_k$ is strongly robust.

    Now suppose $k$ is odd and composite. Let $i$ divide $k$ with $i \neq 1, k$. Take $\Omega = \sg{\rho^i, \sigma}$ as in case \ref{i: dih third type} above, with $m = k / i$ and $X$ the non-free orbit of $\mathbf{k} \curvearrowleft \Omega$ of size $m$. As $\NFr_\pi(\Omega) \neq \Fx_\pi(\Omega)$, we have that $\Omega$ is not strongly docile. As $|\Omega|$ divides $k - |X|$ and $\mathbf{k} \setminus X$ is $\Omega$-invariant, we have that $\Omega$ is not strongly unruly. So $\dih_k$ is not strongly robust.
\end{proof}

\subsubsection{Properties of robust permutation groups}

The following lemma, which we will use often, shows how the specifics of Definition \ref{d:robust permutation group} come into play.

\begin{lem} \label{l:invariant tuples}
    Let $\Theta \leq \sym_k$ be robust. Let $\lambda : U \curvearrowleft \Theta$ be an action consisting of infinitely many free orbits together with a $\Theta$-invariant set $A_0$ such that $(\lambda|_{A_0} : A_0 \curvearrowleft \Theta) \simeq (\pi : \mathbf{k} \curvearrowleft \Theta)$. Then:
    \begin{enumerate}[label=(\roman*)]
        \item \label{k-sets iso to std} for any $\Theta$-invariant set $A\in [U]^k$ we have $(\lambda|_A : A \curvearrowleft \Theta) \simeq (\pi : \mathbf{k} \curvearrowleft \Theta)$, and thus in particular $\NFr_\lambda(\Theta) \sub A$;
        \item \label{set stab is docile} for any $A\in[U]^{k}$ with $A \neq A_0$, letting $\Omega=\sSt_{\lambda}(A)$, we have that $\Omega$ is docile and $\NFr_{\lambda}(\Omega) \sub A$.
    \end{enumerate}	
\end{lem}
\begin{proof}
    \ref{k-sets iso to std}: Let $f : A_0 \to \mathbf{k}$ be a $\Theta$-isomorphism for the actions $\lambda|_{A_0} : A_0 \curvearrowleft \Theta$ and $\pi : \mathbf{k} \curvearrowleft \Theta$. Let $X = \NFr_\lambda(\Theta) \cap A$. As $A$ is the union of $X$ together with free orbits, letting $m$ be the number of free orbits contained in $A$, we have $k = |X| + m|\Theta|$. The same reasoning applied to $\pi$ gives $k = |\NFr_\pi(\Theta)| + m'|\Theta|$, where $m'$ is the number of free orbits of $\pi$. As $\NFr_\lambda(\Theta) \sub A_0$ by assumption, we have $X \sub A_0$. So $f(X) \sub \NFr_\pi(\Theta)$. As $|f(X)| + m|\Theta| = |\NFr_\pi(\Theta)| + m'|\Theta|$, we have that $|\Theta|$ divides $|\NFr_\pi(\Theta)| - |f(X)|$. But $\NFr_\pi(\Theta) \setminus f(X)$ is $\Theta$-invariant, so as $\Theta$ is robust we have $f(X) = \NFr_\pi(\Theta)$. Thus $X = \NFr_\lambda(\Theta)$ and $m = m'$, giving the claim.
    
    \ref{set stab is docile}: Suppose for a contradiction that $\Omega$ is not docile. Then $\mathbf{k} = \NFr_\pi(\Omega)$, and as $\NFr_\lambda(\Omega) \sub \NFr_\lambda(\Theta)$ we have $\NFr_\lambda(\Omega) = A_0$. As $\Omega$ is robust, 
    applying \ref{k-sets iso to std} to $\lambda|_\Omega$ we have $\NFr_\lambda(\Omega) \sub A$, and so $A = A_0$, contradiction.
\end{proof}

\subsection{Seed groups and seed actions} \label{ss:seeds}

Let $\Theta \leq \sym_k$ be robust. We now begin to describe how to construct a group $G$ from $\Theta$ which admits a sharply $\Theta$-transitive action on a set or a sharply $k$-homogeneous action on a \Fr structure. This proceeds in two stages. In this section, first we define a \emph{seed group} $H$ from $\Theta$. Then in \cref{s:completions} we construct the group $G$ from $H$, which we call a \emph{completion} of the seed group $H$.

In fact, in the simplest situation of sharply $\Theta$-transitive actions on an infinite set $U$, we may take $H = \Theta$ and construct $G$ directly -- see \cref{d:free completion} below. In most cases of structural actions, we can also take $H = \Theta$. However, in the more involved case of structural actions with $\Theta = \sym_3$ in Subsection \ref{subs:larger seeds}, we work with a more complicated seed group $H$, containing multiple copies of $\Theta$: see \cref{d:standard s3 seed} and \cref{l:constructed H}. This is the reason for introducing seed groups rather than just working with $\Theta$: they enable us to give a unified presentation of our results. For a first approach to the material in this paper, we recommend the reader to focus on the case $H = \Theta$.

\begin{defn} \label{d:seed group} 
    Let $\Theta\leq\sym_{k}$ be robust. Let $\{c_0, \cdots, c_{r-1}\}$ be a set of representatives of the orbits of $\pi : \mathbf{k} \curvearrowleft \Theta$. For $i < r$, we let $\Gamma_i = \St_\pi(c_i)$.
	
    Let $m \geq 1$. For convenience we assume that one of the following two cases holds:
    \begin{enumerate}[label=(\Roman*)]
        \item \label{robust case} $\Theta$ is robust and $m=1$;
        \item \label{very robust case} $\Theta$ is strongly robust and $r=1$.
    \end{enumerate}

    If $\Theta$ is itself docile we additionally assume that $m=1$.
    
    Let $(H,(\iota_{j})_{j < m})$ be a tuple consisting of a countable group $H$ and embeddings $\iota_{j} : \Theta \to H$, $j < m$. For $\Omega \leq \Theta$ and $\theta \in \Theta$ we write $\Omega_j = \iota_j(\Omega)$ and $\theta_j = \iota_j(\theta)$. The embedding $\iota_0$ will be distinguished as a ``reference copy" of $\Theta$, and we will simply identify $\Theta$ with $\Theta_0$.
	
    We call $(H,(\iota_{j})_{j < m})$ a \emph{$(\Theta,m)$-seed group} if it satisfies the following properties:
    \begin{enumerate}[label=(\roman*)]
        \item \label{seed torsion} every finite subgroup of $H$ has a conjugate contained in some $\Theta_j$;
        \item \label{seed nonconjugate} the groups $\Theta_j$ are pairwise non-conjugate;
        \item \label{seed conjugate docile} for all $j < m$, $\Omega\in\mc{D}(\Theta)$, there is $h \in H$ such that $\iota_j|_\Omega$ is the map given by conjugation by $h$,
        \item \label{seed conjugate stab} for all $i < r$, $j < m$, there is $h \in H$ such that $\iota_j|_{\Gamma_i}$ is the map given by conjugation by $h$;  
        \item \label{seed self-normalised} if $h \in H$ satisfies $\Gamma_i^{h} \cap \Theta \neq \{1\}$ for some $i < r$, then $h \in \Theta$;
    \end{enumerate}
    We usually omit mentioning the embeddings $\iota_j$ and just say that $H$ is a $(\Theta, m)$-seed group, with the embeddings clear from context. For $\Omega_j \leq \Theta_j$, we will say that $\Omega_j$ is docile/unruly if $\iota_j^{-1}(\Omega_j) \leq \Theta$ is docile/unruly.
\end{defn}

Notice that in particular $\Theta$ itself is trivially a $(\Theta,1)$-seed group. The above Definition \ref{d:seed group} is designed to unify the following two cases:
\begin{itemize}
    \item $H = \Theta$ for robust $\Theta$: here $m = 1$, but we do not assume that $\pi : \mathbf{k} \curvearrowleft \Theta$ is transitive;
    \item $H$ contains multiple copies of $\Theta$ (see Definition \ref{d:standard s3 seed} for a concrete example), but we make stronger assumptions on $\Theta$: we assume that $\Theta$ is strongly robust and $\pi : \mathbf{k} \curvearrowleft \Theta$ is transitive.
\end{itemize}

\begin{defn} \label{d:seed action on set} 
    Let $H$ be a $(\Theta, m)$-seed group, and let $\lambda: U \curvearrowleft H$ be an action. We call $\lambda$ a \emph{seed action} if:
    \begin{enumerate}[label=(\roman*)]
        \item \label{seedact infinitely manyorbits}$\lambda$ has infinitely many orbits;
        \item \label{seedact orbit description} there is an $H$-invariant set $V$ such that the $\lambda$-action on $V$ is isomorphic to the right-multiplication action $\bigsqcup_{i < r} \Gamma_i\backslash H \curvearrowleft H$, and all remaining orbits are free.
    \end{enumerate}  
    
    We call $V$ the \emph{paradigm set} of $\lambda$.
    
    (As before $\Gamma_i$ is the stabiliser of $c_i$ in $\pi : \mathbf{k} \curvearrowleft \Theta$. The action $\bigsqcup_{i < r} \Gamma_i\backslash H \curvearrowleft H$ is the right-multiplication action on the disjoint union of the right coset spaces.)
\end{defn}

\begin{observation} \label{o: Theta seed action}
    If $H=\Theta$, then a seed action $\lambda : U \curvearrowleft \Theta$ is just an action with infinitely many orbits such that there is a $\Theta$-invariant set $V$ with $(\lambda|_V : V \curvearrowleft \Theta) \simeq (\pi : \mathbf{k} \curvearrowleft \Theta)$ and the action outside $V$ is free.    
\end{observation}

\begin{lem} \label{l: key seed action props}
    Let $\lambda : U \curvearrowleft H$ be a seed action. Then:
    \begin{enumerate}[label=(\alph*)]
        \item \label{seedact Gamma conj} each $h \in H^\ast$ fixing a point in $U$ is conjugate to some $\gamma \in \bigcup_{i < r} \Gamma_i$, and $|\Fx_\lambda(h)| = |\Fx_\pi(\gamma)|$;
        \item \label{seedact distinguished set} for $j\in\mathbf{m}$ there is some $\Theta_j$-invariant $A_j \in [U]^k$ such that $(A_j \curvearrowleft \Theta_j) \simeq (\mathbf{k} \curvearrowleft \Theta)$ via $\iota_{j}$ and the action $U \setminus A_j \curvearrowleft \Theta_j$ is free;
        \item \label{seedact docile} for $\Omega \in \mc{D}(\Theta)$ and $\Omega$-invariant $A \in [U]^k$, we have $(A \curvearrowleft \Omega) \simeq (\mathbf{k} \curvearrowleft \Omega)$ and $\NFr_\lambda(\Omega) \sub A$;
        \item \label{seedact k-sharp} $\lambda$ is $k$-sharp;
	\item \label{seedact different orbits} no two of the sets $A_j$ ($j\in\mathbf{m}$) are translates of each other under the action of $\lambda$;
        \item \label{seedact unruly unique} for each unruly $\Omega_j \leq \Theta_j$, we have that $A_j$ is the only $\Omega_j$-invariant $k$-set in $U$.
    \end{enumerate}

    We call any $A_0, \cdots, A_{m-1}$ as above a family of \emph{reference sets} for the seed action. (Note that if $\Theta$ is docile, this family will not be unique.)
\end{lem}
\begin{proof}
    For notational convenience, we identify $V_i \curvearrowleft H$ and $\Gamma_i \backslash H \curvearrowleft H$ for each $i < r$.

    \ref{seedact Gamma conj}:  as $\lambda$ acts freely on $U \setminus V$, we have $\Fx_\lambda(h) \sub V$. So $h$ is conjugate to some $\gamma \in \Gamma_i$, $i < r$, which has the same number of fixed points in $U$. Let $\Gamma_{i'} h' \in V$ be a fixed point of $\gamma$. Then $\gamma \in \Gamma_{i'}^{h'}$, so by \Cref{d:seed group}\ref{seed self-normalised} we have $h' \in \Theta$. So the fixed points of $\gamma$ lie in $\bigsqcup_{l < r} \Gamma_l \backslash \Theta$, and thus as $(\bigsqcup_{l < r} \Gamma_l \backslash \Theta \curvearrowleft \Theta) \simeq (\pi : \mathbf{k} \curvearrowleft \Theta)$ we have $|\Fx_\lambda(h)| = |\Fx_\pi(\gamma)|$.

    \ref{seedact distinguished set}: First consider case \ref{robust case} of Definition \ref{d:seed group}, where $m = 1$. Let $A_0 = \bigsqcup_{i < r} \Gamma_i \backslash \Theta$. Then $A_0$ is $\Theta$-invariant, and as $(A_0 \curvearrowleft \Theta) \simeq (\pi : \mathbf{k} \curvearrowleft \Theta)$, it suffices to show that $U \setminus A_0 \curvearrowleft \Theta$ is free. Let $\theta \in \Theta^\ast$, and suppose $\theta$ fixes a point of $U$. By \ref{seedact Gamma conj}, there are $i < r$ and $\gamma \in \Gamma_i$ such that $\gamma^h = \theta$ for some $h \in H$ and $|\Fx_\lambda(\theta)| = \Fx_\pi(\gamma)$. By Definition \ref{d:seed group}\ref{seed self-normalised}, we have $h \in \Theta$, so $|\Fx_\pi(\theta)| = |\Fx_\pi(\gamma)|$. As $|\Fx_\pi(\theta)| = |\Fx_\lambda^{A_0}(\theta)|$, we have $\Fx_\lambda(\theta) = \Fx_\lambda^{A_0}(\theta)$. Thus the $\lambda$-action of $\Theta$ on $U \setminus A_0$ is free as required.
    
    Now consider case \ref{very robust case} of \cref{d:seed group}. Let $\Gamma=\Gamma_{0}$. By \Cref{d:seed group}\ref{seed conjugate stab} there is $h \in H$ with $\iota_j|_\Gamma$ equal to conjugation by $h$. The map $\Gamma^h \backslash \Theta_j \to \Gamma \backslash h\Theta_j$, $\Gamma^h \theta_j \mapsto \Gamma h\theta_j$, is a $\Theta_j$-isomorphism, and $\iota_j$ is an isomorphism $(\Gamma \backslash \Theta \curvearrowleft \Theta) \to (\Gamma^h \backslash \Theta_j \curvearrowleft \Theta_j)$, where the equivariance is also via $\iota_j$. So $(\mathbf{k} \curvearrowleft \Theta) \simeq (\Gamma \backslash h\Theta_j \curvearrowleft \Theta_j)$ via $\iota_j$, and thus we may take $A_j = \Gamma \backslash h\Theta_j \sub V$.
    
    We now show that the action $U \setminus A_j \curvearrowleft \Theta_j$ is free. Let $\theta_j \in \Theta_j^\ast$, and assume $\theta_j$ fixes a point of $U$. By \ref{seedact Gamma conj}, there is $\gamma \in \Gamma$ conjugate to $\theta_j$ by some element of $H$, and $|\Fx_\lambda(\theta_j)| = |\Fx_\pi(\gamma)|$. As $(A_j \curvearrowleft \Theta_j) \simeq (\mathbf{k} \curvearrowleft \Theta)$ via $\iota_j$, we have $|\Fx_\lambda^{A_j}(\theta_j)| = |\Fx_\pi(\iota_j^{-1}(\theta_j))|$. As $\langle \gamma \rangle$, $\langle \iota_j^{-1}(\theta_j) \rangle$ are strongly docile (by \cref{d: strongly robust permutation group}\ref{robust cyclic}) and of the same order, Lemma \ref{docile same order} implies $|\Fx_\pi(\gamma)| = |\Fx_\pi(\iota_j^{-1}(\theta_j))|$. So $|\Fx_\lambda^{\vphantom{A_j}}(\theta_j)| = |\Fx_\lambda^{A_j}(\theta_j)|$ and thus $\Fx_\lambda^{\vphantom{A_j}}(\theta_j) \sub A_j$.

    \ref{seedact docile}: this follows immediately from \ref{seedact distinguished set} and \cref{l:invariant tuples}\ref{k-sets iso to std}.
    
    \ref{seedact k-sharp}: let $h \in H$ fix an element of $(U)^k$. Then by \ref{seedact Gamma conj} there is $\gamma \in \Theta$ conjugate to $h$ with $|\Fx_\lambda(h)| = |\Fx_\pi(\gamma)|$, so $|\Fx_\pi(\gamma)| = k$. Thus $\gamma = 1$, so $h = 1$.

    \ref{seedact different orbits}: For each $j < m$, as $\lambda$ is $k$-sharp we have that $\sSt_\lambda(A_j)$ is finite; by \Cref{d:seed group}\ref{seed torsion} we have $|\sSt_\lambda(A_j)| \leq |\Theta|$, so as $\Theta_j \sub \sSt_\lambda(A_j)$, we have $\Theta_j = \sSt_\lambda(A_j)$. Suppose that there exists $h \in H$ and $j, j' < m$ with $A_j \cdot h = A_{j'}$. Then $\Theta_{j'}^{\vphantom{h}} = \Theta_j^h$, so $j = j'$ by \Cref{d:seed group}\ref{seed nonconjugate}.

    \ref{seedact unruly unique}: let $A \in [U]^k$ be $\Omega_j$-invariant. Then $A$ is the union of the $\Omega_j$-invariant sets $A \cap A_j$ and $A \setminus A_j$, and by \ref{seedact distinguished set} the set $A \setminus A_j$ is a union of free $\Omega_j$-orbits. So $|A| = k = |A \cap A_j| + m|\Omega_j|$ for some $m \geq 0$, and thus $|\Omega|$ divides $k - |A \cap A_j| = |A_j \setminus A|$. By \ref{seedact distinguished set} we have $(A_j \curvearrowleft \Omega_j) \simeq (\mathbf{k} \curvearrowleft \Omega)$ via $\iota_j$, witnessed by some bijection $f : A_j \to \mathbf{k}$. The set $f(A_j \setminus A)$ is $\Omega$-invariant, and as $\Omega$ is unruly we have $f(A_j \setminus A) \sub \NFr_\pi(\Omega)$. As $\Theta$ is robust, and as $|\Omega|$ divides $|f(A_j \setminus A)|$, we have $f(A_j \setminus A) = \emp$ and hence $A = A_j$. 
\end{proof}

\subsection{The standard \texorpdfstring{$(\sym_{3},m)$}{(S3, m)}-seed group}

In fact, the only example of $H \neq \Theta$ featuring in Theorem \ref{t:main} will involve $\Theta=\sym_{3}$; we now construct a $(\sym_{3},m)$-seed group for every $m\geq 1$.

\begin{definition} \label{d:standard s3 seed} 
	By the \emph{standard $(\sym_{3},m)$-seed group} we mean the group $H$ given by the following presentation:
	\begin{equation}
	\label{eq:presentation} H=\sg{\sigma,\tau,z_{1},\dots, z_{m-1}\,|\,\sigma^{2}=1, \tau^{3}=1, \tau^{-1}=\tau^{\sigma} = \tau^{\sigma^{z_{1}}} = \cdots = \tau^{\sigma^{z_{m-1}}}},
	\end{equation} 
	as well as the embeddings $\iota_j : \sym_3 \to H$, $j < m$, given by:
    \begin{alignat*}{2}
        &\iota_0 : (1\;\, 2) \mapsto \sigma,\; &&(0\;\, 1\;\, 2) \mapsto \tau;\\
        &\iota_j : (1\;\, 2) \mapsto \sigma^{z_j},\; &&(0\;\, 1\;\, 2) \mapsto \tau \text{ (for }j \geq 1 \text{)}.    
    \end{alignat*}
\end{definition}

\begin{lemma} \label{l:constructed H} 
	The group $H$ constructed above is a $(\sym_{3},m)$-seed group. 
\end{lemma}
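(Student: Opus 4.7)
The plan is to identify $H$ as the fundamental group of a natural graph of groups $\mathcal{G}$ with all vertex and edge groups finite, and then read off the five seed-group axioms from Bass--Serre theory.

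Set $\sigma_0 := \sigma$, $z_0 := 1$, and for $j \in \{1, \dots, m-1\}$ let $\sigma_j := \sigma^{z_j}$ and $A_j := \langle \sigma_j, \tau\rangle$. Each $A_j$ is a homomorphic image of $\sym_3$ by the defining relations. I build $\mathcal{G}$ with vertex set $\{v_0, \dots, v_{m-1}\}$, vertex group $A_j$ at $v_j$, and for each $j \geq 1$ two edges from $v_0$ to $v_j$: an amalgamation edge $\alpha_j$ with edge group $\langle\tau\rangle$ (both inclusions the identity on $\tau$), and an HNN edge $\beta_j$ with edge group $\cong C_2$, mapped to $\langle\sigma\rangle \leq A_0$ on one side and to $\langle\sigma_j\rangle \leq A_j$ on the other. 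Taking the $\alpha_j$ as a spanning tree, the standard presentation of $\pi_1(\mathcal{G})$ amalgamates the $A_j$ over their common $\langle\tau\rangle$ and introduces stable letters $z_j$ subject to $z_j^{-1}\sigma z_j = \sigma_j$; eliminating the $\sigma_j$ recovers exactly the presentation~\eqref{eq:presentation}. Hence $H \cong \pi_1(\mathcal{G})$, each $A_j$ is genuinely $\sym_3$, and $H$ acts on the Bass--Serre tree $T$ with finite vertex stabilisers (conjugates of the $A_j$) and finite edge stabilisers (conjugates of $\langle\sigma\rangle$ or $\langle\tau\rangle$); in particular $H$ is virtually free.

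Axioms (i) and (ii) of \cref{d:seed group} come directly from standard Bass--Serre arguments: any finite subgroup of $H$ fixes a point of $T$ and thus lies in a conjugate of some vertex stabiliser $A_j = \Theta_j$; and since $v_0, \dots, v_{m-1}$ are distinct vertices of $\mathcal{G}$, the $A_j$ lie in distinct $H$-orbits on vertices of $T$ and are pairwise non-conjugate. For axiom (iii), a short inspection gives $\mathcal{D}(\sym_3) = \{\{1\}, \langle\sigma\rangle, \langle\tau\rangle\}$ up to conjugacy; since $\iota_j$ fixes $\tau$ and sends $\sigma \mapsto \sigma^{z_j}$, its restrictions to $\langle\tau\rangle$ and $\langle\sigma\rangle$ agree with conjugation by $1$ and by $z_j$ respectively. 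Axiom (v) is witnessed by $h_j := z_j$, giving $\Gamma^{h_j} = \langle\sigma_j\rangle \leq \Theta_j$, together with the isomorphism $A_j \to A_0$ sending $\sigma_j \mapsto \sigma$ and $\tau \mapsto \tau$.

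The main obstacle is axiom (iv). Since $|\Gamma| = 2$, the hypothesis $\Gamma^h \cap \Gamma \neq \{1\}$ forces $\Gamma^h = \Gamma$, equivalently $h \in C_H(\sigma)$, so $h$ preserves the fixed subtree $F := \Fix_T(\langle\sigma\rangle)$ setwise. I claim $F$ is a finite star centred at the identity lift $\tilde v_0 := 1 \cdot A_0$, with leaves $\tilde v'_j := z_j A_j$ (for $j = 1, \dots, m-1$) which are lifts of the distinct vertices $v_1, \dots, v_{m-1}$. Granting this and using that $H$ permutes the lifts of each vertex of $\mathcal{G}$ among themselves, $h \cdot \tilde v_0$ must be a $v_0$-lift lying in $F$, so $h \cdot \tilde v_0 = \tilde v_0$ and hence $h \in \St_H(\tilde v_0) = A_0 = \Theta$. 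To verify the claim, note that no conjugate of $\langle\tau\rangle$ contains the order-$2$ element $\sigma$, so no $\alpha_j$-lift is ever fixed by $\sigma$; within each vertex stabiliser (a copy of $\sym_3$) containing $\sigma$, exactly one of the three lifts of the $\beta_j$-edge at that vertex has stabiliser $\langle\sigma\rangle$. Starting from $\tilde v_0$, this gives one fixed outgoing $\beta_j$-edge for each $j \geq 1$, landing at $\tilde v'_j$; the analogous analysis at each $\tilde v'_j$ shows the only fixed incident edge is the one back to $\tilde v_0$, so $F$ is exactly the claimed star.
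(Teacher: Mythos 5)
Your proof is correct, and it rests on the same underlying structure as the paper's: both realise $H$ as a virtually free group via the graph-of-groups splitting with $\sym_3$ vertex groups amalgamated over $\langle\tau\rangle$ and joined by order-$2$ HNN edges, and both read axioms \ref{seed torsion}, \ref{seed nonconjugate} off Bass--Serre theory. The genuine difference is in the execution of axiom \ref{seed self-normalised}. The paper builds $H$ as a tower $H_0 \leq H_1 \leq \cdots \leq H_{m-1}$ of HNN extensions and shows inductively, one Bass--Serre tree at a time, that $Z_{H_j}(\sigma) = Z_{H_{j-1}}(\sigma)$, ending with $Z_H(\sigma) = \langle\sigma\rangle$. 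You instead work in the single Bass--Serre tree of the full splitting and identify $\Fix_T(\langle\sigma\rangle)$ explicitly as a finite star whose unique $v_0$-lift is the base vertex; since a centralising element preserves this star and preserves fibres over $\mathcal{G}$, it fixes the base vertex and lies in $A_0 = \Theta$. Your route avoids the induction entirely and gives a cleaner geometric picture; the paper's route yields the slightly sharper conclusion $C_H(\sigma) = \langle\sigma\rangle$ directly (though your bound $C_H(\sigma) \subseteq A_0$ combined with $C_{\sym_3}(\sigma) = \langle\sigma\rangle$ recovers it, and the axiom only needs $h \in \Theta$). The star computation itself is sound: lifts of the $\alpha_j$ have stabilisers conjugate to $\langle\tau\rangle$ and so are never $\sigma$-fixed, and self-normalisation of $\langle\sigma\rangle$ in $\sym_3$ gives exactly one fixed $\beta_j$-lift at each vertex of the star.

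One small elision in your treatment of axiom \ref{seed nonconjugate}: from ``the $A_j$ stabilise vertices in distinct $H$-orbits'' it does not formally follow that they are non-conjugate until you observe that each $A_j$ fixes a \emph{unique} vertex of $T$ --- which holds here because $|A_j| = 6$ exceeds the order of every edge stabiliser, so $A_j$ cannot fix two vertices (it would then fix the edge path between them). With that one sentence added, the argument is complete.
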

\begin{proof}
    Consider the group $H_{0}$ given as the fundamental group of a graph of groups whose underlying graph is a star with $m$-edges. The vertex group  $\sg{\tau}$ sits at its center, is also equal to the group on each of the edges, while the other vertex groups are of the form $\sg{\sigma_{i},\tau}$ for $i\in\mathbf{m}$. It is easy to see that $H$ can be described as an iterated HNN extension of $H_{0}$, that is $H=H_{m-1}$, for a chain $H_{0}\leq H_{1}\leq\dots \leq H_{m-1}$ where $H_{j}=H_{j-1}\Asterisk_{\sigma^{z_{j}}=\sigma_{j}}$ is an HNN extension of $H_{j-1}$.   
	
	To begin with, notice that any finite subgroup of a graph of groups must be conjugate into one of the vertex groups (see \cite[Theorem 2.4 \& 2.9]{LS1979}). An iterated application of these two results implies that property \ref{seed torsion} holds. 
	So $\sg{\sigma_{j},\tau}$ and $\sg{\sigma_{j'},\tau}$ are conjugate in $H$ if and only if they are conjugate in $H_{0}$. However, in a graph of groups no two vertex groups are conjugate unless they are connected through an edge path along which all edge embeddings are isomorphisms. This establishes \ref{seed nonconjugate}.
	
    We now check \ref{seed self-normalised}. Notice that $\Gamma=\sg{\sigma}$. Take $h$ and $\theta\in \Theta^{*}$ such that $\theta=\sigma^{h}$. Our goal is to show that $h\in \Theta$.
    
	Consider one of the HNN extensions above, $H_{j}=H_{j-1}\Asterisk_{\sigma^{z_{j}}=\sigma_{j}}$. In the corresponding Bass-Serre tree the subgroup $\sg{\sigma}$ stabilises a subtree of diameter two whose vertices consist of the vertex $u$ stabilised by $H_{j-1}$, the vertex $u'$ stabilised by $H_{j-1}^{z_{j}^{-1}}$ and all the translates of the latter by elements of $Z_{H_{j-1}}(\sigma)$. 
   
    \begin{claim*}
        We have $Z_{H}(\sigma)=\sg{\sigma}$.
    \end{claim*}
    \begin{subproof}
    Recall that no element of the group can reverse the orientation of any of the edges of the tree. It easily follows that $Z_{H_{j}}(\sigma)=Z_{H_{j-1}}(\sigma)$. By iterating this argument we conclude that $Z_{H}(\sigma)=Z_{H_{0}}(\sigma)$. On the other hand, in the Bass-Serre tree corresponding to the graph of groups decomposition of $H_{0}$ the element $\sigma$ fixes a unique vertex, which must be fixed by each of the elements in its centraliser, so that $Z_{H_{0}}(\sigma)=Z_{\sg{\tau,\sigma}}(\sigma)=\sg{\sigma}$.
    \end{subproof}
    
    It follows that the only vertices stabilised by $\sigma$ are in fact $u$,$u'$ given above. Since the action on the tree does not invert edges, it follows that $h$ must either fix $u$, and thus belong to $H_{j-1}$, or map $u$ to $u'$ and thus be of the form $h_{0}z_{j}$ for some $h_{0}\in H_{j-1}$.
    We then have $\theta=\sigma^{h_{0}z_{j}}\in H_{j-1}^{z_{j}}\cap \Theta=\subg{\sigma_{j}}\cap\Theta=\{1\}$, a contradiction. Thus $h\in H_{j-1}$. Iterating this argument over all HNN extensions in the constructions we easily conclude that if $\sigma^{h}\in \Theta$, then $h\in H_{0}$. 
    
    In the Bass-Serre tree corresponding to the amalgamated product by which $H_{0}$ is constructed the element $\sigma$ fixes exactly one vertex with $\Theta$ as its stabiliser. Therefore, any element $h\in H_{0}$ such that $\sigma^{h}\in \Theta$ must itself fix this vertex and therefore belong to $\Theta$. This concludes the proof of  \ref{seed self-normalised}.

	Finally, $\Gamma=\sg{\sigma}$, and so conditions \ref{seed conjugate docile}, \ref{seed conjugate stab} are immediate (all involutions in $\sym_{3}$ are conjugate). 
\end{proof}

\section{Completions of seed groups and their partial actions} \label{s:completions}

\begin{definition} \label{d:free completion}
    Let $H$ be a $(\Theta, m)$-seed group. For each $\Omega \in \mc{D}(\Theta)$, let $T_\Omega$ be a finite set with at least two elements. Let $F$ be the free group generated by the disjoint union of the sets $T_\Omega$, and for each $\Omega \in \D$ let $F_\Omega$ denote the free subgroup of $F$ generated by $T_\Omega$.

    The \emph{completion} of $H$ is defined as the tuple $(G,(T_{\Omega})_{\Omega\in\D})$, where $G$ is the group given by the quotient of the free product $H \ast F$ by the normal closure of the set of relations $\bigcup_{\Omega \in \D}\{[\omega,t]=1 \mid \omega\in \Omega, t\in T_{\Omega}^{\pm 1}\}$.
    
    Equivalently, an alternate definition of $G$ is as follows. Take an enumeration $\Omega_{1}, \cdots, \Omega_{d}$ of $\D$ and define a sequence $G_{0}=H < G_{1} < \cdots < G_{d}$ by $G_{i+1}=G_{i}\ast_{\Omega_{i}}(\Omega_{i}\times F_{\Omega_{i}})$ for $i < d$. Then take $G = G_d$.
   
    We write $T_{\mathcal{D}}=\bigcup_{\Omega \in \mc{D}(\Theta)}T_{\Omega}$ and $T=H^\ast \cup T_{\mathcal{D}}$. Note that $T$ is a set of generators of $G$.

    (We often just write $(G, T)$ and take the additional information of $(T_\Omega)_{\Omega \in \mc{D}(\Theta)}$ as implicit, when this is clear from context.)
 \end{definition}

\begin{observation}\label{alternative description}
  Another way to describe the completion $G$ is as an iterated $\mathrm{HNN}$-extension of $H$, where $H$ is the unique vertex group and each of the generators in $t\in T_{\Omega}$, $\Omega\in\D$ is the stable letter associated to a unique edge from the unique vertex to itself. The edge group is just $\Omega$, embedded via the inclusion on both sides, so that $t$ commutes with $\Omega$.
\end{observation}

\begin{eg} \label{ex:s2 s3} \hfill
 	\begin{itemize}
 		\item Let $H=\Theta=\sym_{2}=\sg{\sigma}$. Then $\D=\{\mathtt{1},\sym_{2}\}$, and taking $T_{\sym_{2}}=\{s,s'\}$, $T_{\mathtt{1}}=\{u, u'\}$ we get:
 		\[
 		   G = (\sym_{2}\times\sg{s,s'})\ast\sg{u, u'}.
 		\]
 		\item Let $H=\Theta=\sym_{3}$. Then $\D=\{\mathtt{1},\sg{\sigma},\sg{\tau}\}$ for some involution $\sigma$ and some element $\tau$ of order $3$. Taking $T_{\sg{\sigma}}=\{s,s'\}$, $T_{\sg{\tau}}=\{t,t'\}$, $T_{\mathtt{1}}=\{u, u'\}$ we get
 		\begin{equation*}
 			 G\cong((\sym_{3}\ast_{\sg{\sigma}}(\sg{\sigma}\times\sg{s,s'}))\ast_{\sg{\tau}}(\sg{\tau}\times\sg{t,t'}))\ast\sg{u, u'}.
 		\end{equation*}
 		\end{itemize}
\end{eg}

\begin{lem} \label{torsion completion}
    If $G$ is a completion of a seed group $H$, then any finite subgroup of $G$ is the conjugate of some finite subgroup of $H$. 
\end{lem}
\begin{proof} 
    This follows from the following two basic facts from geometric group theory. One is that finite groups have property $\mathrm{(FA)}$, i.e. any action of a finite group on a tree has a global fixed point (see \cite{serre2002trees}, example 6.3.1). The other is that stabilisers of points in the tree dual to a graph of groups are conjugate of vertex groups (see, for instance, the construction in 5.3 of \cite{serre2002trees}).
\end{proof}

 \begin{observation}
    In the case $H=\Theta$  or in the case $\Theta=\sym_{3}$ and $H$ the standard $(\sym_{3},m)$ seed group, the group $G$ is finitely generated and virtually free (that is, has a finite index free subgroup).
    This follows from \cref{alternative description}, together with the characterisation of finitely generated virtually free groups as fundamental groups of finite graph of groups with finite vertex groups, established in \cite{karrass1973finite}.
    
    For the second of the two cases above, note that the standard $(\sym_{3},m)$-seed group is itself the fundamental group of a finite graph of groups with finite vertex groups. This graph of groups can be easily combined with the one in \cref{alternative description} to show that $G$ is also the fundamental group of such a graph of groups and thus virtually free. 
    
    Likewise, if $H$ is hyperbolic, then $G$ is hyperbolic by the main theorem of \cite{BF92}. 
 \end{observation}
  
\subsection{Group words and small cancellation}

We first establish some basic terminology and notation, most of which is standard.

\begin{defn}
    Let $S$ be a set. We write $S^{\pm 1}$ for the \emph{alphabet} consisting of the disjoint union of $S$ with a set $S^{-1} = \{s^{-1} \mid s \in S\}$ of formal inverse symbols, which we call \emph{inverse letters}. We define the obvious bijection $(-)^{-1} : S \cup S^{-1} \to S^{-1} \cup S$. A \emph{word} $w$ in the alphabet $S^{\pm 1}$ is a finite sequence $(s_1, \cdots, s_m)$ (possibly empty) of elements of $S^{\pm 1}$, and we write $w = s_1 \cdots s_m$. We write $\mc{W}(S)$ for the set of words in $S^{\pm 1}$ (so, in our notation, words in $\mc{W}(S)$ may contain inverse letters). We say that a word $w = s_1 \cdots s_m \in \mc{W}(S)$ is \emph{positive} if $s_i \in S$ for $1 \leq i \leq m$ (that is, there are no inverse letters). 
    
    Given $w = s_1 \cdots s_m \in \mc{W}(S)$, we write $w^{-1}$ for the word $s^{-1}_m \cdots s^{-1}_1$. A \emph{subword} of $w$ is a word of the form $w[i:j]=s_{i} \cdots s_{j}$, for two indices $1 \leq i, j \leq m$, following the convention that $w[i:j]$ is the empty word if $i > j$. We also write $w[i] = s_i$. An \emph{initial subword} of $w$ is a word of the form $w[1:j]$, and we write $\mc{I}(w)$ for the set of initial subwords of $w$. We use the notation $\cdot$ to denote the concatenation of words or symbols if there is a need to be explicit and precise in a context which might otherwise be unclear. If the meaning of concatenation is clear from context we will often just use empty space (so $ww'$ means $w \cdot w'$).
    
    Let $G$ be a group, and let $S$ be a generating set of $G$. Each word $w \in \mc{W}(S)$ evaluates to an element of $G$ by multiplying the letters of $w$ from left to right (replacing inverse symbols with inverse group elements). For $g \in G$, let $\mc{W}(S,g)$ denote the set of words in $\mc{W}(S)$ evaluating to $g$. If a word evaluates to $g$, we will also say that it \emph{represents} $g$. We say that two words $w, w'$ in $\mc{W}(S)$ are \emph{equivalent}, written $w \sim w'$, if they represent the same element of $G$. We say that $w \in \mc{W}(S)$ is \emph{reduced} if it has minimum length amongst all words equivalent to it, and that $w$ is \emph{cyclically reduced} if each cyclic permutation of it is reduced. We write $\Wr(S)$ and $\Wr^c(S)$ for the sets of all reduced words and all cyclically reduced words in $\mc{W}(S)$.
\end{defn}

The following is a variant of the notion of small cancellation, adapted to our needs.

\begin{definition} \label{d: small cancellation}
    Let $S$ be a set. Let $L \in \N_+$. We say that $w \in \Wr(S)$ \emph{has cancellation $< L$} if, for any distinct pairs of indices $i \leq j$, $i' \leq j'$ such that $w[i:j] = w^{\varepsilon}[i':j']$ for some $\varepsilon=\pm 1$, we have $|w[i:j]| < L$. (Informally: any subword of $w$ that occurs twice has length $< L$ -- where an occurrence may also be of its inverse.)
\end{definition}
   
The following is a consequence of the existence of small cancellation words in the standard sense in geometric group theory, but we provide a proof for our specific context. 

\begin{defn} \label{d: syllable sequence}
    Let $S$ be a set, and let $w \in \Wr(S)$ be positive. Write $w = s_1^{e_1} \cdots s_{r \vphantom{1}}^{e_r}$, where $s_i \in S$ and $e_i \geq 1$ for $1 \leq i \leq r$, and $s_i \neq s_{i+1}$ for $1 \leq i \leq r-1$ (note that we may have $s_i = s_j$ for $|i - j| > 1$). We call the sequence $(s_1^{e_1}, \cdots, s_r^{e_r})$ the \emph{syllable sequence} of $w$, denoting it by $\syl(w)$, and we call the sequence $(s_1, \cdots, s_r)$ the \emph{syllable compression} of $w$, and denote it by $\sylc(w)$. We write $|\syl(w)|$, $|\sylc(w)|$ for the length of each sequence respectively (so we always have that $|\syl(w)| = |\sylc(w)|$).
\end{defn}

\begin{lemma} \label{l:existence small cancellation}
    Let $S$ be a finite set with $|S| \geq 2$. Let $s, \tld{s}, t \in S$ with $\tld{s} \neq t$. Let $u \in \mc{W}(S)$ be positive. Then for any $N \in \N_+$ there exists $w \in \Wr(S)$ beginning with $s$ and ending with $s^{-1}$ satisfying the following:
    \begin{enumerate}[label=(\roman*)]
        \item \label{length at least N} $|w| \geq N$;
        \item \label{contains u} any subword of $w$ of length $\geq \lfloor\frac{|w|}{N}\rfloor$ contains $u$ or $u^{-1}$ as a subword;
        \item \label{cancellation w/N} $w$ has cancellation $< \lfloor \frac{|w|}{N} \rfloor$;
        \item \label{special form} there are positive words $v, v' \in \mc{W}(S)$ beginning with $s$ and ending with $t$ such that $\sylc(v) = \sylc(v')$ and $w = v \cdot \tld{s} \cdot (v')^{-1}$.
    \end{enumerate}
\end{lemma}
\begin{proof}
    It suffices to deal with the case where $u$ starts and ends with $\tld{s}$, as otherwise we may replace $u$ by $\tld{s}u\tld{s}$ (note that $u$ is a positive word). 

    Let $M = \max\{N, |u|\}$, and let
    \[
    	v=s u t^{2M+1} u t^{2M+2} \cdots u t^{4M},\quad\quad 
            v'=s u t^{4M+1} u t^{4M+2} \cdots u t^{6M}.
    \]
     Let $w=v \cdot \tld{s} \cdot (v')^{-1}$. It is clear that $w$ is reduced and we have \ref{length at least N} and \ref{special form}. It is straightforward to check that $\tld{s}t^i\tld{s}$ (for $2M + 1 \leq i \leq 6M-1$) and $\tld{s}t^{-6M}\tld{s}^{-1}$ each only occur once in $w$ (where we include occurrences as an inverse), and therefore the longest word occurring twice in $w$ is $t^{6M - 2}ut^{6M - 1}$. Let $C = |u| + 12M > |t^{6M - 2}ut^{6M - 1}|$. Then
     \[|w| = 1 + 4M|u| + \frac{8M + 1}{2} \cdot 4M > 4M|u| + 16M^2 > MC \geq NC,\]
     so $\lfloor \frac{|w|}{N} \rfloor \geq C$. So we have \ref{cancellation w/N}, and \ref{contains u} is easily checked for subwords of length $C$ (one must take particular care with subwords of $ut^{4M}\tld{s}t^{-6M}u$).
\end{proof}

\begin{rem}
    In the case of actions on sets considered in \cref{s:constructing actions on sets}, we do not use the full strength of condition \ref{special form} in \cref{l:existence small cancellation}: we only use the first three conditions and the fact that $w$ begins with $s$ and ends with $s^{-1}$. In the case of actions on structures, condition \ref{special form} will be used in its entirety.
\end{rem}

\subsection{Group words in completions of seed groups}

We now specialise to completions of seed groups. Throughout this section, let $(G, T)$ be a completion of a seed group $H$. (Recall that we consider the distinguished set of generators $T=H^\ast \cup T_{\mc{D}}$.)

\begin{defn} \label{d: very reduced}
    We say that $w \in \mc{W}(T)$ is \emph{very reduced} if it is reduced and does not contain any subword of the form $t \omega t'$, where $\omega\in\Omega\in \mc{D}(\Theta)$ and $t, t' \in T_{\Omega}^{\pm 1}$. 
    We write $\Wvr(T)$ for the set of very reduced words in $T$.
\end{defn}

The following lemma can be easily deduced from \cref{alternative description} and the uniqueness of normal forms in graphs of groups (see \cite[Ch.\ IV, Thm. 2.1 or Thm 2.6]{LS1979} and \cite[Sec. 5.2]{serre2002trees}).

\begin{lem} \label{normal forms}
    Every element of $G$ is represented by some very reduced word. Every two equivalent reduced words are connected by a sequence of moves, where each move is one of the following:
	\begin{enumerate}[label=(\Roman*)]
		\item \label{type1} replace $t \omega$ by $\omega t$ for 
		$t\in T_{\Omega}^{\pm 1}$ and $\omega\in\Omega\in\D$,
		\item \label{type2} delete a subword of the form $hh^{-1}$ for $h\in H$,
		\item \label{type3} replace a subword of the form $hh'$ with $h \ast h'$, for $h, h' \in H$ with $hh'\neq 1$ (where $\ast$ denotes group composition),
	\end{enumerate}
	or an inverse of one of the above moves. Given a word $w$ in $T^{\pm1}$ it is possible to obtain an equivalent reduced word by performing moves of type \ref{type1} and their inverses, types \ref{type2} and \ref{type3} as well as moves of the following type:	
    \begin{enumerate}[label=(\Roman*)] \setcounter{enumi}{3}	
        \item \label{type4} removing a subword of the form $tt^{-1}$, where $t\in T_{\mathcal{D}}^{\pm1}$. 
    \end{enumerate}
\end{lem}

\begin{lem} \label{l:cyc red v red}
        Let $w \in \Wr(T)$ be cyclically reduced. Then there exists an equivalent word $v \in \Wvr(T)$ which is cyclically reduced and very reduced.
\end{lem}

In the below lemma, recall the notation $\mc{I}(w)$ for the set of initial subwords of a word $w$.

\begin{lem} \label{l:conjugation}
    Let $g \in G$. Let $w \in \Wr(T)$ represent $g$. Then there exists a family $(w_{h, 0} w_{h, 1})_{h \in H}$ of decompositions of $w$ and a function $\eta : \bigcup_{h \in H} \mc{I}(w_{h, 0}) \to H$ such that:
    \begin{enumerate}[label=(\roman*)]
            \item \label{l:conjugation:initial} for $h \in H$ and $u \in \mc{I}(w_{h, 0})$, the word $u^{-1} h u$ represents $h^{\eta(u)} \in H$;
            \item \label{l:conjugation:reduced} for each $h \in H$, letting $h'=h^{\eta(w_{h, 0})}$, we have that the word $w_{h, 1}^{-1} h' w_{h, 1}^{\vphantom{-1}}$ is reduced (and so is a reduced representative of $h^g$).
	\end{enumerate}
\end{lem}
\begin{proof}
    We use induction on $|w|$. For $w = \varnothing$, take $w_{h, 0} = w_{h, 1} = \varnothing$ for all $h \in H$ and take $\eta(\varnothing) = 1$. We now do the induction step.

    Decompose $w$ as $w = av$, where $a = w[0]$, and apply the induction hypothesis to $v$ to obtain a family $(v_{h, 0}, v_{h, 1})_{h \in H}$ of decompositions of $v$ and a function $\eta_v$. In the case $a \in H^\ast$, for each $h \in H$ take $(w_{h, 0}, w_{h, 1}) = (av_{h^a, 0}, v_{h^a, 1})$, and for each $u = av' \in \bigcup_{h \in H} \mc{I}(w_{h, 0})$ take $\eta(u) = a \eta_v(v')$ (and also $\eta(\varnothing) = 1$). In the case $a \in T_\Omega^{\pm 1}$, for $h \in \Omega$ take $(w_{h, 0}, w_{h, 1}) = (av_{h, 0}, v_{h, 1})$ and for $h \notin \Omega$ take $(w_{h, 0}, w_{h, 1}) = (\varnothing, w)$, and for each $u = av' \in \bigcup_{h \in H} \mc{I}(w_{h, 0})$ take $\eta(u) = \eta_v(v')$ (and also $\eta(\varnothing) = 1$). It is then straightforward to verify that $(w_{h, 0}, w_{h, 1})_{h \in H}$ and $\eta$ satisfy the required conditions. 
\end{proof}

\begin{cor} \label{c: conj of subset of H in H}
    Let $A \sub H$, and suppose that $A^g \sub H$ for some $g \in G$. Then there is $\tld{h} \in H$ such that $a^{\tld{h}} = a^g$ for all $a \in A$.
\end{cor}
\begin{proof}
    Let $w \in \Wr(T)$ represent $g$, and let $(w_{h, 0}w_{h, 1})_{h \in H}$ be a family of decompositions of $w$ with associated function $\eta$ given by Lemma \ref{l:conjugation}. For each $a \in A$, letting $a' = a^{\eta(w_{a, 0})}$, the word $w_{a, 1}^{-1}a'w_{a, 1}^{\vphantom{-1}}$ specified in Lemma \ref{l:conjugation}\ref{l:conjugation:reduced} is a reduced representative of $a^g$; but $a^g \in H$, so $w_{a, 1} = \emp$, and hence $w_{a, 0} = w$ and $a' = a^{\eta(w)} = a^g$. Taking $\tld{h} = \eta(w) \in H$, we are done.
\end{proof}

\subsection{Partial actions on sets} \label{ssec:partial actions on sets}
   
\subsubsection{General partial actions on sets}

    \begin{defn}
        Let $U$ be an infinite countable set. A \emph{partial bijection} of $U$ is a bijection $\rho : A \to B$ where $A$, $B$ are subsets of $U$. Note that we do not make any assumptions on the domain $A$ and image $B$ -- they may be infinite and may equal $U$ itself. We write $\supp(\rho)=A \cup B$. If $\supp(\rho)$ is finite, we say that $\rho$ is finite. 
        
        We write $\Fix(\rho)$ for the collection of all $a\in U$ such that $(a,a)\in\rho$.
    \end{defn}

   \begin{defn} \label{d:general partial action}
        Let $U$ be an infinite countable set. Let $G$ be a group with generating set $T$. Let $(\phi_{t})_{t\in T}$ be a family of partial bijections of $U$ (which may have different domains and codomains as subsets of $U$). If $\{t, t^{-1}\} \sub T$ we additionally assume $\phi_{t^{-1}} = \phi_t^{-1}$. 
        
        We define $\phi_w$ for $w \in \mc{W}(T)$ as follows:
   	\begin{itemize}
            \item $\phi_\varnothing = \id_U$;
   		\item for $t\in T$ we define $\phi_{t^{-1}}=\phi_{t}^{-1}$;
   		\item for $w=t_0 \cdots t_{m-1} \in \mc{W}(T)$ we define $\phi_{w} = \phi_{t_{m-1}} \circ \cdots \circ \phi_{t_0}$. (Here we take the composition of the functions as relations -- so $\phi_w$ may have a smaller domain than $\phi_{t_0}$.)
   	\end{itemize}

        Note that each $\phi_w$ is a partial bijection of $U$.

        Suppose that for all $t \in T^{\pm 1}$ and $w \in \mc{W}(T, t)$ we have $\phi_w \sub \phi_t$. For $g \in G$, let $\phi_g=\bigcup_{w\in\mc{W}(T,g)}\phi_{w}$, and let $\phi$ denote the family $(\phi_g)_{g \in G}$. If each $\phi_{g}$ is a partial bijection of $U$, then we say that $\phi$ is a \emph{partial action of $(G, T)$ on the set $U$}. Note that if $\phi$ is a partial action, then $\phi_{g'} \circ \phi_g \sub \phi_{g \ast g'}$ for all $g, g' \in G$ (where $\ast$ denotes group composition).

        Let $\phi$ be a partial action of $(G, T)$ on $U$. For $\bar{a}, \bar{b} \in (U)^k$, we say that $\bar{a}, \bar{b}$ are \emph{in the same $\phi$-orbit} if there is $g \in G$ with $\phi_g(\bar{a}) = \bar{b}$. (Note that this implies in particular that $\bar{a} \sub \dom(\phi_g)$.) We likewise say that $A, B \in [U]^k$ are in the same $\phi$-orbit if there is $g\in G$ with $\phi_{g}(A)=B$. We also define pointwise stabilisers $\St_\phi(A)$, setwise stabilisers $\sSt_\phi(A)$ and sets of fixed points $\Fix_\phi(G_0)$ analogously as for group actions. We say that $\phi$ is \emph{finite} if each $\phi_t$, $t \in T$, is finite.
    
        Given two partial actions $\phi$, $\psi$ of $(G,T)$ on $U$, we say that $\psi$ \emph{extends} $\phi$, written $\phi\leq\psi$, if $\phi_{t}\sub\psi_{t}$ for all $t\in T$. We say that $\psi$ is a \emph{finite extension} of $\phi$ if in addition $\psi_{t}\setminus\phi_{t}$ is finite for all $t\in T$.    
    \end{defn}
    
    \begin{defn}
        Let $\phi$ be a partial action of $(G, T)$ on $U$. Let $k \geq 1$. We say that $\phi$ is \emph{$k$-sharp} if, for each $A \in [U]^k$ and each $g \in G^\ast$, the map $\phi_g$ does not contain $\id_A$.
    \end{defn}
  
    \begin{definition} \label{d:orbits and fixing} 
        Let $\phi$ be a partial action of $(G, T)$ on $U$. Let $a \in U$, and let $w = w_1 \cdots w_m \in \mc{W}(T)$ with $a \in \dom(\phi_w)$.
        
        We define the \emph{$w$-arc in $\phi$ starting at $a$}, denoted $\mc{P}^{\phi}_{w}(a)$, to be the sequence $a, \phi_{w_1}(a), \phi_{w_1 w_2}(a), \cdots, \phi_w(a)$. We sometimes treat this sequence notationally as a set. For $\bar{a} \in (U)^k$, we define the $w$-arc in $\phi$ starting at $\bar{a}$ similarly (acting component-wise on each element of the $k$-tuple).
   \end{definition}
      
   \subsubsection{Partial actions of completions} \label{partial actions of completions}
   
    We now assume that we are given robust $\Theta\leq\sym_{k}$, a $(\Theta, m)$-seed action $\lambda : U \curvearrowleft H$ and a completion $(G, T)$ of $H$. We extend $\lambda$ trivially to a partial action of $(G, T)$ via $\lambda_t = \varnothing$ for all $t \in T_{\mc{D}}$.
    
    \begin{defn} \label{d:support}
        For any finite extension $\phi=(\phi_{t})_{t\in T}$ of $\lambda$ we define the support of $\phi$ to be the (necessarily finite) set $\supp(\phi)=\bigcup_{t\in T_{\mathcal{D}}}\supp(\phi_{t})$.
    \end{defn}
   
    \begin{defn} \label{d:omega-tuples}
        Let $\Omega\in \mc{D}(\Theta)$. We say that $A\in [U]^{k}$ is an \emph{$\Omega$-set} (with respect to $\lambda$) if it is $\Omega$-invariant. (Note that we only use the terminology $\Omega$-set for sets of size $k$.) Note that by \cref{l: key seed action props}\ref{seedact docile}, if $A$ is an $\Omega$-set then $(\lambda: A \curvearrowleft \Omega) \simeq (\pi: \mathbf{k} \curvearrowleft \Omega)$. For an $\Omega$-set $A$ (or more generally a finite union of $\Omega$-sets), we write $A_\Omega = A \setminus \NFr_\lambda(\Omega)$, and call $A_\Omega$ the \emph{$\Omega$-free part} of $A$.

        Let $\phi$ be a finite extension of $\lambda$, and let $A$ be an $\Omega$-set. If $\Omega=\sSt_{\phi}(A)$, then we say that $A$ is a \emph{strict} $\Omega$-set with respect to $\phi$.
        
        We refer to any ordering of a (strict) $\Omega$-set as a \emph{(strict) $\Omega$-tuple}. Let $\bar{a}$ be an $\Omega$-tuple obtained by ordering an $\Omega$-set $A$. We let $\bar{a}_\Omega$ be the tuple obtained by taking, in order, the elements of $\bar{a}$ lying in $A_\Omega$, and we likewise call $\bar{a}_\Omega$ the $\Omega$-free part of $\bar{a}$.
        
        For $j < m$ we say that $A\in[U]^{k}$ is a \emph{$\Theta_{j}$-set} if it is $\Theta_{j}$-invariant and $(\lambda : A \curvearrowleft \Theta_j) \simeq (\pi : \mathbf{k} \curvearrowleft \Theta)$ via $\iota_j^{-1}$.
    \end{defn}
  
    \begin{definition} \label{d:taut partial actions}
        We say that an extension $\phi$ of $\lambda$ is \emph{taut} if:
	\begin{enumerate}[label=(\roman*)]
		\item \label{c-ksharp} $\phi$ is $k$-sharp;
            \item \label{c-fullness} for $\Omega\in\mathcal{D}(\Theta)$ and $t\in T_{\Omega}$, the sets $\dom(\phi_{t})$, $\im(\phi_t)$ are $\Omega$-invariant; 
	    \item \label{c-centraliser acts trivially} for $\Omega\in\D$ and $t\in T_{\Omega}$ we have 
		$\NFr_{\lambda}(\Omega)\subseteq \Fix(\phi_{t})$, 
		\item \label{c-no invariant sets} for $\Omega\in\D$ and $A \fin U$, if $A$ is $\phi_t$-invariant for all $t\in T_{\Omega}$ then $A \sub \NFr_\lambda(\Omega)$.
	\end{enumerate}
   \end{definition}
  
    \begin{lem} \label{l: taut transitive first three conds}
        Suppose $\pi : \mathbf{k} \setminus \NFr_\pi(\Omega) \curvearrowleft \Omega$ is transitive for each $\Omega \in \D$. Let $\phi$ be an extension of $\lambda$ satisfying the first three conditions of \Cref{d:taut partial actions}. Then the fourth condition holds, and so $\phi$ is a taut extension of $\lambda$.
    \end{lem}
    \begin{proof}
        Let $\Omega \in \D$ and $A \fin U$ be such that $A$ is $\phi_t$-invariant for all $t \in T_\Omega$. Assume for a contradiction that $A$ is not a subset of $\NFr_\lambda(\Omega)$, and let $B = A \cup \NFr_\lambda(\Omega)$. Then $\NFr_\lambda(\Omega) \subsetneq B$ and by \Cref{d:taut partial actions}\ref{c-centraliser acts trivially} we have that $B$ is $\phi_t$-invariant for all $t \in T_\Omega$. Note that by \Cref{l: key seed action props}\ref{seedact distinguished set} we have that $|\NFr_\lambda(\Omega)| = |\NFr_\pi(\Omega)|$, so $B$ is finite, and by definition $\lambda|_\Omega : U \setminus \NFr_\lambda(\Omega) \curvearrowleft \Omega$ is free. So $B$ contains at least one free $\lambda$-orbit of $\Omega$, and as $\pi : \mathbf{k} \setminus \NFr_\pi(\Omega) \curvearrowleft \Omega$ is transitive we therefore have $|B| \geq k$. As $B$ is $\phi_t$-invariant for all $t \in T_\Omega$ we therefore have an action $\phi : B \curvearrowleft F_\Omega$, and so the pointwise-stabiliser of $B$ in this action has finite index in $F_\Omega$ and is therefore infinite, contradicting the $k$-sharpness of $\phi$.  
    \end{proof}

    \begin{lem} \label{normal forms action}
        Let $(\phi_t)_{t \in T}$ be a family of partial bijections of $U$ with $\phi_h = \lambda_h$ for $h \in H^\ast$, and define $\phi_w$ for $w \in \mc{W}(T)$ and $\phi_g$ for $g \in G$ as in \cref{d:general partial action}. Suppose in addition that:
        \begin{enumerate}[label=(\alph*)]
            \item \label{i: match with commuting Omega T_Omega} $\phi_\omega \circ \phi_t = \phi_t \circ \phi_\omega$ for $\omega \in \Omega \in \mc{D}(\Theta)$, $t \in T_\Omega^{\pm 1}$;
            \item \label{i: dom im T_Omega Omega-inv} $\dom(\phi_t)$, $\im(\phi_t)$ are $\Omega$-invariant for $\Omega \in \mc{D}(\Theta)$, $t \in T_\Omega^{\pm 1}$.
        \end{enumerate}
        Then:
        \begin{enumerate}[label=(\roman*)]
            \item \label{i: contained in reduced dom} for all equivalent words $w, v \in \mc{W}(T)$, if $v$ is reduced then $\dom\phi_{w} \sub \dom\phi_{v}$;
            \item \label{i: reduced dom same as g} for all reduced words $v \in \Wr(T)$, if $v$ represents $g \in G$ then $\dom\phi_v=\dom\phi_g$;
            \item \label{i: get a partial action} $\phi = (\phi_g)_{g \in G}$ is a partial action.
        \end{enumerate}
    \end{lem}

    \begin{proof}
        \ref{i: reduced dom same as g} and \ref{i: get a partial action} follow immediately from \ref{i: contained in reduced dom}. We now show \ref{i: contained in reduced dom}. Firstly, from \cref{normal forms} it follows that for any two equivalent reduced words $u, u' \in \mc{W}(T)$ we have $\dom \phi_u = \dom \phi_{u'}$, as moves of type \ref{type1}, \ref{type2}, \ref{type3} and their inverses do not change the domain (where for moves of \ref{type1} we use the additional assumptions \ref{i: match with commuting Omega T_Omega} and \ref{i: dom im T_Omega Omega-inv} in the statement of the current lemma).
        
        \cref{normal forms} also states that for any word $w$ there is an equivalent reduced word $u$ obtained by applying moves of type \ref{type1} and their inverses (which do not change the domain), moves of type \ref{type2} and \ref{type3} (which again do not change the domain) and moves of type \ref{type4} (which possibly enlarge the domain), and so the result follows.
    \end{proof}

    Note that, in particular, any taut extension $\phi \geq \lambda$ satisfies conditions \ref{i: match with commuting Omega T_Omega}, \ref{i: dom im T_Omega Omega-inv} of the above Lemma \ref{normal forms action} (one sees this via Definition \ref{d:taut partial actions}\ref{c-fullness}), and we will often use Lemma \ref{normal forms action}\ref{i: contained in reduced dom}, \ref{i: reduced dom same as g} in this case, even though we already have that $\phi$ is a partial action.

    \begin{lem} \label{fin taut exts exist}
        Define a family $(\lambda'_{t \vphantom{t \in T}})^{}_{t \in T}$ of partial bijections of $U$ as follows: $\lambda'_h = \lambda^{}_h$ for $h \in H^\ast$, and $\lambda'_t = \id^{}_{\NFr_\lambda(\Omega)}$ for $t \in T_\Omega$, $\Omega \in \mc{D}(\Theta)$. Then $\lambda' = (\lambda'_{g \vphantom{g \in G}})^{}_{g \in G}$ is a taut partial action. (Here for each $g \in G$ we define $\lambda'_g$ as in \cref{d:general partial action}.)
    \end{lem}
    \begin{proof}
        It follows immediately from \cref{normal forms action} that $\lambda'$ is a partial action. Conditions \ref{c-fullness}, \ref{c-centraliser acts trivially}, \ref{c-no invariant sets} in the definition of tautness (\cref{d:taut partial actions}) are immediate. We now show that $\lambda'$ is $k$-sharp. Suppose for a contradiction that there exist $g \in G^\ast$, $\bar{a} \in (U)^k$ with $\lambda'_g(\bar{a}) = \bar{a}$. Let $w \in \Wr(T)$ represent $g$. Then as $\lambda$ is $k$-sharp, there exists some $1 \leq i \leq |w|$ and some $\Omega \in \mc{D}(\Theta)$ with $w[i] \in T_\Omega^{\pm 1}$. As $\lambda'_{w[i]} = \id_{\NFr_\lambda(\Omega)}$, we have that $\lambda'_{w[1 : i-1]}(\bar{a}) \sub \NFr_\lambda(\Omega)$, but $|\NFr_\lambda(\Omega)| < k$ as $\Omega$ is docile, contradiction.
    \end{proof}

    \begin{lem} \label{l: cyc red fixed tuple}
        Let $\phi$ be an extension of $\lambda$ such that for all $\Omega\in\mathcal{D}(\Theta)$ and $t\in T_{\Omega}$, the sets $\dom(\phi_{t})$, $\im(\phi_t)$ are $\Omega$-invariant. Let $g \in G^\ast$, and suppose there exists $\bar{a} \in (U)^k$, $\bar{a} \sub \dom \phi_g$, with $\phi_g(\bar{a}) = \bar{a}$. Then there exists a cyclically reduced word $v \in \Wr(T)$ representing a conjugate of $g$ and $\bar{b} \in (U)^k$, $\bar{b} \sub \dom \phi_v$, with $\phi_v(\bar{b}) = \bar{b}$.
    \end{lem}
    \begin{proof}
        Let $w$ be a reduced word of minimal length such that $w$ represents some conjugate of $g$ and such that there exists $\bar{b} \in (U)^k$, $\bar{b} \sub \dom(\phi_w)$, with $\phi_w(\bar{b}) = \bar{b}$ (note that the set of reduced words satisfying these conditions is non-empty by assumption). Write $w = w_1 \cdots w_m$ and $\mc{P}_w^\phi(\bar{b}) = (\bar{b}_0, \cdots, \bar{b}_m = \bar{b}_0)$. We claim that $w$ is cyclically reduced. Suppose for a contradiction that there exists $i$ with $1 \leq i \leq m-1$ such that the cyclic shift $w' = w_{1 + i \bmod{m}} \cdots w_{m + i \bmod{m}}$ is not reduced. We have that $w'$ represents a conjugate of $g$ and $\phi_{w'}(\bar{b}_i) = \bar{b}_i$. Let $v$ be a reduced word equivalent to $w'$. Then by \cref{normal forms action} we have $\phi_v(\bar{b}_i) = \bar{b}_i$, but $|v| < |w'| = |w|$, contradicting the minimality of $|w|$.
    \end{proof}
        
  \begin{lemma} \label{l:stabilisers of k sets} 
    Let $\phi \geq \lambda$ be a taut finite extension. Let $A\in[U]^{k}$. Then at least one of the following holds:
   \begin{itemize}	
      \item there is $\Omega \in \mc{D}(\Theta)$ such that there exists a $\phi$-strict $\Omega$-set $B$ in the $\phi$-orbit of $A$; 
      \item $\Theta$ is unruly and for some $j\in\mathbf{m}$ there exists a $\phi$-strict $\Theta_{j}$-set $B$ in the $\phi$-orbit of $A$.
    \end{itemize}
  \end{lemma}
    \begin{proof}
        As $\phi$ is $k$-sharp, the action $A \curvearrowleft \sSt_\phi(A)$ is faithful. Thus $\sSt_\phi(A)$ is finite and so by \cref{torsion completion} it is of the form $\Delta^{g}$ for some $g\in G$ and some finite $\Delta \leq H$. Let $w$ be a reduced representative of $g$, and let $(w = w_{h, 0} w_{h, 1})_{h \in \Delta}$ be a family of decompositions given by \cref{l:conjugation} (where we restrict to $\Delta$). Let $\delta \in \Delta$ be such that $w_{\delta, 0}$ is of minimal length amongst all decompositions indexed by $\Delta$, and let $(w_0, w_1) = (w_{\delta, 0}, w_{\delta, 1})$. Then by
        \cref{l:conjugation}\ref{l:conjugation:initial}, there exists $\eta = \eta(w_0) \in H$ such that $w_0^{-1} h w_0^{}$ represents $h^\eta$ for all $h \in \Delta$. Let $\delta' = \delta^\eta \in H$. By \cref{l:conjugation}\ref{l:conjugation:reduced} the word $w_1^{-1} \delta' w_1^{}$ is a reduced representative of $\delta^g$, and so by \cref{normal forms action} we have $\dom \phi_{\delta^g} = \dom \phi_{w_1^{-1} \delta' w_1^{}}$. Thus $A \sub \dom \phi_{w_1^{-1}}$. Let $A' = \phi_{w_1^{-1}}(A)$. Then $\sSt_\phi(A') = \Delta^\eta$. (For example, to see one direction: let $g' \in G$ be the group element represented by $w_1$. Let $h \in \Delta$. Note that $\dom \phi_{h^\eta} = U$. We have $\phi_{h^\eta}(A') = \phi_{g' \ast h^g \ast g'^{-1}}(A')$ (where $\ast$ denotes group composition), and as $\phi_{g'}(A') = A$ and $h^g \in \sSt_\phi(A)$ we have $h^\eta \in \sSt_\phi(A')$.) As $\Delta^\eta \leq_{\text{fin}} H$, by \cref{d:seed group}\ref{seed torsion} there is $h \in H$ and $j < m$ such that $\Delta^{\eta h} \leq \Theta_j$. 
        
        If $\iota_j^{-1}(\Delta^{\eta h}) \leq \Theta$ is docile: by \cref{d:seed group}\ref{seed conjugate docile} there is $h' \in H$ such that $\Delta^{\eta hh'} \in \mc{D}(\Theta)$. Let $\Omega = \Delta^{\eta hh'}$ and $B = \phi_{hh'}(A')$. Then $\Omega = \sSt_\phi(B)$, and $B$ is an $\Omega$-set, so as required $B$ is a strict $\Omega$-set with respect to $\phi$.

        Otherwise $\iota_j^{-1}(\Delta^{\eta h}) \leq \Theta$ is unruly, and therefore so is $\Theta$. Let $B = \phi_h(A')$ and let $\Omega_j = \Delta^{\eta h} \leq \Theta_j$. Then $\Omega_j = \sSt_\phi(B)$. By \cref{l: key seed action props}\ref{seedact unruly unique} we have that $B$ is the unique $\Theta_j$-invariant $k$-set in $U$, and so $\Omega_j = \Theta_j$ and $B$ is a strict $\Theta_j$-set.
  \end{proof}
  
  We wish to emphasize that the previous proof uses in a fundamental way the fact that $G$ is very close to being a free group. 

\begin{lem} \label{l: action of fin subgp of G}
    Let $\phi \geq \lambda$ be an extension of $\lambda$ which is an action $\phi : U \curvearrowleft G$. Let $\Delta \leq_{\textrm{fin}} G$. Then there is a $\Delta$-invariant set $A \in [U]^k$ and $\Omega \leq \Theta$ such that $(A \curvearrowleft \Delta) \simeq (\pi : \mathbf{k} \curvearrowleft \Omega)$ and the action $U \setminus A \curvearrowleft \Delta$ is free.
\end{lem}
\begin{proof}
    By Lemma \ref{torsion completion} and Definition \ref{d:seed group}\ref{seed torsion}, the subgroup $\Delta$ is conjugate to a finite subgroup of some $\Theta_j \leq H$. The result then follows by Lemma \ref{l: key seed action props}\ref{seedact distinguished set}.
\end{proof}

\section{Extending \texorpdfstring{$k$}{k}-sharp partial actions on sets}
    \label{s:extending sharp partial actions on sets}

    Let $\lambda$ be a seed action (extended trivially to a partial action) on $U$ as in the previous section. The main goal of this section is to describe ways in which, under certain conditions, it is possible to extend taut finite extensions of $\lambda$ (see \cref{d:taut partial actions}) in various ways so that $k$-sharpness is preserved. The most flexible way in which this can be done will be described in \cref{p: small cancellation loops} below, which is the main result of the section and key to the entire paper. 
   
    Recall the notions of $\Omega$-sets $A$ and $\Omega$-tuples $\bar{a}$ from \cref{d:omega-tuples}, as well as of their moved parts $A_\Omega$, $\bar{a}_\Omega$.
    Recall the definition of $\supp(\phi)$ from \cref{d:support}. Also recall the definition of a paradigm set of a seed action $\lambda$ from Definition \ref{d:seed action on set}. The simplest type of extension we will consider is the following.
   
    \begin{defn} \label{d:free extension}
        Let $\phi$ be a taut finite extension of $\lambda$. Let $\Omega \in \mc{D}(\Theta)$ and let $A \fin U$ be a finite union of $\Omega$-sets. Let $t \in T_{\Omega}^{\pm 1}$. An \emph{atomic $\Omega$-free extension of $\phi$ to $A$ by $t$} is a finite extension $\phi' \geq \phi$ obtained by extending $\phi_t$ to $\phi'_t=\phi_t \cup \xi$, where $\xi : A \to A'$ is an $\Omega$-isomorphism, such that:
        \begin{enumerate}[label=(\roman*)]
    	\item \label{free-cond-different orbit} the $\lambda$-orbit of $A'_\Omega \setminus \im(\phi_t)$ is disjoint from $\supp(\phi) \cup A$ and the paradigm set of $\lambda$;
    	\item \label{free-cond-different orbits} two elements in $A'_\Omega \setminus \im(\phi_t)$ are in the same $H$-orbit iff they are in the same $\Omega$-orbit.
        \end{enumerate}
    We call $A$ the \emph{extending set} of $\phi'$.

    An \emph{$\Omega$-free extension of $\phi$} is a finite extension $\psi \geq \phi$ such that there exists a chain of atomic $\Omega$-free extensions $\phi = \phi^0 \leq \cdots \leq \phi^r = \psi$ (where we make no assumptions on the extending set at each step).
    
    Let $v=t_{1}\cdots t_{r}\in\Wr(T_{\Omega})$. A \emph{coherent $\Omega$-free extension of $\phi$ to $A$ by $v$} is a finite extension $\psi \geq \phi$ for which there is a chain $\phi=\phi^{0}\leq\phi^{1} \leq \cdots \leq \phi^{r}=\psi$, where $\phi^{l}$ is an atomic $\Omega$-free extension of $\phi^{l-1}$ to $\phi^{l-1}_{t_{1}\cdots t_{l-1}}(A)$ by $t_{l}$ for all $1\leq l\leq r$.
   \end{defn}

    \begin{lem} \label{free extensions exist}
        Let $\phi \geq \lambda$ be a taut finite extension. Let $\Omega \in \mc{D}(\Theta)$ and let $A \fin U$ be a finite union of $\Omega$-sets. Let $B \fin U$. Let $t \in T_\Omega^{\pm 1}$. Then there exists an atomic $\Omega$-free extension $\phi'$ of $\phi$ to $A$ by $t$ such that the $\lambda$-orbit of $\phi'_t(A_\Omega) \setminus \im(\phi_t)$ is disjoint from $B$.
    \end{lem}
    \begin{proof}
        We define $A'$ and $\xi : A \to A'$ satisfying the conditions of \cref{d:free extension}. As $\phi$ is taut, by \cref{d:taut partial actions}\ref{c-fullness} we have that $\dom \phi_t$ is $\Omega$-invariant, and thus $A_\Omega \setminus \dom \phi_t$ is $\Omega$-invariant. As $\lambda$ has infinitely many free $H$-orbits (\cref{d:seed action on set}) and each $\Omega$-orbit in $A_\Omega \setminus \dom \phi_t$ is free (as $\NFr_\lambda(\Omega) \sub \dom \phi_t$ by \cref{d:taut partial actions}\ref{c-centraliser acts trivially}), we may define $\xi$ so that for each $\Omega$-orbit $C$ of $A_\Omega \setminus \dom \phi_t$ we have that $\xi(C)$ is a free $\Omega$-orbit disjoint from $\supp(\phi) \cup A \cup B$ and the paradigm set, and also so that for distinct $\Omega$-orbits $C_0$, $C_1$ we have that $\xi(C_0)$, $\xi(C_1)$ lie in distinct $H$-orbits.
    \end{proof}

\begin{lemma} \label{l:partial k-sharp extend}
    Let $\phi$ be a taut finite extension of $\lambda$. Let $\Omega \in \mc{D}(\Theta)$ and let $A \fin U$ be a finite union of $\Omega$-sets. Let $t \in T_{\Omega}^{\pm 1}$. Let $\phi'$ be an atomic $\Omega$-free extension of $\phi$ by $t$ to $A$. Let $A' = \phi'_t(A)$, and let $X$ be the union of the $\lambda$-orbits of points in $A'_\Omega \setminus \im(\phi_t)$. Then:
	\begin{enumerate}[label=(\roman*)]
		\item \label{no more permutations} for all $B, C \in [U \setminus X]^{k}$ and $g \in G$, we have $\phi_g(B) = C$ if and only if $\phi'_g(B) = C$; 
		\item \label{keeps being strict} for each $\Omega$-set $B \in [A]^k$, if $B$ is $\Omega$-strict for $\phi$, then $B$, $\phi'_t(B)$ are also $\Omega$-strict for $\phi'$;
		\item \label{preservation tautness}$\phi'$ is taut.
	\end{enumerate}
\end{lemma}
\begin{proof}
    We first claim that:
    \begin{enumerate}
        \item[($\ast$)] any $\phi'$-arc of a reduced word starting and ending in $U \setminus X$ must lie entirely in $U \setminus X$.
    \end{enumerate}
    We now prove the above claim. Let $w = w_1 \cdots w_m \in \Wr(T)$. Let $\mc{P}_w^{\phi'}(b) = (b_0, \cdots, b_m)$ be an arc with $b_0, b_m \in U \setminus X$. Suppose for a contradiction that there exists an element of the arc lying in $X$. Then there exists $i \in \mathbf{m}$ such that $b_i \in U \setminus X$ and $b_{i + 1} \in X$. As $X$ is $H$-invariant we have $w_{i+1} \notin H$, and as $X$ is disjoint from $\supp(\phi) \cup A$ we have $w_{i+1} = t$ and $b_{i + 1} \in A'_\Omega \setminus \im(\phi_t)$. If $b_{i + 2} \notin X$ then $w_{i+2} = t^{-1}$, contradicting that $w$ is reduced. So $b_{i+2} \in X$ and thus $w_{i+2} \in H$. As $w$ is reduced we have $w_{i+3} \notin H$, and hence $w_{i+3} = t^{-1}$. So $b_{i+2} \in A'_\Omega \setminus \im(\phi_t)$. By \cref{d:free extension}\ref{free-cond-different orbits} and the fact that the $H$-orbit of $b_{i+2}$ is free, we have $w_{i + 2} \in \Omega$. Let $\omega = w_{i + 2}$. Then $w_{i+1} w_{i+2} w_{i+3} = t \omega t^{-1}$, and as $t$ and $\omega$ commute the word $w$ is not reduced -- contradiction. This concludes the proof of claim ($\ast$).

    \ref{no more permutations}: It is immediate that $\phi_g(B) = C$ implies $\phi'_g(B) = C$ as $\phi' \geq \phi$. Suppose $\phi'_g(B) = C$. Let $w \in \Wr(T)$ be a reduced representative of $g$. By \cref{normal forms action} applied to $\phi'$ and $\phi$ we have $\dom \phi'_w = \dom \phi'_g$ and $\dom \phi_w = \dom \phi_g$. So $B \sub \dom \phi'_w$. By claim ($\ast$), for each $b \in B$ we have $\mc{P}^{\phi'}_w(b) \sub U \setminus X$, and so $B \sub \dom \phi_w$ and $\mc{P}^{\phi'}_w(b) = \mc{P}^\phi_w(b)$ for all $b \in B$ (where we also use the fact that $\phi$ is taut and so satisfies \Cref{d:taut partial actions}\ref{c-centraliser acts trivially}). In particular $\phi'_w(b) = \phi^{}_w(b)$ for each $b \in B$. Thus $\phi_g(B) = C$. 

    \ref{keeps being strict}: Suppose $B$ is $\phi$-strict. Then by taking $C:=B$ in \ref{no more permutations} we have that $B$ is $\phi'$-strict. As $(\sSt_{\phi'}(\phi'_t(B)))^{t^{-1}} = \sSt_{\phi'}(B)$ we have that $\phi'_t(B)$ is $\phi'$-strict.

    \ref{preservation tautness}: Condition \ref{c-centraliser acts trivially} in \cref{d:taut partial actions} is trivial, and condition \ref{c-fullness} follows as $A$, $A'$ are finite unions of $\Omega$-sets. Condition \ref{c-no invariant sets} follows as $X$ is disjoint from $\supp(\phi) \cup A$. It remains to verify that $\phi'$ is $k$-sharp. Let $B\in[U]^{k}$ and suppose for a contradiction that there exists $g \in G^\ast$ with $g\in \St_{\phi'}(B)$. By \cref{l: cyc red fixed tuple}, we may assume that $g$ has a cyclically reduced representative $v = v_1 \cdots v_m \in \Wr(T)$.

    Consider $\mc{P}^{\phi'}_v(b) = (b_0, \cdots, b_m = b_0)$. If any $b_i$ lies in $U \setminus X$, then the entire arc $\mc{P}^{\phi'}_v(b)$ is contained in $U \setminus X$ -- we can see this by applying claim ($\ast$) to the reduced word given by the cyclic shift $v_{i+1 \bmod{m}} \cdots v_{i + m \bmod{m}}$ and $(b_{i \bmod{m}}, b_{i+1 \bmod{m}}, \cdots, b_{i + m \bmod{m}})$. But then $\mc{P}^{\phi'}_v(b) = \mc{P}^{\phi}_v(b)$, contradicting $k$-sharpness of $\phi$. So $\mc{P}^{\phi'}_v(b) \sub X$. As $b_0, b_1 \in X$ and $X$ is disjoint from $\supp(\phi) \cup A$, we have $v_1 \in H$, and as $X$ is disjoint from the paradigm set of $\lambda$ we have $b_1 \neq b_0$. But then $m \geq 2$, and as $b_1, b_2 \in X$ we similarly have $v_2 \in H$, contradicting that $v$ is reduced. Thus $\phi'$ is $k$-sharp.
\end{proof}

    The following corollary follows from the previous lemma by a standard back-and-forth argument. 
    \begin{corollary} \label{c: extension ksharp}
        Any taut finite extension $\phi$ of $\lambda$ extends to a global taut action of $G$ on $U$. 
    \end{corollary}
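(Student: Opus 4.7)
The plan is to use a standard back-and-forth construction. I would enumerate $U\times T$ so that every pair appears infinitely often, and inductively build a chain of taut finite extensions $\phi=\phi^{0}\leq\phi^{1}\leq\cdots$ of $\lambda$ such that at stage $n$ the $n$-th pair $(u_{n},t_{n})$ satisfies $u_{n}\in\dom(\phi^{n}_{t_{n}})\cap\Img(\phi^{n}_{t_{n}})$. The limit $\phi^{\infty}=\bigcup_{n}\phi^{n}$ would then be the required global taut $G$-action.

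For the stage-$n$ extension: if $t_{n}\in H^{\ast}$ then $\phi^{n-1}_{t_{n}}=\lambda_{t_{n}}$ is already a global permutation and nothing needs to be done. Otherwise $t_{n}\in T_{\Omega}$ for some $\Omega\in\mathcal{D}(\Theta)$, and I would extend the domain (and analogously the image, using $t_{n}^{-1}$) by applying an atomic $\Omega$-free extension as in \cref{d:free extension}. If $u_{n}\in\Fix_{\lambda}(\Omega)$ then tautness clause \ref{c-centraliser acts trivially} already places $u_{n}\in\Fix(\phi^{n-1}_{t_{n}})$. Otherwise, by \cref{number of fixed points}\ref{seedact Gamma conj} the number of $\lambda$-fixed points of $\Omega$ agrees with the fixed-point count of the standard action on $\mathbf{k}$, so one can build an $\Omega$-set $A\ni u_{n}$ by combining $\Fix_{\lambda}(\Omega)$ with the $\Omega$-orbit of $u_{n}$ and sufficiently many free $\Omega$-orbits. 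Using the infinitude of free $H$-orbits (\cref{d:seed action on set}\ref{seedact infinitely manyorbits}) outside the finite set $\supp(\phi^{n-1})$, I would similarly pick a strict $\Omega$-set $A'$ meeting the disjointness requirements of \cref{d:free extension}, and perform the corresponding atomic $\Omega$-free extension. By \cref{l:partial k-sharp extend}, the resulting $\phi^{n}$ is again a taut finite extension of $\lambda$.

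In the limit each $\phi^{\infty}_{t}$ will be a total bijection of $U$, and the defining relations of $G$ as a completion of $H$ (\cref{d:free completion}) hold in $\phi^{\infty}$ because they hold in each $\phi^{n}$, so $\phi^{\infty}$ is a genuine $G$-action on $U$. The tautness conditions survive the limit: \ref{c-fullness} is automatic since the domains now cover all of $U$; \ref{c-centraliser acts trivially} is inherited monotonically; and both \ref{c-ksharp} and \ref{c-no invariant sets} pass through by the standard reflection argument that any finite witness to failure would already appear at some $\phi^{n}$, contradicting its tautness (using \cref{o:normal forms action} to replace the offending word by a reduced one).

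The main point of care is to guarantee, at each back-and-forth step, that a not-yet-accommodated point $u_{n}$ lies in some $\Omega$-set when $t_{n}\in T_{\Omega}$; this is exactly what \cref{number of fixed points} delivers, by precisely calibrating the fixed-point and free-orbit structure of $\Omega$ under $\lambda$ to match that of $\mathbf{k}$ under $\Omega$.
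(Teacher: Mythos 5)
Your proposal is correct and is exactly the argument the paper intends: the paper's proof of \cref{c: extension ksharp} consists of the single remark that it ``follows from the previous lemma by a standard back-and-forth argument,'' and your write-up is precisely that back-and-forth, iterating atomic $\Omega$-free extensions and invoking \cref{l:partial k-sharp extend} to preserve tautness at each stage before passing to the union. The only difference is that you supply the details (enumeration of pairs, construction of the $\Omega$-set containing $u_n$ via \cref{number of fixed points}, and the reflection argument for tautness in the limit) that the paper leaves implicit.
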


\begin{lemma} \label{l:disjointness through free extensions}
        Let $\phi \geq \lambda$ be a taut finite extension. Let $\Omega \in \mc{D}(\Theta)$. Then:
        \begin{enumerate}[label=(\roman*)]
            \item \label{positive word} there is a non-empty positive word $v \in \Wr(T_\Omega)$ with $\dom(\phi_{v})=\NFr_{\lambda}(\Omega)$;
            \item \label{positive word avoiding t} there is a non-empty positive word $u \in \Wr(T_\Omega)$ and $t \in T_\Omega$ satisfying the following:
            \begin{itemize}
                \item $\dom(\phi_{u})=\NFr_{\lambda}(\Omega)$,
                \item for each $A \fin U$ which is a finite union of $\Omega$-sets and each $B \fin U$, there is a coherent $\Omega$-free extension $\psi$ of $\phi$ to $A$ by $u$ such that $\psi_{u}(A)\cap (\dom(\psi_t) \cup B)=\NFr_\lambda(\Omega)$.
            \end{itemize}
        \end{enumerate}
    \end{lemma}
    \begin{proof}
        \ref{positive word}: Let $v \in \Wr(T_\Omega)$ be non-empty and positive with $|\dom(\phi_v)|$ minimal. We show that $\dom \phi_v = \NFr_\lambda(\Omega)$. As $\phi$ is taut, by \cref{d:taut partial actions}\ref{c-centraliser acts trivially} we have $\NFr_\lambda(\Omega) \sub \dom(\phi_v)$. For each positive word $v' \in \Wr(T_\Omega)$, as $\dom \phi_{vv'} \sub \dom \phi_v$, by minimality we have $\dom \phi_{vv'} = \dom \phi_v$ and thus $\im \phi_v \sub \dom \phi_{v'}$. Let $C = \bigcup \{\phi_{v'}(\im \phi_v) \mid v' \in \Wr(T_\Omega) \text{ positive}\}$. Then $C$ is finite and $\phi_t$-invariant for all $t \in T_\Omega$, so by \cref{d:taut partial actions}\ref{c-no invariant sets} we have $C = \NFr_\lambda(\Omega)$, and thus $\dom \phi_v = \NFr_\lambda(\Omega)$ (again by \cref{d:taut partial actions}\ref{c-centraliser acts trivially}). 

        \ref{positive word avoiding t}: Take $v$ from \ref{positive word}. Write $v = t_1 \cdots t_m$ and $v_l = v[1:l]$ for $1 \leq l \leq m$. Let $i \in \N$ be greatest such that $t_m^i$ is a subword of $v$, and let $v' = t_m^i$. Let $u = vv'$ and let $t \in T_\Omega$ with $t \neq t_m$. By applying \cref{free extensions exist}, we inductively construct a sequence $\phi = \phi^0 < \cdots < \phi^m$, where for $1 \leq l \leq m$ we have that $\phi^l$ is an atomic $\Omega$-free extension of $\phi^{l-1}$ to $\phi^{l-1}_{v_{l-1}}(A)$ by $t_l$ such that the $\lambda$-orbit of $\phi^l_{v_l}(A) \setminus \im \phi^{l-1}_{t_l}$ is disjoint from $B \cup \supp(\phi^{l-1}) \cup \phi^{l-1}_{v_{l-1}}(A)$ and the paradigm set. Let $A_l = \phi^l_{v_l}(A)$ and $\tld{A}_l = A_l \setminus \im \phi^{l-1}_{t_l}$ for $0 \leq l \leq m$. At each stage of the construction, the set $\tld{A}_l$ of new points added to the image is disjoint from $\supp(\phi)$. So, for $a \in A$, letting $(a_0, \cdots, a_m) = \mc{P}^{\phi^m}_v(a)$, if $a_m \in \supp(\phi)$ then $a_m = \phi_v(a)$. Thus $A_m \cap \supp(\phi) = \NFr_\lambda(\Omega)$.

        Let $\psi^0 = \phi^m$. Apply \cref{free extensions exist} to inductively construct a sequence $\psi^0 < \cdots < \psi^i$, where for $1 \leq l \leq i$, writing $A_{m + l} = \psi^l_{t_m^l}(A_m)$, we have that $\psi^l$ is an atomic $\Omega$-free extension of $\psi^{l-1}$ to $A_{m + l - 1}$ by $t_m$ such that the $\lambda$-orbit of $A_{m+l} \setminus \im \psi^{l-1}_{t_m}$ is disjoint from $B \cup \supp \psi^{l-1} \cup A_{m + l - 1}$ and the paradigm set. Let $\psi = \psi^i$. Let $a \in A_\Omega$. Then $\psi_v(a) \notin \supp(\phi)$, and as $t \neq t_m$ and $v'$ does not occur as a subword of $v$, we have $\psi_{vv'}(a) \notin \supp(\phi^m)$. Thus $\psi_{vv'}(a) \notin \dom \psi_t$, and so the word $u$ and the generator $t$ are as required.
    \end{proof}
    
     \begin{definition} \label{d:arc extension}
        Let $\phi \geq \lambda$ be a taut finite extension. Let $\Omega\in\D$ and let $\bar{a}, \bar{a}' \in (U)^k$ be $\Omega$-isomorphic $\Omega$-tuples. Let $A, A'$ be the underlying sets of $\bar{a}, \bar{a}'$.
    
        Let $w=t_1 \cdots t_m \in \mc{W}(T_\Omega)$, $w \neq \varnothing$, be such that $A \cap \dom(\phi_{t_1}) = A'\cap\im(\phi_{t_m}) = A \cap A' = \NFr_\lambda(\Omega)$.

        An \emph{$\Omega$-extension of $\phi$ by $w$-arcs from $\bar{a}$ to $\bar{a}'$} is a finite extension $\psi \geq \phi$ such that there exists a sequence $\bar{a}^{}_\Omega = \bar{c}_0, \bar{c}_1, \cdots, \bar{c}_m = \bar{a}'_\Omega$ of pairwise-disjoint $\Omega$-isomorphic $\Omega$-tuples satisfying the following properties (where we write $c_{l, j}$ for the $j$th coordinate of $\bar{c}_l$):
        \begin{enumerate}[label=(\roman*)]
            \item \label{arcs different orbits} for all $l, l' \in \mathbf{m+1}$ and for all $j, j'$, if $c_{l,j}$, $c_{l',j'}$ are in the same $H$-orbit then they belong to the same $\Omega$-orbit (in particular, $l=l'$);
    	\item \label{arcs orbit outside support} for all $l$ with $1\leq l\leq m-1$ and for all $j$, the $H$-orbit of $c_{l,j}$ is disjoint from $\supp(\phi)$ and the paradigm set;
            \item the extension $\psi$ of $\phi$ is constructed by adding $(c_{l,j},c_{l+1,j})$ to $\phi_{t_{l}}$ for all $0\leq l < m$ and for all $j$.
        \end{enumerate}

        Note that \ref{arcs different orbits} implies that for all $l$ with $1\leq l\leq m-1$ and for all $j$, the $H$-orbit of $c_{l,j}$ is disjoint from $A \cup A'$.

        For each $j$, we will refer to $(c_{l,j})_{0\leq l\leq m}$ as an \emph{$w$-arc}. We will also refer to any sequence of the form $(c_{l,j})_{l_{0}\leq l\leq l_{1}}$ as a \emph{subarc} of one of the $w$-arcs. For $1 \leq l \leq m-1$, we say that $c_{l,j}$ is a point in the \emph{interior} of the corresponding arc.
    \end{definition}  

    \begin{lem} \label{extensions by w-arcs exist}
        Let $\phi \geq \lambda$ be a taut finite extension. Let $\Omega\in\D$ and let $\bar{a}, \bar{a}' \in (U)^k$ be $\Omega$-isomorphic $\Omega$-tuples. Let $A, A'$ be the underlying sets of $\bar{a}, \bar{a}'$. Let $w=t_1 \cdots t_m \in \mc{W}(T_\Omega)$, $w \neq \varnothing$, with $A \cap \dom(\phi_{t_1}) = A'\cap\im(\phi_{t_m}) = A \cap A' = \NFr_\lambda(\Omega)$. Then:
        \begin{enumerate}[label=(\roman*)]
            \item \label{w-arcs exts exist} there exists an $\Omega$-extension of $\phi$ by $w$-arcs from $\bar{a}$ to $\bar{a}'$;
            \item \label{w-arcs exts easy taut conds} any $\Omega$-extension of $\phi$ by $w$-arcs from $\bar{a}$ to $\bar{a}'$ satisfies conditions \ref{c-fullness}, \ref{c-centraliser acts trivially}, \ref{c-no invariant sets} in \cref{d:taut partial actions}, the definition of a taut extension.
        \end{enumerate}
    \end{lem}
    \begin{proof}
        \ref{w-arcs exts exist}: let $\bar{c}_0 = \bar{a}_\Omega$, $\bar{c}_m = \bar{a}'_\Omega$. We use \cref{free extensions exist} to inductively construct a sequence $\phi = \phi^0 < \cdots < \phi^{m-1}$, where for $1 \leq l \leq m-1$ we have that $\phi^l$ is an atomic $\Omega$-free extension of $\phi^{l-1}$ to $\phi^{l-1}_{w[1:l-1]}(A)$ by $t_l$ and the $\lambda$-orbit of $\phi^l_{w[1:l]}(A_\Omega)$ is disjoint from $A'_\Omega$ (note that $A \cap \dom(\phi_{t_1}) = \NFr_\lambda(\Omega)$). We let $\bar{c}_l = \phi^l_{w[1:l]}(\bar{a}_\Omega)$ for $1 \leq l \leq m-1$. Let $\xi$ be the bijection $\bar{c}_{m-1} \mapsto \bar{c}_m$. We then define $\phi^m \geq \phi^{m-1}$ via $\phi^m_{t_m} = \phi^{m-1}_{t_m} \cup \xi$ (note that $\phi^m$ may not be an atomic $\Omega$-free extension of $\phi^{m-1}$, as $\bar{c}_m$ and $\supp(\phi)$ may not be disjoint), and $\phi^m$ is the extension required.
        
        \ref{w-arcs exts easy taut conds}: immediate.
    \end{proof}

    To produce an $\Omega$-extension by $w$-arcs which remains $k$-sharp requires significant extra work -- this will occupy us for the rest of this section.
    
    The below \cref{p: small cancellation loops} is fundamental to the entire paper.

    \begin{proposition} \label{p: small cancellation loops}
        Let $\phi \geq \lambda$ be a taut finite extension. Let $\Omega \in \mc{D}(\Theta)$, and let $\bar{a}, \bar{a}' \in (U)^k$ be $\Omega$-isomorphic $\Omega$-tuples (where the $\Omega$-isomorphism extends $\id_{\NFr_\lambda(\Omega)}$) such that:
        \begin{enumerate}[label=(\roman*)]
    	\item\label{distinct set orbits} the corresponding underlying $k$-sets $A,A'$ are not in the same $\phi$-orbit; 
    	\item \label{perms of a} $\bar{a}$ is $\Omega$-strict with respect to $\phi$.
        \end{enumerate}    
        Let $w \in \Wr(T_\Omega)$, and suppose there exist $L_{tv}, L_{sc} \in \N_+$ such that: 
        \begin{enumerate}[label=(\alph*)]
   		\item \label{sc a} for each subword $w' = w[i:j]$ of length $|w'| \geq L_{tv}$ we have $\dom\phi_{w'}=\NFr_{\lambda}(\Omega)$;
            \item \label{sc b} $|w| \geq 2 \times (2L_{sc} + L_{tv} - 1)$;
   		\item \label{sc c} $w$ has cancellation $< L_{sc}$.
        \end{enumerate} 
        
        Let $\psi$ be an $\Omega$-extension by $w$-arcs between $\bar{a}$ and $\bar{a}'$. Then $\psi$ is taut (in particular, $k$-sharp).
    \end{proposition} 

    The rest of this section is devoted to proving the above proposition, and proceeds via a series of lemmas. In all subsequent lemmas in this section, we assume the conditions of \cref{p: small cancellation loops}.

    \begin{notation}
        We let $O$ be the union of the $H$-orbits of the points in $\supp(\phi) \cup A \cup A'$ together with the paradigm set.
    \end{notation}
     
    \begin{lemma} \label{l:auxiliary}
        Let $w= w_{0}\cdot w_{1}\cdot w_{2}$ be a decomposition of $w$ into subwords such that $|w_{1}| \geq 2L_{sc}+L_{tv} - 1$. Then:
        \begin{itemize}
            \item $\dom \psi_{w_1} \sub \psi_{w_0}(A)$;
            \item $\dom \psi_{w_1^{-1}} \sub \psi_{w_2^{-1}}(A')$.
        \end{itemize}

        In particular, $\dom \psi_w = A$ and $\im \psi_w = A'$.
    \end{lemma}
    \begin{proof}
        We only prove the first statement, as the same argument will apply symmetrically for the second. We have $\NFr_\lambda(\Omega) \sub \psi_{w_0}(A) \cap \dom \psi_{w_1}$, so it suffices to consider $b \in \dom \psi_{w_1} \setminus \NFr_\lambda(\Omega)$. Suppose for a contradiction that $b \notin \psi_{w_0}(A)$. Let $(b_0, \cdots, b_{|w_1|}) = \mc{P}_{w_1}^\psi(b)$.

        If $\{b_m \mid m \leq L_{sc}\} \cap O = \varnothing$, then as $b_0 \in \dom \psi_{w_1} \setminus \psi_{w_0}(A)$ and as $\psi$ is an extension of $\phi$ by $w$-arcs, we have that $b_0 = c_{l, j}$ for some $j$ and some $l$ with $1 \leq l \leq m$ and $l \neq |w_0|$. Therefore, as $w_1$ is reduced, we have $(b_m \mid m \leq L_{sc}) = (c_{l + \varepsilon m, j} \mid m \leq L_{sc})$ for some $\varepsilon = \pm 1$ (where $c_{l + \varepsilon m, j}$ always lies in the interior of the arc), and so there are two occurrences of the word $w_1[1:L_{sc}]$ in $w$, contradicting that $w$ has cancellation $< L_{sc}$. Thus there exists minimal $m_0$ with $0 \leq m_0 \leq L_{sc}$ such that $b_{m_0} \in O$. 
        
        Let $m_1$ with $m_0 \leq m_1 \leq |w_1|$ be maximal such that $\{b_{m_0}, \cdots, b_{m_1}\} \sub O$. Then $b_{m_1} = \psi_{w_1[m_0 + 1 : m_1]}(b_{m_0}) = \phi_{w_1[m_0 + 1 : m_1]}(b_{m_0})$, so as $b_{m_0} \notin \NFr_\lambda(\Omega)$, by \ref{sc a} we have $|w_1[m_0 + 1 : m_1]| = m_1 - m_0 < L_{tv}$. So $m_1 < L_{tv} + L_{sc}$.

        Now consider the subword $w_1[m_1 + 1 : |w_1|]$. As $b_{m_1} \in O$ and $b_{m_1 + 1} \notin O$, we have that $w_1[m_1 + 1 : |w_1|]$ is an initial subword of $w^{\pm 1}$, and as $|w_{1}| \geq 2L_{sc}+L_{tv} - 1$ and $m_1 < L_{tv} + L_{sc}$ we have $|w_1[m_1 + 1 : |w_1|]| \geq L_{sc}$, contradicting that $w$ has cancellation $< L_{sc}$. (Note that in the particular case $w_0 = \varnothing$ and $m_0 = m_1 = 0$ we have $b_0 \notin A$ by assumption, and therefore $w$ still contains two distinct occurrences of $w_1[m_1 + 1 : |w_1|]$, giving the desired contradiction.)
    \end{proof}

    Recall the notation $\Wvr(T)$ for the set of very reduced words in $T$ (see \cref{d: very reduced}).
   
    \begin{defn}
        Let $u \in \Wvr(T)$. Let $b \in \dom \psi_u$ and let $(b_0, \cdots, b_{|u|}) = \mc{P}^\psi_u(b)$. For $0 \leq l < |u|$, we say that $b_l$ \emph{begins a $(+)$-traverse} if $b_l \in A_\Omega$ and $b_{l+1}$ is an interior point of a $w$-arc. We say that $b_l$ \emph{begins a $(-)$-traverse} if $b_l \in A'_\Omega$ and $b_{l+1}$ is an interior point of a $w$-arc. (Note that as $A^{}_\Omega \cap A'_\Omega = \varnothing$, if $b_l$ begins a $(+)$-traverse then it does not begin a $(-)$-traverse, and vice versa.) We say that $b_l$ \emph{begins a traverse} if it begins a $(+)$-traverse or a $(-)$-traverse.

        For $\bar{b} \in (O)^k$ with $\bar{b} \sub \dom \psi_u$, writing $(\bar{b}_0, \cdots, \bar{b}_u) = \mc{P}^\psi_u(\bar{b})$, we say that $\bar{b}_l$ \emph{begins a $(+)$-traverse} if each coordinate of $\bar{b}_l$ outside $\NFr_\lambda(\Omega)$ begins a $(+)$-traverse, and we define beginning a $(-)$-traverse and beginning a traverse analogously.
    \end{defn}

    \begin{lem} \label{l: whole arc}
        Let $u \in \Wvr(T)$, $u \neq \varnothing$. Let $b \in \dom \psi_u$ with $\{b, \psi_u(b)\} \sub O$. Then:
        \begin{itemize}
            \item if $b$ begins a $(+)$-traverse, then $|u| \geq |w|$ and $u[1 : |w|] = w$, and thus $\mc{P}^\psi_{u[1 : |w|]}(b)$ is a $w$-arc;
            \item if $b$ begins a $(-)$-traverse, then $|u| \geq |w|$ and $u[1 : |w|] = w^{-1}$, and thus $\mc{P}^\psi_{u[1 : |w|]}(b)$ is the inverse of a $w$-arc.
        \end{itemize}
    \end{lem}
    \begin{proof}
        We prove the first statement -- the proof of the second is similar. We show by induction on $l$ that for $1 \leq l \leq |w|$ we have $|u| \geq l$ and $u[1 : l] = w[1 : l]$. Let $(b_0, \cdots, b_{|u|}) = \mc{P}^\psi_u(b)$.
        
        We first check the base case $l = 1$. As $b_0 \in A_\Omega$, we have $b_0 = c_{0, j}$ for some $j$. As $b_1$ is an interior point of a $w$-arc, we have $b_1 = c_{l', j'}$ for some $1 \leq l' \leq |w| - 1$ and some $j'$. So $\psi_{u[1]}(c_{0, j}) = c_{l', j'}$, and by \cref{d:arc extension}\ref{arcs different orbits} we have $u[1] \notin H$. By \cref{d:arc extension}\ref{arcs orbit outside support} we have $c_{l', j'} \notin \supp \phi$, and so $u[1] = w[1]$ and $b_1 = c_{1, j}$.

        For the induction step, suppose $1 \leq l < |w|$ and that $|u| \geq l$ and $u[1 : l] = w[1 : l]$. So $(b_0, \cdots, b_l) = (c_{0, j}, \cdots, c_{l, j})$. As $b_l = c_{l, j} \notin O$ and $\psi_u(b) \in O$ we have $|u| \geq l + 1$ and $u[l+1] \in H^\ast \cup T_\Omega^{\pm 1}$. Suppose for a contradiction that $u[l + 1] \in H^\ast$. Then by \cref{d:arc extension}\ref{arcs different orbits} and \ref{arcs orbit outside support} we have $b_{l+1} \notin O$, so as $\psi_u(b) \in O$ we have $|u| \geq l + 2$ and $u[l + 2] \in H^\ast \cup T_\Omega^{\pm 1}$. As $u$ is reduced we necessarily have $u[l + 2] \in T_\Omega^{\pm 1}$. Thus $b_{l+1}$ is an interior point of a $w$-arc, and so by \cref{d:arc extension}\ref{arcs orbit outside support} we have $u[l+1] \in \Omega$. But then the subword $u[l : l+2]$ contradicts that $u$ is very reduced, as it is of the form $t \omega t'$ for some $t, t' \in T_\Omega^{\pm 1}$ and $\omega \in \Omega$. So $u[l + 1] \in T_\Omega^{\pm 1}$. As $\psi_{u[l+1]}(c_{l, j}) = b_{l+1}$ we therefore have $u[l+1] \in \{w[l+1], w[l]^{-1}\}$, and as $u[1:l] = w[1:l]$ and $u$ is reduced we have $u[l+1] = w[l+1]$, completing the induction step.
    \end{proof}
    
    \begin{lem} \label{l:traverse decomp}
        Let $u\in\Wvr(T)$, $u \neq \varnothing$, and let $\bar{b} \in (O)^k$, $\bar{b} \sub \dom \psi_u$, with $\psi_{u}(\bar{b}) \in (O)^k$. Let $(\bar{b}_0, \cdots, \bar{b}_{|u|}) = \mc{P}^\psi_u(\bar{b})$. Write $b_{i, j}$ for the $j$th coordinate of $\bar{b}_i$ (where $j \in \mathbf{k}$). Let $\rho : A \to A'$ be the bijection $\bar{a} \mapsto \bar{a}'$. Then:
        \begin{enumerate}[label=(\roman*)]
            \item \label{tv sync} if $b_{l, j}$ begins a $(+)$-traverse, then:
            \begin{itemize}
                \item $\bar{b}_l$ begins a $(+)$-traverse,
                \item the underlying sets of $\bar{b}_l$ and $\bar{b}_{l + |w|}$ are $A$ and $A'$ respectively, and $\rho(\bar{b}_l) = \bar{b}_{l + |w|}$;
            \end{itemize}
            \item \label{tv there and back} if $\bar{b}_l$ begins a $(+)$-traverse and there exists a next index $l'$ for which $\bar{b}_{l'}$ begins a traverse (that is, $l' > l$ is minimal with this property), then the following hold:
            \begin{itemize}
                \item $l' > l + |w|$,
                \item $\bar{b}_{l'} = \phi_{u[l + |w| + 1 : l']}(\bar{b}_{l + |w|})$,
                \item $\bar{b}_{l'}$ begins a $(-)$-traverse;
            \end{itemize}
            \item \label{tv minus} analogous statements to \ref{tv sync}, \ref{tv there and back} hold for $(-)$-traverses by switching signs, switching $A$ and $A'$ and switching $\rho$ and $\rho^{-1}$;
            \item \label{tv endpoints} if $\bar{b}_l$ is the first element of $\mc{P}^\psi_u(\bar{b})$ to begin a traverse, then $\bar{b}_l = \phi_{u[1:l]}(\bar{b}_0)$, and if $\bar{b}_l$ is the last element of $\mc{P}^\psi_u(\bar{b})$ to begin a traverse, then $\bar{b}_{|u|} = \phi_{u[l+1:|u|]}(\bar{b}_l)$.
        \end{enumerate}
        
        Let $(\bar{b}_{l_0}, \cdots, \bar{b}_{l_{n-1}})$ be the subsequence of $\mc{P}^\psi_u(\bar{b})$ consisting of the $\bar{b}_l$ which begin a traverse. Then, as a consequence of the above, this subsequence induces a decomposition of $u$ into subwords \[u = v_0 \cdot w^{\varepsilon_0} \cdots v_{n-1} \cdot w^{\varepsilon_{n-1}} \cdot v_n,\] with $\varepsilon_m = \pm 1$ for $0 \leq m < n$, where $v_0 = u[1 : l_0]$, $v_m = u[l_{m-1} + |w| + 1 : l_m]$ for $1 \leq m \leq n-1$ and $v_n = u[l_{n-1} + |w| + 1 : |u|]$, and this decomposition has the following properties:
        \begin{itemize}
            \item the only subwords of $u$ equal to $w^{\pm 1}$ are precisely the $w^{\varepsilon_m}$, $0 \leq m \leq n-1$, in the decomposition;
            \item $|v_m| \geq 1$ for $1 \leq m \leq n-1$;
            \item $\varepsilon_{m + 1} = - \varepsilon_m$ for $0 \leq m < n-1$.
        \end{itemize}
    \end{lem}
    \begin{proof}
        \ref{tv sync}: suppose that $b_{l, j}$ begins a $(+)$-traverse. By \cref{l: whole arc} we have $u[l+1:l+|w|] = w$. We have $\dom \psi_w = A$ by \cref{l:auxiliary}, so as $\bar{b}_l \sub \dom \psi_{u[l+1:l+|w|]}$ it follows that $\bar{b}_l \sub A$ and thus $\bar{b}_l$ begins a $(+)$-traverse. The remaining parts of the claim are immediate. We have the analogous claim for $(-)$-traverses by a similar argument.
        
        \ref{tv there and back}: take $j$ such that $b_{l, j} \notin \NFr_\lambda(\Omega)$. So $b_{l, j}$ begins a $(+)$-traverse and $u[l+1:l+|w|] = w$. Therefore $b_{l+i, j}$ is an interior point of a $w$-arc for $1 \leq i < |w|$ and so does not lie in $\dom \psi_w = A$ or $\dom \psi_{w^{-1}} = A'$. So $l' \geq l + |w|$. 
        
        Suppose for a contradiction that $l' = l + |w|$. Then $u[l'+1:l'+|w|] \in \{w, w^{-1}\}$. If $u[l'+1:l'+|w|] = w^{-1}$, as $u[l+1:l+|w|] = w$ this contradicts that $u$ is reduced. If $u[l'+1:l'+|w|] = w$, then as the underlying set of $\bar{b}_{l'}$ is $A'$ and $\dom \psi_w = A$, this implies $A = A'$, contradicting that $A \cap A' = \NFr_\lambda(\Omega)$. So $l' > l + |w|$.
        
        The tuple $\bar{b}_{l + |w|}$ has underlying set $A' \sub O$. We have that $\mc{P}^\psi_{u[l+|w|+1:l']}(\bar{b}_{l + |w|})$ does not intersect any $H$-orbit of an interior point of a $w$-arc, using the minimality of $l'$ and the fact that the $H$-orbits of interior points of $w$-arcs are disjoint from $O$. So $\psi_{u[l+|w|+1:l']}(\bar{b}_{l + |w|}) = \phi_{u[l+|w|+1:l']}(\bar{b}_{l + |w|})$. Suppose for a contradiction that $\bar{b}_{l'}$ begins a $(+)$-traverse. Then $\bar{b}_{l'}$ has underlying set $A$ and thus $\phi_{u[l+|w|+1:l']}(A') = A$. So $A$, $A'$ lie in the same $\phi$-orbit, contradicting \cref{p: small cancellation loops}\ref{distinct set orbits}. An analogous argument gives the corresponding statement for $(-)$-traverses, and \ref{tv endpoints} follows via a similar argument to that of this paragraph.

        The statements in the lemma regarding the decomposition of $u$ follow immediately from \ref{tv sync}, \ref{tv there and back}, \ref{tv minus}.
    \end{proof}

    \begin{lemma} \label{l:crossing}
        Let $u\in\Wvr(T)$. Let $b\in \dom(\psi_{u})$, and write $(b_0, \cdots, b_{|u|})=\mathcal{P}^{\psi}_{u}(b)$. and suppose that there is $l$, $0 < l < |u|$, such that $b_l$ belongs to some $w$-arc. 
        
         Let $[l_0, l_1] \sub [0, |u|]$ be an interval containing $l$ of maximal length such that $(b_{l_0}, \cdots, b_{l_1})$ is a subarc of some $w$-arc or its inverse. Then:
        \begin{itemize}
            \item if $l_0 \geq 2$, then $b_{l_0}$ is an endpoint of this $w$-arc;
            \item if $l_1 \leq |u|-2$, then $b_{l_1}$ is an endpoint of this $w$-arc.
   	\end{itemize}
   \end{lemma}
    \begin{proof} 
        We prove the first statement -- the proof of the second statement is similar. The proof resembles that given for the induction step in the proof of \cref{l: whole arc}. Suppose for a contradiction that $b_{l_0}$ is not an endpoint of the $w$-arc containing $(b_{l_0}, \cdots, b_{l_1})$. Then $b_{l_0} \notin O$, and so $u[l_0] \in H^\ast \cup T_\Omega^{\pm 1}$. As $[l_0, l_1]$ is of maximal length, we have that $b_{l_0 - 1}$ does not lie in the $w$-arc containing $b_{l_0}$, so $u[l_0] \in H^\ast$ and $b_{l_0 - 1} \notin O$. Thus $u[l_0 - 1] \in H^\ast \cup T_\Omega^{\pm 1}$, and as $u$ is reduced we have $u[l_0 - 1] \in T_\Omega^{\pm 1}$. As $b_{l_0} \notin O$ and $u[l_0] \in H^\ast$ we have $u[l_0 + 1] \in T_\Omega^{\pm 1}$. As $b_{l_0 - 1}$, $b_{l_0}$ both lie in $w$-arcs and $u[l_0] \in H^\ast$, by \cref{d:arc extension}\ref{arcs different orbits} we have $u[l_0] \in \Omega$. But then the subword $u[l_0 - 1 : l_0 + 1]$ is of the form $t \omega t'$ for some $t, t' \in T_\Omega^{\pm 1}$ and $\omega \in \Omega$, contradicting that $u$ is very reduced.
    \end{proof}

    \begin{lemma} \label{l:crossing 3}
        Let $u\in\Wvr(T)$ with $|u| \geq 2 \times (2L_{sc} + L_{tv} - 1)$ and let $B\in [U]^{k}$ with $B \sub \dom(\psi_u)$. Then there is $v \in \mc{W}(T)$ such that $B \sub \dom(\psi_{v})$ and $\psi_{v}(B) \sub O$.  
    \end{lemma}
    \begin{proof}
        Let $C = 2L_{sc} + L_{tv} - 1$. If $\psi_{u[1:C+1]}(B) \sub O$ then we may simply take $v = u[1:C+1]$, so suppose there exists $b \in B$ such that $\psi_{u[1:C+1]}(b) \notin O$. We first show that $u$ has a subword $u_1$ of length $C$ which is a subword of $w^{\pm 1}$. Let $(b_0, \cdots, b_{|u|}) = \mc{P}^\psi_u(b)$. Let $[l_0, l_1] \sub [0, |u|]$ be the interval containing $C+1$ of maximal length such that $(b_{l_0}, \cdots, b_{l_1})$ is a subarc of some $w$-arc or its inverse. If $l_0 \leq 1$, then as $(b_1, \cdots, b_{C+1})$ is a subarc, we have that the subword $u_1 = u[2:C+1]$ of $u$ of length $C$ is also a subword of $w^{\pm 1}$. If $l_0 \geq 2$, by \cref{l:crossing} we have that $l_0$ is an endpoint of the $w$-arc containing $b_{C+1}$ in its interior. As $l_0 \leq C$ and $|u| \geq 2C$, we have that $(b_{l_0}, \cdots, b_{l_0 + C})$ is defined and thus is a subarc of the $w$-arc containing $b_{C+1}$ (where we recall also that $|w| > C$). So again we obtain a subword $u_1 = u[l_0 + 1 : l_0 + C]$ of $u$ of length $C$ which is a subword of $w^{\pm 1}$. Taking $u_1$ from either case above, we decompose $u$ as $u = u_0 u_1 u_2$.

        Assume that $u_1$ (obtained in either case above) is a subword of $w$ -- the case of $w^{-1}$ is similar. Decompose $w$ as $w_0 u_1 w_2$. By \cref{l:auxiliary} we have $\dom \psi_{u_1} \sub \psi_{w_0}(A)$. So $\psi_{u_0}(B) \sub \psi_{w_0}(A)$, and thus $\psi_{u_0 w_0^{-1}}(B) \sub A \sub O$ as required.
    \end{proof}
    
    \begin{lem} \label{l:cr vr word fixing k-tuple}
        Let $u \in \Wvr(T)$ be a very reduced word which is also cyclically reduced. Suppose that there exists $b \in \dom \psi_u$ with $b \notin O$ and $\psi_u(b) = b$. Then $|u| \geq |w|$. 
    \end{lem}
    \begin{proof}
        First suppose for a contradiction that $b$ is not in the interior of some $w$-arc. Then $b \notin \supp \psi$, so $u[1], u[|u|] \in H$. As $b$ does not lie in the non-free $H$-orbit, we thus have $\psi_{u[1]}(b) \neq b$, so $|u| \geq 2$, and thus $u$ is not cyclically reduced -- contradiction. So $b = c_{l, j}$ for some $1 \leq l \leq m-1$ and some $j$. Let $v_0 = w[1:l]$ and $v_1 = w[l+1 : |w|]$. Then as $\psi_u(b) = b$ and as $u$ is very reduced, the word $u$ must decompose as one of the following (where $u'$ denotes the remaining part of $u$ in the decomposition):
        \begin{itemize}
            \item $u = v_1 u' v_0$: clearly here $|u| \geq |w|$;
            \item $u = \omega v_1 u' v_0$ or $u = v_1 u' v_0 \omega$ for some $\omega \in \Omega$: likewise $|u| \geq |w|$;
            \item $u = \omega v_1 u' v_0 \omega'$ for some $\omega, \omega' \in H$: this case cannot occur as $u$ is cyclically reduced;
            \item $u = v_0^{-1}u'v_1^{-1}$ and analogous cases to the above: all have $|u| \geq |w|$;
            \item $u = v_1 u' v_1^{-1}$, $u = \omega v_1 u' v_1^{-1}$, $u = v_1 u' v_1^{-1} \omega$, $u = \omega v_1 u' v_1^{-1} \omega'$: these cannot occur as $u$ is cyclically reduced, and similarly for the cases only involving $v_0$ instead of $v_1$. \qedhere
        \end{itemize}
    \end{proof}

    We now prove \cref{p: small cancellation loops}. The only statement that we need to prove is that $\psi$ is $k$-sharp.

    \begin{proof}[\textbf{Proof of \cref{p: small cancellation loops}}]
        Suppose for a contradiction there is $g \in G^\ast$ and $\bar{b}\in(U)^{k}$ with $\psi_g(\bar{b})=\bar{b}$. In the case that $\bar{b} \notin O^k$ then by \cref{l: cyc red fixed tuple} we may assume that $g$ has a cyclically reduced and very reduced representative $u \in \Wvr(T)$. By \cref{l:cr vr word fixing k-tuple} we have $|u| \geq |w|$, and so by assumption \ref{sc b} in \cref{p: small cancellation loops} and \cref{l:crossing 3} there exists $g_0 \in G^\ast$ with $\psi_{g_0}(\bar{b}) \in (O)^k$. Let $g_1 = g^{g_0}$. Then $\psi_{g_1}(\psi_{g_0}(\bar{b})) = \psi_{g_0}(\bar{b})$. So we may assume that $\bar{b} \in (O)^k$.

        Let $u \in \Wvr(T)$ be a very reduced representative of $g$, and let $u = v_0 \cdot w^{\varepsilon_0} \cdots v_{n-1} \cdot w^{\varepsilon_{n-1}} \cdot v_n$ be the decomposition of $u$ given by \cref{l:traverse decomp}. By \cref{l:traverse decomp}, using the facts that $\phi$ is $k$-sharp and $A, A'$ do not lie in the same $\phi$-orbit, we have $n \geq 2$. If $\varepsilon_0 = 1$ then, letting $g_0 \in G$ be the group element represented by $v_0 w$, we have $\psi_{g^{g_0}}(\bar{a}') = \bar{a}'$, and applying \cref{l:traverse decomp} to a very reduced representative of $g^{g_0}$ we have that the first instance of $w^{\pm 1}$ must be $w^{-1}$. So we may assume that $\varepsilon_0 = -1$, and thus $u$ is of the form $v_0 w^{-1} v_1 w v_2 \cdots$, where $v_1$ is non-empty (possibly $v_0$, $v_2$ are empty, but the two instances of $w^{\pm 1}$ must occur). But as the underlying sets of $\psi_{v_0 w^{-1}}(\bar{b})$ and $\psi_{v_0 w^{-1} v_1}(\bar{b})$ are both $A$, we have that $\psi_{v_1}(A) = A$, so $\phi_{v_1}(A) = A$ (by \cref{l:traverse decomp}), and thus as $A$ is a strict $\Omega$-set with respect to $\phi$ we have that $v_1$ represents an element of $\Omega$. As $v_1$ is reduced, in fact $v_1 = \omega$ for some $\omega \in \Omega$. But then $u = v_0 w^{-1} \omega w \cdots$, and as $w \in \Wr(T_\Omega)$ this contradicts the fact that $u$ is very reduced. So $\psi$ is $k$-sharp.
    \end{proof}

\section{Constructing sharply \texorpdfstring{$\Theta$}{Theta}-transitive actions} \label{s:constructing actions on sets}
    
    We assume as in \cref{s:extending sharp partial actions on sets} that we are given robust $\Theta\leq\sym_{k}$, a $(\Theta, m)$-seed action $\lambda : U \curvearrowleft H$ and a completion $(G, T)$ of $H$. We extend $\lambda$ trivially to a partial action $U \curvearrowleft (G, T)$ as in \cref{partial actions of completions}. 
    
    \textit{In this section only, we make the additional assumption that $m = 1$ and $H = \Theta$.} (From \cref{s:seed actions on structures} onwards we will not make this assumption, and we consider $m \geq 1$ and $(\Theta, m)$-seed groups in general.)

    We assume that the reader is acquainted with basic facts about Polish spaces (e.g.\ that a countable product of Polish spaces is Polish, and that a subspace of a Polish space is Polish iff it is a $\text{G}_\delta$-set); see \cite{Kec95} for further background. 

    \begin{defn} \label{d:spaces of actions on a set}
        Recall that $\sym_U$ denotes the permutation group of $U$. We equip $\sym_U$ as usual with the pointwise convergence topology: for each $n \in \N$ and $\bar{a}, \bar{b} \in U^n$, we specify a basic open set $\{f \in \sym_U \mid \bar{b} = \bar{a} \cdot f\}$. We then have that $\sym_U$ is a Polish space. As $G$ is a countable group, the product space $(\sym_U)^G$ is also Polish. Let $\Act(U, G)$ be the set of (right) actions of $G$ on $U$. We equip $\Act(U, G)$ with the subspace topology induced by $(\sym_U)^G$. As $\Act(U, G)$ is a closed subspace of $(\sym_U)^G$, we then also have that $\Act(U, G)$ is Polish.

        We write $\Act_\lambda(U, G)$ for the subspace of $\Act(U, G)$ consisting of the actions which extend $\lambda$. As $T_\mc{D}$ is finite, the topology on $\Act_\lambda(U, G)$ has a base given by the basic open sets $\mc{V}_\phi = \{\mu \in \Act_\lambda(U, G) \mid \phi \leq \mu\}$, where $\phi$ runs over all finite extensions of $\lambda$.

        We write:
        \begin{align*}
            \Ta_\lambda(U, G) &= \{\mu \in \Act_\lambda(U, G) \mid \mu \text{ is a taut extension of } \lambda \}\\
            \STTa_{\Theta, \lambda}(U, G) &= \{\mu \in \Ta_\lambda(U, G) \mid \mu \text{ is sharply } \Theta \text{-transitive}\}
        \end{align*}

        It is straightforward to check that $\Act_\lambda(U, G)$ is a closed subspace of $\Act(U, G)$ and that $\Ta_\lambda(U, G)$ is a closed subspace of $\Act_\lambda(U, G)$. So $\Act_\lambda(U, G)$ and $\Ta_\lambda(U, G)$ are Polish spaces. 

        Let $\mc{V}_\phi$ be a basic open set of $\Act_\lambda(U, G)$, and let $\mu \in \mc{V}_\phi \cap \Ta_\lambda(U, G)$. As $\mu$ is taut, we may expand $\phi$ to a taut partial action compatible with $\mu$ (here $\phi$ immediately inherits conditions \ref{c-ksharp}, \ref{c-no invariant sets} from $\mu$, and we expand $\phi$ compatibly with $\mu$ if necessary to satisfy conditions \ref{c-fullness}, \ref{c-centraliser acts trivially}). So $\Ta_\lambda(U, G)$ has a base given by $\widetilde{\mc{V}}_\phi = \mc{V}_\phi \cap \Ta_\lambda(U, G)$, where $\phi$ runs over all taut finite extensions of $\lambda$.
    \end{defn}

    In the below, recall that a non-empty perfect Polish space has cardinality $2^{\aleph_0}$ (see \cite[Corollary 6.3]{Kec95}).
    
    \begin{proposition} \label{p:generic action on sets} 
        The space $\Ta_\lambda(U, G)$ is a non-empty perfect Polish space (hence has cardinality $2^{\aleph_0}$), and the subspace $\STTa_{\Theta,\lambda}(U, G)$ is a dense $\text{G}_{\delta}$ subset of $\Ta_\lambda(U,G)$. 
    \end{proposition}
    \begin{proof}
        First, observe that by \cref{fin taut exts exist} a taut finite extension $\lambda'$ of $\lambda$ exists, and by \cref{c: extension ksharp} we have that $\lambda'$ extends to a global taut action on $U$. So the space $\Ta_\lambda(U, G)$ is non-empty. 
        
        We next show that $\Ta_\lambda(U, G)$ is perfect. Let $\mu \in \Ta_\lambda(U, G)$, and let $\widetilde{\mc{V}}_\phi$ be a basic open set of $\Ta_\lambda(U, G)$ containing $\mu$, where $\phi$ is a taut finite extension of $\lambda$. Let $\Omega \in \mc{D}(\Theta)$, and let $A \in [U]^k$ be an $\Omega$-set with $A \not\sub \supp(\phi)$. Let $t \in T_\Omega$. By \cref{free extensions exist} and \cref{l:partial k-sharp extend}, there is a taut finite extension $\psi \geq \phi$ with $\psi_t(A) \neq \mu_t(A)$. As each taut finite extension extends to a global taut action, we have $\varnothing \neq \widetilde{\mc{V}}_\psi \sub \widetilde{\mc{V}}_\phi \setminus \{\mu\}$, so $\Ta_\lambda(U, G)$ is perfect.

        We now show that $\STTa_{\Theta,\lambda}(U, G)$ is a $\text{G}_{\delta}$ subset of $\Ta_\lambda(U,G)$. Let $A_{0}\in[U]^{k}$ be a $\Theta$-invariant set for which $(\lambda : A_0 \curvearrowleft \Theta) \simeq (\pi : \mathbf{k} \curvearrowleft \Theta)$. For $C\in[U]^{k}$ let $\mc{W}_C$ be the set of actions in $\Ta_\lambda(U, G)$ with $A_0, C$ in the same orbit. It is straightforward to see that each $\mc{W}_C$ is an open subset of $\Ta_\lambda(U, G)$. By definition, each $\mu \in \STTa_{\Theta, \lambda}(U, G)$ acts transitively on $[U]^k$ and thus lies in each $\mc{W}_C$. Let $\mu \in \bigcap_{C \in [U]^k} \mc{W}_C$. As $\mu$ is $k$-sharp, the set-stabiliser $\sSt_\mu(A_0)$ is finite, and as $\sSt_\mu(A_0)$ is conjugate to a subgroup of $\Theta$ by \cref{torsion completion}, we have $|\sSt_\mu(A_0)| \leq |\Theta|$. As $\Theta \sub \sSt_\mu(A_0)$, we have $\sSt_\mu(A_0) = \Theta$. As $\mu$ acts transitively on $[U]^k$, we therefore have $\mu \in \STTa_{\Theta, \lambda}(U, G)$. So $\STTa_{\Theta, \lambda}(U, G) = \bigcap_{C \in [U]^k} \mc{W}_C$, and thus $\STTa_{\Theta, \lambda}(U, G)$ is a $\text{G}_\delta$ set.
        
        Finally, we show that $\STTa_{\Theta, \lambda}(U, G)$ is dense in $\Ta_\lambda(U, G)$. As $\Ta_\lambda(U, G)$ is a Polish space and $\STTa_{\Theta, \lambda}(U, G)$ is a countable intersection of the open sets $\mc{W}_C$, by the Baire category theorem it suffices to show that each $\mc{W}_C$ is dense in $\Ta_\lambda(U, G)$.

        Let $C \in [U]^k$. Let $\mu \in \Ta_\lambda(U, G)$, and let $\widetilde{\mc{V}}_\phi$ be a basic open set of $\Ta_\lambda(U, G)$ containing $\mu$, where $\phi$ is a taut finite extension of $\lambda$. We will show that $\mc{W}_C \cap \widetilde{\mc{V}}_\phi \neq \varnothing$. If $\Theta$ is unruly and there is a $\phi$-strict $\Theta$-set $C'$ in the $\phi$-orbit of $C$, then by \cref{l: key seed action props}\ref{seedact unruly unique}, as $A_0$ is the unique $\Theta$-set we have $C' = A_0$, and so $\widetilde{\mc{V}}_\phi \sub \mc{W}_C$.

        Otherwise, by \cref{l:stabilisers of k sets} there is $\Omega \in \mc{D}(\Theta)$ and a $\phi$-strict $\Omega$-set $C'$ in the $\phi$-orbit of $C$. It thus suffices to find a taut finite extension $\psi \geq \phi$ with $C'$, $A_0$ in the same $\psi$-orbit (note that given any taut finite extension of $\lambda$, \cref{c: extension ksharp} guarantees that the basic open set specified by this extension is non-empty).

        By \cref{l:disjointness through free extensions}, there is a non-empty positive word $u'\in\Wr(T_\Omega)$, a coherent $\Omega$-free extension $\phi^{(1)}$ of $\phi$ to $A_0 \cup C'$ by $u'$ and a generator $t \in T_\Omega$ such that $\phi^{(1)}_{u'}(A_0 \cup C') \cap \dom \phi^{(1)}_t=\NFr_{\lambda}(\Omega)$. Let $\phi^{(2)}$ be an atomic $\Omega$-free extension of $\phi^{(1)}$ to $\phi^{(1)}_{u'}(A_0 \cup C')$ by $t$. Then $\phi^{(2)}_{u't}(A_0 \cup C') \cap \dom \phi^{(2)}_s = \NFr_\lambda(\Omega)$ for each $s \in T_\Omega$. Let $\phi^{(3)}$ be an atomic $\Omega$-free extension of $\phi^{(2)}$ to $\phi^{(2)}_{u't}(C')$ by $t$. Let $A'_0 = \phi^{(3)}_{u't}(A_0)$ and $C'' = \phi^{(3)}_{u't^2}(C')$. Then $A'_0 \cap C'' = \NFr_\lambda(\Omega)$ and $(A'_0 \cup C'') \cap \dom \phi^{(3)}_s = \NFr_\lambda(\Omega)$ for each $s \in T_\Omega \setminus \{t\}$. By \cref{l:partial k-sharp extend}, the partial action $\phi^{(3)}$ is taut and $C''$ is a $\phi^{(3)}$-strict $\Omega$-set. By Lemma \ref{l:partial k-sharp extend}\ref{no more permutations}, we have that $C'', A'_0$ are not in the same $\phi^{(3)}$-orbit.  
        
        Applying \cref{l:disjointness through free extensions} again to $\phi^{(3)}$, there is a non-empty positive word $u \in \Wr(T_\Omega)$ with $\dom \phi^{(3)}_u = \NFr_\lambda(\Omega)$. Let $s \in T_\Omega \setminus \{t\}$. Apply \cref{l:existence small cancellation} with $S = T_\Omega$, the letters $s = \tld{s}$ and $t$, the positive word $u$ and $N = 6$ to obtain $w \in \Wr(T_\Omega)$ satisfying the conditions specified in the lemma. Note that $w$ begins with $s$ and ends with $s^{-1}$. Enumerate $C''$, $A'_0$ as $\bar{c}''$, $\bar{a}'_0$ so that the map $\bar{c}'' \mapsto \bar{a}'_0$ is an $\Omega$-isomorphism extending $\id_{\NFr_\lambda(\Omega)}$. Then by \cref{extensions by w-arcs exist} there exists an $\Omega$-extension $\psi$ of $\phi^{(3)}$ by $w$-arcs from $\bar{c}''$ to $\bar{a}'_0$, and taking $L_{tv} = L_{sc} = \lfloor \frac{|w|}{6} \rfloor$, the conditions of \cref{p: small cancellation loops} are satisfied, so $\psi$ is taut. So $\STTa_{\Theta, \lambda}(U, G)$ is dense in $\Ta_\lambda(U, G)$. 
    \end{proof}

We now apply Proposition \ref{p:generic action on sets} in the particular cases $\Theta = \sym_2, \sym_3$. We now assume some background knowledge of sharply $k$-transitive groups: in particular, the notions of a non-split group and the characteristic of a sharply $k$-transitive action (see for example \cite{Ten16I}). Recall from \cite{Ten16I} that:
\begin{itemize}
    \item a sharply $2$-transitive action $U \curvearrowleft G$ has \emph{characteristic 2} if all involutions have no fixed points;
    \item a sharply $2$-transitive group $G$ is \emph{non-split} if it does not have a non-trivial abelian normal subgroup,
\end{itemize}
and recall that we define the same two properties for sharply $3$-transitive $G$ if they hold for each point-stabiliser of $G$.

\begin{defn}
    Let $G$ be a group and $U$ a countably infinite set.
    \begin{itemize}
        \item We define $\ShChar_2(U, G)$ to be the space of $2$-sharp actions $U \curvearrowleft G$ such that every involution acts freely.
        \item We define $\ShChar_3(U, G)$ to be the space of $3$-sharp actions $U \curvearrowleft G$ such that each involution has exactly one fixed point and each element of $G$ of order $3$ acts freely.
    \end{itemize}
    For $k = 2, 3$, we define $\ShTChar_k(U, G)$ to be the subspace of $\ShChar_k(U, G)$ consisting of those actions which are sharply $k$-transitive. Note that $\ShTChar_k(U, G)$ is exactly the space of sharply $k$-transitive actions $U \curvearrowleft G$ of characteristic $2$.
\end{defn}

\begin{cor} \label{c: non-split fin pres from prop}
    Let $k = 2, 3$. Let $\Theta = \sym_k$ and let $G$ be a completion of $\Theta$. Let $U$ be an infinite set. Then:
    \begin{enumerate}[label=(\roman*)]
        \item \label{i: gen for set actions} the Polish space $\ShChar_k(U, G)$ has cardinality $2^{\aleph_0}$ and $\ShTChar_k(U, G)$ is a comeagre subset of $\ShChar_k(U, G)$;
        \item \label{i: non split} $G$ is non-split and finitely presented.
    \end{enumerate}
\end{cor}
\begin{proof}
    \ref{i: gen for set actions}: Let $\Se_k(U, \Theta)$ be the space of seed actions $U \curvearrowleft \Theta$. It is straightforward to check that the restriction map $\rho : \mu \mapsto \mu|_\Theta$ is a continuous open map $\rho : \ShChar_k(U, G) \to \Se_k(U, \Theta)$. By Lemma \ref{l: action of fin subgp of G} we have $\Ta_\lambda(U, G) \sub \ShChar_k(U, G)$ for each $\lambda \in \Se_k(U, \Theta)$. By Lemma \ref{l: taut transitive first three conds}, it is straightforward to see that for each $\lambda \in \Se_k(U, \Theta)$ we have $\rho^{-1}(\lambda) = \Ta_\lambda(U, G)$. By Proposition \ref{p:generic action on sets} we have that $\STTa_{\Theta, \lambda}(U, G)$ is a dense $\mathrm{G}_\delta$ set in $\Ta_\lambda(U, G)$ and $|\Ta_\lambda(U, G)| = 2^{\aleph_0}$ for each $\lambda \in \Se_k(U, \Theta)$, and so $|\ShChar_k(U, G)| = 2^{\aleph_0}$ and we have that $\ShTChar_k(U, G) \cap \rho^{-1}(\lambda)$ is comeagre in $\rho^{-1}(\lambda)$ for each $\lambda \in \Se_k(U, \Theta)$. It is straightforward to check that $\ShTChar_k(U, G)$ has the Baire Property. Thus by \cite[Theorem 1.33]{Mel26} (a generalised version of the Kuratowski-Ulam theorem) we have that $\ShTChar_k(U, G)$ is comeagre in $\ShChar_k(U, G)$ as required.

    \ref{i: non split}: It is immediate from the definition of completions of a seed group (Definition \ref{d:free completion}) that $G$ is finitely presented. It remains to show that $G$ is non-split. We imagine that it is folklore that sharply $k$-transitive virtually free groups which are not virtually cyclic are non-split, but we give a proof for the benefit of the reader.
    
    \begin{claim*}\label{normal subgroups}
        A non-abelian group $G$ that admits a sharply $2$-transitive action on an infinite set $\mu : U \curvearrowleft G$ cannot have a non-trivial abelian normal subgroup $N$ that is finite or finite-index.
    \end{claim*}
     \begin{subproof}
        First consider the case where $N$ has finite index in $G$. As $N$ is abelian we have $|G : \centr_G(N)| \leq |G : N|$. Let $g \in N$, $g \neq 1$. We have $|G : \centr_G(g)| \leq |G : \centr_G(N)|$, and so $g$ has finitely many conjugates in $G$. As $\mu$ is sharp, there is $a \in U$ with $a \neq a \cdot g$. For each $b \in U \setminus \{a\}$, by $2$-transitivity of $\mu$ there is $h \in G$ with $(a, a \cdot g) \cdot h = (a, b)$, so $a \cdot g^h = b$, and as $U$ is infinite this gives a contradiction.
     	  
        The remaining case is where $N$ is finite. As $N \trianglelefteq G$, each element of $G$ must respect the partition of $U$ into $N$-orbits, and so by sharp $2$-transitivity of $G$ we have that $\mu|_N : U \curvearrowleft N$ is transitive or trivial. As $N$ is finite, the action $\mu|_N$ cannot be transitive, so $\mu|_N$ is trivial and thus by $2$-sharpness $N = \mathtt{1}$.
     \end{subproof} 

     We now show that the completion $G$ of the seed group $\Theta$ is non-split. First consider the case $k = 2$. Note that $G$ is not virtually cyclic: for any finite-index subgroup $G' \leq G$, we have that for each $g \in G$, some non-zero power of $g$ lies in $G'$, and so such $G'$ cannot be cyclic as $G$ contains a non-abelian free group (and thus two elements $g_0,g_1$ where no power of $g_0$ commutes with a power of $g_1$). Applying the above Claim, if $G$ is split, then it must contain some abelian $N \trianglelefteq G$ which is both infinite and has infinite index. Let $F$ be a free subgroup of $G$ of finite index. Then $N \cap F$ is an infinite-index infinite abelian normal subgroup of $F$. However, any non-trivial abelian subgroup of $F$ is cyclic, and its normaliser is the maximal cyclic subgroup of $F$ containing it (see \cite[Ch.1, Thm. 2.19]{LS1979}). Therefore $F$ itself would have to be cyclic, which is impossible.
     
     In the case $k=3$, given a sharply $3$-transitive action $\mu : U \curvearrowleft G$, we need to check that the stabiliser $G_{a}=\St_{\mu}(a)$ of a point $a\in U$ admits no non-trivial abelian normal subgroup. Since $G_{a}$ inherits the property of being virtually free from $G$ and the action $U\setminus \{a\} \curvearrowleft G_{a}$ is sharply $2$-transitive, the previous argument applies (recalling the notation of Definition \ref{d:free completion} and letting $\sigma \in \sym_3$ be an involution with $\sg{\sigma} \in \mc{D}(\sym_3)$, we have that $G_{a}$ contains some conjugate of the non-abelian free group $F_{\subg{\sigma}}$ and thus is not virtually cyclic).
\end{proof}

\begin{remark}
    Another proof of Corollary \ref{c: non-split fin pres from prop}\ref{i: non split} is via the fact that $G$ admits an action on a simplicial tree that is faithful, minimal and irreducible (the last corresponds to type $V$ in \cite{chiswell2001introduction}, p.134). It is a consequence of \cite[Ch.\ IV, Thm 2.9]{chiswell2001introduction} that if a group $G$ acts minimally and irreducibly on a simplicial tree $S$, then any abelian normal subgroup $A\trianglelefteq G$ must be in the kernel of the action. This reduces the problem to showing that $G$ (resp.\ $G_{a}$) does not have any non-trivial finite abelian normal subgroup, which can be shown either from the structure of $G$ as a graph product of groups or using the Claim in the above proof.
\end{remark}

Corollary \ref{c: non-split fin pres from prop} is a strengthened form of Corollary \ref{c: non split fin pres} from the Introduction (see \cref{ex:s2 s3} for the two examples of groups $G$ stated in Corollary \ref{c: non split fin pres}), and so this completes the proof of Corollary \ref{c: non split fin pres}.
     
\section{Sharply \texorpdfstring{$k$}{k}-homogeneous actions on structures via pleasant structural seed actions} \label{s:seed actions on structures} 
 
This section begins the second half of the paper devoted to finding sharply $k$-homogeneous actions on relational \Fr structures. We will follow the general strategy of the proof of \cref{p:generic action on sets}.

\begin{notn}
    Given a structure $\mc{M}$, we will generally write $M$ for the domain of $\mc{M}$ (the underlying set of the first-order structure). For $A \sub M$ we write $\mc{M}|_A$ or $A^{\mc{M}}$ for the substructure of $\mc{M}$ induced on $A$. We write $[\mc{M}]^k$ for the set of substructures of $\mc{M}$ of size $k$.

    Let $\mc{M}$ be a structure. We call an isomorphism between finitely generated substructures of $\mc{M}$ a \emph{finite partial isomorphism} of $\mc{M}$. Sometimes to avoid excessively heavy typography we will write $A \sub \mc{M}$ when we should more properly write $\mc{A} \sub \mc{M}$: we only do this in the case where $\mc{M}$ is a previously specified ``ambient" structure (for us always a \Fr structure), and as substructures are in one-to-one correspondence with their domains in this case, this should hopefully not cause confusion. We do this most often when using SWIRs (see \cref{s:structures and independence}).
\end{notn}

\begin{defn} \label{d:partial actions structure}
    Let $\mc{M}$ be a locally finite structure, and let $G$ be a group with generating set $T$. A \emph{partial action of $(G,T)$ on $\mc{M}$} is a partial action $\phi=(\phi_g)_{g \in G}$ of $(G,T)$ on the universe $M$ (see \cref{d:general partial action}) such that each $\phi_g$ is an isomorphism of substructures of $\mc{M}$. 
\end{defn}

In the below definition, recall that $\pi : \mathbf{k} \curvearrowleft \Theta$ is the permutation action.

\begin{defn} \label{d:theta r structure}
    Let $\M$ be a strong relational \Fr structure. Let $k \geq 1$ and let $\Theta\leq\sym_{k}$ be a robust subgroup. Let $r \geq 1$. We say that $\M$ is a \emph{$(\Theta,r)$-structure} if:
    \begin{enumerate}[label=(\roman*)]
        \item \label{automorphism tuples} for each $\mc{A}\in[\mc{M}]^k$, there is $\Omega \in \mc{D}(\Theta) \cup \{\Theta\}$ such that $(A \curvearrowleft \Aut(\mc{A})) \simeq (\mathbf{k} \curvearrowleft \Omega)$; 
        \item \label{isomorphism types} there are exactly $r$ isomorphism classes of substructures $\mc{A} \in [\mc{M}]^k$ such that $(A \curvearrowleft \Aut(\mc{A})) \simeq (\mathbf{k} \curvearrowleft \Theta)$;
        \item \label{lower transitivity} if $\Theta$ is unruly and $r > 1$, then $\mc{M}$ is transitive.
    \end{enumerate}
\end{defn}
 
\begin{remark}
    The conditions in the previous definition are rather ad hoc, having been tailored to the particular classes of examples that we consider in this paper: our objective is to obtain sharply $k$-homogeneous actions for many well-known \Fr structures while keeping technicalities to a minimum (particularly the descriptions of the seed groups $H$), so as not to obscure the core ideas. We conjecture that it should be possible to construct sharply $k$-homogeneous actions for some \Fr structures which do not satisfy the above conditions. For example, one could consider the case where some $\mc{A}\in[\mc{M}]^{k}$ has an automorphism group which is an unruly proper subgroup of $\Theta$. Likewise, one could relax \ref{lower transitivity} and attempt to construct sharply $3$-homogeneous actions on structures whose elements have multiple isomorphism types (for example, the generic red-blue-coloured graph).
\end{remark}
    
\begin{definition}
    All the notions introduced in \cref{ssec:partial actions on sets} make sense for actions on structures. In particular, if $\lambda$ is a seed action of $H$ on $M$ by automorphisms (shortly, an action on $\M$), then a taut extension of $\lambda$ will simply be a finite extension of $\lambda$ satisfying the properties of \cref{d:taut partial actions} which is also a partial action on the structure $\M$. We can also speak of free extensions and extensions on $w$-arcs.  
\end{definition}

In the below, recall the definition of taut extensions from \cref{d:taut partial actions}.

\begin{defn}
    Let $\mc{M}$ be a $(\Theta, r)$-structure. Let $s \geq 1$ and let $H$ be a $(\Theta, s)$-seed group. We say that $H$ is \emph{compatible} with $\mc{M}$ if $\Theta$ is docile and $s = 1$, or if $\Theta$ is unruly and $s = r$. 
\end{defn}

\begin{defn} \label{d:arc extensiveness}
    Let $\mc{M}$ be a $(\Theta, r)$-structure. Let $H$ be a compatible seed group and let $(G,T)$ be a completion of $H$. Let $\lambda : M \curvearrowleft H$ be a seed action of sets, where each $\lambda_h$, $h \in H$, is an automorphism of $\mc{M}$, and extend $\lambda$ trivially to $(G, T)$.

    Let $\mc{F}$ be a collection of taut finite extensions $\phi \geq \lambda$, where each $\phi$ is a partial action of $(G, T)$ on $\mc{M}$. We say that $\mc{F}$ is \emph{arc-extensive} if it is non-empty and the following hold:
    \begin{enumerate}[label=(\Roman*)]
        \item \label{condI} given $\Omega \in \mc{D}(\Theta)$, $\phi\in\mc{F}$, $t\in T_{\Omega}^{\pm 1}$, an $\Omega$-substructure $\mc{A}\in[\mc{M}]^{k}$ and $B \fin M$, there exists $\phi' \in \mc{F}$ which is an atomic $\Omega$-free extension of $\phi$ by $t$ to a substructure containing $\mc{A}$, such that the $\lambda$-orbit of $\phi'_t(A_\Omega) \setminus \im(\phi_t)$ is disjoint from $B$;
        \item \label{condII} for each $\Omega \in \mc{D}(\Theta)$ and $\phi \in \mc{F}$, there is $\tld{s} \in T_\Omega$ such that, given two $\Omega$-isomorphic $\Omega$-substructures $\mc{A}, \mc{A}' \in [\mc{M}]^k$ with $\mc{A}$ a strict $\Omega$-substructure in $\phi$ and $\mc{A}$, $\mc{A}'$ not in the same $\phi$-orbit, the following holds:
   	\begin{enumerate}[label=(\alph*),ref=(\Roman{enumi}\alph*)]
   		\item\label{II sep} there is an $\Omega$-free extension $\phi'$ of $\phi$ such that there are $\Omega$-substructures $\mc{B},\mc{B}'\in[\mc{M}]^{k}$ in the $\phi'$-orbits of $\mc{A}$, $\mc{A}'$ with $\mc{B}$ a strict $\Omega$-substructure in $\phi'$ and $B\cap B'=\NFr_{\lambda}(\Omega)$, where $\mc{B}, \mc{B}'$ are not in the same $\phi'$-orbit,
   		\item\label{II arc} in addition, there is $s \in T_{\Omega}$ and $N>0$ such that for every word $w\in\Wr(T_{\Omega})$ with the properties:
        \begin{itemize}
            \item $w = v \cdot \tld{s} \cdot (v')^{-1}$, where $v, v' \in \Wr(T_\Omega)$ are positive words beginning with $s$, ending with a letter distinct from $\tld{s}$ and satisfying $\sylc(v) = \sylc(v')$,
            \item $|w| \geq N$,
            \item every subword $w'$ of $w$ with $|w'| \geq \lfloor \frac{|w|}{N} \rfloor$ has $\dom(\phi'_{w'})=\NFr_{\lambda}(\Omega)$,
            \item $w$ has cancellation $< \lfloor \frac{|w|}{N} \rfloor$, 
        \end{itemize}
          there is some $\Omega$-extension $\phi''$ of $\phi'$ by $w$-arcs from some enumeration of $B$ to some enumeration of $B'$, and furthermore an $\Omega$-free extension $\psi$ of $\phi''$ with $\psi \in \mc{F}$.
   	\end{enumerate}	 
   \end{enumerate}
\end{defn}
    
\begin{defn} \label{d:seed action on structure}
    Let $\M$ be a $(\Theta,r)$-structure. Let $H$ be a compatible seed group. Let $\lambda : M \curvearrowleft H$ be a seed action of sets. We say that $\lambda$ is a \emph{pleasant structural seed action on $\mc{M}$} if the following hold:
    \begin{enumerate}[label=(\roman*)]
        \item \label{seed-struc auto}$\lambda$ is an action by automorphisms of $\M$;
        \item \label{seed-struc reference} if $\Theta$ is unruly, then letting $A_0, \cdots, A_{r-1}$ be the family of reference sets for the $\lambda$-action (see \cref{l: key seed action props}) and $\mc{A}_i = \mc{M}|_{A_i}$ for $i < r$, we have:
        \begin{itemize}
            \item $(A_i \curvearrowleft \Aut(\mc{A}_i)) \simeq (\pi : \mathbf{k} \curvearrowleft \Theta)$ for each $i < r$,
            \item for each $\mc{A} \in [\mc{M}]^k$ with $(A \curvearrowleft \Aut(\mc{A})) \simeq (\pi : \mathbf{k} \curvearrowleft \Theta)$ we have $\mc{A} \cong \mc{A}_i$ for some $i < r$;
        \end{itemize}
        \item \label{seed-struc all permutations} for each $\mc{A} \in [\mc{M}]^k$ such that there is $\Omega \in \mc{D}(\Theta)$ with $(A \curvearrowleft \Aut(\mc{A})) \simeq (\pi : \mathbf{k} \curvearrowleft \Omega)$, there is some $\Omega$-invariant $\mc{A}' \in [\mc{M}]^k$ with $\mc{A} \cong \mc{A}'$;
        \item \label{seed-struc equivariant isomorphism condition} given $\Omega \in \mc{D}(\Theta)$ and $\Omega$-substructures $\mc{A}, \mc{A}' \in [\mc{M}]^k$ with $\mc{A} \cong \mc{A}'$, there exists $\rho \in \Norm_\Theta(\Omega)$ (the normaliser of $\Omega$ in $\Theta$) such that the structures $\lambda_{\rho}(\mc{A})$, $\mc{A}'$ are $\Omega$-isomorphic;
        \item \label{seed-struc arc extension} for each completion $(G, T)$ of $H$, there is an arc-extensive collection $\mc{F}$ of taut finite extensions of $\lambda$. 
    \end{enumerate}
    If it is possible to take $\mc{F}$ to be the collection of all finite taut extensions of $\lambda$ to $G$ (where we assume these extensions are partial actions on $\mc{M}$), then we say that $\lambda$ is \emph{generous}. 
\end{defn} 
  
Of all the conditions listed in the previous definition, \ref{seed-struc equivariant isomorphism condition} is perhaps the least transparent. Although establishing this condition in some cases will require certain care in the construction of $\lambda$, we note in the below lemma that the condition comes for free in some important cases. 

\begin{lem} \label{l:condition omega tuples comes for free} 
        Let $\Theta \in \{\sym_2, \sym_3\}$ and let $\mc{M}$ be a $(\Theta, r)$-structure for $r \geq 1$. Let $H$ be a compatible seed group and $\lambda : M \curvearrowleft H$ a seed action on sets. Suppose that $\lambda$ satisfies all conditions of \cref{d:seed action on structure} except condition \ref{seed-struc equivariant isomorphism condition}. Then $\lambda$ also satisfies \ref{seed-struc equivariant isomorphism condition}, and so is a pleasant structural seed action on $\mc{M}$.
\end{lem}
\begin{proof}
    Let $\Omega \in \mc{D}(\Theta)$ and let $\mc{A}, \mc{A}' \in [\mc{M}]^k$ be isomorphic $\Omega$-substructures. The case $\Omega = \mathtt{1}$ is trivial, so assume $\Omega \neq \mathtt{1}$. As the action $\lambda : A \curvearrowleft \Omega$ is faithful (via \cref{l: key seed action props}(c)) and $\lambda$ acts by automorphisms, we have that $\Omega$ is isomorphic to a subgroup of $\Aut(\mc{A})$. Thus we have $\Aut(\mc{A}) \neq \mathtt{1}$.
    
    As $\mc{M}$ is a $(\Theta, r)$-structure, we have $(A \curvearrowleft \Aut(\mc{A})) \simeq (\mathbf{k} \curvearrowleft \Delta)$ for some $\Delta \in \mc{D}(\Theta) \cup \{\Theta\}$. In the case $\Delta = \Theta$, then as $\Theta = \sym_k$, any bijection $A \to A'$ is an isomorphism $\mc{A} \to \mc{A}'$. By \cref{l: key seed action props}(c), we have $(A \curvearrowleft \Omega) \simeq (\mathbf{k} \curvearrowleft \Omega) \simeq (A' \curvearrowleft \Omega)$, so there is an $\Omega$-equivariant bijection $A \to A'$ which then is necessarily an $\Omega$-equivariant isomorphism $\mc{A} \to \mc{A}'$ as required. If $\Theta = \sym_2$, then as $\Delta \neq \mathtt{1}$ we have $\Delta = \Theta$, so we are done. It remains to consider the case $\Theta = \sym_3$ and $\Delta \in \mc{D}(\Theta)$.
    
    We have $\mc{D}(\Theta)=\{\mathtt{1},\sg{\sigma},\sg{\tau}\}$, where $\sigma$ has order $2$ and $\tau$ order $3$.  We have two cases:
    \begin{itemize}
        \item if $\Delta = \sg{\sigma}$, then $\Aut(\mc{A})$, $\Aut(\mc{A}')$ both have only a single nontrivial element, and so any isomorphism $\mc{A} \to \mc{A}'$ is automatically $\Omega$-equivariant;
        \item if $\Delta = \sg{\tau}$, then $\Omega \cong \sg{\tau}$. Let $\omega \in \Omega$, $\omega \neq 1$. Let $f : \mc{A} \to \mc{A}'$ be an isomorphism. If $f$ is not $\Omega$-equivariant, then we must have $f \circ \lambda_\omega|_A = \lambda_{\omega^{-1}}|_{A'} \circ f$. As $\Theta = \sym_3$, we have $\omega \sigma = \sigma \omega^{-1}$. So $\sigma \in \Norm_\Theta(\Omega)$ and $\lambda_\sigma(\mc{A})$ is an $\Omega$-substructure, and we have \[f \circ \lambda_\sigma|_{\lambda_\sigma(A)} \circ \lambda_{\omega^{-1}}|_{\lambda_\sigma(A)} = f \circ \lambda_\omega|_A \circ \lambda_\sigma|_{\lambda_\sigma(A)} = \lambda_{\omega^{-1}}|_{A'} \circ f \circ \lambda_\sigma|_{\lambda_\sigma(A)}.\] So $f \circ \lambda_\sigma|_{\lambda_\sigma(A)}$ is an $\Omega$-equivariant isomorphism $\lambda_\sigma(\mc{A}) \to \mc{A}'$. 
    \end{itemize}
\end{proof}
    
\begin{defn} \label{d:space of actions on structures}
    Let $\mc{M}$ be a $(\Theta, r)$-structure, $H$ a compatible seed group, $(G, T)$ a completion of $H$ and $\lambda$ a pleasant structural seed action on $\mc{M}$ with arc-extensive family $\mc{F}$.
    
    We write $\Act(\mc{M}, G)$ for the subspace of $\Act(M, G)$ consisting of actions by automorphisms of $\mc{M}$. It is straightforward to show that $\Act(\mc{M}, G)$ is a closed subset of $\Act(M, G)$, and thus is a Polish space. We define $\Act_\lambda(\mc{M}, G) = \Act_\lambda(M, G) \cap \Act(\mc{M}, G)$, $\Ta_\lambda(\mc{M}, G) = \Ta_\lambda(M, G) \cap \Act(\mc{M}, G)$, which are likewise Polish spaces. We also define $\SHTa_{k, \lambda}(\mc{M}, G)$ to be the subspace of $\Ta_\lambda(\mc{M}, G)$ consisting of the actions which are sharply $k$-homogeneous.

    We also write $\TaF$ for the subspace of $\Ta_\lambda(\mc{M}, G)$ consisting of actions $\mu$ such that
    for every finite extension $\phi \geq \lambda$ with $\phi<\mu$, there is $\psi \in \mc{F}$ with $\phi \leq \psi < \mu$, and we let $\SHTaF = \SHTa_{k, \lambda}(\mc{M}, G) \cap \TaF$.

    For $\phi \in \mc{F}$, let $\mc{V}'_\phi = \widetilde{\mc{V}}^{}_\phi \cap \TaF$. By the definition of $\TaF$, the $\mc{V}'_\phi$, $\phi \in \mc{F}$, give a base of $\TaF$.
\end{defn}
     
\begin{prop} \label{p:sharply k-homog actions on structures}
    The space $\TaF$ is a non-empty perfect Polish space (hence has cardinality $2^{\aleph_0}$), and the subspace $\SHTaF$ a dense $\text{G}_{\delta}$ subset of $\TaF$. 
\end{prop}

\begin{proof}
    The proof is an adaptation of that of \cref{p:generic action on sets}.
    
    As $\mc{F}$ is non-empty, by condition \ref{condI} in \cref{d:arc extensiveness} and a back-and-forth argument we have that $\TaF$ is non-empty. Using condition \ref{condI}, an argument analogous to that in the proof of \cref{p:generic action on sets} shows that $\TaF$ is perfect.

    Let $q \leq \aleph_0$ be such that there are exactly $q$ isomorphism classes of $k$-substructures of $\mc{M}$. Let $\{\mc{A}_i \mid i < q, \mc{A}_i \in [\mc{M}]^k\}$ be a system of representatives of these isomorphism classes, where if $\Theta$ is unruly we take $\mc{A}_0, \cdots, \mc{A}_{r-1}$ to be the substructures of $\mc{M}$ induced on the reference sets $A_0, \cdots, A_{r-1}$ of $\lambda$. If $\Theta$ is docile let $I = \mathbf{q}$, and if $\Theta$ is unruly let $I = \{i \in \N \mid r \leq i < q\}$. Using property \ref{seed-struc all permutations} of the definition of a pleasant structural seed action (\cref{d:seed action on structure}), we may assume for each $i \in I$ that $\mc{A}_i$ is $\Omega^{(i)}$-invariant, where $\Omega^{(i)} \in \mc{D}(\Theta)$ is the subgroup of $\Theta$ with $(A_i \curvearrowleft \Aut(\mc{A}_i)) \simeq (\mathbf{k} \curvearrowleft \Omega^{(i)})$. For each $i \in I$, as $(A_i \curvearrowleft \Omega^{(i)}) \simeq (\mathbf{k} \curvearrowleft \Omega^{(i)})$, we have $(A_i \curvearrowleft \Omega^{(i)}) \simeq (A_i \curvearrowleft \Aut(\mc{A}_i))$. Likewise for $i < q$ with $i \notin I$ we have $(A_i \curvearrowleft \Theta_i) \simeq (A_i \curvearrowleft \Aut(\mc{A}_i))$.
    
    For each $\mc{C} \in [\mc{M}]^k$ let $f(\mc{C})\in\mathbf{q}$ be the unique index such that the structures $\mc{C}, \mc{A}_{f(\mc{C})}$ are isomorphic, and let $\mc{W}_{\mc{C}}$ be the subset of $\TaF$ consisting of those actions for which $\mc{A}_{f(\mc{C})}$, $\mc{C}$ are in the same orbit. It is straightforward to see that each $\mc{W}_{\mc{C}}$ is open and $\SHTaF = \bigcap_{\mc{C}\in[\mc{M}]^{k}}\mc{W}_{\mc{C}}$. So $\SHTaF$ is a $\text{G}_\delta$ set.

    We now show that $\SHTaF$ is dense in $\TaF$. By the Baire category theorem, it suffices to show that each $\mc{W}_{\mc{C}}$ is dense in $\TaF$.

    Let $\mc{C} \in [\mc{M}]^k$. Let $\mu \in \TaF$ and let $\mc{V}'_\phi$ be an open set containing $\mu$ (here $\phi \in \mc{F}$). If $\Theta$ is unruly and there is $i < r$ and a $\phi$-strict $\Theta_i$-substructure in the $\phi$-orbit of $\mc{C}$, then this substructure must be $\mc{A}_i$ by \cref{l: key seed action props}\ref{seedact unruly unique}, so $i = f(\mc{C})$ and $\mc{V}'_\phi \sub \mc{W}_{\mc{C}}$. Otherwise, by \cref{l:stabilisers of k sets}, there is $\Omega \in \mc{D}(\Theta)$ and a $\phi$-strict $\Omega$-substructure $\mc{C}'$ in the $\phi$-orbit of $\mc{C}$. By property \ref{seed-struc equivariant isomorphism condition} of the definition of a pleasant structural seed action, there is $\rho \in \Norm_\Theta(\Omega)$ such that the $\Omega$-substructures $\lambda_\rho(\mc{C}')$, $\mc{A}_{f(\mc{C})}$ are $\Omega$-isomorphic. Let $\mc{C}'' = \lambda_\rho(\mc{C}')$. As $\mc{C}'$ is a $\phi$-strict $\Omega$-substructure and $\rho \in \Norm_\Theta(\Omega)$, we have that $\mc{C}''$ is a $\phi$-strict $\Omega$-substructure. If $\mc{C}''$, $\mc{A}_{f(\mc{C})}$ are in the same $\phi$-orbit then we are done, so assume not. Let $\tld{s}$, $\phi'$, $\mc{B}$ and $\mc{B}'$ be given by condition \ref{condII} applied to $\Omega$, $\phi$, $\mc{C}''$, $\mc{A}_{f(\mc{C})}$, and let $s$ and $N$ be given by condition \ref{II arc}. As $\phi$ is taut and $\phi'$ is an $\Omega$-free extension of $\phi$, we have that $\phi'$ is taut by \Cref{l:partial k-sharp extend}. By \Cref{l:disjointness through free extensions}\ref{positive word} there is a positive word $u \in \Wr(T_\Omega)$ with $\dom(\phi'_u) = \NFr_\lambda(\Omega)$. Let $t \in T_\Omega \setminus \{\tld{s}\}$. Apply \Cref{l:existence small cancellation} with 
    $s, \tld{s}, t$, the positive word $u$ and $N$ to obtain a word $w$ satisfying the conditions in bulletpoints in \ref{II arc}; by \ref{II arc} we then have $\psi \in \mc{F}$ with $\psi \geq \phi$ and with $\mc{C}''$, $\mc{A}_{f(\mc{C})}$ in the same $\psi$-orbit. By condition \ref{condI} in \cref{d:arc extensiveness} the extension $\psi$ extends to an element of $\TaF$, so $\mc{W}_{\mc{C}} \cap \mc{V}'_\phi \neq \varnothing$ as required.
\end{proof}
            
\section{SWIRs: stationary weak independence relations} 
\label{s:structures and independence}

\begin{notn}
    In the below, we write $AB$ to mean $A \cup B$, and $\langle A \rangle$ for the substructure generated by a set $A$.
\end{notn}

\begin{defn}
    Let $\mc{M}$ be a \Fr structure (where the language may include function and constant symbols), and let $A, B, B' \fg \mc{M}$. We write $B \equiv_A B'$ if there exists $f \in \Aut(M)$ with $f(B) = B'$ and $f|_A = \id_A$, or equivalently if (some finite generating sets of) $B, B'$ have the same quantifier-free type over (some finite generating set of) $A$ in some enumeration. (The two definitions are equivalent as $M$ is ultrahomogeneous.)

    We will be a little flexible in our notation. If a particular isomorphism is clear from context (for example when there is a given isomorphism taking $B$ to $B'$) then we use this isomorphism. For instance, in the below stationarity axiom, when we write $B \equiv_A B' \Rightarrow B \equiv_{\langle AC \rangle} B'$, both statements refer to the same isomorphism from $B$ to $B'$.
\end{defn}

\begin{defn} \label{d:swir}
    Let $\mc{M}$ be a \Fr structure (in any first-order language, which may also include constant and function symbols). A \emph{stationary weak independence relation (SWIR)} on $\mc{M}$ consists of a ternary relation $\ind$ on the set of finitely generated substructures of $\mc{M}$, which we write as $B \ind_A C$, satisfying the following properties:
    \begin{itemize}
        \item Invariance (Inv): 
        \[\text{for }g \in \Aut(\mc{M}), B \ind_A C \Rightarrow gB \ind_{gA} gC;\]
        \item Existence (Ex): for all $A, B, C \fg \mc{M}$,
        \medskip
        \begin{itemize}
            \item (LEx): there exists $B' \fg \mc{M}$ such that $B' \ind_A C$ and $B \equiv_A B'$;
            \item (REx): there exists $C' \fg \mc{M}$ such that $B \ind_A C'$ and $C \equiv_A C'$;
        \end{itemize}
        \medskip
        \item Stationarity (Sta): for all $A, B, C \fg \mc{M}$,
        \medskip
        \begin{itemize}
            \item (LSta): if $B \ind_A C \wedge B' \ind_A C$ and $B \equiv_A B'$, then $B \equiv_{\langle AC \rangle} B'$;
            \item (RSta): if $B \ind_A C \wedge B \ind_A C'$ and $C \equiv_A C'$, then $C \equiv_{\langle AB \rangle} C'$;
        \end{itemize}
        \medskip
        \item Monotonicity (Mon):
        \begin{itemize}
            \item (LMon): $\langle BD \rangle \ind_A C \Rightarrow B \ind_A C \,\wedge\, D \ind_{\langle AB \rangle} C$;
            \item (RMon): $B \ind_{A} \langle CD \rangle \Rightarrow B \ind_A C \,\wedge\, B \ind_{\langle AC \rangle} D$;
        \end{itemize}
        \item Transitivity (Tr):
        \begin{itemize}
            \item (LTr): $B \ind_A C \,\wedge\, B \ind_{\langle AC \rangle} D \Rightarrow B \ind_A D$;
            \item (RTr): $B \ind_A C \,\wedge\, D \ind_{\langle AB \rangle} C \Rightarrow D \ind_A C$.
        \end{itemize}
    \end{itemize}
\end{defn}

\begin{rem}
    This is not a minimal list of properties: 
    \begin{itemize}
        \item (REx) follows from (LEx) and (Inv) (\cite[Remark 3.1.2(iii)]{Li21});
        \item (Tr) follows from the other four properties (\cite[Remark 3.1.2(ii)]{Li21});
        \item (Mon) follows from the other four properties (see \cite[Lemma 2.10]{KSW26}, which is a straightforward adaptation of \cite[Lemma 3.2]{Bau16}).
    \end{itemize}

     We therefore see that when checking that a certain relation $\ind$ is a SWIR, we do not need to check both (Mon) and (Tr); it suffices to check one of the two.
\end{rem}

\begin{rem} \label{r: SWIR notn conv for substructures}
    In fact, we should more properly write $\mc{A}, \mc{B}, \mc{C} \fg \mc{M}$ and $\mc{B} \ind_{\mc{A}} \mc{C}$ rather than $B \ind_A C$, given that $\ind$ is defined on the set of finitely generated substructures of $\mc{M}$. We do not do this, for the sake of avoiding ponderous typography. Also note that the substructures of $M$ are in one-to-one correspondence with their domains, so this notation does not lead to any confusion.
\end{rem}

\begin{defn}
    Let $\mc{M}$ be a \Fr structure, and let $\ind$ be a SWIR on $\mc{M}$. If in addition $\ind$ is symmetric, i.e.\ for all $A, B, C \fg \mc{M}$ we have $B \ind_A C \Leftrightarrow C \ind_A B$, then we call $\ind$ a \emph{stationary independence relation} (SIR).
\end{defn}

See \Cref{intro: swir background} for the historical background here: the symmetric version SIR was first defined by Tent and Ziegler (\cite{TZ13}), and the definition of SWIR without the assumption of symmetry was later given in \cite{Li21}.

\begin{eg} \label{ex: structures with SWIRS}
We now give several key examples of \Fr structures with SWIRs. In each example, the SWIR is given by a certain canonical notion of amalgamation. In fact, the existence of a SWIR for a \Fr structure $\mc{M}$ is equivalent to the existence of a \emph{standard amalgamation operator} $\otimes$ on $\Age(\mc{M})$: see \Cref{d: standard amalg op} and \cite[Prop.\ 2.18, Prop.\ 2.19]{KSW26}.

\begin{itemize}
    \item A relational \Fr structure $\mc{M}$ with free amalgamation has a SIR: for $A, B, C \fin \mc{M}$, define $B \ind_A C$ if $BA, CA$ are freely amalgamated over $A$. Examples of such $\mc{M}$ with free amalgamation include the random graph, the random $K_n$-free graph, the random $n$-hypergraph, the random oriented graph and the Henson digraphs.
    \item The random poset has a SIR: define $B \ind_A C$ if $ABC$ is the transitive closure of $BA$ and $CA$ (that is, for $b \in B \setminus A$ and $c \in C \setminus A$ such that there does not exist $a \in A$ with $b < a < c$ or $b > a > c$, we have that $b, c$ are incomparable).
    \item $\Q$ has a SWIR: define $B \ind_A C$ if for $b \in B \setminus A$ and $c \in C \setminus A$ such that there does not exist $a \in A$ with $b < a < c$ or $b > a > c$, we have $b < c$.
    \item The random tournament has a SWIR: define $B \ind_A C$ if for $b \in B \setminus A$ and $c \in C \setminus A$ we have $b \ra c$.
    \item Let $n \geq 3$. The same definition of $\ind$ as for the random tournament also gives a SWIR for the \Fr limit of the class of finite oriented graphs omitting $n$-anticliques.
    \item The \Fr limit of the free superposition of a free amalgamation class with the class of finite linear orders has a SWIR. So, for example, the random ordered graph has a SWIR. More generally, if $\mc{M}_0$, $\mc{M}_1$ are the \Fr limits of relational strong amalgamation classes $\mc{K}_0$, $\mc{K}_1$ and $\mc{M}_0$, $\mc{M}_1$ have SWIRs, then the \Fr limit of the free superposition of $\mc{K}_0$, $\mc{K}_1$ has a SWIR. (This is straightforward to check. See \cite[Section 3.7]{Bod15} for the definition of free superposition.)
\end{itemize}    
\end{eg}

We now give a number of basic properties of SWIRs: see \cite{KSW26} for the proofs, which are not difficult.

\begin{lem}[{\cite[Lemma 2.8]{KSW26}}]
    Let $\mc{M}$ be a \Fr structure with SWIR $\ind$. Then the relation $\ind$ is trivial when one argument equals the base, i.e.\ for all $A, B \fg \mc{M}$:
    \begin{itemize}
        \item (LB): $A \ind_A B$;
        \item (RB): $A \ind_B B$.
    \end{itemize}
\end{lem}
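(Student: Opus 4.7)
The plan is to derive both (LB) and (RB) directly from the existence axioms (LEx) and (REx), together with the following trivial observation about the equivalence relation $\equiv_A$: if $A' \fg \mc{M}$ satisfies $A \equiv_A A'$, then $A = A'$. Indeed, by definition there exists $f \in \Aut(\mc{M})$ with $f|_A = \id_A$ and $f(A) = A'$; but $f|_A = \id_A$ forces $f(A) = A$, so $A' = A$.

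For (LB), apply (LEx) with the left argument equal to the base, i.e.\ to the triple $(A, A, B)$ (playing the roles of base, left, right). This yields some $A' \fg \mc{M}$ with $A' \ind_A B$ and $A \equiv_A A'$. By the observation above, $A' = A$, and hence $A \ind_A B$.

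For (RB), symmetrically apply (REx) with the right argument equal to the base, i.e.\ to the triple $(B, A, B)$. This yields some $B' \fg \mc{M}$ with $A \ind_B B'$ and $B \equiv_B B'$. Again $B' = B$, so $A \ind_B B$.

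There is no genuine obstacle here; the only point worth pausing on is making sure that when invoking existence we are in a position where the ``$B$" argument supplied to the axiom is identical to the base, so that the witness of $\equiv_A$ must be the identity on (a generating set of) that argument. Note that we do not need the stationarity, monotonicity, or transitivity axioms, nor do we need any symmetry of $\ind$; this is why the lemma works equally well for a SWIR as for a SIR.
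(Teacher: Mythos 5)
Your proof is correct, and it rests on the same basic mechanism as the paper's: extract the conclusion from the Existence axiom. The only difference is which argument you ask Existence to replace. You replace the argument that coincides with the base, so that the $\equiv_A$-witness is forced to be the identity on it and the new copy literally equals the old one; the paper instead applies (REx) to the \emph{other} argument, obtaining $B'$ with $B \equiv_A B'$ and $A \ind_A B'$, and then transports back to $B$ via an automorphism fixing $A$ together with (Inv). Your variant is marginally more economical in that it never invokes (Inv), but the two arguments are otherwise interchangeable.
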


\begin{lem}[{\cite[Lemma 2.12]{KSW26}}] \label{base_up}
    Let $\mc{M}$ be a \Fr structure with SWIR $\ind$. Then:
    \begin{align*}
        B \ind_A C &\Leftrightarrow \langle AB \rangle \ind_A C; \\
        B \ind_A C &\Leftrightarrow B \ind_A \langle AC \rangle.
    \end{align*}
\end{lem}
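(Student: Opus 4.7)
The two statements are symmetric to each other (the first is the ``left'' version, the second the ``right'' version), so I will focus on the first one, $B \ind_A C \Leftrightarrow \langle AB \rangle \ind_A C$; the second is proved entirely analogously, swapping the roles of (LMon)/(LEx)/(LSta) with (RMon)/(REx)/(RSta).

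The backward direction $\langle AB \rangle \ind_A C \Rightarrow B \ind_A C$ is immediate from (LMon). Indeed, writing $\langle AB \rangle = \langle BA \rangle$ and applying (LMon) with the substructure ``$D$'' chosen to be $A$, we get $B \ind_A C$ (and incidentally $A \ind_{\langle AB \rangle} C$).

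For the forward direction $B \ind_A C \Rightarrow \langle AB \rangle \ind_A C$, the plan is to first produce an independent copy of $C$ over $A$ in the stronger sense, and then use stationarity on the right to identify it with $C$. Concretely: apply (REx) to the triple $(A, \langle AB \rangle, C)$ to obtain $C' \fg \mc{M}$ with $C \equiv_A C'$ and $\langle AB \rangle \ind_A C'$. Now (LMon), applied just as in the backward direction, yields $B \ind_A C'$. We therefore have both $B \ind_A C$ and $B \ind_A C'$ with $C \equiv_A C'$, so (RSta) upgrades this to $C \equiv_{\langle AB \rangle} C'$. This gives $g \in \Aut(\mc{M})$ fixing $\langle AB \rangle$ pointwise with $g(C') = C$, and then (Inv) applied to $\langle AB \rangle \ind_A C'$ delivers $\langle AB \rangle \ind_A C$, as required.

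No step here looks like a genuine obstacle: the only subtle point is the order in which the axioms are invoked. One must first use (REx) to get a $C'$ independent from the larger set $\langle AB \rangle$ (not from $B$), then use (LMon) to deduce the weaker independence $B \ind_A C'$ needed to apply (RSta), and finally transport the conclusion back to $C$ via invariance. The second equivalence is proved in the mirror fashion, using (LEx) to build a witness $B'$ with $B' \ind_A \langle AC \rangle$, applying (RMon) to get $B' \ind_A C$, using (LSta) to conclude $B \equiv_{\langle AC \rangle} B'$, and transporting by (Inv).
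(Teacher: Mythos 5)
Your proof is correct. The backward direction via (LMon) is exactly what the paper does. For the forward direction, however, you take a genuinely different route from the paper's. The paper applies (REx) \emph{over the base $\langle AB\rangle$} to produce $C'$ with $C'\equiv_{\langle AB\rangle}C$ and $\langle AB\rangle\ind_{\langle AB\rangle}C'$, transports this by (Inv) to get the trivial independence $\langle AB\rangle\ind_{\langle AB\rangle}C$, and then closes with a single application of (RTr) combined with the hypothesis $B\ind_A C$ — essentially ``relativise the preceding lemma (RB) and use transitivity.'' You instead apply (REx) over the base $A$ to get $C'\equiv_A C'$ with $\langle AB\rangle\ind_A C'$, descend to $B\ind_A C'$ by (LMon), invoke (RSta) against the hypothesis $B\ind_A C$ to upgrade $C\equiv_A C'$ to $C\equiv_{\langle AB\rangle}C'$, and finish with (Inv). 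Both arguments are valid; the paper's is a bit shorter, while yours replaces the use of (Tr) with (Sta) and (Mon). Since the paper remarks that both (Tr) and (Mon) are redundant given the remaining axioms, neither version is more economical in any essential sense — but it is worth being aware that the stationarity route and the transitivity route are the two standard ways to prove this kind of base-enlargement statement, and you found the one the authors did not use.
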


\begin{lem}[{\cite[Lemma 2.9]{KSW26}}] \label{revmon}
    Let $\mc{M}$ be a \Fr structure with SWIR $\ind$. Then
    \begin{align*}
             \langle BD \rangle \ind_A C &\Leftrightarrow B \ind_A C \wedge D \ind_{\langle AB \rangle} C; \\
             B \ind_A \langle CD \rangle &\Leftrightarrow B \ind_A C \wedge B \ind_{\langle AC \rangle} D.
        \end{align*}
\end{lem}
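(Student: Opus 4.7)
The forward directions of both equivalences are precisely the two halves of the monotonicity axiom (Mon), so all the substance lies in the reverse implications. These are the converses of (LMon) and (RMon) respectively, and the two are entirely symmetric: I will sketch the first one (deducing $\langle BD \rangle \ind_A C$ from $B \ind_A C$ and $D \ind_{\langle AB\rangle} C$) in detail; the second follows by swapping left and right everywhere, i.e.\ replacing (REx), (LMon), (RSta) below with (LEx), (RMon), (LSta).

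The idea is standard for stationarity arguments: first produce, via existence, a ``parallel copy'' $C'$ of $C$ which already enjoys the desired joint independence, then use stationarity twice to promote the base of the type equality $C \equiv_A C'$ up to $\langle ABD \rangle$, and finally transfer the conclusion back via invariance. Concretely, I would proceed as follows. Apply (REx) to $\langle BD \rangle$ and $C$ over $A$ to obtain $C' \fg \mc{M}$ with $\langle BD \rangle \ind_A C'$ and $C \equiv_A C'$. By (LMon) we get $B \ind_A C'$ \emph{and} $D \ind_{\langle AB \rangle} C'$ ``for free''.

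Now I compare $C$ and $C'$. From $B \ind_A C$ (hypothesis) and $B \ind_A C'$ (just obtained), together with $C \equiv_A C'$, the stationarity axiom (RSta) gives $C \equiv_{\langle AB \rangle} C'$. I then use (RSta) a second time, at the higher base $\langle AB \rangle$: the two statements $D \ind_{\langle AB \rangle} C$ (hypothesis) and $D \ind_{\langle AB \rangle} C'$, together with the just-upgraded $C \equiv_{\langle AB \rangle} C'$, yield $C \equiv_{\langle ABD \rangle} C'$. Hence there is an automorphism $g \in \Aut(\mc{M})$ fixing $\langle ABD \rangle$ (in particular $A$ and $\langle BD \rangle$) pointwise with $g(C') = C$. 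Applying (Inv) to $\langle BD \rangle \ind_A C'$ then delivers $\langle BD \rangle \ind_A C$, as required.

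I do not anticipate any real obstacle: the argument is a clean ``bootstrap'' using only the axioms listed in \Cref{d:swir}. The only thing to be careful about is keeping track of which base a given type equality holds over, and in particular realising that (RSta) must be applied twice in succession --- once over $A$ (to pass from $C \equiv_A C'$ to $C \equiv_{\langle AB\rangle} C'$) and once over $\langle AB\rangle$ (to pass to $C \equiv_{\langle ABD\rangle} C'$) --- before invariance can finish the job. No symmetry of $\ind$ is used, so the argument works uniformly for SWIRs and SIRs alike.
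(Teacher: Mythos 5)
Your proof is correct, but it takes a genuinely different route from the paper's. The paper's argument is a three-line deduction: from $D \ind_{\langle AB \rangle} C$ it first passes to $\langle ABD \rangle \ind_{\langle AB \rangle} C$ via \Cref{base_up}, extracts $\langle BD \rangle \ind_{\langle AB \rangle} C$ by (Mon), and then combines this with $B \ind_A C$ using (RTr) to conclude $\langle BD \rangle \ind_A C$ directly --- no existence, stationarity or invariance needed. Your argument instead avoids transitivity altogether and runs the standard ``parallel copy'' bootstrap: (REx) to manufacture $C'$ with $\langle BD \rangle \ind_A C'$, (LMon) to split it, two successive applications of (RSta) to upgrade $C \equiv_A C'$ to $C \equiv_{\langle ABD \rangle} C'$, and (Inv) to transport the conclusion back to $C$. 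Each step checks out, including the point you flag about tracking the base of the type equality (and, implicitly, that both stationarity applications refer to the same witnessing isomorphism, as the paper's convention requires). The trade-off is this: the paper's proof is shorter because it leans on (Tr), which is part of \Cref{d:swir} as stated; yours is essentially a hand-unrolling of the paper's own remark that (Tr) is derivable from (Inv), (Ex), (Sta) and (Mon), specialised to the instance needed here. So your version is more self-contained relative to a minimal axiom set, at the cost of length; the paper's is more economical given the full axiom list. Both are valid, and as you note, neither uses symmetry, so both apply to SWIRs and SIRs alike.
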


\begin{lem}[{\cite[Lemma 2.13]{KSW26}}] \label{SWIR strong amalg}
    Let $\mc{M}$ be a \Fr structure with SWIR $\ind$. Suppose that $\mc{M}$ has strong amalgamation. Then if $B \ind_A C$, we have that $B \setminus A$ and $C \setminus A$ are disjoint as sets.
\end{lem}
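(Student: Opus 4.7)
The plan is to argue by contradiction. Suppose $B \ind_A C$ but some $b$ lies in $(B \setminus A) \cap (C \setminus A)$. By \Cref{base_up} I may replace $B, C$ with $\langle AB \rangle, \langle AC \rangle$ and thus assume $A \sub B \cap C$. Set $A_1 = \langle A \cup \{b\}\rangle$, so $A \subsetneq A_1 \sub B \cap C$. Applying (LMon) to $B \ind_A C$ (writing $B = \langle A_1 \cup B\rangle$) yields $A_1 \ind_A C$, and then (RMon) applied to $A_1 \ind_A C$ (writing $C = \langle A_1 \cup C\rangle$) yields $A_1 \ind_A A_1$. It therefore suffices to derive a contradiction from $A_1 \ind_A A_1$ together with $A \subsetneq A_1$.

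To invoke strong amalgamation, I would produce a disjoint copy: there exists $A_1' \fg \mc{M}$ with $A_1' \equiv_A A_1$ and $A_1' \cap A_1 = A$. This is obtained by embedding the strong amalgam of $A_1$ with itself over $A$ into $\mc{M}$ and using ultrahomogeneity to identify one side with $A_1$. Since $A \subsetneq A_1$, this forces $A_1' \neq A_1$.

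Next, I would derive $A_1' \ind_A A_1$ via (RTr). The ingredient $A_1' \ind_{A_1} A_1$ comes for free: by (REx) over the base $A_1$ there is $X \equiv_{A_1} A_1$ with $A_1' \ind_{A_1} X$, but any automorphism realising $X \equiv_{A_1} A_1$ fixes $A_1$ pointwise and hence setwise, so $X = A_1$. Pairing $A_1 \ind_A A_1$ with $A_1' \ind_{A_1} A_1$ via (RTr) then gives $A_1' \ind_A A_1$. Finally, (LSta) applied to $A_1 \ind_A A_1$ and $A_1' \ind_A A_1$ with $A_1 \equiv_A A_1'$ produces $A_1 \equiv_{\langle AA_1\rangle} A_1'$, meaning some automorphism of $\mc{M}$ fixes $A_1$ pointwise and sends $A_1$ to $A_1'$. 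Such an automorphism acts as the identity on $A_1$ setwise, forcing $A_1' = A_1$ and contradicting $A_1' \cap A_1 = A \subsetneq A_1$.

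The main obstacle I expect is identifying this precise asymmetric combination: because the SWIR is not assumed symmetric, one cannot freely swap sides, and it is crucial to use strong amalgamation to produce a disjoint copy on one specific side, apply (RTr) to transfer independence onto $A_1'$, and only then trigger (LSta) to collapse $A_1'$ back onto $A_1$.
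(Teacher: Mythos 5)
Your proof is correct, but it takes a genuinely different route from the paper's. The paper's argument never forms $A_1 = \langle A \cup \{b\}\rangle$: instead it takes a strong amalgam of two copies $B_0, B_1$ of all of $B$ over $A$ (so $B_0 \cap B_1 = A$), uses (Ex) and (Mon) to arrange $B_0 \ind_A C$ and $B_1 \ind_A C$ simultaneously, and then applies (Sta) to conclude $B_0 \equiv_{\langle AC\rangle} B_1 \equiv_{\langle AC\rangle} B$; since ``being equal to $c$'' for $c \in C$ is part of the quantifier-free type over $C$ and $B_0 \setminus A$, $B_1 \setminus A$ are disjoint, no element of either copy outside $A$ can lie in $C$, and the conclusion transfers to $B$. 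Your version instead localises the obstruction to the single substructure $A_1 \sub B \cap C$, extracts the self-independence $A_1 \ind_A A_1$ via (LMon) and (RMon), and then shows that a proper extension of the base cannot be independent from itself over the base in a strong amalgamation class — using (RB)-type triviality, (RTr), and (LSta) to force the disjoint copy $A_1'$ to coincide with $A_1$. Each step checks out: the reduction via \Cref{base_up} is valid even with function symbols (the set difference only grows), the monotonicity applications are instantiated correctly, and the final appeal to stationarity does force $A_1' = A_1$ since any automorphism fixing $A_1$ pointwise fixes it setwise. What your route buys is a reusable intermediate fact of independent interest (no SWIR on a strong amalgamation class admits $X \ind_A X$ with $A \subsetneq X$); what the paper's route buys is brevity, since it reads disjointness directly off the types of the two copies over $C$ without needing the transitivity step.
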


We will also need a notion of \emph{standard amalgamation operator} on $\Age(\mc{M})$, which will be equivalent to the existence of a SWIR on $\mc{M}$. We take this from \cite{KSW26} and give a very brief presentation -- see \cite{KSW26} for a fuller account (where the commutative diagrams in the properties below are explicitly given).

\begin{notn}
    Let $\mc{M}$ be a \Fr structure. Let $h_i : D_i \to E_i$, $i = 0, 1$, be embeddings of structures in $\Age(\mc{M})$. We write $h : (D_0, D_1) \to (E_0, E_1)$ for the pair $h = (h_0, h_1)$. If $D_0 = D_1 = D$, we write $h : D \to (E_0, E_1)$ and if $E_0 = E_1 = E$ we write $h : (D_0, D_1) \to E$. 

    Let $h : (D_0, D_1) \to (E_0, E_1)$, $h' : (D'_0, D'_1) \to (E'_0, E'_1)$ be pairs of embeddings. An embedding $i : h \to h'$ is defined to be a pair $(i_D, i_E)$ where $i_D : (D_0, D_1) \to (D'_0, D'_1)$, $i_E : (E_0, E_1) \to (E'_0, E'_1)$ and $i_E \circ h = h' \circ i_D$.
\end{notn}

\begin{defn}[{\cite[Definition 2.17]{KSW26}}] \label{d: standard amalg op}
    Let $\mc{M}$ be a \Fr structure. For each pair of embeddings $e : A \to (B_0, B_1)$ of structures in $\Age(\mc{M})$, a \emph{standard amalgamation operator} $\otimes$ gives an amalgam $B_0 \otimes_A B_1$ of $e = (e_0, e_1)$, such that in addition $\otimes$ satisfies:

    \begin{itemize}
        \item Minimality: the substructure of $B_0 \otimes_A B_1$ generated by the union of the images of $B_0$ and $B_1$ is $B_0 \otimes B_1$ itself;
        \item Invariance: given a pair of embeddings $e : A \to (B_0, B_1)$, $e' : A' \to (B'_0, B'_1)$ and an isomorphism $i : e \to e'$, we have $B^{}_0 \otimes_A B^{}_1 \cong B'_0 \otimes_A B'_1$ via an isomorphism that respects $i$;
        \item Transitivity: let $e : A \to (B, C)$ be a pair of embeddings. Then:
        \begin{itemize}
            \item for each embedding $b : B \to B'$ we have $B' \otimes_B (B \otimes_A C) \cong B' \otimes_A C$ via an embedding-respecting isomorphism;
            \item for each embedding $c : C \to C'$ we have $(B \otimes_A C) \otimes_C C' \cong B \otimes_A C'$ via an embedding-respecting isomorphism.
        \end{itemize}
    \end{itemize}
\end{defn}

\begin{defn}
    Let $\mc{M}$ be a \Fr structure and let $\otimes$ be a standard amalgamation operator on $\Age(\mc{M})$. If for each pair of embeddings $e_0 : A \to B_0$, $e_1 : A \to B_1$ we have $B_0 \otimes_A B_1 = B_1 \otimes_A B_0$ (also with the same embeddings from $B_0$ and $B_1$ in the amalgam), then we say that $\otimes$ is \emph{symmetric}. 
\end{defn}

\begin{lem}[{\cite[Lemma 2.20]{KSW26}}]
    Let $M$ be a \Fr structure with standard amalgamation operator $\otimes$. Then $\otimes$ has the following properties:
    \begin{itemize}
        \item Monotonicity: given embeddings $A \to (B, C)$, $B \to B'$, $C \to C'$, there is a unique embedding-respecting embedding $B \otimes_A C \to B' \otimes_A C'$;
        \item Associativity: given embeddings $A \to (B, C)$, $A' \to (C, D)$, there is a unique embedding-respecting isomorphism $(B \otimes_A C) \otimes_{A'} D \to B \otimes_A (C \otimes_{A'} D)$.
    \end{itemize}
\end{lem}

\begin{prop}[{\cite[Proposition 2.19]{KSW26}}] \label{p:swir sao}
    Let $\mc{M}$ be a \Fr structure with SWIR $\ind$. Then there exists a standard amalgamation operator $\otimes$ on $\Age(\mc{M})$ such that for $A, B, C \fg \mc{M}$, we have $B \ind_A C$ if and only if $\langle ABC \rangle \cong \langle BA \rangle \otimes_A \langle CA \rangle$ (via an embedding-respecting isomorphism). In addition, if $\ind$ is a SIR then $\otimes$ is symmetric.
\end{prop}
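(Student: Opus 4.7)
The plan is to define $\otimes$ by internally realizing amalgams inside $\mc{M}$, using the SWIR to single out a canonical independent pair of copies. Given an embedding $e : A \to (B_0, B_1)$ in $\Age(\mc{M})$, I first fix an embedding of $A$ into $\mc{M}$; then by (LEx) I find a copy $B_0^* \fg \mc{M}$ of $B_0$ extending $A$, and by (REx) a copy $B_1^* \fg \mc{M}$ of $B_1$ extending $A$ with $B_0^* \ind_A B_1^*$. I set $B_0 \otimes_A B_1 := \langle B_0^* B_1^* \rangle$, equipped with the inclusions $\iota_i : B_i \cong B_i^* \hookrightarrow \langle B_0^* B_1^* \rangle$. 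The biconditional $B \ind_A C \Leftrightarrow \langle ABC \rangle \cong \langle BA \rangle \otimes_A \langle CA \rangle$ is then built into the definition: the forward direction is tautological, and the reverse follows from the well-definedness below together with (Inv), transporting a hypothetical isomorphism back into an $\ind$-independent configuration.

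I would next establish well-definedness up to isomorphism over $A$. Given two realizations $(B_0^*, B_1^*)$ and $(B_0', B_1')$, I would first use ultrahomogeneity to identify the two copies of $A$ inside $\mc{M}$ by an automorphism; then apply (LSta) to the pair $B_0^* \ind_A B_1^*, B_0' \ind_A B_1^*$ (after transporting) to produce an automorphism fixing $A$ pointwise and matching the two $B_0$-copies; then apply (RSta) over $\langle A B_0 \rangle$ to match the two $B_1$-copies. The composite gives an $A$-isomorphism of amalgams respecting both embeddings. With well-definedness in hand, minimality is immediate, and invariance follows from (Inv) applied to transport independent configurations along isomorphisms $i : e \to e'$. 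For monotonicity, given an embedding $b : (B_0, B_1) \to (B_0', B_1')$ over $A$, I realize $B_0' \otimes_A B_1'$ as $\langle B_0^{*\prime} B_1^{*\prime} \rangle \sub \mc{M}$, then apply (LMon) and (RMon) to $B_0^{*\prime} \ind_A B_1^{*\prime}$ to deduce $b_0(B_0) \ind_A b_1(B_1)$; by well-definedness $\langle b_0(B_0) b_1(B_1) \rangle \cong B_0 \otimes_A B_1$, and the inclusion into $\langle B_0^{*\prime} B_1^{*\prime} \rangle$ gives the required embedding of amalgams.

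The main obstacle I anticipate is associativity, which is delicate because $\ind$ is not assumed symmetric. Given $e : A_0 \to (B_0, B_1)$ and $f : A_1 \to (B_1, B_2)$, I would realize the left-associated amalgam by fixing $B_1^* \fg \mc{M}$, then choosing $B_0^*$ with $B_0^* \ind_{A_0} B_1^*$ and $B_2^*$ with $\langle B_0^* B_1^* \rangle \ind_{A_1} B_2^*$, which by construction realizes $(B_0 \otimes_{A_0} B_1) \otimes_{A_1} B_2 \cong \langle B_0^* B_1^* B_2^* \rangle$. For the right-associated amalgam I need $B_0^* \ind_{A_0} \langle B_1^* B_2^* \rangle$; using that $A_0 \sub B_1^*$ (so $\langle A_0 B_1^* \rangle = B_1^*$), \Cref{revmon} reduces this to $B_0^* \ind_{A_0} B_1^*$ (given) together with $B_0^* \ind_{B_1^*} B_2^*$. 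The latter is obtained by applying (LMon) in the form $\langle B_1^* B_0^* \rangle \ind_{A_1} B_2^* \Rightarrow B_0^* \ind_{\langle A_1 B_1^* \rangle} B_2^*$, using $A_1 \sub B_1^*$. Finally, if $\ind$ is a SIR, then symmetry of the defining relation $B_0^* \ind_A B_1^* \Leftrightarrow B_1^* \ind_A B_0^*$ means the roles of $B_0^*, B_1^*$ can be interchanged in the construction, yielding symmetry of $\otimes$.
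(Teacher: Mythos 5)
Your construction of $\otimes$ by realizing an $\ind$-independent pair of copies inside $\mc{M}$, the use of (Sta) for well-definedness and invariance, of (Mon) for monotonicity, and in particular your associativity argument via \Cref{revmon} (reducing $B_0^* \ind_{A_0} \langle B_1^* B_2^* \rangle$ to $B_0^* \ind_{A_0} B_1^*$ and $B_0^* \ind_{B_1^*} B_2^*$, the latter extracted from $\langle B_0^* B_1^* \rangle \ind_{A_1} B_2^*$ by (LMon)) is essentially identical to the paper's proof. The proposal is correct and follows the same route.
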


\begin{rem}
    The converse direction of the above also holds (see \cite[Proposition 2.18]{KSW26}): if $\Age(\mc{M})$ has a standard amalgamation operator $\otimes$, then $\mc{M}$ has a SWIR. We define $B \ind_A C$ if there exists an embedding-respecting isomorphism $\langle ABC \rangle \to \langle AB \rangle \otimes_A \langle AC \rangle$. We omit the details of the proof as we shall not use this direction.
\end{rem}

\section{The escape property} \label{s:making independent}

\begin{term*}
    Recall the definition of strong amalgamation from Subsection \ref{ss: intro main str results} in the Introduction. We say that a \Fr structure $\mc{M}$ is a \emph{strong \Fr structure} if $\Age(\mc{M})$ has strong amalgamation.    
\end{term*}

We begin by recalling the following basic fact.
\begin{lem}[{\cite[straightforward adaptation of 2.15]{Cam90}}]
    Let $\mc{N}$ be a locally finite \Fr structure (where the language may include constant and function symbols). For each finite subset $A \sub N$, let $\fcl(A)$ be the union of the finite orbits of the pointwise stabiliser of $A$ in $\Aut(\mc{N})$. We call $\fcl(A)$ the \emph{finite-orbit closure} of $A$.

    If $\mc{N}$ has strong amalgamation, then $\fcl(A) = \langle A \rangle$ for $A \fin M$. (That is, $\mc{N}$ has trivial finite-orbit closure.)
\end{lem}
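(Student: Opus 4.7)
The inclusion $\langle A \rangle \subseteq \fcl(A)$ is immediate: any element of $\langle A \rangle$ is fixed by every automorphism fixing $A$ pointwise, and so its orbit under the pointwise stabiliser of $A$ is a singleton.

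For the reverse inclusion, I will show the contrapositive: if $b \in M \setminus \langle A \rangle$, then the orbit of $b$ under the pointwise stabiliser of $A$ in $\Aut(\M)$ is infinite. Put $B = \langle A \cup \{b\} \rangle$, which is finite by local finiteness. The plan is to produce, inside $\M$, infinitely many distinct elements $b = b_0, b_1, b_2, \ldots$ all realising the same quantifier-free type as $b$ over $\langle A \rangle$; by ultrahomogeneity, each $b_i$ is then the image of $b$ under an automorphism fixing $\langle A \rangle$ (and hence $A$) pointwise, so the orbit is infinite.

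To construct the $b_i$, I iterate strong amalgamation. Given finitely many copies $B_0, \ldots, B_{n-1}$ of $B$ already amalgamated over $\langle A \rangle$ inside some $C_n \in \Age(\M)$, with witnesses $b_0, \ldots, b_{n-1}$, I form the strong amalgam $C_n \otimes_{\langle A \rangle} B$; this lies in $\Age(\M)$ since the age has strong amalgamation, and it contains a new element $b_n$ distinct from all previous $b_i$ (strong amalgamation identifies nothing outside the base) with the same quantifier-free type over $\langle A \rangle$ as $b$. By the extension property of the \Fr structure $\M$, the resulting finite structure embeds in $\M$ over $\langle A \rangle$, yielding $n+1$ distinct realisations of $\tp^{\mathrm{qf}}(b/\langle A \rangle)$ in $\M$. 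Letting $n$ grow, we obtain infinitely many such realisations, completing the argument.

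The only minor point to be careful with is that the language may include function and constant symbols; however, since $\M$ is locally finite the generated substructures $\langle A \rangle$ and $B$ are finite, and strong amalgamation inside $\Age(\M)$ together with ultrahomogeneity operate on quantifier-free types exactly as in the purely relational setting. No serious obstacle is anticipated.
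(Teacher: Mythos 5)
Your proof is correct. The paper itself states this lemma without proof, simply ``recalling'' it as a basic fact (only the converse direction is attributed, via Neumann's lemma, to Cameron's book), so there is no proof in the text to compare against. Your argument --- note that the pointwise stabiliser of $A$ equals that of $\langle A \rangle$, then iterate strong amalgamation in $\Age(\mc{M})$ to produce infinitely many distinct realisations of the quantifier-free type of $b$ over $\langle A \rangle$, and transport $b$ to each of them by ultrahomogeneity --- is the standard argument for this fact, and your handling of the non-relational case (working with finitely generated substructures throughout, which local finiteness keeps finite) is the right way to do it.
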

\begin{rem}
    In fact the converse is also true: if $\mc{N}$ is a locally finite \Fr structure with $\fcl(A) = \langle A \rangle$ for $A \fin N$, then $\mc{N}$ has strong amalgamation. The proof is via Neumann's lemma -- see \cite[Section 2.7]{Cam90}.

    Also note that if $\mc{N}$ is $\omega$-categorical, then via the Ryll-Nardzewski theorem the finite-orbit closure coincides with the algebraic closure in the usual sense (the union of finite $A$-definable subsets). 
\end{rem}

We now introduce two definitions we often use later.

\begin{defn}
    Let $\mc{N}$ be a first-order structure. We say that $\mc{N}$ is \emph{quasi-relational} if it is locally finite and there exists an equivalence relation $E$ on $N$ such that, for all $A \sub N$, we have $\langle A \rangle = \langle \varnothing \rangle \cup \bigcup_{a \in A} E(a)$, where $E(a)$ denotes the $E$-equivalence class of $a$. (Note that this implies that $E$ is $\Aut(\mc{N})$-invariant.)
    
    Note that if $\mc{N}$ is quasi-relational, then for substructures $\mc{A}, \mc{B} \sub \mc{N}$, we have $\dom \langle AB \rangle = AB$ and $\dom \langle A \cap B \rangle = A \cap B$.

    (Note that relational structures are quasi-relational by our definition.)
\end{defn}

\begin{defn} \label{d:freedom}
    Let $\mc{N}$ be a quasi-relational strong \Fr structure with SWIR $\ind$. We say that $\ind$ satisfies the Freedom axiom (denoted (Free)) if:
    \[
    B \ind_A C \wedge A \cap BC \sub D \sub A \,\Rightarrow\, B \ind_D C.
    \]

    (This axiom was first named in \cite{Con17}, though it appears originally in \cite[Lemma 5.1]{TZ13}.)
\end{defn}

For the remainder of Section \ref{s:making independent}, we assume that $\mc{N}$ is a quasi-relational strong \Fr structure with a SWIR $\ind$, in any first-order language (possibly including function and constant symbols). The reason for considering first-order languages in general (not just relational languages) will become clear in \cref{s:ind use section}, and is related to the actions of the finite groups $\Omega \in \mc{D}(\Theta)$ (see \cref{d:seed action on structure}). 

\begin{defn}
    Let $\mc{N}$ be a quasi-relational strong \Fr structure with a SWIR $\ind$. We extend $\ind$ to all finite subsets $A, B, C \fin N$ by defining $B \ind_A C$ if $\sg{B} \ind_{\sg{A}} \sg{C}$. We also write $B \ind C$ instead of $B \ind_\varnothing C$.
\end{defn}

\begin{defn}
    Let $(F_S, S)$ be a non-abelian free group with finite generating set $S$. Let $\phi$ be a finite partial action of $(F_S, S)$ on $\mc{N}$, and let $\mc{A} \fin \mc{N}$ and $s \in S^{\pm 1}$. An \emph{atomic free extension $\psi$ of $\phi$ to $\mc{A}$ by $s$} is an extension obtained by extending $\phi_s$ to $\psi_s = \phi_s \cup \xi$, where $\xi : \mc{A} \to \mc{A}'$ is an isomorphism of substructures of $\mc{N}$, such that $A' \setminus \im(\phi_s)$ is disjoint from $\supp(\phi) \cup A$. A \emph{free extension $\psi$ of $\phi$} is an extension obtained from $\phi$ by a finite chain of atomic free extensions (with no restrictions on the extending set at each step).
\end{defn}

This section is primarily concerned with the following definition.

\begin{defn} \label{d:extensive}
    Let $(F_S, S)$ be a non-abelian free group with finite generating set $S$. Let $\mc{E}$ be a collection of finite partial actions of $(F_S, S)$ on $\mc{N}$. We say that $\mc{E}$ has the \emph{escape property} if for all $\phi \in \mc{E}$ and $\mc{C} \fin \mc{N}$ there is a free extension $\psi \in \mc{E}$ of $\phi$, some $s \in S$ and some $u\in\Wr(S)$ with $C \sub \dom(\psi_u)$ such that $\dom(\psi_{s})\ind\psi_{u}(C)$.
\end{defn}

We exploit this property in \cref{s:ind use section} to find sufficient conditions for the existence of arc-extensive families. Given the technical nature of the issues addressed here, some readers might want to skip this section on a first read and jump to \cref{s:ind use section} for additional motivation. For the most general case considered here (with no additional assumptions on $\mc{N}$ or $\ind$), we use a proof technique from \cite{KS19}: specifically, we adapt an idea from the construction of strongly repulsive automorphisms in the proof of \cite[Theorem 3.12]{KS19}). The arguments in subsections \cref{ss:free case} and \cref{ssection:Q} are, to the best of our knowledge, new.

\subsection{Independent extensions}

Recall that we assume in Section \ref{s:making independent} that $\mc{N}$ is a quasi-relational strong \Fr structure with a SWIR $\ind$.

\begin{defn} \label{d:forwards independent extension}
    Let $\rho : \mc{A} \to \mc{B}$ be a finite partial isomorphism of $\mc{N}$ and let $C, E \fin N$. We say that a finite partial isomorphism $\rho': \mc{A}' \to \mc{B}'$ extending $\rho$ is: 
    \begin{itemize}
        \item an \emph{$(E, +)$-independent extension to $C$} if $A'=\sg{CA}$ and $EA'\ind_{B}B'$,
        \item a \emph{$(-, E)$-independent extension over to $C$} if $B'=\sg{CB}$ and $A'\ind_{A}B'E$. 
    \end{itemize}
    We refer to either of these types of extension as an \emph{$E$-independent extension}, and call $C$ the \emph{extending set}. In the absence of any mention of $E$, we implicitly take $E=\varnothing$.

    Note that by ultrahomogeneity of $\mc{N}$ and (Ex), an $E$-independent extension of each type always exists.
\end{defn}
   
\begin{defn} \label{d:forwards independent extension word}
    Let $\phi$ be a finite partial action of $(F_S, S)$ on $\mc{N}$. Let $w = s_1 \cdots s_m \in \Wr(S)$ be positive and let $C, E \fin M$.
 	
    An \emph{$(E, +)$-independent extension of $\phi$ to $C$ by $w$} is a finite extension $\psi \geq \phi$ for which there exists a chain of extensions $\phi = \phi^0 \leq \cdots \leq \phi^m = \psi$ such that for $1 \leq j \leq m$, the extension $\phi^j$ is obtained from $\phi^{j-1}$ by taking $\phi^j_{s_j}$ to be a $(\supp(\phi^{j-1}) \cup E, +)$-independent extension of $\phi^{j-1}_{s_j}$ to $\phi^{j-1}_{s_{1} s_{2} \cdots s_{j-1}}(C)$. (Note that here when $j=1$ we extend to $C$).
    	
    A finite partial isomorphism $\rho$ of $\mc{N}$ can be regarded as a finite partial action $\phi$ of $(\Z,s)$ on $\M$ with $\phi_s = \rho$, and given an $(E, +)$-independent extension $\psi \geq \phi$ by $s^m$ to $C$ we call $\rho'=\psi_s$ an $m$-iterated $(E, +)$-independent extension of the map $\rho$.
\end{defn}

\begin{lem} \label{l:independent description} \hfill
    \begin{enumerate}[label=(\roman*)]
        \item Let $\rho: \mc{A} \to \mc{B}$ be a finite partial isomorphism of $\mc{N}$ and let $E, C \fin N$. Let $\rho': \mc{A}' \to \mc{B}'$ be an $(E, +)$-independent extension of $\rho$ to $C$ (so $A' = \sg{CA}$). Then $\rho'(A' \setminus A)$ is disjoint from $EA'$. An analogous statement holds for $(-, E)$-independent extensions.
        \item Let $\phi$ be a finite partial action of $(F_S, S)$ on $\mc{N}$. Let $w = s_1 \cdots s_m \in \Wr(S)$ be positive and $C, E \fin N$. Let $\psi$ be an $(E, +)$-independent extension of $\phi$ to $C$ by $w$. Suppose that $C \cap \dom(\phi_{s_1}) = \sg{\varnothing}$. Then $\supp(\psi)$ is the disjoint union of $\supp(\phi)$ and $m$ disjoint copies $D_1, D_2, \cdots, D_m$ of the set $C \setminus \sg{\varnothing}$, and for $1 \leq i \leq m$ the map $\psi_{s_i}$ sends $D_{i-1}$ to $D_i$.
    \end{enumerate}      
\end{lem}
\begin{proof}
    (i): As $EA' \ind_{B} B'$, by \cref{SWIR strong amalg} we have $EA' \cap B' \sub B$. As $\rho'$ is a bijection, we have $\rho'(A' \setminus A) \sub B' \setminus B$, so $\rho'(A' \setminus A) \cap EA' = \varnothing$. 
    
    (ii): immediate from (i).
\end{proof}

\begin{lem} \label{l: concatenations}
    Let $\rho: \mc{A} \to \mc{B}$ be a finite partial isomorphism of $\mc{N}$ and $E \fin N$. Let $\mc{C} \fin \mc{N}$, and let $\sigma$ be an $m$-iterated $(E, +)$-independent extension of $\rho$ to $C$. Then $E \cup \supp(\rho) \ind_B \im(\sigma)$. In particular, by (Mon) we have $A \ind_B \sigma^m(C)$.
\end{lem}
\begin{proof}
    Let $\rho = \rho_0 \sub \cdots \sub \rho_m = \sigma$ be the chain of iterated extensions. By definition, for $1 \leq j \leq m$ we have $E \cup \supp(\rho_{j-1}) \cup \dom(\rho_j) \ind_{\im(\rho_{j-1})} \im(\rho_j)$. By (Mon) we have $E \cup \supp(\rho) \ind_{\im(\rho_{j-1})} \im(\rho_j)$ for $1 \leq j \leq m$. The result follows by (Tr) (and, formally speaking, by induction).
\end{proof}
 
\subsection{The general case} \label{ssection:general case}

We now produce a collection of finite partial actions on $\mc{N}$ with the escape property in the general case, without any additional assumptions on $\mc{N}$. To do this, we avail ourselves of ideas from \cite{KS19} which we present in a somewhat different language. See Subsection \ref{ss:free case} for the case where we additionally assume that $\ind$ satisfies (Free), and see Subsection \ref{ssection:Q} for the case of the structure $(\Q, <)$. In these two latter cases, we obtain stronger results than in this subsection, enabling us to prove correspondingly stronger results (showing genericity in addition to existence) in Section \ref{s:ind use section}: see Corollaries \ref{c:extension by arcs structure} and \ref{c: pleasant ssa SWIR rigid}.
    
\begin{defn} \label{d:chain-independent} 
    Let $\rho$, $\sigma$ be finite partial isomorphisms of $\mc{N}$ with $\rho \sub \sigma$. We say that $\sigma$ is \emph{chain-independent} over $\rho$ if there is $m \geq 0$ and a chain $\rho = \rho_0 \subsetneq \cdots \subsetneq \rho_m = \sigma$ of finite partial isomorphisms such that for $1 \leq i \leq m$, we have that $\rho_i$ is a \pl-independent or \mi-independent extension of $\rho_{i-1}$. We also say that $\sigma$ is a \emph{\pl-chain-independent extension} of $\rho$ in the particular case that each $\rho_i$ is a \pl-independent extension for $1 \leq i \leq m$, and likewise for \mi-independent extensions. If $\rho=\id_{\sg{\varnothing}}$, then we say that $\sigma$ is a \emph{chain-independent partial isomorphism}.
    
    We say that $\sigma$ is a \emph{full} chain-independent extension of $\rho$ if for $1 \leq i \leq m$ we have:
    \begin{itemize}
        \item if $\rho_i$ is a \pl-independent extension of $\rho_{i-1}$, then $\im(\rho_{i-1})$ is contained in the extending set of $\rho_i$;
        \item if $\rho_i$ is a \mi-independent extension of $\rho_{i-1}$, then $\dom(\rho_{i-1})$ is contained in the extending set of $\rho_i$.
    \end{itemize}

    Note that this implies that for $\rho_i$ a \pl-independent extension we have $\supp(\rho_{i-1}) \sub \dom(\rho_i)$, and for $\rho_i$ a \mi-independent extension we have $\supp(\rho_{i-1}) \sub \im(\rho_i)$. In particular $\supp(\rho_0) \subsetneq \cdots \subsetneq \supp(\rho_m)$.
\end{defn}

\begin{notn}
    For $J \sub \Z$ and $n \in \Z$ we write $J + n = \{j+n \mid j\in J\}$.
\end{notn}
       
\begin{defn} \label{d: indexing by intervals}
    Let $\sigma$ be a full chain-independent partial isomorphism of $\mc{N}$, as witnessed by some sequence $(\rho_i)_{0 \leq i \leq m}$ (see \cref{d:chain-independent}). We define integer intervals $J_i \sub \Z$ for $0 \leq i \leq m$, a \emph{height function} $\chi : \supp(\sigma) \to \Z$ and sets $C_{J_i}$ for $0 \leq i \leq m-1$ which are domains of finite substructures of $\mc{N}$ as follows:
    \begin{itemize}
        \item $J_0=\{0\}$ and $\chi(a) = 0$ for $a \in \sg{\varnothing}$;                            
        \item for $1 \leq i \leq m$, with $J_{i-1}$ already given and $\chi$ already defined on $\supp(\rho_{i-1})$, we define:
        \begin{itemize}
            \item if $\rho_i$ is a \pl-independent extension of $\rho_{i-1}$ we let $C_{J_{i-1}}= \dom(\rho_i)$ and $J_i = (J_{i-1}+1) \cup J_{i-1}$, and we define

            \begin{tabular}{p{5cm} p{5cm}}
                $\chi(a) = 0$ & for $a \in \dom(\rho_i) \setminus \supp(\rho_{i-1})$, \\
                $\chi(\rho_i(a)) = \chi(a) + 1$ & for $a \in \dom(\rho_i) \setminus \dom(\rho_{i-1})$,
            \end{tabular}

            \item if $\rho_i$ is a \mi-independent extension of $\rho_{i-1}$ we let $C_{J_{i-1}}=\im(\rho_i)$ and $J_i = (J_{i-1}-1) \cup J_{i-1}$, and we define

            \begin{tabular}{p{5cm} p{5cm}}
                $\chi(a) = 0$ & for $a \in \im(\rho_i) \setminus \supp(\rho_{i-1})$,\\
                $\chi(\rho_i^{-1}(a)) = \chi(a) - 1$ & for $a \in \im(\rho_i) \setminus \im(\rho_{i-1})$.
            \end{tabular}

        \end{itemize}
    \end{itemize}
    Observe that the above intervals $J_i$, height function $\chi$ and sets $C_{J_i}$ depend on the particular sequence $(\rho_i)_{0 \leq i \leq m}$ we take. Note that $J_0 \sub \cdots \sub J_m$ and that $J_i \sub [-i, i]$ and $|J_i| = i + 1$ for $0 \leq i \leq m$. For $1 \leq i \leq m$, we have $J_i = \chi(\supp(\rho_i))$ and $\chi(C_{J_{i-1}}) = J_{i-1}$.
    
    For $0 \leq i \leq m-1$ and $n \in \Z$ such that $C_{J_i} \sub \dom(\sigma^n)$, we define $C_{J_i + n} = \sigma^n(C_{J_i})$. We also define $C_\varnothing = \sg{\varnothing}$.  
\end{defn}

\begin{lem}
    Let $\sigma$ be a full chain-independent finite partial isomorphism of $\mc{N}$, with the notation of the previous definition. Then:
    \begin{enumerate}[label=(\roman*)]
        \item for each proper subinterval $J \subsetneq J_m$, the set $C_J$ is defined;
        \item for $J \sub J' \subsetneq J_m$ we have $C_J \sub C_{J'}$.
    \end{enumerate}
\end{lem}
\begin{proof}
    (i): Let $J \subsetneq J_m$ be a proper subinterval. We have $J = J_{i-1} + n$ for some $1 \leq i \leq m$ and $n \in \Z$. The case $n=0$ is by definition. Suppose that $n > 0$. Then by the definition of $J_i, \cdots, J_m$ we have that at least $n$ extensions from $\rho_i, \cdots, \rho_m$ are \pl-independent extensions. Let $\rho_{i'}$ be the first \pl-independent extension in this sequence. Then as $C_{J_{i-1}} \sub \dom(\rho_{i'})$ we therefore have $C_{J_{i-1}} \sub \dom(\sigma^n)$. The argument is analogous for $n < 0$ but with \mi-independent extensions.

    (ii): Let $1 \leq i \leq m$ and let $J'' \sub J_{i-1}$ be a subinterval. Then $J'' = J_{l-1} + n$ for some $1 \leq l \leq i$. Suppose that $n > 0$. Arguing as in (i), we have that at least $n$ extensions from $\rho_l, \cdots, \rho_{i-1}$ are \pl-independent extensions. So $C_{J''} \sub \im(\rho_{i-1}) \sub C_{J_{i-1}}$. The case $n < 0$ is analogous but with \mi-independent extensions, and the case $n = 0$ follows immediately by definition. The general claim for $J \sub J'$ follows by translating by a power of $\sigma$ if necessary.
\end{proof}
  
The following is a key part of the construction of a strongly repulsive automorphism in \cite[Theorem 3.12]{KS19}. We recall it both for the reader's convenience and also to harmonise notation.

\begin{lem} \label{l:independence intervals} 
    Let $\sigma$ be a full chain-independent finite partial isomorphism of $\mc{N}$, with the notation of \cref{d: indexing by intervals}. Let $J, J' \subsetneq J_m$ with $\min J < \min J'$. Then $C_J \ind_{C_{J \cap J'}} C_{J'}$.  
\end{lem}
\begin{proof}
    We use induction on $m$. The case $m = 0$ is trivial, and the case $m = 1$ follows immediately by the definition of independent extensions. Suppose $m \geq 2$ and that the result holds for $m-1$. By (Inv) and translating by $\sigma$ it suffices to consider the case $\max J' = \max J_m$, and by (Mon) it suffices to consider the case where in addition $\min J = \min J_m$. Let $n = \min J_m$, $l = \max J_m$. We may also assume that $J' \not\sub J$ by base triviality.

    Suppose that $\rho_m$ is a \pl-independent extension of $\rho_{m-1}$ (the case for \mi-independence is similar). So $J_{m-1} = J_m \setminus \{l\}$, and we have $J \sub J_{m-1}$ and $J' \sub J_{m-1} + 1$. Let $I = J_{m-1} \setminus \{n\}$.

    If $\rho_{m-1}$ is a \pl-independent extension of $\rho_{m-2}$, then $C_I = \sigma(C_{J_{m-2}}) = \im(\rho_{m-1})$, and if $\rho_{m-1}$ is a \mi-independent extension of $\rho_{m-2}$ then $C_I = C_{J_{m-2}} = \im(\rho_{m-1})$. As $\rho_m$ is a \pl-independent extension we have $C_{J_{m-1}} \ind_{\im(\rho_{m-1})} C_{J_{m-1} + 1}$, and hence $C_{J_{m-1}} \ind_{C_I} C_{J_{m-1} + 1}$, which implies $C_J \ind_{C_I} C_{J_{m-1} + 1}$ by (Mon). 
    
    By the induction hypothesis for $J_{m-1}$ we have $C_J \ind_{C_{J \cap I}} C_I$, so applying (Tr) to $C_J \ind_{C_{J \cap I}} C_I$ and $C_J \ind_{C_I} C_{J_{m-1} + 1}$ we have $C_J \ind_{C_{J \cap I}} C_{J_{m-1} + 1}$, and hence $C_J \ind_{C_{J \cap I}} C_{J'}$ by (Mon).
    
    Also, as $J \cap J' \sub I$ we have $J \cap J' = J \cap J' \cap I$, and by the induction hypothesis for $J_{m-1}$, applying $\sigma$ and using (Inv) we have $C_{J \cap I} \ind_{C_{J \cap J'}} C_{J'}$. As $J \cap J' \sub J \cap I$ we have $C_{J \cap I}C_{J \cap J'} = C_{J \cap I}$, and so applying (Tr) to $C_J \ind_{C_{J \cap I}} C_{J'}$ and $C_{J \cap I} \ind_{C_{J \cap J'}} C_{J'}$ we have $C_J \ind_{C_{J \cap J'}} C_{J'}$ as required.
\end{proof}
   
\begin{cor} \label{c:escape property chain-independent}
    Let $\mc{N}$ be a quasi-relational strong \Fr structure with a SWIR. Let $(F_S, S)$ be a non-abelian free group with finite generating set $S$. Let $s \in S$. Then the collection $\mc{E}$ of all finite partial actions $\phi$ such that $\phi_s$ is full chain-independent has the escape property. 
\end{cor}
\begin{proof}
    Let $\mc{C} \fin \mc{N}$ and $\phi \in \mc{E}$. Let $t \in S \setminus \{s\}$. Let $(\rho_i)_{0 \leq i \leq l}$ be a sequence of extensions witnessing the full chain-independence of $\phi_s$. Extend this sequence to define a full chain-independent partial isomorphism $\phi'_s$, witnessed by $(\rho_i)_{0 \leq i \leq m}$ with $m \geq l$, by extending $\phi_s$ so that $C \cup \dom(\phi_t) \sub \dom(\phi'_s)$ and $\phi'_s$ is a \pl-independent extension. We use the notation of \cref{d: indexing by intervals}. We have $C \cup \dom(\phi_t) \sub C_{J_{m - 1}}$. Extend $\phi'_s$ by $m$-many \pl-independent extensions to a full chain-independent extension $\psi_s$, and let $\psi \in \mc{E}$ be the partial action given by taking $\phi$ and extending $\phi_s$ to $\psi_s$. In particular we have $\psi_t = \phi_t$. As $J_{m - 1} \cap (J_{m - 1} + m) = \varnothing$, by \cref{l:independence intervals} we have $C_{J_{m-1}} \ind C_{J_{m - 1} + m}$, and so by (Mon) we have $\dom(\psi_t) \ind \psi_{s^m}(C)$.
\end{proof}

\begin{lem} \label{l:fullifying}
    Let $\rho$ be a full chain-independent finite partial isomorphism of $\mc{N}$. Then for each \pl-chain-independent extension $\rho'$ of $\rho$, there is a \pl-chain-independent extension $\sigma$ of $\rho'$ which is a full chain-independent finite partial isomorphism of $\mc{N}$.
\end{lem}


\begin{proof}
    For each \pl-chain-independent extension $\rho'$ of $\rho$, define $m_{\rho'}$ to be the least $m$ such that there is a chain $\rho = \rho_0 \subsetneq \cdots \subsetneq \rho_m = \rho'$ witnessing the chain-independence of $\rho'$. We show that for all $m \in \N$ the lemma holds for $\rho'$ with $m_{\rho'} = m$, and we do this via induction on $m$. The case $m=0$ is trivial. We now do the induction step. Let $\rho'$ be a \pl-chain-independent extension of $\rho$ with a chain $\rho = \rho_0 \subsetneq \cdots \subsetneq \rho_m = \rho'$ witnessing the chain-independence. For $1 \leq j \leq m$, let $C_j$ be the extending set of $\rho_j$ and let $C'_j = \rho_j(C_j)$, and we also let $C_{1:j} = \bigcup_{l=1}^j C_l$ and define $C'_{1:j}$ similarly.
    
    Let $A = \dom(\rho_0)$, $B = \im(\rho_0)$. By the definition of \pl-independent extensions, for $1 \leq j \leq m$ we have $AC^{}_{1:j} \ind_{BC'_{1:j-1}} BC'_{1:j}$, and hence by base triviality and (Mon) we have \[\tag{$\ast$} ABC^{}_{1:j} \ind_{BC'_{1:j-1}} BC'_{1:j} \quad\text{ for } 1 \leq j \leq m.\] Let $\rho''$ be a \pl-independent extension of $\rho'$ to $B$. Again by definition and applying base triviality and (Mon) we have \[\tag{$\ast \ast$} ABC^{}_{1:m} \ind_{BC'_{1:m}} B\rho''(B)C'_{1:m}.\]

    Let $1 \leq l \leq m$. Then from $(\ast)$ applying (Mon) we have $ABC^{}_{1:l} \ind_{BC'_{1:j-1}} BC'_{1:j}$ for $l \leq j \leq m$, and from $(\ast \ast)$ applying (Mon) we have $ABC^{}_{1:l} \ind_{BC'_{1:m}} B\rho''(B)C'_{1:m}$. So by (Tr) we have \[\tag{$\dagger$} ABC^{}_{1:l} \ind_{BC'_{1:l-1}} B\rho''(B)C'_{1:m},\] and hence by (Mon) we have $ABC^{}_{1:l} \ind_{B\rho''(B)C'_{1:l-1}} B\rho''(B)C'_{1:l}$. Thus $\rho_l \cup \rho''|_B$ is a \pl-independent extension of $\rho_{l-1} \cup \rho''|_B$.

    By $(\dagger)$ with $l = 1$ we have $ABC^{}_1 \ind_B B\rho''(B)C'_1$, so $\rho_1 \cup \rho''|_B$ is a \pl-independent extension of $\rho_0$ to $BC_1 \supseteq B$, and thus $\rho_1 \cup \rho''|_B$ is a full chain-independent finite partial isomorphism of $\mc{N}$. We have a chain of length $m-1$ of \pl-independent extensions $\rho_1 \cup \rho''|_B \subsetneq \rho_2 \cup \rho''|_B \subsetneq \cdots \subsetneq \rho_m \cup \rho''|_B = \rho''$, and so applying the induction hypothesis to the \pl-chain-independent extension $\rho''$ of $\rho_1 \cup \rho''|_B$, we obtain a \pl-chain-independent extension $\sigma$ of $\rho''$ which is a full chain-independent finite partial isomorphism of $\mc{N}$. As $\rho''$ is a \pl-independent extension of $\rho'$, we have that $\sigma$ is a \pl-chain-independent extension of $\rho'$, and thus $\sigma$ is as required.
\end{proof}
      
\subsection{Free case} \label{ss:free case}

\begin{notation}
     Let $\rho$ be a finite partial isomorphism of $\mc{N}$ and $V \sub N$. In this section we will often consider ``the image of $V$ under $\rho$" given by $\rho(V \cap \dom(\rho))$. We write $\rho(\hat{V}) = \rho(V \cap \dom(\rho))$, and avoid writing $\rho(V)$ to prevent confusion as to where $\rho$ is defined.
\end{notation}

\begin{lem} \label{l: free escape 1}
    Let $\mc{N}$ be a quasi-relational strong \Fr structure with SWIR $\ind$, and suppose that $\ind$ satisfies (Free). Let $\rho$ be a finite partial isomorphism of $\mc{N}$. Let $\mc{C}, \mc{D} \fin \mc{N}$ with $\dom(\rho) \sub D$ and $C \cap \dom(\rho) = \sg{\varnothing}$. Let $m \geq 1$, and let $\sigma$ be an $m$-iterated $(D, +)$-independent extension of $\rho$ to $C$. Then $D \ind_{\im(\rho^m)} \sigma^m(C)$.
\end{lem}
\begin{proof}
    We use induction on $m$. The case $m=1$ follows from the definition of a $(D, +)$-independent extension: we have $D \dom(\sigma) \ind_{\im(\rho)} \im(\sigma)$ and hence $D \ind_{\im(\rho)} \sigma(C)$ by (Mon). Suppose $m > 1$. Let $\sigma'$ be a \pl-independent extension of $\sigma$ to $D$. By the induction hypothesis we have $D \ind_{\im(\rho^{m-1})} \sigma^{m-1}(C)$, and so applying $\sigma'$ and using (Inv) (and the ultrahomogeneity of $\mc{N}$ to extend $\sigma'$ to an element of $\Aut(\mc{N})$) we have $\sigma'(D) \ind_{\sigma'(\im(\rho^{m-1}))} \sigma^m(C)$. As $\dom(\rho) \sub D$, by (Mon) we thus have $\im(\rho) \ind_{\sigma'(\im(\rho^{m-1}))} \sigma^m(C)$. 
    
    We now use (Free) to reduce the base of the independence relation. We have $\sigma'(\im(\rho^{m-1})) \cap \im(\rho) = \im(\rho^m)$, and as $C \cap \dom(\rho) = \sg{\varnothing}$ we have $\sigma'(\im(\rho^{m-1})) \cap \sigma^m(C) = \sg{\varnothing}$. So by (Free) we have $\im(\rho) \ind_{\im(\rho^m)} \sigma^m(C)$. By \cref{l: concatenations} we have $D \ind_{\im(\rho)} \sigma^m(C)$, so applying (Tr) we obtain $D \ind_{\im(\rho^m)} \sigma^m(C)$.
\end{proof}

\begin{lem} \label{l:freeness and independent extensions}
    Suppose that $\ind$ satisfies (Free). Let $\rho$ be a finite partial isomorphism of $\mc{N}$. Let $E \fin N$. Let $U, V \sub \supp(\rho)$ with $\dom(\rho) \ind_U V$. Let $m \geq 1$ and let $\sigma$ be an $m$-iterated $(E, +)$-independent extension of $\rho$ to $V$. Let $\hat{U} = U \cap \dom(\sigma^m)$. Then $E \cup \supp(\rho) \ind_{\sigma^m(\hat{U})} \sigma^m(V)$.
\end{lem}
\begin{proof}
    The proof is almost identical to that of the previous lemma. We use induction on $m$. Let $\sigma'$ be a \pl-independent extension of $\sigma$ to $\sigma^{m-1}(\hat{U})$ (that is, to $U$ in the case $m = 1$). We have $\dom(\rho) \ind_{\sigma^{m-1}(\hat{U})} \sigma^{m-1}(V)$: this follows for $m = 1$ by the assumption of the lemma, and for $m > 1$ by the induction assumption and (Mon). So applying $\sigma'$ and using (Inv) we have $\im(\rho) \ind_{\sigma'(\sigma^{m-1}(\hat{U}))} \sigma^m(V)$. As $\sigma'(\sigma^{m-1}(\hat{U})) \cap \im(\rho) \sub \sigma^m(\hat{U})$ and $\sigma'(\sigma^{m-1}(\hat{U})) \cap \sigma^m(V) \sub \sigma^m(\hat{U})$, by (Free) we have $\im(\rho) \ind_{\sigma^m(\hat{U})} \sigma^m(V)$. By \cref{l: concatenations} we have $E \cup \supp(\rho) \ind_{\im(\rho)} \im(\sigma)$, so by (Mon) we have $E \cup \supp(\rho) \ind_{\im(\rho) \cup \sigma^m(\hat{U})} \sigma^m(V)$, and the result follows by (Tr) applied to $\im(\rho) \ind_{\sigma^m(\hat{U})} \sigma^m(V)$ and $E \cup \supp(\rho) \ind_{\im(\rho) \cup \sigma^m(\hat{U})} \sigma^m(V)$.
\end{proof}

\begin{lem} \label{l:escape property free} 
    Let $\mc{N}$ be a quasi-relational strong \Fr structure with a SWIR $\ind$ satisfying (Free). Let $(F_S, S)$ be a non-abelian free group with finite generating set $S$. Let $\mc{E}$ be the collection of all finite partial actions $\phi$ of $(F_S, S)$ on $\mc{N}$ such that there is no $\mc{A} \fin \mc{N}$ with $\mc{A} \neq \sg{\varnothing}$ and $\phi_s(\mc{A})=\mc{A}$ for all $s \in S$. Then $\mc{E}$ has the escape property.  
\end{lem}
\begin{proof}
    First observe that for $\phi \in \mc{E}$, given a \pl-independent extension $\psi$ of $\phi$ to a finite set by a word $w \in \Wr(S)$, we have $\psi \in \mc{E}$.

    Let $\phi \in \mc{E}$ and $\mc{C} \fin \mc{N}$. Let $\mc{V}$ be the collection of all quadruples $(\mc{A}_0, u, \psi, t)$, where $\mc{A}_0 \fin \mc{N}$, $u \in \Wr(S)$ is a positive word, $\psi$ is a \pl-independent extension of $\phi$ to $C$ by $u$ and $t \in S$, satisfying the following:
    \begin{enumerate}[label=(\roman*)]
        \item \label{indep cond} $\dom(\psi_t) \ind_{A_0} \psi_u(C)$;
        \item \label{disjoint cond} $\dom(\psi_t) \cap \psi_u(C) = \sg{\varnothing}$;
        \item \label{no mapping into cond} for each $m \in \N$ and $a \in (A_0 \setminus \sg{\varnothing}) \cap \dom(\psi_{t^m})$ we have $\psi_{t^m}(a) \notin \psi_u(C)$.
    \end{enumerate}

    In the remainder of the proof, we show in a series of steps that there exists $(\mc{A}_0, u, \psi, t) \in \mc{V}$ with $\mc{A}_0 = \sg{\varnothing}$, which implies by condition \ref{indep cond} that $\mc{E}$ has the escape property.

    \textbf{Claim 1:}  $\mc{V}$ is non-empty.
    
    \begin{subproof}
        Observe that there is $s \in S$, a positive word $v \in \Wr(S)$ and a \pl-independent extension $\phi'$ of $\phi$ to $C$ by $v$ such that $\phi'_v(C) \cap \dom(\phi'_s) = \sg{\varnothing}$: this follows by an entirely analogous argument to that in the proof of \cref{l:disjointness through free extensions}, using \pl-independent extensions instead of atomic $\Omega$-free extensions. Let $C' = \phi'_v(C)$. Let $A_0 = \bigcap_{n \geq 1} \im(\phi'_{s^n})$. As $\phi'$ is finite we have $\phi'_s(A_0) = A_0$, and also we have that there exists $m > 0$ such that $A_0 = \im(\phi'_{s^m})$. Let $\psi$ be a \pl-independent extension of $\phi'$ to $C'$ by $s^m$. By \cref{l: free escape 1} we have $\supp(\phi') \ind_{A_0} \psi_{s^m}(C')$. Let $t \in S \setminus \{s\}$. As $\dom(\psi_t) = \dom(\phi'_t)$, by (Mon) we have $\dom(\psi_t) \ind_{A_0} \psi_{s^m}(C')$, so condition \ref{indep cond} holds. As $C' \cap \dom(\phi'_s) = \sg{\varnothing}$, by the definition of $\psi$ we have that conditions \ref{disjoint cond}, \ref{no mapping into cond} hold. So $\mc{V}$ is non-empty.    
    \end{subproof}

    \textbf{Claim 2:} for each $(\mc{A}_0, u, \psi, t) \in \mc{V}$ and for all $n > 0$, given a \pl-independent extension $\psi'$ of $\psi$ to $\psi_u(C)$ by $t^n$ and given $s \in S \setminus \{t\}$, we have $(\psi_{t^n}(\hat{\mc{A}}_0), ut^n, \psi', s) \in \mc{V}$.
    \begin{subproof}
        As $\dom(\psi_t) \ind_{A_0} \psi_u(C)$ (this is condition \ref{indep cond}), by \cref{l:freeness and independent extensions} we have $\supp(\psi) \ind_{\psi'_{t^n}(\hat{A}_0} \psi'_{ut^n}(C)$, and as $\dom(\psi'_s) = \dom(\psi_s)$ we therefore have $\dom(\psi'_s) \ind_{\psi'_{t^n}(\hat{A}_0)} \psi'_{ut^n}(C)$. As $(\mc{A}_0, u, \psi, t)$ satisfies condition \ref{no mapping into cond}, we have $\psi'_{t^n}(\hat{A}_0) = \psi_{t^n}(\hat{A}_0)$ (note that here we really do need the full strength of condition \ref{no mapping into cond} for all $m \in \N$), and so $\dom(\psi'_s) \ind_{\psi_{t^n}(\hat{A}_0)} \psi'_{ut^n}(C)$. Thus $(\psi_{t^n}(\hat{\mc{A}}_0), ut^n, \psi', s)$ satisfies condition \ref{indep cond}. As $(\mc{A}_0, u, \psi, t)$ satisfies conditions \ref{disjoint cond}, \ref{no mapping into cond}, it is straightforward to see that $(\psi_{t^n}(\hat{\mc{A}}_0), ut^n, \psi', s)$ also satisfies these two conditions.
    \end{subproof}

    \textbf{Claim 3:} for each quadruple $(\mc{A}_0, u, \psi, t) \in \mc{V}$ with $\mc{A}_0 \neq \sg{\varnothing}$, there is a positive word $w \in \Wr(S)$ with $A_0 \nsubseteq \dom(\psi_w)$.
    \begin{subproof}
        Suppose not. Then the set $\bigcup \{\psi_v(A_0) \mid v \in \Wr(S) \text{ positive}\}$ is $\psi_s$-invariant for all $s \in S$ and also not equal to $\sg{\varnothing}$, contradicting that $\psi \in \mc{E}$.
    \end{subproof}

    Suppose for a contradiction that for each $(\mc{A}_0, u, \psi, t) \in \mc{V}$ we have $\mc{A}_0 \neq \sg{\varnothing}$. Let $\mc{V}'$ be the subset of $\mc{V}$ consisting of the quadruples $(\mc{A}_0, u, \psi, t) \in \mc{V}$ with $|A_0|$ minimal. Let $\mc{X}$ be the set consisting of the tuples $(\mc{A}_0, u, \psi, t, w)$ where $(\mc{A}_0, u, \psi, t) \in \mc{V}'$ and where $w \in \Wr(S)$ is a positive word with $A_0 \nsubseteq \dom(\psi_w)$ (note that necessarily $w$ is non-empty). Take $(\mc{A}_0, u, \psi, t, w) \in \mc{X}$ such that $|\syl(w)|$ is minimal (over all tuples in $\mc{X}$). (See \cref{d: syllable sequence} for the definition of $\syl(w)$.)
    
    By Claim 2, as $|A_0|$ is minimal amongst tuples in $\mc{V}$ we have $A_0 \sub \dom(\psi_{t^n})$ for each $n > 0$. Thus, as $\psi$ is finite, there is $m > 0$ with $\psi_{t^m}(A_0) = A_0$.

    Let $t_0^n$ be the first syllable of $w$ (so $n > 0$). Write $w = t_0^n v$, where necessarily the first syllable of $v$ is not $t_0$ (the subword $v$ may be empty). We have two cases.

    \textbf{Case 1:} $t_0 = t$. We have $w = t^n v$. Let $\psi'$ be a \pl-independent extension of $\psi$ to $\psi_u(C)$ by $t^n$, and let $s \in S \setminus \{t\}$. Then by Claim 2 we have $(\psi_{t^n}(\hat{\mc{A}}_0), ut^n, \psi', s) \in \mc{V}$. As $|A_0|$ is minimal we have $(\psi_{t^n}(\mc{A}_0), ut^n, \psi', s) \in \mc{V}'$. By the minimality of $|\syl(w)|$ we have $\psi_{t^n}(A_0) \sub \dom(\psi'_v)$. As $\psi_{t^n}(A_0) \nsubseteq \dom(\psi_v)$ and $\psi'$ is a \pl-independent extension by $t^n$, we have that there is $a \in A_0 \setminus \sg{\varnothing}$ and $i < |v|$ such that $\psi_{t^n v[1:i]}(a) \in \psi_u(C)$ and $v[j] = t$ for all $i < j \leq |v|$. But then $\psi_{t^n}(a) \notin \dom(\psi'_{vt^n})$, and $|\syl(vt^n)| < |\syl(w)|$, contradiction.

    \textbf{Case 2:} $t_0 \neq t$. Let $\psi'$ be a \pl-independent extension of $\psi$ to $\psi_u(C)$ by $t^m$. Recall that $m > 0$ and $\psi_{t^m}(A_0) = A_0$. Using this together with Claim 2 we have $(\mc{A}_0, ut^m, \psi', t_0) \in \mc{V}'$. If $A_0 \nsubseteq \dom(\psi'_w)$, then we are again in the situation of Case 1 (with the tuple $(\mc{A}_0, ut^m, \psi', t_0)$ and the same word $w$), and so we have a contradiction. If $A_0 \sub \dom(\psi'_w)$, then by similar reasoning to the previous case we have that the final letter of $w$ is $t$ and that $A_0 \nsubseteq \dom(\psi'_{wt^m})$. As $|\syl(wt^m)| = |\syl(w)|$, we are once more in Case 1 with the tuple $(\mc{A}_0, ut^m, \psi', t_0)$ and the word $wt^m$, and so again we get a contradiction.
\end{proof}
      
\subsection{The case \texorpdfstring{$\M=(\mathbb{Q},<)$}{M=(Q, <)}} \label{ssection:Q}
    
Recall from \cref{ex: structures with SWIRS} that $(\Q,<)$ admits a SWIR: we define $B \ind_A C$ if for $b \in B \setminus A$ and $c \in C \setminus A$ such that there does not exist $a \in A$ with $b < a < c$ or $b > a > c$, we have $b < c$.

We now define some notation that we will only use in this section.

\begin{defn} \label{d:inc dec}
    Let $\rho$ be a finite partial isomorphism of $(\Q, <)$. Let $\{\Inc, \Dec, \Fixed, \Undef\}$ be a set of symbols. For $q \in \Q$, we define the \emph{$\rho$-character} of $q$, written $\chi(q, \rho)$, as follows:
    \[
        \chi(q, \rho) =
        \begin{cases}
            \Inc &\quad q \in [a, \rho(a)] \text{ for some } a \in \dom(\rho) \text{ with } a < \rho(a)\\
            \Dec &\quad q \in [\rho(a), a] \text{ for some } a \in \dom(\rho) \text{ with } a > \rho(a)\\
            \Fixed &\quad \rho(q) = q\\
            \Undef &\quad \text{otherwise},
        \end{cases}
    \]
    and for $C \sub \Q$ we define $\Inc(C, \rho), \Dec(C, \rho), \Fixed(C, \rho), \Undef(C, \rho)$ to consist of the elements of $C$ with $\rho$-character $\Inc, \Dec, \Fixed, \Undef$ respectively. 
    
    In the particular case $C = \supp(\rho)$, we write $\Inc(\rho)$ instead of $\Inc(\supp(\rho), \rho)$, and define $\Dec(\rho), \Fixed(\rho)$ similarly. It is straightforward to see that $\Inc(\rho), \Dec(\rho), \Fixed(\rho)$ partition $\supp(\rho)$.

    We define $\SInc(\rho)$ to be the collection of maximal intervals $[a, b] \sub \Q$ with $a, b \in \Inc(\rho)$ and $[a, b] \cap (\Dec(\rho) \cup \Fixed(\rho)) = \varnothing$, and similarly define $\SDec(\rho)$ to be the collection of maximal intervals $[a, b] \sub \Q$ with $a, b \in \Dec(\rho)$ and $[a,b] \cap (\Inc(\rho) \cup \Fixed(\rho)) = \varnothing$.
\end{defn}

\begin{defn}
    Let $\rho$ be a finite partial isomorphism of $(\Q, <)$.

    Let $q \in \Q$. We define a word $w$ in the alphabet $\{\Inc, \Dec, \Fixed\}$ as follows. First, note that $\SInc(\rho) \cup \SDec(\rho) \cup \Fixed(\rho)$ is linearly ordered by the $\Q$-order. The word $w$ contains exactly:
    \begin{itemize}
        \item one occurrence of $\Inc$ for each interval in $\SInc(\rho)$ which intersects $[q, \infty)$,
        \item one occurrence of $\Dec$ for each interval in $\SDec(\rho)$ which intersects $[q, \infty)$,
        \item one occurrence of $\Fixed$ for each point in $\Fixed(\rho)$ contained in $[q, \infty)$,
    \end{itemize}
    and we order these occurrences in the word via the linear order on $\SInc(\rho) \cup \SDec(\rho) \cup \Fixed(\rho)$.
     
    Let $\pi(q, \rho)$ be the word obtained from $w$ by:
    \begin{itemize}
        \item inserting the letter $\Inc$ between each pair of consecutive letters of $w$ which are both not equal to $\Inc$;
        \item if the last letter of $w$ is not equal to $\Inc$, appending the letter $\Inc$ to the end of the word;
    \end{itemize}
    and let $\Pi(q, \rho)=|\pi(q, \rho)|$.
\end{defn}

\begin{defn} \label{d: t-advancing ext}
    Let $\phi$ be a finite partial action of $(F_S, S)$ on $\Q$. Let $t \in S$. Let $C \fin \Q$ be non-empty, and let $c = \min C$. Suppose that $[c, \infty) \cap \supp(\phi_t) \neq \varnothing$ and that there is $\eps = \pm 1$ with $\min([c, \infty) \cap \supp(\phi_t)) \in \Inc(\phi_{t^\eps})$. We call $\eps$ the \emph{sign of $(C, \phi_t)$}.
    
    We call an extension $\psi \geq \phi$ a \emph{$t$-advancing extension to $C$} if there exists a chain of extensions $\phi \leq \phi' \leq \phi'' \leq \psi$ satisfying the following:      
    \begin{enumerate}[label=(\roman*)]
        \item $\phi'$ is a \pl-independent extension of $\phi$ to $\{c\}$ by $t^\eps$;
        \item $\phi''$ is an extension of $\phi'$ by $t^\eps$ to $\Undef(C, \phi'_t)$ such that, for each $q \in \Undef(C, \phi'_t)$:
        \begin{itemize}
            \item if $q \in [a, b]$ for some $[a, b] \in \SInc(\phi'_t)$, then:
            \begin{itemize}
                \item in the case $\eps = +1$, we have $q < \phi''_t(q)$ and $(q, \phi''_t(q)) \cap \supp(\phi'') = \varnothing$,
                \item in the case $\eps = -1$, we have $\phi''_{t^{-1}}(q) < q$ and $(\phi''_{t^{-1}}(q), q) \cap \supp(\phi'') = \varnothing$,
            \end{itemize}
            \item if $q \in [a, b]$ for some $[a, b] \in \SDec(\phi'_t)$, then:
            \begin{itemize}
                \item in the case $\eps = +1$, we have $\phi''_t(q) < q$ and $(\phi''_t(q), q) \cap \supp(\phi'') = \varnothing$,
                \item in the case $\eps = -1$, we have $q < \phi''_{t^{-1}}(q)$ and $(q, \phi''_{t^{-1}}(q)) \cap \supp(\phi'') = \varnothing$,
            \end{itemize}
            \item otherwise:
            \begin{itemize}
                \item in the case $\eps = +1$, we have $q < \phi''_t(q)$ and $(q, \phi''_t(q)) \cap \supp(\phi'') = \varnothing$,
                \item in the case $\eps = -1$, we have $\phi''_{t^{-1}}(q) < q$ and $(\phi''_{t^{-1}}(q), q) \cap \supp(\phi'') = \varnothing$;
            \end{itemize}
        \end{itemize}
        \item $\psi$ is a \pl-independent extension of $\phi''$ by $t^\eps$ to $C \setminus (\Undef(C, \phi'_t) \cup \{c\})$. 
    \end{enumerate}

    Note that for each $t$-advancing extension $\psi \geq \phi$ to $C$, we have $c \leq \psi_{t^\eps}(c)$, and if $c \notin \dom(\phi_{t^\eps})$ then $c < \psi_{t^\eps}(c)$. Also note that if $C$ does not contain any element of $\phi_t$-character $\Undef$, then a $t$-advancing extension of $\phi$ to $C$ is just a \pl-independent extension of $\phi$ to $C$ by $t^\eps$. 
    
    For $m\geq 1$ we define a \emph{$m$-iterated $t$-advancing extension of $\phi$ to $C$} to be an extension $\psi$ for which there is a chain $\phi = \phi^0 \leq \phi^1 \leq \cdots \phi^m = \psi$ such that $\phi^l$ is a $t$-advancing extension of $\phi^{l-1}_{\vphantom{t^{\eps(l-1)}}}$ to $\phi^{l-1}_{t^{\eps(l-1)}}(C)$ for $1 \leq l \leq m$.
    
    (Here $\eps$ is the sign of $(C, \phi_t)$ -- note that $(\phi^l_{t^{\eps l}}(C), \phi^l_{t^{\vphantom{\eps l}}})$ has sign $\eps$ for $l < m$. Also note that, for $2 \leq l \leq m$, we have that $\phi^{l-1}_{t^{\eps(l-1)}}(C)$ does not contain any element of $\phi^{l-1}_t$-character $\Undef$, and so $\phi^l$ is a \pl-independent extension of $\phi^{l-1}$ to $\phi^{l-1}_{t^{\eps(l-1)}}(C)$ by $t^\eps$.)
\end{defn}

\begin{lem}
    We use the notation of \Cref{d: t-advancing ext} above. Let $\psi$ be a $t$-advancing extension of $\phi$ to $C$. Then $\pi(c, \phi_t) = \pi(\psi_{t^\eps}(c), \psi_t)$.
\end{lem}
\begin{proof}
    Let $x = \min([c, \infty) \cap \supp(\phi_t))$. Let $\phi \leq \phi' \leq \phi'' \leq \psi$ be a chain of extensions witnessing that $\psi$ is a $t$-advancing extension of $\phi$ to $C$. We check the claim of the lemma step by step in the chain.
    \begin{enumerate}[label=(\roman*)]
        \item It is straightforward to check that $x, c, \phi'_{t^\eps}(c)$ have the same $\phi'_t$-character and that, if there exist elements of $[c, \infty) \cap \supp(\phi_t)$ with different $\phi_t$-character to $x$, then $\phi'_{t^\eps}(c)$ is less than the least such. Thus $\pi(\phi'_{t^\eps}(c), \phi'_t) = \pi(c, \phi_t)$.
        \item Let $q \in \Undef(C, \phi'_{t^\eps})$. If $q$ lies in some maximal interval $[a, b] \in \SInc(\phi'_t) \cup \SDec(\phi'_t)$, then by definition $\{q, \phi''_{t^\eps}(q)\} \sub [a, b]$ and $\chi(q, \phi''_t) = \chi(a, \phi''_t)$. Otherwise, by definition $\chi(q, \phi''_t) = \Inc$. So $\pi(\phi''_{t^\eps}(c), \phi''_t) = \pi(\phi'_{t^\eps}(c), \phi'_t)$.
        \item Let $q \in C \setminus (\Undef(C, \phi'_{t^\eps}) \cup \{c\})$. Then $\chi(q, \phi''_t) \in \{\Inc, \Dec, \Fixed\}$, and so it is straightforward to see that $\pi(\psi^{}_{t^\eps}(c), \psi^{}_t) = \pi(\phi''_{t^\eps}(c), \phi''_t)$. \qedhere
    \end{enumerate}
\end{proof}

\begin{defn}
    Let $\phi$ be a finite partial action of $(F_S, S)$ on $\Q$. For $t \in S$, $q \in \Q$, we define $\Nx(q, \phi_t)$ to be the least $a \in [q, \infty) \cap \supp(\phi_t)$ with $\chi(a, \phi_t) \neq \chi(q, \phi_t)$. If there is no such $a$, we define $\Nx(q, \phi_t) = \infty$. (Informally, $\Nx(q, \phi_t)$ is the ``next element" above $q$ of different $\phi_t$-character.)
\end{defn}

\begin{lem} \label{l:bypassing increasing zone}
    Let $\phi$, $t$, $C$, $c = \min C$ be as in \Cref{d: t-advancing ext}. Then there exists $m \geq 0$ such that any $m$-iterated $t$-advancing extension $\psi$ of $\phi$ to $C$ satisfies the property that $\psi_{t^{\eps m}}(c) > a$ for all $a \in (-\infty, \Nx(\psi_{t^{\eps m}}(c), \psi_t)) \cap \bigcup_{s \in S \setminus \{t\}} \supp(\phi_s)$. 
\end{lem}
\begin{proof}
    It suffices to prove the lemma for $(C, \phi)$ with $C \sub \supp(\phi_t)$ (and thus with $C$ having no elements of $\phi_t$-character $\Undef$), as we may take a single $t$-advancing extension to $C$ if this is not the case. We therefore assume that each pair $(C, \phi)$ in this proof satisfies $C \sub \supp(\phi_t)$.

    For each $(C, \phi)$, letting $c = \min C$, we define $A_{(C, \phi)} = [c, \Nx(c, \phi_t)) \cap \bigcup_{s \in S \setminus \{t\}} \supp(\phi_s)$. We define $e_{(C, \phi)}$ to be the greatest element of $[c, \Nx(c, \phi_t)) \cap \supp(\phi_t)$ such that there is $a \in A_{(C, \phi)}$ with $e_{(C, \phi)} \leq a$, letting $e_{(C, \phi)} = -\infty$ if no such element exists. Let $\nu_{(C, \phi)} = |C \cap [c, e_{(C, \phi)}]|$. (Here, if $e_{(C, \phi)} = -\infty$, we have $[c, -\infty] = \varnothing$ and $\nu_{(C, \phi)} = 0$.)

    We prove the lemma for all $(C, \phi)$ satisfying the above conditions, via induction on $\nu_{(C, \phi)}$. In the base case $\nu_{(C, \phi)} = 0$, the claim is immediate, with $m = 0$. Suppose $\nu_{(C, \phi)} > 0$. Let $c' = \max((-\infty, e_{(C, \phi)}] \cap C)$, and let $j = |[c', e] \cap \supp(\phi_t)|$. Then it is straightforward to see that any $j$-iterated $t$-advancing extension $\psi \geq \phi$ to $C$ satisfies $\nu_{(\psi_{t^{\eps j}}(C), \psi)} < \nu_{(C, \phi)}$, and so by applying the induction assumption to $(\psi_{t^{\eps j}}(C), \psi)$ to obtain $m$, we then have that $m + j$ will satisfy the statement of the lemma for $(C, \phi)$.
\end{proof}
      
\begin{prop} \label{p:making independent Q}
    Let $(F_S,S)$ be a non-abelian free group with finite generating set $S$. Let $\mc{E}$ be the class of finite partial actions $\phi$ of $(F_S, S)$ on $(\Q, <)$ with the property that there is no $a \in \Q$ with $\phi_s(a)=a$ for all $s \in S$. Then $\mc{E}$ has the escape property.
\end{prop}
\begin{proof}
    Let $s \in S$. We show that, for each $j \in \N$, for all $\phi \in \mc{E}$ and $\mc{C} \fin (\Q, <)$ with $\Pi(\min C, \phi_s) \leq j$, we have the escape property for $(\phi, \mc{C})$ witnessed by some extension $\psi \geq \phi$ in $\mc{E}$ and some $u \in \Wr(S)$ with $\dom(\psi_s) \ind \psi_u(C)$. We do this by induction on $j$. In the base case $j = 0$, we have $\supp(\phi_s) < \min C$ by definition of $\Pi(\min C, \phi_s)$, and therefore immediately $\dom(\phi_s) \ind C$. Suppose $\Pi(\min C, \phi_s) = j > 0$. It suffices to find an extension $\tld\phi \geq \phi$ and a word $u \in \Wr(S)$ with $C \sub \dom(\tld\phi_u)$ and $\Pi(\min \tld\phi_u(C), \tld\phi_s) < j$. Let $c = \min C$ and let $x = \min([c, \infty) \cap \supp(\phi_s))$.

    In the case that $x$ is fixed by $\phi_s$, by the definition of the class $\mc{E}$ there is $t \in S \setminus \{s\}$ with $\chi(x, \phi_t) \neq \Fixed$. If $\chi(x, \phi_t) \in \{\Inc, \Undef\}$ define $\delta = +1$, and if $\chi(x, \phi_t) = \Dec$ define $\delta = -1$. If $x = c$, then take $\tld\phi$ to be a \pl-independent extension of $\phi$ to $C$ by $t^\delta$: we have $\Pi(\min \tld\phi_{t^\delta}(C), \tld\phi_s) < j$. If $x \neq c$, then $\chi(c, \phi_s) = \Undef$, and taking $\tld\phi$ to be a \pl-independent extension of $\phi$ to $C$ by $st^\delta$ we have $\Pi(\min \tld\phi_{st^\delta}(C), \tld\phi_s) < j$.

    Now consider the case where $x \in \Inc(\phi_{s^\eps})$ for some $\eps = \pm 1$. By \Cref{l:bypassing increasing zone}, there is $m \geq 0$ and some $m$-iterated $s$-advancing extension $\phi'$ of $\phi$ to $C$ with $\phi'_{s^{\eps m}}(c) > a$ for all $a \in (-\infty, \Nx(\phi'_{s^{\eps m}}(c), \phi'_s)) \cap \bigcup_{t \in S \setminus \{s\}} \supp(\phi_t)$. Let $t \in S \setminus \{s\}$. Let $\tld\phi$ be a \pl-independent extension of $\phi'$ to $\phi'_{s^{\eps m}}(C)$ by $t$. Then $\Pi(\min \tld\phi_{s^{\eps m}t}(C), \tld\phi_s) < j$ as required.
\end{proof}
      
\section{Independence and extensions by arcs} \label{s:ind use section}
       
In this section, we fix a robust subgroup $\Theta \leq \sym_k$, $r \geq 1$, a $(\Theta, r)$-structure $\mc{M}$ with universe $M$, a compatible seed group $H$, and a seed action of sets $\lambda : M \curvearrowleft H$ where $\lambda$ is an action by automorphisms of $\mc{M}$. The main result of this section is \cref{p:extension by arcs structure}, which provides conditions on $\mc{M}$ and $\lambda$ that ensure that $\lambda$ is a pleasant structural seed action on $\mc{M}$. The focus here will be the existence of an arc extensive family of finite taut extensions of $\lambda$ to some completion $(G,T)$ of $H$.

\subsection{Independent extensions and extensions by arcs} \label{ssec:alternation}

\begin{lem} \label{l:freely independent extension to an independent tuple}
    Let $\mc{N}$ be a quasi-relational strong \Fr structure with SWIR $\ind$. Let $\rho$ be a finite partial isomorphism of $\mc{N}$. Let $\mc{A}, \mc{C} \fin \mc{N}$ with $C \cup \supp(\rho) \sub A$ and $\dom(\rho) \ind C$. Let $m \geq 1$ and let $\sigma$ be an $m$-iterated $(A, +)$-independent extension of $\rho$ to $C$. Then for each $0 \leq j < m$ we have $A \cup \bigcup_{l = 0}^j \sigma^l(C) \ind \bigcup_{l = j + 1}^m \sigma^l(C)$.
\end{lem}
\begin{proof}
    Let $\rho = \rho_0 \sub \cdots \sub \rho_m = \sigma$ be the chain of $(A, +)$-independent extensions witnessing the $m$-iterated independence of the extension $\sigma$, and let $C_l = \sigma^l(C)$ for $0 \leq l \leq m$. We use induction on $m$.

    For the base case $m = 1$: as $\rho_1$ is an $(A, +)$-independent extension of $\rho_0$, we have \[A \cup \dom(\rho_1) \ind_{\im(\rho)} \im(\rho_1),\] so by (Mon) we have $A \ind_{\im(\rho)} C_1$. By the assumption of the lemma we have $\dom(\rho) \ind C$, so applying $\rho$ and using (Inv) we have $\im(\rho) \ind C_1$. Applying (Tr) to $\im(\rho) \ind C_1$ and $A \ind_{\im(\rho)} C_1$ we obtain $A \ind C_1$ as required.

    For the induction step: as $\rho_m$ is an $(A, +)$-independent extension of $\rho_{m-1}$, we have \[A \cup \dom(\rho_m) \ind_{\im(\rho_{m-1})} \im(\rho_m),\] hence by (Mon) we have $A \cup \bigcup_{l=0}^{m-1} C_l \ind_{\im(\rho_{m-1})} C_m$. The induction hypothesis for $m - 1$ with $j = m - 2$ gives $A \cup \bigcup_{l=0}^{m-2} C_l \ind C_{m-1}$, so by (Mon) we have $\dom(\rho_{m-1}) \ind C_{m-1}$, and applying $\sigma$ and using (Inv) we have $\im(\rho_{m-1}) \ind C_m$. Applying (Tr) to $\im(\rho_{m-1}) \ind C_m$ and $A \cup \bigcup_{l=0}^{m-1} C_l \ind_{\im(\rho_{m-1})} C_m$ we obtain $A \cup \bigcup_{l=0}^{m-1} C_l \ind C_m$. This gives the statement of the lemma for $j = m-1$.

    Now consider the case $0 \leq j \leq m-2$. Applying (Mon) to $A \cup \bigcup_{l=0}^{m-1} C_l \ind C_m$ we have $A \cup \bigcup_{l=0}^j C_l \ind_{\bigcup_{l = j+1}^{m-1} C_l} C_m$, so by base triviality we have $A \cup \bigcup_{l=0}^j C_l \ind_{\bigcup_{l = j+1}^{m-1} C_l} \bigcup_{l = j+1}^m C_l$. By the induction hypothesis we have $A \cup \bigcup_{l=0}^j C_l \ind \bigcup_{l=j+1}^{m-1} C_l$, so by (Tr) applied to the two previous statements we have $A \cup \bigcup_{l = 0}^j C_l \ind \bigcup_{l = j + 1}^m C_l$ as required.
\end{proof}

\begin{lem} \label{l:extending by power of one letter}
    Let $\mc{N}$ be a quasi-relational strong \Fr structure with SWIR $\ind$. Let $\rho$ be a finite partial isomorphism of $\mc{N}$. Let $\mc{A}, \mc{C}, \mc{C}' \fin \mc{N}$ with $C \cup C' \cup \supp(\rho) \sub A$, and also with $\dom(\rho) \ind C$ and $\dom(\rho) \ind C'$. Let $m, m' \geq 1$. Let $\sigma$ be an $m$-iterated $(A, +)$-independent extension of $\rho$ to $C$, and let $\tau$ be an $m'$-iterated $(A, +)$-independent extension of $\sigma$ to $C'$. Then $A \ind \tau^m(C)$ and $A \ind \tau^{m'}(C')$.
\end{lem}
\begin{proof}
    By \Cref{l:freely independent extension to an independent tuple} applied to $\sigma$ and the fact that $\tau^m(C) = \sigma^m(C)$, we immediately have $A \ind \tau^m(C)$.
    
    For $0 \leq i \leq j \leq m$ let $C_i = \tau^i(C)$ and $C_{i:j} = \bigcup_{i \leq l \leq j} C_l$, and for $0 \leq i \leq j \leq m'$ let $C'_i = \tau^i(C')$ and $C'_{i:j} = \bigcup_{i \leq l \leq j} C'_l$. By the definition of an $(A, +)$-independent extension and by (Mon) we have $A \ind_{\im(\rho) C_{1:i}} C_{i+1}$ for $0 \leq i < m$ (where $C_{1:0} = \varnothing$), and also $A \ind_{\im(\rho) C^{\vphantom{\prime}}_{1:m \vphantom{1:i}} C'_{1:i}} C'_{i+1}$ for $0 \leq i < m'$. So by (Tr) we have $A \ind_{\im(\rho)} C^{\vphantom{\prime}}_{1:m^{\vphantom{\prime}}} C'_{1:m'}$, and by (Mon) we have $A \ind_{\im(\rho) C'_{1:i}} C'_{i+1}$ for $0 \leq i < m'$. Again by (Mon) we have $AC'_{1:i} \ind_{\im(\rho) C'_{1:i}} C'_{i+1}$ for $0 \leq i < m'$, so the restriction of $\tau$ to $\dom(\rho) C'_{0:m'-1} \to \im(\rho) C'_{1:m'}$ is an $m'$-iterated $(A, +)$-independent extension of $\rho$ to $C'$. Thus by \Cref{l:freely independent extension to an independent tuple} we have $A \ind \tau^{m'}(C')$.
\end{proof}

\begin{defn} \label{d:F_Omega arc extension}
    Let $(F_S, S)$ be a non-abelian free group with finite generating set $S$. Let $\mc{N}$ be a quasi-relational strong \Fr structure, and let $\phi$ be a finite partial action of $(F_S, S)$ on $\mc{N}$. Let $\mc{A}, \mc{A}' \in [\mc{N}]^k$ be isomorphic via $\bar{a} \mapsto \bar{a}'$, where $\bar{a}$, $\bar{a}'$ are enumerations of $A, A'$. Let $w = t_1 \cdots t_m \in \mc{W}(S)$, $w \neq \varnothing$, be such that $A \cap \dom(\phi_{t_1}) = A' \cap \im(\phi_{t_m}) = A \cap A' = \sg{\varnothing}$.

    An \emph{extension of $\phi$ by $w$-arcs from $\bar{a}$ to $\bar{a}'$} is a finite extension $\psi \geq \phi$ such that there exists a sequence $\bar{a} = \bar{a}_0, \cdots, \bar{a}_m = \bar{a}'$ of enumerations of substructures $\mc{A}_0, \cdots, \mc{A}_n$ of $\mc{N}$ satisfying:
    \begin{enumerate}[label=(\roman*)]
        \item $\psi$ can be obtained from $\phi$ via a chain of extensions $\phi = \phi^0 \leq \cdots \leq \phi^m = \psi$, where for $1 \leq i \leq m$ the extension $\phi^i$ is constructed from $\phi^{i-1}$ by extending $\phi^{i-1}_{t^i}$ by the isomorphism $\bar{a}_{i-1} \mapsto \bar{a}_i$;
        \item for $0 \leq i < j \leq m$ we have $A_i \cap A_j = \sg{\varnothing}$;
        \item for $1 \leq i \leq m-1$ we have $A_i \cap \supp(\phi) = \sg{\varnothing}$.
    \end{enumerate}
\end{defn}

\begin{lem} \label{l:arc extension from independent}
    Let $\mc{N}$ be a quasi-relational strong \Fr structure with SWIR $\ind$. Let $(F_S, S)$ be a non-abelian free group with finite generating set $S$. Let $\phi$ be a finite partial action of $(F_S, S)$ on $\mc{N}$. Let $s, \tld{s}, t \in S$ with $\tld{s} \neq t$. Let $\mc{C}, \mc{C}' \fin \mc{N}$ be finite substructures of $\mc{N}$ isomorphic via $\bar{c} \mapsto \bar{c}'$ (where $\bar{c}, \bar{c}'$ are enumerations of $\mc{C}, \mc{C}'$) with $\dom(\phi_s) \ind C$, $\dom(\phi_s) \ind C'$ and $C \cap C' = \sg{\varnothing}$.

    Let $v, v' \in \Wr(S)$ be positive words beginning with $s$ and ending with $t$ such that $\sylc(v) = \sylc(v')$. Let $w = v \cdot \tld{s} \cdot (v')^{-1}$.

    Then there exists a finite extension $\psi \geq \phi$ such that:
    \begin{itemize}
        \item $\psi$ is an extension of $\phi$ by $w$-arcs from $\bar{c}$ to $\bar{c}'$;
        \item there is a chain of extensions $\phi = \phi^0 \leq \cdots \leq \phi^n = \phi' \leq \psi$ such that:
        \begin{itemize}
            \item for $1 \leq i \leq n$, the extension $\phi^i$ is a \pl-independent extension of $\phi^{i-1}$ by a (positive) letter in $T_\Omega$ (here we do not assume coherence of the extending sets),
            \item $\phi'_v(C)$, $\phi'_{v'}(C')$ are defined, with $\supp(\phi') = \supp(\phi) \cup \mc{P}_v^{\phi'}(C) \cup \mc{P}_{v'}^{\phi'}(C')$, and $\phi'_v(C) \cap \phi'_{v'}(C') = \phi'_v(C) \cap \supp(\phi'_{\tld{s}}) = \phi'_{v'}(C') \cap \supp(\phi'_{\tld{s}}) = \sg{\varnothing}$,
            \item $\psi$ is an extension of $\phi'$ by $\tld{s}$, where $\psi^{}_{\tld{s}}$ extends $\phi'_{\tld{s}}$ by the map $\phi'_v(\bar{c}) \mapsto \phi'_{v'}(\bar{c}')$.
        \end{itemize}
    \end{itemize}
\end{lem}
\begin{proof}
    Write $v = s_1^{m_1 \vphantom{m'_r}} \cdots s_r^{m_r \vphantom{m'_r}}$, $v' = s_1^{m'_1} \cdots s_r^{m'_r}$, where $s_1 = s$, $s_r = t$. Define $v_i = s_1^{m_1 \vphantom{m'_r}} \cdots s_i^{m_i \vphantom{m'_r}}$, $v'_i = s_1^{m'_1} \cdots s_i^{m'_i}$ for $1 \leq i \leq r$, and let $v^{}_0, v'_0$ be the empty word.

    We inductively define a chain of extensions $\phi = \psi^0 \leq \chi^1 \leq \psi^1 \leq \cdots \leq \chi^r \leq \psi^r$ as follows. Let $\psi^0 = \phi$. For $1 \leq i \leq r$, assuming the chain has been defined up to $\psi^{i-1}$, define $\chi^i$ to be a $(\psi^{i-1}_{v'_{i-1}}(C'), +)$-independent extension of $\psi^{i-1}$ by $s_i^{m_i}$ to $\psi^{i-1}_{v_{i-1}}(C)$, and define $\psi^i$ to be a \pl-independent extension of $\chi^i$ by $s_i^{m'_i}$ to $\chi^i_{v'_{i-1}}(C')$.

    We claim that for $1 \leq i \leq r$ we have $\supp(\psi^{i-1}) \ind \psi^i_{v_i}(C)$ and $\supp(\psi^{i-1}) \ind \psi^i_{v'_i}(C')$. We prove this by induction on $i$. In the case $i = 1$, as $\dom(\phi_s) \ind C$ and $\dom(\phi_s) \ind C'$, \Cref{l:extending by power of one letter} immediately gives $\supp(\psi^0) \ind \psi^1_{v_1}(C)$ and $\supp(\psi^0) \ind \psi^1_{v'_1}(C')$. In the case $i > 1$, as $s_i \neq s_{i-1}$ we have $\dom(\psi^{i-1}_{s_i}) = \dom(\psi^{i-2}_{s_i})$. By the induction assumption $\supp(\psi^{i-2}) \ind \psi^{i-1}_{v_{i-1}}(C)$ and $\supp(\psi^{i-2}) \ind \psi^{i-1}_{v'_{i-1}}(C')$, so $\dom(\psi^{i-1}_{s_i}) \ind \psi^{i-1}_{v_{i-1}}(C)$ and $\dom(\psi^{i-1}_{s_i}) \ind \psi^{i-1}_{v'_{i-1}}(C')$, and by applying \Cref{l:extending by power of one letter} we have $\supp(\psi^{i-1}) \ind \psi^i_{v_i}(C)$ and $\supp(\psi^{i-1}) \ind \psi^i_{v'_i}(C')$. This completes the proof of the claim.

    Applying the claim in the case $i = r$, we have $\supp(\psi^{r-1}) \ind \psi^r_v(C)$ and $\supp(\psi^{r-1}) \ind \psi^r_{v'}(C')$. As $s_r = t \neq \tld{s}$, we have $\supp(\psi^r_{\tld{s}}) \ind \psi^r_v(C)$ and $\supp(\psi^r_{\tld{s}}) \ind \psi^r_{v'}(C')$, so $\dom(\psi^r_{\tld{s}}) \ind \psi^r_v(C)$ and $\im(\psi^r_{\tld{s}}) \ind \psi^r_{v'}(C')$. By ultrahomogeneity, (Inv) and (Sta) we can thus extend $\psi^r$ to a new extension $\psi$ by defining $\psi_{\tld{s}}$ to extend $\psi^r_{\tld{s}}$ by $\psi^r_v(\bar{c}) \mapsto \psi^r_{v'}(\bar{c}')$, and then it is immediate that $\psi$ satisfies the required conditions of the lemma.
\end{proof}
    
\subsection{Well-centralised actions.} 

\cref{l:arc extension from independent} motivates Definitions \ref{d:well centralised} and \ref{d:orbit-rich} below. In the below definition, recall the setup at the start of \Cref{s:ind use section}.
      
\begin{defn} \label{d:extended language}
    Let $\mc{L}$ denote the language of $\mc{M}$. Let $\Omega \leq \Theta$ and $\mc{A} \fin \mc{M}$. We let $\mc{L}(\mc{A}, \Omega)$ be the language obtained by expanding $\mc{L}$ by constant symbols $c_a$ for each $a \in \mc{A}$ and function symbols $f_{\omega}$ for each $\omega \in \Omega$. Let $\mu : \mc{M} \curvearrowleft \Omega$ be an action. We let $\mc{M}(\mc{A}, \mu)$ be the $\mc{L}(\mc{A}, \Omega)$-expansion of $\mc{M}$ given by interpreting each $f_{\omega}$, $\omega \in \Omega$, as the $\mu$-action of $\omega$ on $\mc{M}$, and interpreting each $c_a$, $a \in \mc{A}$, as $a$.
\end{defn}
      
\begin{defn} \label{d:well centralised}
    Let $\Omega \leq \Theta$. Let $\mu : \mc{M} \curvearrowleft \Omega$ be an action and let $\mc{A} \fin \mc{M}$ be $\Omega$-invariant in $\mu$. We say that $\mu$ is \emph{$k$-flexible over $\mc{A}$} if $\mc{M}(A, \mu)$ admits a quasi-relational strong \Fr expansion $\mc{N}$ such that:
    \begin{enumerate}[label=(\alph*)]
        \item \label{univ subst} for all $C \sub N$, we have $\sg{C}_{\mc{N}} = \sg{C}_{\mc{M}(A, \mu)}$; 
        \item \label{univ k sets} any two substructures $B,B' \in [\mc{M}(A, \mu)]^k$ which are in the same $\Aut(\mc{M}(A, \mu))$-orbit are also in the same $\Aut(\mc{N})$-orbit.
      \end{enumerate}
      If in addition $\mc{N}$ admits a SWIR, then we say that $\mu$ is \emph{$k$-well-centralised over $\mc{A}$}.

      If we can take $\mc{N}=\mc{M}(A, \mu)$ itself, then we say that $\mu$ is \emph{strictly flexible (resp. well-centralised)}. (Note that in this case the second item is automatically verified, and thus $k$ plays no role in the condition.)
\end{defn}
  
When extending finite extensions of the seed action  $\lambda$ it is always necessary that there is ``sufficient space'' among the different orbits of the action $\lambda$ for the extension to be as free as possible (see conditions \ref{arcs different orbits} and \ref{arcs orbit outside support} in \cref{d:arc extension}). In the presence of structure the assumption that $\lambda$ has infinitely many orbits needs to be strengthened according to the following definition. 
     
\begin{defn} \label{d:orbit-rich}
    Let $\mc{N}$ be a quasi-relational strong \Fr structure with domain $N$, and let $\mu : N \curvearrowleft H$ be an action. For $A \sub N$ denote by $[A]_\mu$ the $\mu$-orbit-closure of $A$.  We say that $\mu$ is \emph{orbit-rich for $\mc{N}$} if for any $\mc{A}, \mc{B} \fin \mc{N}$ and $C \fin N$ there is $g \in \Aut(\mc{N})$ fixing $\mc{B}$ with $[gC]_\mu \cap [AB]_\mu = [B \cap gC]_\mu$.
\end{defn}
     
\begin{lem} \label{l:keeping different orbits}
    Let $\mc{N}$ be a quasi-relational strong \Fr structure. For $A \sub N$, we write $[A]$ to denote the union of all $E$-equivalence classes of points in $A$, where $E$ is the equivalence relation given in the definition of a quasi-relational structure. Suppose we have:
    \begin{itemize}
        \item some action $\mu : N \curvearrowleft H$ orbit-rich for $\mc{N}$;
        \item substructures $\mc{A}, \mc{B} \fin \mc{N}$ and a subset $C \fin N$ such that $B \cap [C] = \varnothing$.
    \end{itemize}
         
    Then there is some $g \in \Aut(\mc{N})$ fixing $B$ such that:
    \begin{enumerate}[label=(\roman*)]
        \item $[gC]_\mu \cap AB = \varnothing$, 
        \item for all $c, c'\in C$, if $[gc]_\mu=[gc']_\mu$ then $[gc]=[gc']$.
    \end{enumerate} 
\end{lem}
\begin{proof}
    Let $m$ be the number of $E$-equivalence classes having non-trivial intersection with $C$. We use induction on $m$. The case $m = 0$ is trivial. Suppose $m > 0$. As $B \cap [C] = \varnothing$ and $B$ is the domain of a substructure of $\mc{N}$, we have $\sg{\varnothing} \cap [C] = \varnothing$. So there exist $C_0, C' \sub C$ with $C = C_0 \cup C'$ such that $C_0$ is contained in $m-1$ $E$-equivalence classes, $C'$ is contained in a single $E$-equivalence class and $\sg{C_0} \cap [C'] = \varnothing$. By the induction assumption, there is $g \in \Aut(\mc{N})$ fixing $B$ with $[gC_0]_\mu \cap AB = \varnothing$ and with (ii) holding for all elements of $C_0$. As $g$ fixes $B$ and $B \cap [C] = \varnothing$, we have $B \cap [gC] = \varnothing$. Also $\sg{gC_0} \cap [gC'] = \varnothing$, so $B\sg{gC_0} \cap [gC'] = \varnothing$. As $\mu$ is orbit-rich for $\mc{N}$, there is $g' \in \Aut(\mc{N})$ fixing $B\sg{gC_0}$ with $[g'gC']_\mu \cap [AB\sg{gC_0}]_\mu = [B\sg{gC_0} \cap g'gC']_\mu$, and as $g'$ fixes $B\sg{gC_0}$ we have $B\sg{gC_0} \cap g'gC' = B\sg{gC_0} \cap gC' = \varnothing$. So $[g'gC']_\mu \cap [AB\sg{g'gC_0}]_\mu = \varnothing$, and as $[g'gC_0]_\mu \cap AB = [gC_0]_\mu \cap AB = \varnothing$, we are done.
\end{proof}
    
\subsection{Finding extensions by arcs.}

\begin{prop} \label{p:extension by arcs structure}
    Let $\Theta \leq \sym_k$ be robust and let $\mc{M}$ be a $(\Theta, r)$-structure. Let $H$ be a compatible seed group and $(G, T)$ a completion of $H$. Let $\lambda : M \curvearrowleft H$ be a seed action of sets, acting by automorphisms of $\mc{M}$, such that:
    \begin{enumerate}[label=(\alph*)]
        \item \label{flexible} for each $\Omega \in \mc{D}(\Theta)$, the restriction $\lambda|_\Omega$ is $k$-well-centralised, as witnessed by some expansion $\mathcal{N}_{\Omega}$ of $\M(\NFr_{\lambda}(\Omega), \lambda|_\Omega)$ and some SWIR $\ind^{\Omega}$ on $\mathcal{N}_{\Omega}$,
        \item \label{orbit-richness}for every $\Omega \in \mc{D}(\Theta)$ the action $\lambda$ is orbit-rich for $\mathcal{N}_{\Omega}$. 
    \end{enumerate}
     	
     Suppose that for all $\Omega \in \mc{D}(\Theta)$ there is a collection $\mc{E}_{\Omega}$ of finite partial actions of the free group $(F_{\Omega},T_{\Omega})$ on $\mc{N}_{\Omega}$ with the following properties:
     \begin{enumerate}[label=(\roman*)]
        \item \label{Eomega esc} $\mc{E}_\Omega$ has the escape property;
        \item \label{Eomega non-empty} $\mc{E}_\Omega$ contains the partial action $\phi$ defined by $\phi_t = \id_{\NFr_\lambda(\Omega)}$, $t \in T_\Omega$;
        \item \label{Eomega conj} for all $\phi \in \mc{E}_\Omega$, $g \in \Aut(\mc{N}_\Omega)$, letting $\psi$ be the partial action defined by $\psi_t = g^{-1} \circ \phi_t \circ g$, $t \in T_\Omega$, we have $\psi \in \mc{E}_\Omega$;
        \item \label{Eomega atomic} for each $\phi \in \mc{E}_\Omega$, $\mc{A} \fin \mc{N}_\Omega$, $t \in T_\Omega^{\pm 1}$, there is an atomic free extension $\psi \geq \phi$ by $t$ to a substructure containing $A$ such that $\psi \in \mc{E}_\Omega$;
        \item \label{Eomega chains} for each $\phi \in \mc{E}_\Omega$ and each chain $\phi = \phi^0 \leq \cdots \leq \phi^n$ where for all $1 \leq i \leq n$ we have that $\phi^i$ is a \pl-independent extension (via $\ind^\Omega$) of $\phi^{i-1}$ by some $t \in T_\Omega$, there is a free extension $\psi$ of $\phi^n$ with $\psi \in \mc{E}_\Omega$;
        \item \label{Eomega arcs} there is $\tld{s}_\Omega \in T_\Omega$ such that for each $\phi \in \mc{E}_\Omega$ and each chain $\phi = \phi^0 \leq \cdots \leq \phi^n$ where for all $1 \leq i \leq n$ we have that $\phi^i$ is a \pl-independent extension (via $\ind^\Omega$) of $\phi^{i-1}$ by some $t \in T_\Omega$, the following holds: for each extension $\psi$ of $\phi^n$ by $\tld{s}_\Omega$, where $\psi^{}_{\tld{s}_\Omega}$ extends $\phi^n_{\tld{s}_\Omega}$ by $\bar{c} \mapsto \bar{c}'$ and where $\bar{c}$, $\bar{c}'$ are enumerations of substructures $\mc{C}, \mc{C}' \fin \mc{N}_\Omega$ with $C \cap C' = C \cap \supp(\phi^n_{\tld{s}_\Omega}) = C' \cap \supp(\phi^n_{\tld{s}_\Omega}) = \sg{\varnothing}$, there is a free extension $\psi'$ of $\psi$ with $\psi' \in \mc{E}_\Omega$.
    \end{enumerate}
    Let $\mc{F}$ be the collection of partial actions $\phi$ of $(G,T)$ on $\mc{M}$ with $\phi$ a taut finite extension of $\lambda$ and such that, for each $\Omega \in \mc{D}(\Theta)$, the restriction of $\phi$ to $(F_\Omega, T_\Omega)$ lies in $\mc{E}_\Omega$. Then $\mc{F}$ is arc-extensive. 
\end{prop}
\begin{proof}
    It is straightforward to see from condition \ref{Eomega non-empty} that $\mc{F}$ is non-empty.

    We first show condition \ref{condI} in the definition of an arc-extensive family (\Cref{d:arc extensiveness}). We are given $\Omega \in \mc{D}(\Theta)$, $\phi \in \mc{F}$, $t \in T_\Omega^{\pm 1}$, an $\Omega$-set $A \in [\mc{M}]^k$ and $B \fin M$. As $A$ is an $\Omega$-set and $\lambda$ is a seed action of sets, by \Cref{l: key seed action props}\ref{seedact docile} we have $\NFr_\lambda(\Omega) \sub A$, and as $\lambda$ is $k$-well-centralised, by \Cref{d:well centralised}\ref{univ subst} we have that $A$ is the domain of a substructure of $\mc{N}_\Omega$. Let $\phi^\Omega$ denote the restriction of $\phi$ to $(F_\Omega, T_\Omega)$. By condition \ref{Eomega atomic} for $\mc{E}_\Omega$, there is an atomic free extension $\chi^\Omega \in \mc{E}_\Omega$ of $\phi^\Omega$ by $t$ to a substructure $\mc{C} \fin \mc{N}_\Omega$ with $A \sub C$. By the definition of an atomic free extension, we have that $\chi^\Omega_t(C) \setminus \im(\phi^\Omega_t)$ is disjoint from $C \cup \supp(\phi^\Omega)$.
    
    By taking a superset of $B$ if necessary, we may assume that $B$ contains $\supp(\phi) \cup C$ and the paradigm set of $\lambda$, and we may also assume that $B$ is the domain of a substructure of $\mc{N}_\Omega$. There exists $g \in \Aut(\mc{N}_\Omega)$ fixing $C \cup \supp(\phi^\Omega)$ with $g(\chi^\Omega_t(C) \setminus \im(\phi^\Omega_t)) \cap B = \varnothing$. Let $\tld{\chi}^\Omega = g^{-1} \cdot \chi^\Omega \cdot g$. Then $\tld{\chi}^\Omega \in \mc{E}_\Omega$ by condition \ref{Eomega conj}, and $\tld{\chi}^\Omega$ is an extension of $\phi^\Omega$. We have $(\tld{\chi}^\Omega_t(C) \setminus \im(\phi^\Omega_t)) \cap B = \varnothing$. Note that $\tld{\chi}^\Omega_t(C) \setminus \im(\phi^\Omega_t)$ is $\Omega$-invariant, as $C$ is the domain of a substructure of $\mc{N}_\Omega$ and $\phi$ is taut. So by \Cref{l:keeping different orbits}, there is $g' \in \Aut(\mc{N}_\Omega)$ fixing $B$ with $[g'(\tld{\chi}^\Omega_t(C) \setminus \im(\phi^\Omega_t))]_\lambda \cap B = \varnothing$, and such that two elements of $g'(\tld{\chi}^\Omega_t(C) \setminus \im(\phi^\Omega_t))$ lie in the same $\lambda$-orbit iff they lie in the same $\Omega$-orbit. Let $\psi^\Omega = (\tld{\chi}^\Omega)^{g'}$. Then $\psi^\Omega \in \mc{E}_\Omega$ by condition \ref{Eomega conj}, and defining a family of partial bijections $\psi$ by extending $\phi$ on $T_\Omega$ to $\psi^\Omega$, we have that $\psi$ is a partial action by \Cref{normal forms action} (note that the conditions of \Cref{normal forms action} are satisfied as we extend by partial actions of $\mc{N}_\Omega$). By construction $\psi$ is an atomic $\Omega$-free extension of $\phi$ and hence is taut by \Cref{l:partial k-sharp extend}, so $\psi \in \mc{F}$.

    We now show condition \ref{condII} in the definition of an arc-extensive family. Let $\Omega \in \mc{D}(\Theta)$, and take $\tld{s}_\Omega$ from condition \ref{Eomega arcs}. Let $\phi \in \mc{F}$. Let $\mc{A}, \mc{A}' \in [\mc{M}]^k$ be $\Omega$-isomorphic $\Omega$-substructures with $A$ $\Omega$-strict and $A, A'$ not in the same $\phi$-orbit. Similarly to the proof of condition \ref{condI}, we have that $A, A'$ are the domains of substructures of $\mc{N}_\Omega$, and additionally by \Cref{d:well centralised}\ref{univ k sets} we have that these substructures of $\mc{N}_\Omega$ are isomorphic. Let $\chi = \phi^\Omega$ be the restriction of $\phi$ to $(F_\Omega, T_\Omega)$. As $\mc{E}_\Omega$ has the escape property (condition \ref{Eomega esc}), there is a free extension $\chi^1 \in \mc{E}_\Omega$ of $\chi$, a letter $s_0 \in T_\Omega$ and a word $u \in \Wr(T_\Omega)$ with $AA' \sub \dom(\chi^1_u)$ such that $\dom(\chi^1_{s_0}) \ind^\Omega \chi^1_u(AA')$. Let $A^{}_1 = \chi^1_u(A)$, $A'_1 = \chi^1_u(A')$. We have $\dom(\chi^1_{s_0}) \cap (A^{}_1 A'_1) = \sg{\varnothing}$. Let $\chi^2$ be a \pl-independent extension of $\chi^1$ by $s_0^2$ to $A^{}_1 A'_1$, and let $A^{}_2 = \chi^2_{s_0}(A^{}_1)$, $A'_2 = \chi^2_{s_0^2}(A'_1)$. Then $A^{}_2 \cap A'_2 = \sg{\varnothing}$, and by \Cref{l:freely independent extension to an independent tuple} we have $\dom(\chi^2_s) \ind^\Omega A^{}_2 A'_2$ for all $s \in T_\Omega \setminus \{s_0\}$. Let $s \in T_\Omega \setminus \{s_0\}$, let $N = 6$, and let $w = v \cdot \tld{s}_{\Omega} \cdot (v')^{-1} \in \Wr(T_\Omega)$ be a word satisfying the four bulletpointed conditions in \ref{II arc}, where $v, v' \in \Wr(T_\Omega)$ are positive words beginning with $s$ and ending with a letter distinct from $\tld{s}_\Omega$ with $\sylc(v) = \sylc(v')$. By \Cref{l:arc extension from independent}, there is an extension $\chi^3$ of $\chi^2$ by $w$-arcs from (an enumeration of) $A^{}_2$ to (an enumeration of) $A'_2$ satisfying the bulletpointed conditions in \Cref{l:arc extension from independent}, and by condition \ref{Eomega arcs} there is a free extension $\psi^\Omega$ of $\chi^3$ with $\psi^\Omega \in \mc{E}_\Omega$. As in the proof of condition \ref{condI}, by applying automorphisms of $\Aut(\mc{N}_\Omega)$ we may assume that when we extend $\phi$ by extending $\phi^\Omega$ to $\psi^\Omega$ to give a partial action $\psi$, we have that $\psi$ can be obtained by a chain of extensions as follows: an $\Omega$-free extension $\phi'$ where the images $B, B'$ of $A, A'$ satisfy the conditions of \ref{II sep} (here we use \Cref{l:partial k-sharp extend}), then an $\Omega$-extension $\phi''$ by $w$-arcs from an enumeration of $B$ to an enumeration of $B'$, then an $\Omega$-free extension $\psi$, exactly as specified in parts \ref{II sep}, \ref{II arc} of condition \ref{condII} (we leave the easy verifications to the reader). By \Cref{l:partial k-sharp extend} and \Cref{p: small cancellation loops} we have that $\psi$ is taut. Hence $\psi \in \mc{F}$.
\end{proof}

\begin{lem} \label{l:Eomega examples}
    Let $\Theta$, $\mc{M}$, $H$, $(G, T)$, $\lambda$ and $(\mc{N}_\Omega)_{\Omega \in \mc{D}(\Theta)}$ be as in \Cref{p:extension by arcs structure}.

    Consider the following two cases, where in each case we specify a family $(\mc{E}_\Omega)_{\Omega \in \mc{D}(\Theta)}$.
    \begin{itemize}
        \item \textbf{General case:} for each $\Omega \in \mc{D}(\Theta)$, fix some $t_\Omega \in T_\Omega$, and let $\mc{E}_\Omega$ be the collection of finite partial actions $\phi$ of $(F_\Omega, T_\Omega)$ on $\mc{N}_\Omega$ where $\phi_{t_\Omega}$ is full chain-independent.
        \item \textbf{Free case:} in the case where for each $\Omega \in \mc{D}(\Theta)$ we have that $\ind^\Omega$ satisfies (Free), let $\mc{E}_\Omega$ be the collection of finite partial actions $\phi$ of $(F_\Omega, T_\Omega)$ on $\mc{N}_\Omega$ such that there is no $\mc{A} \fin \mc{N}_\Omega$ with $\mc{A} \neq \sg{\varnothing}$ and with $\phi_t(\mc{A}) = \mc{A}$ for all $t \in T_\Omega$.
    \end{itemize}
    In each case above, for each $\Omega \in \mc{D}(\Theta)$ we have that $\mc{E}_\Omega$ satisfies properties \ref{Eomega esc}-\ref{Eomega arcs} in \Cref{p:extension by arcs structure}.
\end{lem}
\begin{proof}
    \textbf{General case:} \ref{Eomega esc} is given by \Cref{c:escape property chain-independent}. \ref{Eomega non-empty}: $\phi$ defined by $\phi_t = \id_{\NFr_\lambda(\Omega)}$ for all $t \in T_\Omega$ has $\phi_{t_\Omega}$ full chain-independent by definition. \ref{Eomega conj}: follows by (Inv) of $\ind^\Omega$. \ref{Eomega atomic}: let $\phi \in \mc{E}_\Omega$, $\mc{A} \fin \mc{N}_\Omega$, $t \in T_\Omega$ (note that we assume $t$ is positive). If $t \neq t_\Omega$, then for $t^{\pm 1}$ the property holds immediately by taking an atomic free extension of $\phi$ by $t^{\pm 1}$ to $\mc{A}$, so assume $t = t_\Omega$. Take a \pl-independent extension of $\phi_t$ to $\mc{A}$ and use \Cref{l:fullifying} to obtain a full chain-independent extension $\psi_t$ of $\phi_t$; the extension $\psi$ of $\phi$ obtained by extending $\phi_t$ to $\psi_t$ then lies in $\mc{E}_\Omega$. It remains to show \ref{Eomega atomic} for $t^{-1} = t_\Omega^{-1}$: take a \mi-independent extension of $\phi_t$ over to $\mc{A}$ and use \Cref{l:fullifying}. \ref{Eomega chains} and \ref{Eomega arcs}: these follow similarly to \ref{Eomega atomic}, taking $\tld{s}_\Omega \in T_\Omega \setminus \{t_\Omega\}$ and using \Cref{l:fullifying}.

    \textbf{Free case:} \ref{Eomega esc} is given by \cref{l:escape property free}. \ref{Eomega non-empty}, \ref{Eomega conj} are straightforward, and \ref{Eomega atomic}, \ref{Eomega chains} follow from the definition of atomic free extension. \ref{Eomega arcs}: taking $\tld{s}_\Omega \in T_\Omega$, this property follows directly from the conditions $C \cap C' = C \cap \supp(\phi^n_{\tld{s}_\Omega}) = C' \cap \supp(\phi^n_{\tld{s}_\Omega}) = \sg{\varnothing}$.
\end{proof}

The below corollary follows immediately from \Cref{p:extension by arcs structure} and \Cref{l:Eomega examples} (in the Free case, recall property \ref{c-no invariant sets} of taut extensions):

\begin{cor} \label{c:extension by arcs structure}
    Let $\Theta$, $\mc{M}$, $H$, $(G, T)$, $\lambda$ and $(\mc{N}_\Omega)_{\Omega \in \mc{D}(\Theta)}$ be as in \Cref{p:extension by arcs structure}. Suppose that in addition $\lambda$ satisfies conditions \ref{seed-struc reference}, \ref{seed-struc all permutations}, \ref{seed-struc equivariant isomorphism condition} in the definition of a pleasant structural seed action (\Cref{d:seed action on structure}). 

    \begin{enumerate}[label=(\roman*)]
        \item For each $\Omega \in \mc{D}(\Theta)$, fix $t_\Omega \in T_{\Omega}$. Let $\mc{F}$ be the collection of partial actions $\phi$ of $(G, T)$ on $\mc{M}$ with $\phi$ a finite taut extension of $\lambda$ such that, for each $\Omega \in \mc{D}(\Theta)$, the restriction of $\phi$ to $(F_{\Omega}, T_\Omega)$ is a partial action on $\mc{N}_\Omega$ with $\phi_{t_{\Omega}}$ full chain-independent. Then $\lambda$ is a pleasant structural seed action with arc-extensive family $\mc{F}$.
        \item If for all $\Omega \in \mc{D}(\Theta)$ we have that $\lambda|_\Omega$ is strictly well-centralised and $\ind^{\Omega}$ on $\mc{M}(\NFr_{\lambda}(\Omega), \lambda|_\Omega)$ satisfies (Free), then letting $\mc{F}$ be the collection of all finite taut extensions of $\lambda$, we have that $\lambda$ is a pleasant structural seed action with arc-extensive family $\mc{F}$. (That is, $\lambda$ is generous.)
    \end{enumerate}
\end{cor}

\begin{cor} \label{c: pleasant ssa SWIR rigid}
    Let $\mc{M}$ be a relational \Fr structure with strong amalgamation. Let $k \geq 1$. Suppose $\mc{M}$ has a SWIR and $k$-substructures of $\mc{M}$ are rigid (i.e.\ have trivial automorphism group). Let $H = \Theta = \mathtt{1} \leq \sym_k$, and let $\lambda : \mc{M} \curvearrowleft H$ be the trivial action. Then $\lambda$ is a pleasant structural seed action on $\mc{M}$. If $\ind$ satisfies (Free), then $\lambda$ is generous. Also, if $\mc{M} = (\Q, <)$ then $\lambda$ is generous.
\end{cor}
\begin{proof}
    We have that $\mc{M}$ is a $(\Theta, r)$-structure for some $r \geq 1$: condition \ref{automorphism tuples} in Definition \ref{d:theta r structure} follows by rigidity of $k$-substructures, \ref{isomorphism types} and \ref{lower transitivity} are trivial. Conditions \ref{seed-struc reference}, \ref{seed-struc all permutations}, \ref{seed-struc equivariant isomorphism condition} in the definition of a pleasant structural seed action are immediate. As $\mc{M}(\NFr_\lambda(\Theta), \lambda)$ is just $\mc{M}$ equipped with the identity, we may work directly with $\mc{M}$: we have that $\lambda$ is well-centralised as $\mc{M}$ has a SWIR, and as $\mc{M}$ has strong amalgamation it is immediate that $\lambda$ is orbit-rich. By Corollary \ref{c:extension by arcs structure} we have that $\lambda$ is a pleasant structural seed action and, if $\ind$ satisfies (Free), then $\lambda$ is generous.
    
    Now suppose $\mc{M} = (\Q, <)$. Let $\mc{E}$ be the class of finite partial actions $\phi$ of $(F_\Theta, T_\Theta)$ on $(\Q, <)$ with the property that there is no $a \in \Q$ with $\phi_t(a)=a$ for all $t \in T_\Theta$. Then $\mc{E}$ has the escape property by \Cref{p:making independent Q}, and properties \ref{Eomega non-empty}-\ref{Eomega arcs} in \Cref{p:extension by arcs structure} are straightforward to check. By \Cref{p:extension by arcs structure}, the collection $\mc{F}$ of all finite taut extensions of $\lambda$ is arc-extensive, so $\lambda$ is a generous pleasant structural seed action.
\end{proof}

\subsection{The reducts of \texorpdfstring{$(\Q, <)$}{(Q, <)}}

\begin{cor} \label{c:reducts Q}
The following holds for the non-trivial reducts of $(\mathbb{Q},<)$. 
    \begin{itemize}
        \item for all $k\geq 2$ the betweenness structure $(\mathbb{Q},B^{(3)})$ admits a pleasant structural seed action by a copy of $\cyc_{2}$ inside $\sym_{k}$ with maximum support (that is, with a fixed point if $k$ is odd and no fixed points if $k$ is even),
        \item for all $k\geq 2$ the cyclic ordering $(\mathbb{Q},C^{(3)})$ admits a pleasant structural seed action of $\cyc_{k}\leq\sym_{k}$,
        \item for all odd $k \geq 3$ the dihedral group $\dih_k$ admits a pleasant structural seed action on the separation structure $(\mathbb{Q},S^{(4)})$. 
    \end{itemize} 
\end{cor}
\begin{proof}
    	Consider, for instance, the case of $\M=(\mathbb{Q},C^{(3)})$. Fix $k\geq 2$ and notice that $\cyc_{k}\leq\sym_{k}$ and all its subgroups are docile with no fixed points. Up to equivariant isomorphism, there is a unique free action $\mu$ of $\Omega=\cyc_k$ on $(\mathbb{Q},C^{(3)})$. It is not hard to see that the structure $\M(\varnothing, \mu)$ admits an expansion interdefinable with the structure $\mathcal{N}=(\mathbb{Q}\times\mathbf{k},E^{(2)},<, P_{0}^{1},\dots P_{k-1}^{1})$, where:
    	\begin{itemize}
    		\item $P_{i}^{\mathcal{N}}=\mathbb{Q}\times\{i\}$,
    		\item $E$ is an equivalence class in which each class consists of exactly one point in each $P_{i}$, 
    		\item $<$ is a dense linear order on the domain of $P_{0}$. 
    		\item Each $P_{i}$ corresponds to some interval of $(\mathbb{Q},C^{(3)})$ which is a translate under the action $\mu$ of some fundamental domain. 
    	\end{itemize} 
      It is easy to see how this structure inherits a SWIR from $\mathbb{Q}$. It is also easy to see that for any two $\Omega$-sets $A,A'\subseteq [\mathbb{Q}]^{k}$, which are always isomorphic in the sense of $\mathcal{M}(\varnothing, \mu)$, their images in $\mathbb{Q}\times\mathbf{k}$ are also isomorphic in $\mathcal{N}$, even though some of the isomorphisms between the two have been lost when passing from $\M(\varnothing, \mu)$. Property \ref{seed-struc equivariant isomorphism condition} of \cref{d:seed action on structure} is also immediate. The remaining cases are not meaningfully more complex and are left to the reader. 
    \end{proof}
    
\section{Constructing flexible seed actions on structures} \label{s:construction good finite actions}

Fix a robust subgroup $\Theta \leq \sym_k$, a $(\Theta, r)$ structure $\mc{M}$ with age $\mc{K}$, a compatible seed group $H$ and a completion $(G, T)$ of $H$.

In \Cref{s:construction good finite actions}, under certain additional assumptions on $\Theta$ and $\mc{M}$, we construct actions $\lambda : \mc{M} \curvearrowleft H$ satisfying the requirements of \Cref{c:extension by arcs structure}, namely:

\begin{itemize}
    \item $\lambda$ will be a seed action of sets $\lambda : M \curvearrowleft H$ acting by automorphisms of $\mc{M}$ and satisfying conditions \ref{seed-struc reference}, \ref{seed-struc all permutations}, \ref{seed-struc equivariant isomorphism condition} in the definition of a pleasant structural seed action (\Cref{d:seed action on structure});
    \item $\lambda$ will satisfy conditions \ref{flexible}, \ref{orbit-richness} in \Cref{p:extension by arcs structure}.
\end{itemize}
 
We will then be able to apply \Cref{c:extension by arcs structure} to conclude that $\lambda$ is a pleasant structural seed action on $\mc{M}$, and thus we will have by \Cref{p:sharply k-homog actions on structures} that $(G, T)$ admits a sharply $k$-homogeneous action on $\mc{M}$. (In some cases, we will obtain more: namely, genericity results for such actions.) We do this final step in \Cref{s:main theorem}, which collects our results together.
 
In the below we also fix an action $\lambda^0$ on a finite substructure $\mc{A}_0 \fin \mc{M}$ as follows:
\begin{itemize}
    \item if $\Theta$ is docile, we take $\mc{A}_0$ and $\lambda^0$ to be empty;
    \item if $\Theta$ is unruly, we take some $\mc{A}_0 \in [\mc{M}]^k$ with $(A_0 \curvearrowleft \Aut(\mc{A}_0)) \simeq (\pi : \mathbf{k} \curvearrowleft \Theta)$, and we take $\lambda^0 : \mc{A}_0 \curvearrowleft \Theta$ isomorphic to the permutation action $\pi$.
\end{itemize}

\begin{rem}
    Though we restrict ourselves to robust $\Theta$, subgroups $\Omega \in \mc{D}(\Theta) \cup \{\Theta\}$ and actions $\lambda^0$ as above, the constructions in \Cref{s:construction good finite actions} work more generally for arbitrary finite groups and finite actions. We work in this specific context so that the reader can immediately see how the results of this section fit into the general framework of the paper.
\end{rem}

\begin{defn} \label{d:theta-class}
    Recall \cref{d:extended language} and the extended language introduced therein. Let $\Omega \in \mc{D}(\Theta) \cup \{\Theta\}$, let $\mc{A} \fin \mc{M}$ and let $\mu^{0} : \mc{A} \curvearrowleft \Omega$ be a structural action. We write $\mc{K}(\mu^0, \Omega)$ for the class of all $\mc{L}(A, \Omega)$-structures $\mc{B}$ satisfying:
    \begin{itemize}
        \item $\mc{B}|_{\mc{L}} \in \mc{K}$;
        \item the interpretation of the constant symbols $(c_a)_{a \in A}$ of $\mc{L}(A, \Omega)$ gives an embedding $\mc{A} \to \mc{B}|_{\mc{L}}$ (henceforth taken to be an inclusion); 
        \item the interpretation of the function symbols $(f_{\omega})_{\omega \in \Omega}$ of $\mc{L}(A, \Omega)$ gives an action by automorphisms $\mc{B}|_{
        \mc{L}} \curvearrowleft \Omega$ which is free on $B \setminus A$ and which restricts to $\mu^0 : \mc{A} \curvearrowleft \Omega$.
    \end{itemize}

    As the action $\mu^0$ specifies the group involved in the action, we usually suppress $\Omega$ from the notation and just write $\mc{K}(\mu^0)$. We also consider the case where $\mc{A} = \varnothing$ and we extend an empty action $\mu^0$: we write $\mc{K}(\varnothing)$ in this case, and when the group is not clear from context we write $\mc{K}(\varnothing, \Omega)$.
\end{defn} 
    
\begin{defn} \label{d:universal actions}
    Let $\Omega \in \mc{D}(\Theta) \cup \{\Theta\}$, let $\mc{A} \fin \mc{M}$ and let $\mu^{0} : \mc{A} \curvearrowleft \Omega$ be a structural action. We say that an action $\mu : \mc{M} \curvearrowleft \Omega$ is \emph{universal over $\mu^0$} if $\mc{K}(\mu^0)$ is a strong \Fr class and $\mu$ is $\Omega$-isomorphic to the action given by the interpretation of the function symbols $(f_\omega)_{\omega \in \Omega}$ in $\FrLim(\mc{K}(\mu^0))$.
\end{defn}

\begin{defn} \label{d:strongly universal actions}
    Recall that we fixed $\lambda^0 : \mc{A}_0 \curvearrowleft \Theta$ at the start of this section. Let $\lambda : \mc{M} \curvearrowleft \Theta$ be a structural action extending $\lambda^0$. We say that $\lambda$ is \emph{strongly universal over $\lambda^0$} if $\lambda$ is universal over $\lambda^0$ and in addition:
    \begin{enumerate}[label=(\roman*)]
        \item\label{univ over fx} for every $\Omega \in \mc{D}(\Theta)$, the action $\lambda|_\Omega$ is universal over its restricted action on $\NFr_\lambda(\Omega)$; 
        \item\label{univ act orbit-rich} for every $\Omega \in \mc{D}(\Theta)$, the action $\lambda$ is orbit-rich for $\mc{M}(\NFr_\lambda(\Omega), \lambda|_\Omega)$. 
    \end{enumerate}
         
    If for each $\Omega \in \mc{D}(\Theta)$ we additionally have that $\lambda|_\Omega$ is strictly well-centralised over $\NFr_\lambda(\Omega)$, then we say that $\lambda$ is \emph{strictly strongly universal over $\lambda^0$}. (See \Cref{d:well centralised} for the definition of strictly well-centralised actions.)
\end{defn}

The proof of the following lemma is immediate.
\begin{lem} \label{l:strong univ rephrasing}
    An action $\lambda : \mc{M} \curvearrowleft \Theta$ universal over $\lambda^0$ and satisfying condition \ref{univ over fx} in \Cref{d:strongly universal actions} exists if and only if $\mc{K}(\lambda^0)$ is a strong \Fr class and for each $\Omega \in \mc{D}(\Theta)$ we have that $\mc{K}(\lambda^0|_{\NFr_{\lambda^0}(\Omega) \curvearrowleft \Omega})$ is a strong \Fr class with $\FrLim(\mc{K}(\lambda^0))|_{\mc{L}(\NFr_{\lambda^0}(\Omega), \Omega)} = \FrLim(\mc{K}(\lambda^0|_{\NFr_{\lambda^0}(\Omega) \curvearrowleft \Omega}))$.
\end{lem}

\begin{defn} \label{d: Se_k M Theta}
    Let $\Se_k(\mc{M}, \Theta)$ be the space of structural actions $\mc{M} \curvearrowleft \Theta$ with a $\Theta$-invariant set on which the action is $\Theta$-isomorphic to $\pi : \mathbf{k} \curvearrowleft \Theta$ and with all other orbits free. (We write $\Se$ as a mnemonic for ``structural action which is a \emph{seed} action of \emph{sets}" -- see Observation \ref{o: Theta seed action}.)
 
    We note the following particular examples. It is straightforward to see the following:
    \begin{itemize}
        \item $\Se_2(\mc{M}, \sym_2)$ consists of the free structural actions $\mc{M} \curvearrowleft \sym_2$;
        \item $\Se_3(\mc{M}, \sym_3)$ consists of the structural actions $\mc{M} \curvearrowleft \sym_3$ where each involution of $\sym_3$ has exactly one fixed point and where each element of $\sym_3$ of order $3$ acts freely.
    \end{itemize}
\end{defn}

The following is a straightforward adaptation to our context of the folkloric fact that the set of structures isomorphic to the \Fr limit of a locally finite amalgamation class $\mc{C}$ is comeagre in the space of structures with age contained in $\mc{C}$, where we fix a countably infinite domain (see for example \cite{KT17}):

\begin{fact} \label{f:genericity of universal actions}
    Suppose that $\mc{M}$ admits an action $\lambda : \mc{M} \curvearrowleft \Theta$ universal over $\lambda^0$. Then the set of actions $\mu : \mc{M} \curvearrowleft \Theta$ which are universal over an action $\Theta$-isomorphic to $\lambda^0$ is a comeagre subset of $\Se_k(\mc{M}, \Theta)$.
\end{fact}
  
We also use the following folkloric fact, the proof of which is straightforward (via verification of the extension property):
\begin{fact} \label{f:reducts}
    Let $\mc{L}, \mc{L}'$ be first-order languages with $\mc{L} \sub \mc{L}'$. Let $\mc{K}$, $\mc{K}'$ be \Fr classes of $\mc{L}$- and $\mc{L}'$-structures respectively, where for each $\mc{C}' \in \mc{K}'$ we have $\mc{C}'|_{\mc{L}} \in \mc{K}$, and suppose that:
    \begin{enumerate}[label=(\Alph*)] 
        \item \label{reduct cond1} each $\mc{C} \in \mc{K}$ embeds into some $\mc{D} \in \mc{K}$ such that $\mc{D}$ admits an expansion $\mc{D}' \in \mc{K}'$; 
        \item \label{reduct cond2} for each $\mc{C}' \in \mc{K}'$ and each embedding $f_0 : \mc{C}'|_{\mc{L}} \to \mc{D}$ with $\mc{D} \in \mc{K}$, there exists an embedding $f_1 : \mc{D} \to \mc{E}$ with $\mc{E} \in \mc{K}$ such that $\mc{E}$ expands to some $\mc{E}' \in \mc{K}'$ and $f_1 \circ f_0$ is an embedding $\mc{C}' \to \mc{E}'$.
    \end{enumerate}
    Then $\FrLim(\mc{K}')|_{\mc{L}} = \FrLim(\mc{K})$.
\end{fact}
    
Now we move on to condition \ref{univ act orbit-rich} in \cref{d:strongly universal actions}. For this we need some stronger assumptions.  

\begin{defn} \label{d:free orbit property}
    Let $\Omega \in \mc{D}(\Theta)$, and let $\mu^{0}$ be the restriction of $\lambda^0$ to $\NFr_{\lambda^0}(\Omega) \curvearrowleft \Omega$. Let $\mc{P} \sub \mc{K}(\mu^0)$, $\mc{P}' \sub \mc{K}(\lambda^0)$ be strong \Fr classes. We say that the pair $(\mc{P},\mc{P}')$ has the \emph{free orbit property} if $\mc{P}$, $\mc{P}'$ satisfy the assumptions of \cref{f:reducts} and the following conditions hold:
    \begin{enumerate}[label=(\alph*)]
        \item \label{fo cond1} in condition \ref{reduct cond1}, the given embedding $\mc{C} \to \mc{D}$ gives an injection from the set of free orbits of $\mc{C}$ to the set of free orbits of $\mc{D}'$ disjoint from $A_0$;
        \item \label{fo cond2} in condition \ref{reduct cond2}, the embedding $f_1 : \mc{D} \to \mc{E}$ gives an injection from the set of free orbits of $\mc{D}$ disjoint from $\im(f_0)$ to the set of free orbits of $\mc{E}'$.  
    \end{enumerate} 
\end{defn}
    
The above definition is motivated by the following lemma.  
\begin{lem} \label{l:orbit-rich}
    Let $\Omega \in \mc{D}(\Theta)$, and let $\mu^{0}$ be the restriction of $\lambda^0$ to $\NFr_{\lambda^0}(\Omega) \curvearrowleft \Omega$. Let $\mc{P} \sub \mc{K}(\mu^0)$, $\mc{P}' \sub \mc{K}(\lambda^0)$ be strong \Fr classes with $\FrLim(\mc{P})|_{\mc{L}} = \mc{M}$ such that $(\mc{P}, \mc{P}')$ has the free orbit property. Let $\lambda$ be the action on $\mc{M}$ given by $\FrLim(\mc{P}')$. (Note that, as $\mc{P}'$ is a strong \Fr class, we have that $\lambda$ is orbit-rich with respect to $\FrLim(\mc{P}')$.) Then $\lambda$ is orbit-rich with respect to $\mc{M}(\NFr_\lambda(\Omega), \lambda|_\Omega) = \FrLim(\mc{P})$. 
\end{lem}

\begin{proof}
    Throughout this proof, given a structure $\mc{S}$ we denote its domain by $S$ (as usual in this paper). Write $\mc{N} = \FrLim(\mc{P})$, $\mc{N}' = \FrLim(\mc{P}')$. Write $\mc{L}_{\mc{N}}$ for the language of $\mc{N}$. Let $\mc{A}, \mc{B} \fin \mc{N}$ and $C \fin N$. Let $\mc{B}'_1$ be the substructure of $\mc{N}'$ generated by $B$, and let $\mc{B}_1$ be the $\mc{L}_{\mc{N}}$-reduct of $\mc{B}'_1$. We have $\mc{B} \sub \mc{B}_1$, and as $\mc{P}$ has strong amalgamation, by applying an automorphism of $\mc{N}$ if necessary, we may assume $(C \setminus B) \cap B_1 = \varnothing$. As $\mc{B}_1$ is a substructure of $\mc{N}$, this implies $(C \setminus B) \cap [B_1]_{\lambda|_\Omega} = \varnothing$. Let $\mc{D} = \sg{B_1 C}_{\mc{N}}$. Considering the inclusion $\mc{B}_1 \hookrightarrow \mc{D}$, as $(\mc{P}, \mc{P}')$ has the free orbit property, there is $\mc{E}' \in \mc{P}'$ with $\mc{L}_{\mc{N}}$-reduct $\mc{E}$ and an embedding $f : \mc{D} \to \mc{E}$ such that $f$ restricts to an embedding $\mc{B}'_1 \to \mc{E}'$ and such that $f$ gives an injection from the set of free orbits of $\mc{D}$ disjoint from $B_1$ into the set of free orbits of $\mc{E}'$. Let $X = E \setminus f(B_1)$. As $\mc{P}'$ has strong amalgamation, we may assume that $\mc{E}' \sub \mc{N}'$, that $f$ is the identity on $\mc{B}'_1$ and that $X \cap [A]_\lambda = \varnothing$. Extend $f : \mc{D} \to f(\mc{D})$ by ultrahomogeneity of $\mc{N}$ to $g \in \Aut(\mc{N})$. Then, by the fact that $f$ was given by the free orbit property, it is straightforward to see that $[gC]_\lambda \cap [AB]_\lambda = [(gC) \cap B]_\lambda$.
\end{proof}

\subsection{Strictly strongly universal actions.} \label{ss:strictly strongly universal actions}

\subsubsection{Mutually independent families}
\begin{defn} \label{d:mutually indep}
    Suppose $\mc{M}$ has a SIR $\ind$. Let $\mc{C} \fin \mc{M}$, let $l \geq 1$, and for each $i < l$ let $\mc{D}_i \fin \mc{M}$ with $\mc{C} \sub \mc{D}_i$. For each $I \sub \mathbf{l}$, define $D_I = \bigcup_{i \in I} D_i$. We say that the family $(D_i)_{i < l}$ is \emph{mutually independent over $C$} if, for all disjoint non-empty $I, J \sub \mathbf{l}$, we have $D_I \ind_C D_J$.
    
    Note that the above condition implies that $D_i \cap D_j = C$ for all distinct $i, j < l$, using (Mon) and Lemma \ref{SWIR strong amalg}.
\end{defn}

\begin{lem} \label{l: finding mutually indep}
    Suppose $\mc{M}$ has a SIR $\ind$. Let $\mc{C} \fin \mc{M}$, let $l \geq 1$ and for $i < l$ let $\mc{D}_i \fin \mc{M}$ with $\mc{C} \sub \mc{D}_i$. Suppose that $D_0 \cdots D_{i-1} \ind_C D_i$ for all $i < l$. Then $(D_i)_{i < l}$ is mutually independent over $C$.
\end{lem}
\begin{proof}
    We use induction on $l$. The case $l = 1$ is trivial. Suppose $l > 1$. Let $I, J \sub \mathbf{l}$ be non-empty and disjoint. By the induction hypothesis for $l-1$, if $I, J \sub \mathbf{l-1}$ then $D_I \ind_C D_J$. Thus we may assume $l-1 \in J$ (without loss of generality). By the induction hypothesis we have $D_I \ind_C D_{J \setminus \{l-1\}}$, and by assumption and (Mon) we have $D_I D_{J \setminus \{l-1\}} \ind_C D_{l-1}$; hence $D_I \ind_{C D_{J \setminus \{l-1\}}} D_{l-1}$ by (Mon). Applying (Tr) with $D_I \ind_C D_{J \setminus \{l-1\}}$ and $D_I \ind_{C D_{J \setminus \{l-1\}}} D_{l-1}$ we have $D_I \ind_C D_J$ as required.
\end{proof}

\begin{lem} \label{l:gluing}
    Suppose that $\mc{M}$ is equipped with a SIR $\ind$. Let $\mc{C} \fin \mc{M}$, let $l \geq 1$ and let $\mc{D}_i \fin \mc{M}$ for $i < l$, with $(D_i)_{i < l}$ mutually independent over $C$. Let $\mc{D}$ be the substructure of $\mc{M}$ with domain $D = \bigcup_{i < l} D_i$, and let $f : D \to D$ be a bijection fixing $C$ setwise such that there exists $\sigma \in \sym_l$ with $f|_{\mc{D}_i} : \mc{D}_i \to \mc{D}_{\sigma(i)}$ an isomorphism for all $i < l$. Then $f \in \Aut(\mc{D})$.
\end{lem}
\begin{proof}
    For $j < l$, let $\mc{D}_{\leq j}$ be the substructure of $\mc{M}$ with domain $D_{\leq j} = \bigcup_{i \leq j} D_i$. We show by induction on $j$ that $f|_{\mc{D}_{\leq j}}$ is an isomorphism for $j < l$. In the case $j = 0$, this is by assumption. Let $j > 0$, and suppose that $f|_{\mc{D}_{\leq j}}$ is an isomorphism. By assumption we have $D_{\leq j} \ind_C D_{j+1}$, and as $\sigma(j+1) \notin \sigma(\{0, \cdots, j\})$, by assumption we have $\bigcup_{i < j} D_{\sigma(i)} \ind_C D_{\sigma(j+1)}$. Equivalently $f(D_{\leq j}) \ind_{f(C)} f(D_{j+1})$ (here we use the fact that $f(C) = C$). As $f|_{\mc{D}_{\leq j}}$ is an isomorphism, by ultrahomogeneity it extends to some $g \in \Aut(\mc{M})$. By (Inv), with $g^{-1}$ applied to $f(D_{\leq j}) \ind_{f(C)} f(D_{j+1})$, we have $D_{\leq j} \ind_C (g^{-1} \circ f)(D_{j+1})$, so as $D_{\leq j} \ind_C D_{j+1}$ we have $(g^{-1} \circ f)(D_{j+1}) \equiv_{CD_{\leq j}} D_{j+1}$ by (Sta). As $g$ is an automorphism of $\mc{M}$, we therefore have that $f|_{D_{\leq j+1}}$ is an isomorphism.
\end{proof}

\begin{lem} \label{l: S2 S3 str}
    Let $\mc{M}$ be a relational \Fr structure with strong amalgamation. Suppose that $\mc{M}$ has a SIR. Then:
    \begin{enumerate}[label=(\roman*)]
        \item \label{i: S2 str} $\mc{M}$ is a $(\sym_2, r)$-structure for some $r \geq 1$;
        \item \label{i: S3, 1 str} if $\mc{M}$ has a single isomorphism type of $3$-substructure, then $\mc{M}$ is a transitive $(\sym_3, 1)$-structure;
        \item \label{i: S3 str} if $\mc{M}$ is transitive, then $\mc{M}$ is a $(\sym_3, r)$-structure for some $r \geq 1$.
    \end{enumerate}
\end{lem}
\begin{proof}
    We first claim that for $c \in \mc{M}$ and $k = 2, 3$, there is $\mc{A} \in [\mc{M}]^k$ with $\Aut(\mc{A}) \cong \sym_k$ and $\qftp(a) = \qftp(c)$ for all $a \in A$: as $\mc{M}$ has strong amalgamation, by repeatedly applying (Ex), there are distinct $a_i \in M$, $i < k$, with each $a_i$ having the same quantifier-free type as $c$ and with $a_0 \cdots a_{i-1} \ind a_i$ for all $i < k$. Taking $\mc{A} = \mc{M}|_{\{a_0, \cdots, a_{k-1}\}}$, the claim then follows by Lemma \ref{l: finding mutually indep} and Lemma \ref{l:gluing}.

    \ref{i: S2 str}: We have $\mc{D}(\sym_2) = \{\mathtt{1}, \sym_2\}$, so condition \ref{automorphism tuples} in Definition \ref{d:theta r structure} is immediate. Condition \ref{lower transitivity} is trivial. For condition \ref{isomorphism types} it suffices to find $\mc{A} \in [\mc{M}]^2$ with $\Aut(\mc{A}) \neq \mathtt{1}$: such $\mc{A}$ exists by the above claim.

    \ref{i: S3, 1 str}: Condition \ref{automorphism tuples} in \Cref{d:theta r structure} is easily checked; condition \ref{isomorphism types} and the transitivity of $\mc{M}$ follow by the above claim.

    \ref{i: S3 str}: Conditions \ref{automorphism tuples}, \ref{isomorphism types} are verified as in the previous case. As $\mc{M}$ is transitive, we have condition \ref{lower transitivity}.
\end{proof}

\subsubsection{Neat extensions}

\begin{defn} \label{d:neat extension}
    Suppose $\mc{M}$ is equipped with a SIR $\ind$. Let $\Omega \in \mc{D}(\Theta) \cup \{\Theta\}$. Let $\mc{C} \sub \mc{D} \fin \mc{M}$, let $\nu^0 : \mc{C} \curvearrowleft \Theta$ be an action, and let $\mu : \mc{D} \curvearrowleft \Omega$ be an action extending $\nu^0|_\Omega$ such that $\mu$ acts freely on $D \setminus C$. 

    Let $\mc{E} \fin \mc{M}$ with $\mc{D} \sub \mc{E}$. We say that an action $\nu : \mc{E} \curvearrowleft \Theta$ extending $\nu^0$ and $\mu$ is a \emph{neat extension of $(\nu^{0}, \mu)$} if 
    \begin{itemize}
        \item $E$ is the $\nu$-orbit-closure of $D$;
        \item every $\nu$-orbit disjoint from $C$ is free and contains exactly one of the free $\mu$-orbits within $D \setminus C$;
        \item for any system of representatives $(\theta_i)_{i \in \mathbf{q}}$ of the right cosets in $\Omega \backslash \Theta$, the family $(\nu_{\theta_{j}}(D))_{j\in\mathbf{q}}$ is mutually independent over $C$.
    \end{itemize}
\end{defn}
    
\begin{lem} \label{l:neat extension}
    In the situation of \cref{d:neat extension} there is, up to $\Theta$-isomorphism, a unique neat extension of $(\nu^{0},\mu)$ to an action $\nu$.
\end{lem}
\begin{proof}
    We show existence, with uniqueness being straightforward. First we construct a set action. As $\mu$ acts freely on $D \setminus C$, the $\mu$-action on each orbit in $D \setminus C$ is $\Omega$-isomorphic to the right multiplication action $\Omega \curvearrowleft \Omega$, which extends to the right multiplication action $\Theta \curvearrowleft \Theta$ for which $\Theta$ is a free orbit. Thus there is a set $S \supseteq D$ and a set action $\nu^S : S \curvearrowleft \Theta$ extending $\nu^0$, $\mu$ such that $S \setminus C$ consists of free $\Theta$-orbits, where each free $\Theta$-orbit in $S \setminus C$ contains exactly one free $\mu$-orbit of $D \setminus C$. For each $\theta \in \Theta$, let $D_\theta = \nu^S_\theta(D)$.

    \begin{claim*}
        For all $\theta, \theta' \in \Theta$, if $D_\theta \cap D_{\theta'} \neq C$, then $D_\theta = D_{\theta'}$ and $\theta' \in \Omega \theta$.
    \end{claim*}
    \begin{subproof}
        As $D_\theta \cap D_{\theta'} \neq C$, there are $d, d' \in D \setminus C$ with $d \cdot \theta = d' \cdot \theta'$, so $d = d' \cdot \theta' \theta^{-1}$, and so as $\nu^S$ is free on $S \setminus C$ and each $\Omega$-orbit of $D \setminus C$ is contained in exactly one $\Theta$-orbit of $S \setminus C$, there is $\omega \in \Omega$ with $\theta' \theta^{-1} = \omega$. As $D$ is $\Omega$-invariant, we have $D_\theta = D_{\theta'}$.
    \end{subproof}
    Let $\Omega \theta_0, \cdots, \Omega \theta_{l-1}$ be a system of right coset representatives in $\Omega \backslash \Theta$, with $\theta_0 = 1$. For each $i < l$, define a structure $\mc{D}_{\theta_i}$ by specifying that $\nu^S_{\theta_i}|_D : D \to D_{\theta_i}$ is an isomorphism (extending $\nu^S_{\theta_i}|_C : C \to C$). Note that as $D$ is $\Omega$-invariant, for each $\theta \in \Theta$, writing $\theta = \omega \theta_i$ for some $\omega \in \Omega$ and $i < l$, we have $D_\theta = D_{\theta_i}$ and $\nu^S_\theta|_D : D \to D_\theta$ is an isomorphism. Hence, for all $\theta \in \Theta$ and $i < l$, we have that $\nu^S_\theta|_{D_{\theta_i}}$ is an isomorphism $\mc{D}_{\theta_i} \to \mc{D}_{\theta_j}$ for some $j < l$.

    Let $\mc{D}_0 = \mc{D}$. Using ($\ind$-Ex) and induction, embed $\mc{D}_{\theta_1}, \cdots, \mc{D}_{\theta_{l-1}}$ as some $\mc{D}_1, \cdots, \mc{D}_{l-1} \sub \mc{M}$ such that $D_0 \cdots D_{i-1} \ind_C D_i$ for all $i < l$. By Lemma \ref{l: finding mutually indep}, we have that $(D_i)_{i \in \mathbf{l}}$ is mutually independent over $C$. Let $\mc{E}$ be the structure with domain $\bigcup_{i \in \mathbf{l}} D_i$, and let $\nu : E \curvearrowleft \Theta$ be the set action induced by $\nu^S$ (via the embeddings $D_{\theta_i} \to D_i$). By \Cref{l:gluing} we have that $\nu$ is a structural action $\mc{E} \curvearrowleft \Theta$, and it is clear that $\nu$ is a neat extension.
\end{proof}

\subsubsection{Pleasant structural seed actions from strictly strongly universal actions}

\begin{lem} \label{l:SWIR gives action amalgam}
    Recall that SWIRs induce standard amalgamation operators (see \Cref{p:swir sao}).
    
    Suppose that $\mc{M}$ is equipped with a SWIR $\ind$, inducing a standard amalgamation operator $\otimes$. Let $\Omega \in \mc{D}(\Theta) \cup \Theta$ and let $\mu^0 : \mc{A} \curvearrowleft \Omega$ be an action on some $\mc{A} \fin \mc{M}$. Then $\mc{K}(\mu^0)$ is a strong \Fr class, and has a standard amalgamation operator $\otimes'$ as follows: given $\mc{C}, \mc{D} \in \mc{K}(\mu^0)$ with intersection $\mc{B} \in \mc{K}(\mu^0)$, take the $\mc{L}$-structure on $C \cup D$ to be $C|_{\mc{L}} \otimes_{B|_{\mc{L}}} D|_{\mc{L}}$, and expand to an $\mc{L}(A, \Omega)$-structure by interpreting the constant symbols as $A \sub B$ and by interpreting each function symbol $f_\omega$, $\omega \in \Omega$, as $f_\omega^{\mc{C}} \cup f_\omega^{\mc{D}}$ (note that $f_\omega^{\mc{C}}$, $f_\omega^{\mc{D}}$ agree on $f_\omega^{\mc{B}}$). If $\ind$ satisfies (Free), then so does the SWIR on $\FrLim(\mc{K}(\mu^0))$ induced by $\otimes'$.
\end{lem}
\begin{proof}
    We only need to check that $f := f_\omega^{\mc{C}} \cup f_\omega^{\mc{D}}$ is an automorphism of $C|_{\mc{L}} \otimes_{B|_{\mc{L}}} D|_{\mc{L}}$, and this follows immediately from ($\otimes$-Inv) (which in turn follows from ($\ind$-Inv) and ($\ind$-Sta)).
\end{proof}

\begin{rem}
    Note that it is not necessarily the case in the above lemma that $\FrLim(\mc{K}(\mu^0))|_{\mc{L}} = \mc{M}$, as $(\mc{K}, \mc{K}(\mu^0))$ may not satisfy the conditions of \Cref{f:reducts}; we only proved \Cref{l:neat extension} in the case where $\ind$ is symmetric.  
\end{rem}

\begin{prop} \label{p:str strongly univ}
    Suppose that $\mc{M}$ is equipped with a SIR. Then there exists an action $\lambda : \mc{M} \curvearrowleft \Theta$ which is strictly strongly universal over $\lambda^0$. In fact, the set of actions which are strictly strongly universal over an action $\Theta$-isomorphic to $\lambda^0$ is comeagre in $\Se_k(\mc{M}, \Theta)$.
\end{prop}
\begin{proof}
    For each $\Omega \in \mc{D}(\Theta)$, let $\mu^{0, \Omega} = \lambda^0|_{\NFr_{\lambda^0}(\Omega) \curvearrowleft \Omega}$. By \Cref{l:strong univ rephrasing}, it suffices to show the following:
    \begin{enumerate}[label=(\roman*)]
        \item \label{Kl0 Fr class} $\mc{K}(\lambda^0)$ is a strong \Fr class, as is $\mc{K}(\mu^{0, \Omega})$ for each $\Omega \in \mc{D}(\Theta)$;
        \item \label{Kl0Omega reduct} for each $\Omega \in \mc{D}(\Theta)$, the \Fr limit $\FrLim(\mc{K}(\mu^{0, \Omega}))$ is the reduct of $\FrLim(\mc{K}(\lambda^0))$, and we also have $\FrLim(\mc{K}(\mu^{0, \Omega}))|_\mc{L} = \mc{M}$;
        \item \label{orb rich and str well cent} writing $\lambda$ for the action induced by $\FrLim(\mc{K}(\lambda^0))$, we have that $\lambda$ is orbit-rich for $\mc{M}(\NFr_\lambda(\Omega), \lambda|_\Omega)$ and $\lambda|_\Omega$ is strictly well-centralised over $\NFr_\lambda(\Omega)$.
    \end{enumerate}

    We have \ref{Kl0 Fr class} by \Cref{l:SWIR gives action amalgam}. \ref{Kl0Omega reduct} follows from \Cref{l:neat extension} and \Cref{f:reducts}. For \ref{orb rich and str well cent}, orbit-richness follows from \Cref{l:neat extension} and \Cref{l:orbit-rich}, and \Cref{l:SWIR gives action amalgam} gives that $\lambda|_\Omega$ is strictly well-centralised over $\NFr_\lambda(\Omega)$. The genericity statement follows from \Cref{f:genericity of universal actions}.
\end{proof}

\begin{cor} \label{c:seed actions SIR}
    Let $\mc{M}$ be a strong relational \Fr structure with a SIR. Then the following hold:
    
    \begin{enumerate}[label=(\roman*)]
        \item \label{SIR seed action S2} There is a pleasant structural seed action $\mc{M} \curvearrowleft \sym_2$. In fact, there is a comeagre subset of $\Se_2(\mc{M}, \sym_2)$ consisting of pleasant structural seed actions.
        \item \label{SIR seed action (S3, 1)} If there is only one isomorphism type of $\mc{A} \in [\mc{M}]^3$ with $\Aut(\mc{A}) \cong \sym_3$, then $\sym_{3}$ admits a pleasant structural seed action on $\M$. In fact, there is a comeagre subset of $\Se_3(\mc{M}, \sym_3)$ consisting of pleasant structural seed actions. 
    \end{enumerate}
    If moreover the SIR $\ind$ satisfies (Free), then the pleasant structural seed actions in question are generous. 
\end{cor}
\begin{proof}
    \ref{SIR seed action S2}: Lemma \ref{l: S2 S3 str} gives that $\mc{M}$ is a $(\sym_2, r)$-structure for some $r \geq 1$. By \Cref{p:str strongly univ}, there is a strictly strongly universal action $\lambda : \mc{M} \curvearrowleft \sym_2$ (in fact a comeagre set of such actions). By the definition of strictly strongly universal actions, $\lambda$ satisfies conditions \ref{flexible}, \ref{orbit-richness} in \Cref{p:extension by arcs structure}. It is immediate that $\lambda$ is a seed action of sets. We now check conditions \ref{seed-struc reference}, \ref{seed-struc all permutations}, \ref{seed-struc equivariant isomorphism condition} in \Cref{d:seed action on structure} (the definition of a pleasant structural seed action). Condition \ref{seed-struc reference} is trivial as $\sym_2$ is docile. Condition \ref{seed-struc all permutations} follows from the fact that $\mc{K}(\varnothing)$ is a \Fr class, and condition \ref{seed-struc equivariant isomorphism condition} follows by \cref{l:condition omega tuples comes for free}. By \Cref{c:extension by arcs structure} we have that $\lambda$ is a pleasant structural seed action on $\mc{M}$. Each strictly strongly universal action is thus a pleasant structural seed action, giving the additional genericity result.

    \ref{SIR seed action (S3, 1)}: Note that $\mc{M}$ is vertex-transitive: suppose, for a contradiction, that there are vertices $a_0$ and $b_0$ with different isomorphism types. Then using (Ex) we may produce $a_0, a_1, a_2$ which are mutually independent and $b_0, b_1, b_2$ which are mutually independent. But then $\Aut(a_0 a_1 a_2) \cong \sym_3$, $\Aut(b_0 b_1 b_2) \cong \sym_3$, contradicting the assumption that $r = 1$. We then have via Lemma \ref{l: S2 S3 str} that $\mc{M}$ is a $(\sym_3, 1)$-structure. The remainder of the argument is similar to the case of $\sym_2$, and we leave the checking to the reader.
\end{proof}

\subsection{Seed actions on \texorpdfstring{$k$}{k}-tournaments}
     
In this section, we construct a seed action of $\cyc_3 \leq \sym_3$ on $\mb{T}_2$ (recall that we have already constructed a seed action $\mb{T}_2 \curvearrowleft \sym_2$), and we construct a seed action of $\alt_k$ on the generic $k$-tournament $\mb{T}_k$ for $3 \leq k \leq 5$. 

Unlike in the previous subsections, here in order to satisfy condition \ref{seed-struc equivariant isomorphism condition} of \cref{d:seed action on structure}, we need to restrict to a subclass of $\mc{K}(\lambda^0)$. We first see this in the below Lemma \ref{l:3 seed action 2tournament}.

\subsubsection{The random tournament}

\begin{lem} \label{l:3 seed action 2tournament}
    There exists a pleasant structural seed action of $\Theta = \cyc_3 \leq \sym_3$ on the random tournament $\mb{T}_2$.
\end{lem}
\begin{proof}
    It is straightforward to check that $\mb{T}_2$ is a $(\cyc_3, 1)$-structure, using the fact that there are only two isomorphism classes of tournaments on $3$ vertices (namely, a $3$-cycle and a linear order): we leave the verification to the reader. Let $R$ denote the tournament relation of $\mb{T}_2$, let $\mc{L}$ denote the tournament language, and let $\mc{K} = \Age(\mb{T}_2)$. As noted in Example \ref{ex: structures with SWIRS}, the random tournament $\mb{T}_2$ admits a SWIR, or equivalently a standard amalgamation operator. Let $\tau$ be a generator of $\cyc_3$. Let $\mc{P}'$ be the subclass of $\mc{K}(\emp, \cyc_3)$ consisting of all $\mc{C}' \in \mc{K}(\emp, \cyc_3)$ such that the action on $\mc{C}'|_{\mc{L}}$ specified by $\mc{C}'$ satisfies the condition that each orbit consists of three points $a_0, a_1, a_2$ with $R(a_0, a_1) \wedge R(a_1, a_2) \wedge R(a_2, a_0)$ and $\tau$ acting as the cycle $(a_0 a_1 a_2)$. The argument in the proof of \cref{l:SWIR gives action amalgam} straightforwardly adapts to give that $\mc{P}'$ is a strong \Fr class with a standard amalgamation operator.

    We now check that $(\mc{K}, \mc{P}')$ has the free orbit property (considering $\mc{K}$ as the class acted on by the trivial group). By the definition of $\mc{P}'$ we immediately have $\mc{C}'|_{\mc{L}} \in \mc{K}$ for each $\mc{C}' \in \mc{P}'$. Let $\mc{C}' \in \mc{P}'$ (where potentially $\mc{C}' = \emp$), let $\mc{C} = \mc{C}'|_{\mc{L}}$, and let $\mc{D}_0 \in \mc{K}$ with $\mc{C} \sub \mc{D}_0$. Let $n = |\mc{D}_0 \setminus \mc{C}|$ and enumerate the vertices of $\mc{D}_0 \setminus \mc{C}$ as $\bar{v}_0 = (v_{0, i})_{i < n}$. Define a tournament $\mc{E}$ containing $\mc{D}_0$ by adding vertices $v_{1, i}, v_{2, i}$ for $i < n$, letting $\bar{v}_j = (v_{j, i})_{i < n}$ for $j = 1, 2$, with $\qftp_{\mc{E}}(\bar{v}_j/C \cdot \tau^j) = \qftp_{\mc{D}_0}(\bar{v}_0/C)$ for $j = 1, 2$ and $R(v_{0, i}, v_{1, i'}) \wedge R(v_{1, i'}, v_{2, i''}) \wedge R(v_{2, i''}, v_{0, i})$ for all $i, i', i'' < n$. Extend the action $\mc{C} \curvearrowleft \cyc_3$ specified by $\mc{C}'$ to $\mc{E}$ by defining $v_{j, i} \cdot \tau = v_{j + 1 \pmod{3}, i}$ for $j < 3$; it is straightforward to check that this action is by tournament automorphisms, that each element of $\mc{D}_0 \setminus \mc{C}$ lies in a different free orbit, and that $\mc{E}$ equipped with this action gives an element of $\mc{P}'$. We have thus shown the free orbit property for $(\mc{K}, \mc{P}')$.
    
    By Fact \ref{f:reducts} we have $\FrLim(\mc{P}')|_{\mc{L}} = \mb{T}_2$, and letting $\lambda : \mb{T}_2 \curvearrowleft \cyc_3$ be the action specified by $\FrLim(\mc{P}')$, we have by Lemma \ref{l:orbit-rich} that $\lambda$ is orbit-rich with respect to $\mb{T}_2$. As $\mc{P}'$ is a strong \Fr class, we have that $\lambda$ is orbit-rich with respect to $\FrLim(\mc{P}') = \mb{T}_2(\emp, \lambda)$. As $\mb{T}_2$ and $\FrLim(\mc{P}')$ both have SWIRs, we therefore have conditions \ref{flexible} and \ref{orbit-richness} of Proposition \ref{p:extension by arcs structure}.

    To apply Corollary \ref{c:extension by arcs structure}, it remains to check conditions \ref{seed-struc reference}, \ref{seed-struc all permutations}, \ref{seed-struc equivariant isomorphism condition} of Definition \ref{d:seed action on structure}. Condition \ref{seed-struc reference} is vacuously true as $\cyc_3$ is docile, condition \ref{seed-struc all permutations} follows by the definition of $\mc{P}'$ (in the case $\Omega = \cyc_3$ we take a single free orbit giving an element of $\mc{P}'$), and condition \ref{seed-struc equivariant isomorphism condition} also follows from the definition of $\mc{P}'$: in the case $\Omega = \cyc_3$, any $\Omega$-substructure consists of three points $a_0, a_1, a_2$ with $R(a_0, a_1) \wedge R(a_1, a_2) \wedge R(a_2, a_0)$ and $\tau$ acting as the cycle $(a_0 a_1 a_2)$, and so any two isomorphic $\Omega$-substructures are $\Omega$-isomorphic. We now apply Corollary \ref{c:extension by arcs structure} to give the result.
\end{proof}

\subsubsection{The random \texorpdfstring{$k$}{k}-tournament for \texorpdfstring{$3 \leq k \leq 5$}{3 <= k <= 5}} \hfill

In this subsection, we construct a generous pleasant structural seed action on the generic $k$-tournament $\mb{T}_k$. See Definition \ref{d: ht def} for the definition of a $k$-tournament.

\begin{rem}
    We warn the reader that there is another notion of $k$-hypertournament in the literature (the earliest occurrence of which known to the authors is \cite{Ass86}), where the $k$-ary relation holds for exactly one enumeration of each $k$-set. 
\end{rem}

Fix $k \in \{3, 4, 5\}$ and take $\Theta = \alt_k \leq \sym_k$. Note that $\Theta$ is robust (see Lemma \ref{l:examples}), and note that for $k = 3$ the group $\Theta$ is docile and for $k \in \{4, 5\}$ the group $\Theta$ is unruly. Let $\mc{M} = \mb{T}_k$. It is straightforward to check that $\mc{M}$ is a $(\Theta, 1)$-structure (see Definition \ref{d:theta r structure}). Let $\mc{K} = \Age(\mc{M})$. Let $H = \Theta$ (so $H$ is a compatible seed group with $s = r = 1$) and let $G = (H, T)$ be a completion of $H$. Fix an action $\lambda^0$ as at the start of Section \ref{s:construction good finite actions}.

For $k \geq 3$, there is no SWIR on $\mb{T}_k$: suppose for a contradiction that $\mb{T}_k$ has a SWIR $\ind$, and let $\mc{A}, \mc{B} \sub \mb{T}_k$ with $|A| = k-1$, $|B| = 1$ and $A \ind B$. Then the automorphism of $\mc{A}$ given by the transposition of two points (note that $\mc{A}$ has no $k$-tournament edges) extends to an automorphism of the $k$-substructure of $\mb{T}_k$ induced on $AB$ -- but this automorphism is an odd permutation, giving a contradiction. Given the absence of a SWIR, our approach in this subsection is necessarily more ad hoc.

\begin{defn} \label{d:restricted class}
    We define an action $\tld{\lambda}^0$ as follows. If $k \in \{4, 5\}$ (note that in this case $\Theta$ is unruly) we take $\tld{\lambda}^0 = \lambda^0$, and if $k = 3$ (in this case $\Theta$ is docile) we take $\tld{\lambda}^0$ to be a free action of $\Theta = \alt_3$ on a single $3$-tournament edge.
    
    Recall Definition \ref{d:theta-class}. We write $\tld{\mc{K}}(\tld{\lambda}^0)$ for the subclass of $\mc{K}(\lambda^0)$ consisting of those $\mc{B} \in \mc{K}(\lambda^0)$ such that, for all $\Omega \in \mc{D}(\Theta)$, for each $\Omega$-substructure $\mc{A} \in [\mc{B}|_{\mc{L}}]^k$, the action $\mc{A} \curvearrowleft \Omega$ specified by $\mc{B}$ is isomorphic via a $k$-tournament isomorphism to $\tld{\lambda}^0|_{\Omega}$. 
\end{defn}

Throughout this section, when we say that a $k$-tuple $\bar{e}'$ is an even/odd permutation of a $k$-tuple $\bar{e}$, this means that $\bar{e}' = (e_{\sigma(0)}, \cdots, e_{\sigma(k-1)})$ for some even/odd $\sigma \in \sym_k$.

\begin{lem} \label{l:extending sets to tournaments equivariantly}
    Let $k \geq 2$. Let $A$ be a set, let $G$ be a group with generating set $T$, and let $\phi$ be a partial action of $(G,T)$ on $A$. Let $P \sub (A)^k$ satisfy the following:
    \begin{enumerate}[label=(\roman*)]
        \item \label{tour1} for $\bar{e} \in P$, each even permutation of $\bar{e}$ lies in $P$ and no odd permutation of $\bar{e}$ lies in $P$;
        \item \label{tour2} for $g \in G$, $\bar{e}, \bar{e}' \in P$ with $\bar{e} \sub \dom(\phi_g)$, if $\phi_g(\bar{e})$, $\bar{e}'$ have the same underlying set then $\phi_g(\bar{e})$ is an even permutation of $\bar{e}'$;
        \item \label{tour3} for $g \in G$, $\bar{e} \in (A)^k$ with $\bar{e} \sub \dom(\phi_g)$, if $\phi_g(\bar{e})$, $\bar{e}$ have the same underlying set then $\phi_g(\bar{e})$ is an even permutation of $\bar{e}$. 
    \end{enumerate}
    Then there is a $k$-tournament $\mc{A}$ on the set $A$ with edge set containing $P$ and such that each $\phi_g$, $g \in G$, is a partial isomorphism of $\mc{A}$.
\end{lem}
\begin{proof}
    Let $X \sub [A]^k$ consist of the elements of $[A]^k$ which do not lie in the $\phi$-orbit of the underlying set of any element of $P$. Let $Y \sub X$ be a set containing exactly one representative for each $\phi$-orbit contained in $X$, let $Z \sub (A)^k$ be obtained by imposing an arbitrary ordering on each element of $Y$ and then closing the resulting set of $k$-tuples under even permutations. Let $E$ be the union of the $\phi$-orbits of the elements of $P \cup Z$. It is straightforward to verify that $(A, E)$ is a $k$-tournament for which each $\phi_g$ is a partial isomorphism.
\end{proof}

\begin{lem} \label{l:structure on omega tuples}
    Let $\lambda : U \curvearrowleft \Theta$ be a seed action of sets extending $\lambda^0$, and let $\phi'$ be a taut finite extension of $\lambda$. Let $C \fin U$ be a $G$-invariant set in the partial action $\phi'$, with $A_0 \sub C$. Let $\phi = \phi'|_C$. Let $m < \omega$. For each $i < m$, let $\mc{B}_i$ be a $k$-tournament with domain $B_i$ such that $\mc{A}_0 \sub \mc{B}_i$ and $B_i \sub C$. Suppose that the following conditions hold:
    \begin{enumerate}[label=(\alph*)]
        \item \label{no incompatible permutations condition} for all $g \in G$, $i, i' < m$, $\bar{e} \in R^{\mc{B}_i} \cap (\dom(\phi_g))^k$, $\bar{e}' \in R^{\mc{B}_{i'}}$, if $\phi_g(\bar{e})$, $\bar{e}'$ have the same underlying set, then $\phi_g(\bar{e})$ is an even permutation of $\bar{e}'$;
        \item \label{action by permutations} for all $i < m$, $\Omega \in \mc{D}(\Theta)$, for each $\Omega$-substructure $\mc{A} \in [\mc{B}_i]^k$ in the action $\lambda$, the action $\lambda|_\Omega : \mc{A} \curvearrowleft \Omega$ is isomorphic via a $k$-tournament isomorphism to $\tld{\lambda}^0|_\Omega$.
    \end{enumerate}
    Then there is a $k$-tournament $\mc{C}$ with domain $C$ such that $\mc{B}_i \sub \mc{C}$ for all $i < m$ and such that the following hold:
    \begin{enumerate}[label=(\roman*)]
        \item \label{restriction of lambda} $\mc{C}$ equipped with the $\Theta$-action induced by $\lambda$ lies in $\tld{\mc{K}}(\tld{\lambda}^0)$;
        \item \label{phit condition} $\phi_g$ is a $k$-tournament isomorphism for all $g \in G$. 
    \end{enumerate}
\end{lem}
\begin{proof}
    For each $\Omega \in \mc{D}(\Theta)$, take a maximal collection $\mc{S}_\Omega$ of strict $\Omega$-sets in $C$ in distinct $\phi$-orbits not intersecting $\bigcup_{i < m} [B_i]^k$. Note that, for $\Omega, \Omega' \in \mc{D}(\Theta)$, given a strict $\Omega$-set $D$ and a strict $\Omega'$-set $D'$, if $\phi_g(D) = D'$ for some $g \in G$, then $\Omega'^g = \Omega$ by strictness; by Corollary \ref{c: conj of subset of H in H} we have that $\Omega'$ is conjugate to $\Omega$ in $\Theta$ and as $\mc{D}(\Theta)$ contains exactly one representative of each conjugacy class of docile subgroups we have $\Omega = \Omega'$. Thus, for distinct $\Omega$, $\Omega'$, the set of orbits of the elements of $\mc{S}_\Omega$ and the set of orbits of the elements of $\mc{S}_{\Omega'}$ are disjoint. Let $\mc{S} = \bigcup_{\Omega \in \mc{D}(\Theta)} \mc{S}_\Omega$. By maximality of each $\mc{S}_\Omega$, the fact that $\mc{A}_0 \sub \bigcap_{i < m} \mc{B}_i$, Lemma \ref{l:stabilisers of k sets} and Lemma \ref{l: key seed action props}\ref{seedact unruly unique}, we have that each element of $[C]^k$ is in the $\phi$-orbit of some element of $\mc{S} \cup \bigcup_{i < m} [B_i]^k$.

    For each $\Omega \in \mc{D}(\Theta)$, $D \in \mc{S}_\Omega$, as $\Omega \leq \Theta = \alt_k$ and $(\lambda|_\Omega : D \curvearrowleft \Omega) \simeq (\mathbf{k} \curvearrowleft \Omega)$ by Lemma \ref{l: key seed action props}\ref{seedact docile}, there is a $k$-tournament relation $R_D$ on $D$ such that $\lambda|_\Omega$ acts by automorphisms of $(D, R_D)$ and $(\lambda|_\Omega : (D, R_D) \curvearrowleft \Omega) \simeq \tld{\lambda}^0|_\Omega$ via a $k$-tournament isomorphism. Let $\mc{R} = (\bigcup_{D \in \mc{S}} R_D) \cup (\bigcup_{i < m} R^{\mc{B}_i})$; by the statement at the end of the preceding paragraph, each element of $[C]^k$ lies in the $\phi$-orbit of the underlying set of some element of $\mc{R}$.

    We claim that $\mc{R}$ satisfies the conditions \ref{tour1}--\ref{tour3} of Lemma \ref{l:extending sets to tournaments equivariantly}. Condition \ref{tour1} follows as the $\mc{B}_i$ and $(D, R_D)$ are $k$-tournaments. Condition \ref{tour2} follows from condition \ref{no incompatible permutations condition} in the statement of the lemma and the fact that, for each $D \in \mc{S}$, we have that $(D, R_D)$ is a $k$-tournament and the $\phi$-orbit of $D$ does not intersect $\bigcup_{i < m} [B_i]^k$. Condition \ref{tour3} follows from the following facts: each $D \in \mc{S}_\Omega$ is $\Omega$-strict, the set $A_0$ is $\Theta$-strict (in the case that $\Theta$ is unruly) by Lemma \ref{l:stabilisers of k sets} and Lemma \ref{l: key seed action props}\ref{seedact unruly unique}, each element of $[C]^k$ is in the $\phi$-orbit of some element of $\mc{S} \cup \bigcup_{i < m} [B_i]^k$, and the setwise-stabiliser for each $D \in \mc{S}_\Omega$ and for $A_0$ acts on the corresponding set with an action isomorphic to the corresponding standard permutation action (respectively: $\pi : \mathbf{k} \curvearrowleft \Omega$ and $\pi : \mathbf{k} \curvearrowleft \Theta$). We may therefore apply Lemma \ref{l:extending sets to tournaments equivariantly} to $\mc{R}$ to obtain a $k$-tournament $\mc{C}$ with domain $C$ such that $\mc{B}_i \sub \mc{C}$ for all $i < m$ and $\phi_g$ is a $k$-tournament isomorphism for all $g \in G$. As $\lambda$ is a seed action of sets, we have that the expansion of $\mc{C}$ by $\lambda$ lies in $\mc{K}(\lambda^0)$. It remains to check that this expansion lies in $\tld{\mc{K}}(\tld{\lambda}^0)$.

    Let $\Omega \in \mc{D}(\Theta)$ and let $\mc{D} \in [\mc{C}]^k$ be an $\Omega$-substructure. Let $\mu^0 : \mc{D} \curvearrowleft \Omega$ be the restriction of $\lambda|_\Omega$ to $\mc{D}$. We need to show that $\mu^0$ is isomorphic to $\tld{\lambda}^0|_\Omega$ via a $k$-tournament isomorphism. By the definition of $\mc{S}$, Lemma \ref{l:stabilisers of k sets} and Lemma \ref{l: key seed action props}\ref{seedact unruly unique} (also recalling that $\phi_g$ is a $k$-tournament isomorphism for all $g \in G$), there is $\Omega' \in \mc{D}(\Theta) \cup \{\Theta\}$ and $g \in G$ such that $\mc{D}' := \phi_g(\mc{D})$ is a strict $\Omega'$-substructure, and where one of the following two cases holds: $\Omega' \in \mc{D}(\Theta)$ and $D' \in \mc{S}_{\Omega'} \cup \bigcup_{i < m} [B_i]^k$, or $\Omega' = \Theta$ and $D' = A_0$. In each case, the restriction $\nu^1 := \lambda|_{\Omega'} : \mc{D}' \curvearrowleft \Omega'$ is isomorphic to $\tld{\lambda}^0|_{\Omega'}$ via a $k$-tournament isomorphism (either by construction, by condition \ref{action by permutations} or by the fact that $\lambda$ extends $\lambda^0$).

    Define an action $\mu^1 : \mc{D}' \curvearrowleft \Omega$ by $\mu^1_\omega = \phi_g \circ \phi_\omega \circ \phi_{g^{-1}}|_{D'}$ for each $\omega \in \Omega$. Note that $\mu^1 \simeq \mu^0$ via a $k$-tournament isomorphism (as $\phi_g$ is a $k$-tournament isomorphism). As $D'$ is $\Omega'$-strict in $\phi$, we have $\Omega^g \sub \Omega'$, and so by Corollary \ref{c: conj of subset of H in H} there is $\theta \in \Theta$ with $\omega^g = \omega^\theta$ for all $\omega \in \Omega$. So $\mu^1_\omega = \lambda_\theta \circ \lambda_\omega \circ \lambda_{\theta^{-1}}|_{D'}$ for each $\omega \in \Omega$, and thus $(\mu^1 : \mc{D}' \curvearrowleft \Omega) \simeq (\lambda : \mc{D}' \curvearrowleft \Omega^\theta) \simeq \tld{\lambda}^0|_{\Omega^\theta}$ by a $k$-tournament isomorphism, and $\tld{\lambda}^0|_{\Omega^\theta} \simeq \tld{\lambda^0}|_\Omega$ via a $k$-tournament isomorphism (as $\tld{\lambda}^0_\theta$ is a $k$-tournament isomorphism), so we are done.
\end{proof}
    
\begin{lem} \label{l:tournament action}
    $\tld{\mc{K}}(\tld{\lambda}^0)$ is a \Fr class and the pair $(\mc{K},\tld{\mc{K}}(\tld{\lambda}^0))$ has the free orbit property.
\end{lem}
\begin{proof}
    To see that $\tld{\mc{K}}(\tld{\lambda}^0)$ has the strong amalgamation property: take $\mc{A}, \mc{B}_0, \mc{B}_1 \in \tld{\mc{K}}(\tld{\lambda}^0)$ with $\mc{A} \sub \mc{B}_0, \mc{B}_1$ and $B_0 \cap B_1 = A$. Let $C$ be the disjoint union of $B_0, B_1$ over $A$, take an action $C \curvearrowleft \Theta$ given by the union of the actions specified by $\mc{B}_0, \mc{B}_1$, and take a seed action of sets $\lambda$ extending this action on $C$. Take $\lambda'$ from Lemma \ref{fin taut exts exist}. Then Lemma \ref{l:structure on omega tuples} gives an amalgam $(C, \lambda) \in \tld{\mc{K}}(\tld{\lambda}^0)$. As $(\mc{A}_0, \lambda^0) \in \tld{\mc{K}}(\tld{\lambda}^0)$ embeds into each element of $\tld{\mc{K}}(\tld{\lambda}^0)$, the amalgamation property for $\tld{\mc{K}}(\tld{\lambda}^0)$ implies the joint embedding property, and it is immediate that $\tld{\mc{K}}(\tld{\lambda}^0)$ is hereditary and has countably many isomorphism types. So $\tld{\mc{K}}(\tld{\lambda}^0)$ is a \Fr class.

    To check the free orbit property for $(\mc{K}, \tld{\mc{K}}(\tld{\lambda}^0))$, it suffices to check condition \ref{fo cond2} in Definition \ref{d:free orbit property}, as condition \ref{fo cond1} follows from \ref{fo cond2}: given $C \in \mc{K}$, take a strong amalgam of $C$ with $A_0$ over $\emp$ and apply \ref{fo cond2} (taking $\mc{C}' = (A_0, \lambda^0)$ and $\mc{D}$ to be the strong amalgam just mentioned). We now check \ref{fo cond2}. Suppose we are given $(\mc{C}, \mu) \in \tld{\mc{K}}(\tld{\lambda}^0)$ (where $\mu$ is the action on $\mc{C}$), and $\mc{D} \in \mc{K}$ with  $\mc{C} \sub \mc{D}$. Extend $\mu$ to a seed action on sets $\lambda$ where each element of $D \setminus C$ lies in a different free orbit, and let $E$ be the union of the $\lambda$-orbits of the elements of $D$. Then apply Lemma \ref{l:structure on omega tuples} to $E$ with $\mc{B}_0 = D$ and $\phi' = \lambda'$.
\end{proof}

\begin{lem} \label{l:seed actions on tournaments}
    Let $\lambda$ be the action on $\mc{M} = \mb{T}_k$ specified by the \Fr limit of $\tld{\mc{K}}(\tld{\lambda}^0)$. Then $\lambda$ is a generous pleasant structural seed action.
\end{lem}
\begin{proof}
    Property \ref{seed-struc auto} in Definition \ref{d:seed action on structure} is immediate. \ref{seed-struc reference} follows from the definition of $\lambda^0$, the fact that $\tld{\mc{K}}(\tld{\lambda}^0) \sub \mc{K}(\lambda^0)$ and the fact that all $k$-tournament edges are isomorphic. \ref{seed-struc all permutations} follows from the following: all $k$-tournament edges are isomorphic; in the case that $\Theta$ is unruly, we have that $\lambda$ extends $\lambda^0$; in the case that $\Theta$ is docile, we have that $\lambda$ is specified by $\FrLim(\tld{\mc{K}}(\tld{\lambda}^0))$. \ref{seed-struc equivariant isomorphism condition} follows from the definition of $\tld{\mc{K}}(\tld{\lambda}^0)$.

    We now check \ref{seed-struc arc extension}. Let $(G, T)$ be a completion of $\Theta$, and let $\mc{F}$ be the class of finite taut extensions $\phi \geq \lambda$ (where each $\phi_g$, $g \in G$, is a partial isomorphism of $\mc{M}$). We check condition \ref{condII} in Definition \ref{d:arc extensiveness} (the definition of an arc-extensive family), leaving \ref{condI} to the reader as the argument is analogous. Let $\phi \in \mc{F}$ and $\Omega \in \mc{D}(\Theta)$. We first construct an extension $\psi$ of $\phi$ as a partial action of sets, following the final two paragraphs of the proof of Proposition \ref{p:generic action on sets}, and then we add $k$-tournament structure using Lemma \ref{l:structure on omega tuples}.

    Suppose we are given $\Omega$-isomorphic $\Omega$-substructures $\mc{A}, \mc{A}' \in [\mc{M}^k]$, enumerated as $\bar{a}$, $\bar{a}'$ so that $\bar{a} \mapsto \bar{a}'$ is an $\Omega$-isomorphism, with $A$ $\Omega$-strict for $\phi$ and $A, A'$ not in the same $\phi$-orbit. Let $\mc{O}$ be the substructure of $\mc{M}$ whose domain $O$ is the $\lambda$-orbit closure of $\supp(\phi) \cup A \cup A' \cup A_0$. By following the argument in the final two paragraphs of the proof of Proposition \ref{p:generic action on sets}, regarding $\phi$ as a partial action on the set $M$, we obtain $\tld{s}$ satisfying the requirements of \ref{condII} (not depending on $A, A'$, as Lemma \ref{l:disjointness through free extensions} only gives dependence on $\phi$) and extensions $\phi \leq \phi' \leq \psi$ such that:
    \begin{itemize}
        \item $\phi'$ is an $\Omega$-free extension of $\phi$ satisfying condition \ref{II sep}, where $B, B'$ are not in the same $\phi'$-orbit (by Lemma \ref{l:partial k-sharp extend}\ref{no more permutations}), and where $\bar{b}$, $\bar{b}$' are enumerations of $B, B'$ giving the images of $\bar{a}$, $\bar{a}'$;
        \item $\psi$ is an $\Omega$-extension by $w$-arcs from $\bar{b}$ to $\bar{b}'$ (where $w$ is any word satisfying the conditions in \ref{II arc} and where we take $s = \tld{s}$ and $N = 6$ as in the argument in the proof of Proposition \ref{p:generic action on sets}).
    \end{itemize}
    We have that $\psi$ is taut by Proposition \ref{p: small cancellation loops}.

    Let $C$ be the $\lambda$-orbit closure of $\supp(\psi) \cup A_0$. We have $O \sub C$. As $\lambda$ is given by $\FrLim(\tld{\mc{K}}(\tld{\lambda}^0))$, to produce the required extension of $\phi$ acting by partial automorphisms of $\mc{M}$, it suffices to show that there is a $k$-tournament structure $\mc{C}$ on $C$ such that equipping $\mc{C}$ with the $\lambda$-action gives an element of $\tld{\mc{K}}(\tld{\lambda}^0)$ and such that $\psi$ is a partial action by partial automorphisms of $\mc{C}$. We obtain this $k$-tournament structure $\mc{C}$ by applying Lemma \ref{l:structure on omega tuples} to $\psi$, $C$ and $\mc{O}$: condition \ref{no incompatible permutations condition} follows by Lemma \ref{l:traverse decomp} and condition \ref{action by permutations} is immediate.
\end{proof}

\subsection{Constructing pleasant structural seed actions for \texorpdfstring{$(\sym_{3},r)$}{(S3, r)}-structures with \texorpdfstring{$r>1$}{r > 1}.} \label{subs:larger seeds}
 
Recall the presentation of the standard $(\sym_{3},r)$-seed group given in \cref{d:standard s3 seed}:
\[
    H_r=\sg{\sigma,\tau,z_{1},\dots, z_{r-1}\,|\,\sigma^{2}=1,\tau^{3}=1,\tau^{-1}=\tau^{\sigma}=\tau^{\sigma^{z_{1}}}=\cdots = \tau^{\sigma^{z_{r-1}}}}.
\]

\subsubsection{Complexes and groups}
\renewcommand{\P}[0]{\mathcal{P}}
\newcommand{\C}[0]{\mathcal{C}}
We will assume the reader is familiar with the basic theory of covering spaces, as presented in \cite{Hat02} or \cite{munkres2025elements}. A succinct exposition of this theory in its combinatorial incarnation can be found in Chapter III of \cite{LS1979} or Chapter II, Sections 23, 24 of \cite{Bog08}.

\begin{definition}
	Let $H$ be a group admitting the presentation $\mathcal{P}:\sg{T\,|\, \mathcal{R}}$, meaning that 	
	$H\cong \mathbb{F}(T)/\sg{\sg{\mathcal{R}}}$, for some set $T$ of generators of $H$ and $\mathcal{R}$ some subset of the free group $\mathbb{F}(T)$, where the isomorphism maps $t\in T$ to its copy in $\mathbb{F}(T)$.
	By a $(T,\mathcal{R})$-complex we mean a particular class of  combinatorial $2$-complexes 
	$\mathcal{C}$, where each orientation of an edge in $\mathcal{C}$ is given a label from $T^{\pm 1}$, and a labelling of each $2$-cell with some relator from $\mathcal{R}$. 
    
	Note that this allows one to assign a well defined word $w(\gamma)$ in the alphabet $T^{\pm1}$ to each combinatorial path $\gamma$ in the $1$-skeleton of $\mathcal{C}$. 
	
	We require that at each vertex $v\in\mathcal{C}^{0}$:
	\begin{itemize}
		\item if one orientation of $e$ is labelled by $t$, then the opposite orientation is labelled by $t^{-1}$, 
		\item for any $s\in T^{\pm1}$ there is at most one oriented edge with label $s$ departing from $v$, 
		\item  every combinatorial path $p$ for which $w(p)$ is the reduced word representing one of the elements in $\mathcal{R}$ is a loop, and there is exactly one $2$-cell in $\mathcal{C}$ attached along $p$.
	\end{itemize}      
  If, moreover, for every vertex $v\in\mathcal{C}^{0}$ there is  exactly one outgoing edge leaving $v$ with label $t$ for each $t\in T^{\pm 1}$, then we say that $\mathcal{C}$ is \emph{full}. We will refer to the labelled skeleton of a $\mathcal{P}$-complex as a \emph{$\mathcal{P}$-graph}.
  
  Note that given a $\P$-graph $\Gamma$ there is a unique $\P$-complex of which $\Gamma$ is the $1$-skeleton, which we denote by $\overline{\Gamma}$. 
 
  One instance of particular importance is the complex $\mathcal{C}_{\mathcal{P}}$  whose $1$-skeleton consists of a single vertex $v_{0}$ with one edge with orientations labelled by $t$ and $t^{-1}$ for each $t\in T$ and exactly one $2$-cell for each relator in $\mathcal{R}$. We refer to $\mathcal{C}_{\mathcal{P}}$ as the \emph{presentation complex} of $\mathcal{P}$. 
\end{definition}

For every combinatorial complex $\C$ and $v\in \C^{0}$ there is a notion of a fundamental group of $\C$ based at $v$ defined in purely combinatorial terms, but naturally isomorphic to the fundamental group of the underlying topological space over that point. The following is equivalent to Ch. III, Proposition 2.3 in \cite{LS1979} and motivates the use of the term ``presentation complex".
\begin{fact}\label{presentation complex}
	The map $\phi:\mathbb{F}(T)\to \pi_{1}(|\mathcal{C}_{\mathcal{P}}|)$ mapping each $t\in T$ to the loop in $\mathcal{C}_{\mathcal{P}}$ labelled by $t$ factors through an isomorphism between $H$ and $\pi_{1}(\mathcal{C}_{\mathcal{P}},v_{0})$. 
\end{fact}

\newcommand{\onto}[0]{\twoheadrightarrow}
\newcommand{\CH}[0]{\mathcal{C}_{\mathcal{P},H}} 
\newcommand{\CK}[0]{\mathcal{C}_{\mathcal{P},K}}

From now on, we shall identify $H$ and $\pi_{1}(\C_{\P},v_{0})$.
The theory of covering maps can be developed in the combinatorial context, as in Chapter III of \cite{LS1979} or Chapter II in \cite{Bog08}.
A combinatorial covering map induces a covering map in the topological sense between the geometric realizations, and it is easy to see from the lifting Theorem that if the target is a $\P$-complex, then there is a unique $\P$-complex structure on the covering complex compatible with the covering map.   

The following statement gathers a few standard facts that can be found in Ch. III of \cite{LS1979}, including Proposition 3.1, and Proposition 3.4, as well as the theory of deck transformations as in Section 1.3 in \cite{Hat02}, which is also implicitly developed in the results of Ch. III, Section 4 of \cite{LS1979}.
\begin{fact}
	\label{basic covering}The labelling of a full $\P$-complex $\C$ determines a covering map from $\C$ onto $\C_{\P}$.
    For every subgroup $K\leq H$ there is a covering complex  $\C_{\P,K}$ of $\C_{\P}$, unique up to isomorphism of covering spaces, whose fundamental group with respect to some distinguished lift $v_{K}$ of $v_{0}$ is mapped isomorphically onto $K$ by the homomorphism between fundamental groups induced by the covering map $p_{K}:\C_{\P,K}\onto \C_{\P}$. 

    Write $\tilde{\C}_{\P}=\C_{\P,\{1\}}$ and $p=p_{\{1\}}$. Fix some vertex $\tilde{v}_{0}\in\tilde{\C}_{\P}^{0}$ lifting the unique vertex $v_{0}\in\C_{\P}$. This vertex determines an identification between elements in $H$ and automorphisms of $\tilde{\C}_{\P}$ (i.e., a label preserving automorphism of the graph) as follows.  

    Any given $h_{0}\in H$  acts on $\tilde{\C}_{\P}$ by sending $\tilde{v}_{0}$ to the vertex $h_{0}\cdot \tilde{v}_{0}$ which is the endpoint of the unique lift of a path $\gamma$ in $\C_{\P}$ representing $\gamma$ and starting at the vertex  $\tilde{v}_{0}$. Any vertex of $\tilde{\C}_{\P}$ is of the form $h\cdot \tilde{v}_{0}$ for some $h\in H$ and $h_{0}$ maps $h\cdot \tilde{v}_{0}$ to $(h_{0}h)\cdot\tilde{v}_{0}$. This map extends to $1$ and $2$-cells in a combinatorially unique way.
    
    The map $p$ factors as $p_{K}\circ q_{K}$ for some covering map $q_{K}:\tilde{\C}_{\P}\onto\C_{\P,K}$, where $q_{K}$ is isomorphic to the quotient of $\tilde{\C}_{\P}$ by the action of $K$ on $\tilde{\C}_{\P}$ described above.
\end{fact}
  
From \cref{presentation complex} and \cref{basic covering} we can deduce that for any $K\leq H$ the action of the elements of $T$ on the right on the coset space $K\backslash H$ can be directly read from the labelling of the oriented edges of $\C_{\P,K}$. This is described in the Corollary below, whose content corresponds to that of Propositions 4.1, 4.2, and 4.3 in \cite{LS1979}.

\begin{corollary} \label{description actions} 
	   Given $K\leq H$, there is a bijection $\psi_{K}: \CK^{0}\cong K\backslash H$ such that for every oriented edge labelled by $t\in T^{\pm 1}$ from a vertex $v$ to a vertex $v'$ we have $\psi_{K}(v')=\psi_{K}(v)\cdot t$, where the second expression refers to the obvious action on the right of $H$ on $K\backslash H$. 
\end{corollary}

We will now focus on the group $H=H_{r}$, for $r\geq 2$. Recall that $H_{r}$ admits a presentation $\P$ in the generators $\{\sigma,\tau,z_{i}\}_{i=1}^{r-1}$ with set of relators $\mathcal{R}=\{\sigma^{2},\tau^{3},\tau\tau^{\sigma^{z_{i}}}\}_{i=0}^{r-1}$, where for convenience we take $z_{0}=1$. 
 
Our goal is to use the criterion in \cref{description actions} to understand the action of the generators in $T$ on the right on $G$ (the so-called Cayley graph of $H$ with respect to $T$) as well as their action on $\sg{\sigma}\backslash H$.   

We will do this using as building blocks the $\mathcal{P}$-graphs $\Gamma_{0}$, depicted in \cref{fig:diagram GS0} and $\Delta_{0}$, depicted in \cref{fig:diagram DS0}. For $1\leq i\leq r-1$ we also consider $\Gamma_{i}$, depicted in \cref{fig:diagram GSi}, and $\Delta_{i},$ depicted in \cref{fig:diagram DSi}.

\begin{figure}
\centering
\begin{subfigure}[b]{0.45\textwidth}
\centering
\makebox[\linewidth][c]{
\begin{tikzpicture}[dot/.style={circle, fill=black, minimum size=2pt,inner sep=2pt}, >=Stealth]

\useasboundingbox (-2.4,-0.8) rectangle (2.4,3.5);

\node[dot] (T')  at ( 0.0, 3.2) {};
\node[dot] (L')  at (-2, 0) {};
\node[dot] (R')  at ( 2, 0) {};

\node[dot] (T)  at ( 0.0, 1.7) {};
\node[dot] (L)  at (-0.7, 0.7) {};
\node[dot] (R)  at ( 0.7, 0.7) {};

\draw[->] (T)  to[bend left=25]  node[left] {$\sigma$} (T');
\draw[->] (T') to[bend left=25]  node[right] {$\sigma$} (T);
\draw[->] (R)  to[bend right=25] node[left] {$\sigma$} (R');
\draw[->] (R') to[bend right=25] node[right] {$\sigma$} (R);
\draw[->] (L)  to[bend left=25]  node[right] {$\sigma$} (L');
\draw[->] (L') to[bend left=25]  node[left] {$\sigma$} (L);

\draw[->] (T)  to[bend left=15]  node[below, xshift=-0.5mm] {$\tau$} (R);
\draw[->] (R)  to[bend left=15]  node[above, yshift=-0.5mm] {$\tau$} (L);
\draw[->] (L)  to[bend left=15]  node[below, xshift=0.5mm] {$\tau$} (T);

\draw[->] (L') to[bend right=20] node[below] {$\tau$} (R');
\draw[->] (R') to[bend right=20] node[above] {$\tau$} (T');
\draw[->] (T') to[bend right=20] node[above] {$\tau$} (L');
\end{tikzpicture}
}
\caption{The $\mathcal{P}$-graph $\Gamma_{0}$.}
\label{fig:diagram GS0}
\end{subfigure}
\hfill 
\begin{subfigure}[b]{0.45\textwidth}
\centering
\makebox[\linewidth][c]{
\begin{tikzpicture}[dot/.style={circle, fill=black, minimum size=2pt,inner sep=2pt}, >=Stealth, loop/.style={looseness=20,in=-45,out=45}]

\useasboundingbox (-0.3,-0.8) rectangle (2.4,3.5);

\node[dot] (A)  at (2.25, 1.5) {};
\node[dot] (B)  at (0, 3.2) {};
\node[dot] (C)  at (0, 0) {};

\draw[->] (A)  to[loop] node[right, yshift=1.2mm] {$\sigma$} (A);

\draw[->] (A)  to[bend right=0] node[above] {$\tau$} (B);
\draw[->] (B)  to[bend right=0] node[left] {$\tau$} (C);
\draw[->] (C)  to[bend right=0] node[below] {$\tau$} (A);

\draw[->] (B)  to[bend left=35] node[right] {$\sigma$} (C);
\draw[->] (C)  to[bend left=35] node[left] {$\sigma$} (B);
\end{tikzpicture}
}
\caption{The $\mathcal{P}$-graph $\Delta_{0}$.}
\label{fig:diagram DS0}
\end{subfigure}

\vspace{2em} 

\begin{subfigure}[b]{0.45\textwidth}
\centering
\makebox[\linewidth][c]{%
\begin{tikzpicture}[dot/.style={circle, fill=black, minimum size=2pt,inner sep=2pt}, >=Stealth]

\useasboundingbox (-2.3, -0.75) rectangle (9.5, 3.5);

\node[dot] (T)  at (2, 1.5) {};
\node[dot] (L)  at (0, 3) {};
\node[dot] (R)  at (0, 0) {};

\node[dot] (T') at (9, 1.5) {};
\node[dot] (L') at (7, 3) {};
\node[dot] (R') at (7, 0) {};

\node[dot] (T1) at (4.25, 1.5) {};
\node[dot] (L1) at (2, 3) {};
\node[dot] (R1) at (2, 0) {};

\node[dot] (T2) at (6.25, 1.5) {};
\node[dot] (L2) at (4, 3) {};
\node[dot] (R2) at (4, 0) {};

\draw[->] (T)  to[bend left=0] node[above] {$\tau$} (R);
\draw[->] (R)  to[bend left=0] node[above, xshift=2mm] {$\tau$} (L);
\draw[->] (L)  to[bend left=0] node[below] {$\tau$} (T);

\draw[->] (T') to[bend right=0] node[above] {$\tau$} (L');
\draw[->] (L') to[bend right=0] node[above, xshift=-2mm] {$\tau$} (R');
\draw[->] (R') to[bend right=0] node[below] {$\tau$} (T');

\draw[->] (T1) to node[above, yshift=-0.5mm] {$z_{i}$} (T);
\draw[->] (T2) to node[below] {$z_{i}$} (T');
\draw[->] (T1) to[bend left=25] node[above] {$\sigma$} (T2);
\draw[->] (T2) to[bend left=25] node[below] {$\sigma$} (T1);

\draw[->] (L1) to node[above, yshift=-0.5mm] {$z_{i}$} (L);
\draw[->] (L2) to node[below] {$z_{i}$} (L');
\draw[->] (L1) to[bend left=25] node[above] {$\sigma$} (L2);
\draw[->] (L2) to[bend left=25] node[below] {$\sigma$} (L1);

\draw[->] (R1) to node[above, yshift=-0.5mm] {$z_{i}$} (R);
\draw[->] (R2) to node[below] {$z_{i}$} (R');
\draw[->] (R1) to[bend left=25] node[above] {$\sigma$} (R2);
\draw[->] (R2) to[bend left=25] node[below] {$\sigma$} (R1);

\end{tikzpicture}%
}
\caption{The $\mathcal{P}$-graph $\Gamma_{i}$, $1\leq i\leq r-1$.}
\label{fig:diagram GSi}
\end{subfigure}
\hfill
\begin{subfigure}[b]{0.45\textwidth}
\centering
\makebox[\linewidth][c]{%
\begin{tikzpicture}[dot/.style={circle, fill=black, minimum size=2pt,inner sep=2pt},
>=Stealth, loop/.style={looseness=15,in=120,out=240}] 

\useasboundingbox (-2.5, -0.75) rectangle (6.5, 3.5);

\node[dot] (A)  at (0, 1.5) {}; 
\node[dot] (A') at (1, 1.5) {};
\node[dot] (B') at (2, 2.5) {};
\node[dot] (C') at (2, 0.5) {};
\node[dot] (X)  at (4, 0.7) {};
\node[dot] (Y)  at (4, 2.3) {};

\draw[->] (A)  to[loop] node[left] {$\sigma$} (A);

\draw[->] (A)  to[bend left=0] node[below] {$z_{i}$} (A');
\draw[->] (C') to[bend right=0] node[right] {$\tau$} (B');
\draw[->] (B') to[bend right=0] node[left] {$\tau$} (A');
\draw[->] (A') to[bend right=0] node[below] {$\tau$} (C');
\draw[->] (Y)  to[bend right=0] node[above] {$z_{i}$} (B');
\draw[->] (X)  to[bend left=0] node[above] {$z_{i}$} (C');

\draw[->] (X)  to[bend left=25] node[left] {$\sigma$} (Y);
\draw[->] (Y)  to[bend left=25] node[right] {$\sigma$} (X);
\end{tikzpicture}%
}
\caption{The $\mathcal{P}$-graph $\Delta_{i}$, $1\leq i\leq r-1$.} 
\label{fig:diagram DSi}
\end{subfigure}
\caption{}
\end{figure}

The following is a consequence of Seifert-Van Kampen theorem. 
\begin{observation} 
	The complexes $\overline{\Gamma_{0}}$, $\overline{\Gamma_{i}}$ for $1\leq i\leq r-1$ are simply connected. The complexes $\overline{\Delta_{0}}$ and $\overline{\Delta_{i}}$ contain a distinguished vertex $u_{0}$ such that the fundamental group based at $u_{0}$ is generated by an element represented by a self-edge labelled by $\sigma$ at $u_{0}$. 
\end{observation}

\begin{definition}\label{d:trees}
	By a \emph{finite $\{1\}$-tree} we mean a finite tree $X=(V,E)$ whose vertex and edge sets are partitioned as 
	$V=\coprod_{i=0}^{r-1}V_{i}$ and $E=\coprod_{i=1}^{r-1}(E_{\tau}^{i}\coprod E_{\sigma}^{i})$ and such that the following properties are satisfied: 
	\begin{itemize}
		\item all the leaves of $X$ are in $V_{0}$, 
		\item every interior node $v\in V_{0}$ is connected for every $1\leq i\leq r-1$  to at most $3$ other vertices in $V_{i}$ by edges of type $E_{\sigma}^{i}$ and to at most $2$ other vertices of type $V_{i}$ by edges in $E_{\tau}^{i}$,
		\item every interior node $v\in V_{i}$, $1\leq i\leq r-1$ is connected to at most $2$ other vertices in $V_{0}$ by edges of type $E_{\sigma}^{i}$ and to at most $3$ other vertices of type $V_{0}$ by edges in $E_{\tau}^{i}$.
	\end{itemize}
  By the \emph{infinite $\{1\}$-tree} we mean an infinite tree with no leaves and satisfying the last two conditions above, and where the order of every vertex is maximal. 
  
  The notion of a \emph{(finite or complete infinite) $\sg{\sigma}$-tree} is almost entirely analogous, and differs only in the following two points:
  \begin{itemize}
  	\item there is a distinguished vertex $w_{0}\in V_{0}$, which for every $1\leq i\leq r-1$ is connected to at most one vertex of type $V_{i}$ by an edge of type $E_{\tau}$ and to at most two of them by an edge of type $E_{\sigma}$;  	
  	\item every vertex incident to $w_{0}$ is incident to at most one other vertex through an edge of type $E_{\sigma}$ and to at most one other vertex in $V_{0}$ through an edge of type $E_{\tau}$. 
  \end{itemize}
   
\end{definition}

\begin{definition}\label{nice p graphs}
	By a \emph{finite $(\P,\{1\})$-graph} $\Gamma$ we mean a $\P$-graph that can be constructed as a tree-like amalgam of $\P$-graphs of the form $\Gamma_{i}$ in the following sense. 
	We use the term \emph{$h$-cycle} for $h\in H$ to refer to a subgraph of a $\mathcal{P}$-graph spanned by the orbit under $\sg{h}$ of a single vertex. 
	There is some $\{1\}$-tree $X=(V,E)$ such that $\Gamma$ is a quotient of the disjoint union of a collection 
	\begin{equation*}
		\{\Lambda_{v}\,|\,0\leq i\leq r-1,\,v\in V_{i}\},
	\end{equation*}
	where $\Lambda_{v}$ is a copy of $\Gamma_{i}$ that obeys the following rules:
	\begin{enumerate}
		\item Given an edge $e$ between $v\in V_{0}$ and $v'\in V_{i}$, $1\leq i\leq r-1$, we identify:
		\begin{itemize}
			\item  a $\tau$-cycle in $\Lambda_{v}$ with a $\tau$-cycle in  $\Lambda_{v'}$ if $E\in E_{\tau}$, 
			\item a  $\sigma$-cycle in $\Lambda_{v}$ with a $\sigma$-cycle in $\Lambda_{v'}$ if $E\in E_{\sigma}$.
		\end{itemize}
		\item For every $v\in V_{0}$, every $1\leq i\leq r-1$ and every $\tau$-cycle or $\sigma$-cycle in $\Lambda_{v}$ there is at most one edge in $E$ for which the corresponding gluing involves this cycle. 
		\item For every $1\leq i\leq r-1$  and $v\in V_{i}$, and every $\tau$-cycle or $\sigma$-cycle in $\Lambda_{v}$ there is at most one edge in $E$ corresponding to a gluing involving this cycle. 
	\end{enumerate}
  The complete $(\P,\{1\})$-graph is the (clearly unique) result of gluing copies of $\Gamma_{i}$ in a similar manner according to the pattern determined by the infinite $\{1\}$-tree $X$.
  
  A \emph{finite (resp. full infinite) $(\P,\sg{\sigma})$-graph} is defined by gluing complexes over $\tau$ and $\sigma$-cycles in a similar manner, as constrained by some finite (resp. complete infinite) $\sg{\sigma}$-tree $X$. The only differences are as follows:
  \begin{enumerate}
  	\item $\Lambda_{w_{0}}=\Delta_{0}$ (see \cref{fig:diagram DS0}).
  	\item For any $v\in V_{i}$, $1\leq i\leq r-1$ adjacent to $w_{0}$ we either have $\Lambda_{v}\cong\Delta_{i}$ 
  	or $\Lambda_{v}\cong\Gamma_{i}$; more precisely:
  	\begin{itemize}
  		\item the unique possible $v$ connected to $w_{0}$ by a $E_{\tau}$ satisfies $\Lambda_{v}\cong\Gamma_{i}$ and is glued to $\Lambda_{w_{0}}$ through a $\tau$-cycle; 
  		\item each of the two potential vertices $v$ connected to $w_{0}$ by an edge of type $E_{\sigma}$ is amalgamated to $\Lambda_{w_{0}}$ through a distinct $\sigma$-cycle and we have: 
  		\begin{itemize}
  			\item if the gluing takes place along the $\sigma$-cycle which is a loop, then $\Lambda_{v}\cong\Delta_{i}$;  
  			\item if the gluing takes place along the $\sigma$-cycle of order $2$ in $\Lambda_{w_{0}}$, then $\Lambda_{v}\cong\Gamma_{i}$.  
  		\end{itemize}
  	\end{itemize}
  \end{enumerate}
  The complex $\Lambda_{v}$ for any vertex at distance at least $2$ from $w_{0}$ is given in the exact same way as for $(\P,\sg{\sigma})$-graphs. And the gluing between two vertices $v,v'\in V\setminus \{w_{0}\}$ takes place according to the same rules as for a $(\P,\{1\})$-graph. 
  
  A \emph{finite nice $\P$-graph} is a disjoint union of one $(\P,\sg{\sigma})$-graph and a finite union of finite $(\P,\{1\})$-graphs. 
\end{definition}

\begin{observation}\label{P-graphs and actions}
	If $\Gamma$ is a finite $(\P,\{1\})$-graph or the full $(\P,\{1\})$-graph, then $\overline{\Gamma}$ is simply connected. 
	In particular, the labels on the complete $(\P,\{1\})$-graph describe an action of $H$ on its vertex set isomorphic to the action by multiplication on the right of $H$ on itself. 
	
	If $\Gamma$ is a finite $(\P,\sg{\sigma})$-graph or the complete $(\P,\sg{\sigma})$-graph, then $\overline{\Gamma}$ has fundamental group generated by a loop labelled by $\sigma$. 
	In particular, the labels on the complete $(\P,\sg{\sigma})$-graph describe the action on the right of the generators of $T$ on $\sg{\sigma}\backslash H$ (and thus implicitly the entire action). 
\end{observation}
\begin{proof}
	Consider first the case in which $\Gamma$ is either a finite $(\P,\{1\})$-graph or the complete $(\P,\{1\})$-graph. It is not hard to see that $\overline{\Gamma}$ is obtained by gluing the corresponding complexes $\overline{\Gamma_{i}^{v}}$, which are simply connected, over complexes of the form $\overline{\gamma}$, where $\gamma$ is a $\tau$ or $\sigma$ cycle, which are also simply connected. The conclusion is a simple application of Seifert-Van Kampen theorem. 
	
	The other case is entirely analogous. The only difference is that exactly one of the complexes being glued has a non-trivial fundamental group of order two generated by a single edge loop labelled by $\sigma$. In both cases the second part of the conclusion follows from \cref{description actions}.
\end{proof}

\begin{prop} \label{p:general s3}
        Let $\mathcal{M}$ be a $(\sym_{3},r)$-structure with an SIR. Then a pleasant structural seed action $\lambda$ of $H_r$ on $\M$ exists. 
\end{prop}
\begin{proof}
    We prove the result in the case $r=2$, since the general case poses no additional conceptual difficulty. We write $z = z_1$. Let $S = \sg{\sigma, \tau} \leq H$, $S' = \sg{\sigma^z, \tau} \leq H$. We assume $S = \sym_3$, and we have an isomorphism $S \to S'$ given by $\sigma \mapsto \sigma^z$, $\tau \mapsto \tau$. Take $\mc{A}_0 \in [\mc{M}]^3$ with $\Aut(\mc{A}_0) \cong \sym_3$. Let $\lambda^0 : \mc{A}_0 \curvearrowleft S$ be a structural action with $(\lambda^0 : A_0 \curvearrowleft S) \simeq (\pi : \mathbf{3} \curvearrowleft S)$.
    
    We construct an action $\lambda : \mc{M} \curvearrowleft H$ satisfying the following conditions:
    \begin{enumerate}[label=(\Roman*)]
        \item \label{universal copies} $\lambda|_S$ is strictly strongly universal over $\lambda^0$;
        \item \label{A1 copy} there is $\mc{A}_1 \in [\mc{M}]^3$ with $\Aut(\mc{A}_1) \cong \sym_3$ and $\mc{A}_1 \not\cong \mc{A}_0$ such that $A_1$ is $S'$-invariant;
        \item \label{free action} $\lambda$ has infinitely many orbits, with one $\lambda$-orbit isomorphic to $\sg{\sigma}\backslash H \curvearrowleft H$ and the other orbits free;
        \item \label{many orbits} $\lambda$ is orbit-rich for $\M(A_0, \lambda|_S)$.
    \end{enumerate}
    Given $\lambda$ satisfying the above conditions, we show that $\lambda$ is a pleasant structural seed action as follows: by condition \ref{universal copies} we have that $\lambda|_\Omega$ is strictly well-centralised for all $\Omega \in \mc{D}(\Theta)$ (condition \ref{p:extension by arcs structure}\ref{flexible}). Also by condition \ref{universal copies}, the action $\lambda|_S$ is orbit-rich for $\mc{M}(\NFr_\lambda(\Omega), \lambda|_\Omega)$ for each $\Omega \in \mc{D}(\Theta)$, and combining this with condition \ref{many orbits}, it is straightforward to see that $\lambda$ is orbit-rich for $\mc{M}(\NFr_\lambda(\Omega), \lambda|_\Omega)$ for each $\Omega \in \mc{D}(\Theta)$. Thus $\lambda$ also satisfies condition \ref{p:extension by arcs structure}\ref{orbit-richness}. We now check conditions \ref{d:seed action on structure}\ref{seed-struc reference}, \ref{seed-struc all permutations}, \ref{seed-struc equivariant isomorphism condition} in the definition of a pleasant structural seed action (\Cref{d:seed action on structure}): for \ref{d:seed action on structure}\ref{seed-struc reference}, note that $A_0$ is $S$-invariant, and use conditions \ref{free action}, \ref{A1 copy} and \Cref{l: key seed action props}\ref{seedact unruly unique}. \ref{d:seed action on structure}\ref{seed-struc all permutations} follows by \ref{universal copies}, and \ref{d:seed action on structure}\ref{seed-struc equivariant isomorphism condition} follows by \Cref{l:condition omega tuples comes for free}. By \Cref{c:extension by arcs structure}, we then have that $\lambda : \mc{M} \curvearrowleft H$ is a pleasant structural seed action.

   The next step is to construct $\lambda : \mc{M} \curvearrowleft H$ satisfying \ref{universal copies}--\ref{many orbits}.
    
    By \cref{p:str strongly univ}, there is a strictly strongly universal action $\mu : \mc{M} \curvearrowleft S$ over $\lambda^0$. Let $\mc{A}_1 \in \mc{K} = \Age(\mc{M})$ with $(A_1 \curvearrowleft \Aut(\mc{A}_1)) \simeq (\mathbf{3} \curvearrowleft \sym_3)$ and $\mc{A}_1 \not\cong \mc{A}_0$. By \Cref{l:neat extension} (which gives the existence of neat extensions), there is $\mc{A}_1 \sub \mc{A}'_1 \in \mc{K}$ and a free action $\nu^1 : \mc{A}'_1 \curvearrowleft S$ extending the action $\mc{A}_1 \curvearrowleft \sg{\tau}$ where $\tau$ acts by cyclically permuting the vertices of $\mc{A}_1$. The argument in \Cref{l:SWIR gives action amalgam} shows that the class of structures in $\mc{K}$ equipped with arbitrary actions of $S$ is a strong \Fr class with a canonical amalgam extending the canonical amalgam of $\mc{K}$ induced by the SIR on $\mc{M}$. As $(\mc{A}_0, \lambda^0)$, $(\mc{A}'_1, \nu^1)$ lie in this class, by taking the canonical amalgam of $(\mc{A}_0, \lambda^0)$, $(\mc{A}'_1, \nu^1)$ over $\varnothing$, noting that the result lies in $\mc{K}(\lambda^0)$ and using the extension property for $(\mc{M}, \mu)$ over $(\mc{A}_0, \lambda^0)$, we may assume $A_0 \ind \bigcup_{s \in S} \mu_s(A_1)$.
    
    For simplicity, write $\mathcal{N}=\M(\mu,A_{0})$ and $\mathcal{H}=\mathcal{K}(\lambda^{0})$. Then $\mathcal{H}$ is a strong \Fr class and $\mathcal{N}$ is isomorphic to its \Fr limit. We also write $\mathcal{K}_{\Omega}$, for $\Omega\in\{\sg{\sigma},\sg{\tau}\}$ for the class $\mathcal{K}(\mu_{\restriction A_{0}\times \Omega})$ of actions of $\Omega$ on $\mathcal{K}$-structures, and $\mathcal{N}_{\Omega}$ for the $\mathcal{N}_{\restriction \mathcal{L}(A_{0},\Omega)}$. 
     
    Consider the collection $\mc{X}$ of triples $\phi=(\phi_{\sigma},\phi_{\tau},\phi_{z})$ satisfying the following properties, where  $\supp^{*}(\phi)$ is the union of the $S$-orbits of the elements of $\supp(\phi_{z})$.
    \begin{enumerate}[label=(\roman*)]
    	  \item $\phi_{\sigma}=(\mu_{\sigma})_{\restriction \supp^{*}(\phi)}$, $\phi_{\tau}=(\mu_{\tau})_{\restriction \supp^{*}(\phi)}$ 
        \item the labelled graph representing the restrictions of the maps $\mu_{\sigma},\mu_{\tau},\phi_{z}$ to $\supp^{*}(\phi)$ is a nice finite $\P$-graph in the sense of \cref{nice p graphs};
        \item there is a unique non-free orbit of $S'$ under $\phi$ and it is equal to $A_{1}$. 
    \end{enumerate}
   
   Note that all the definition does is to describe a particular type of partial actions of $H$ on $\M$. Since $H$ is not a completion of a given seed group, but rather a prospective group itself, we are not able to use the already established toolkit for dealing with taut extensions, although most of the intuition gained in our treatment of the concept carries through. In particular, we can define a partial isomorphism $\phi_{g}$ between substructures of $\supp^{*}(\phi)$ for every $g\in H$, as well as the notion of extension $\leq$ in an entirely analogous way.  
   
   Every $\phi\in\mc{X}$ determines some substructure $\phi_{\restriction \mathcal{H}}\in\mathcal{H}$ of $\mathcal{N}$ in an obvious way, by forgetting the map $\phi_{z}$ (the underlying set is identical to a subset of $M$ containing $A_{0}\cup A_{1}$). 
   

    \begin{lem}
    	To produce $\lambda:\mc{M} \curvearrowleft H$ satisfying \ref{universal copies}--\ref{many orbits}, it suffices to show that $\mc{X} \neq \varnothing$ and:      
    \begin{enumerate}
        \item[$(\star)$] \label{tp} for every $\phi \in \mc{X}$, $B \fin M$, $\eps = \pm 1$ and $a \in M \setminus \dom(\phi_{z^\eps})$,  there is some extension $\psi \in \mc{X}$ of $\phi$ with $a \in \dom(\psi_{z^{\eps}})$ such that $\supp^*(\psi) \cap B\subseteq \supp^*(\phi)$  such that for all $c, c' \in \supp^*(\phi)\cup\{a\}$, if $c$, $c'$ belong to different $\phi$-orbits, then they belong to different $\psi$-orbits. 
    \end{enumerate}   
    \end{lem}
    \begin{subproof}
        Let $\mc{N} = \mc{M}(A_0, \lambda|_S)$, and let $\mc{K}' = \Age(\mc{N})$. Let $a_0, a_1, \cdots$ be an enumeration of the vertices of $M$. 
        Our goal is to construct our action as the union $\lambda$ of an infinite chain of finite extensions $\phi^{0}\leq \phi^{1}\leq\phi^{2}\dots$ in $\mc{X}$. To ensure that the union of such a chain is an action it suffices to use a standard back-and-forth argument consisting of enumerating $M$ as $\{c_{l}\}_{l\geq 0}$ and to use \ref{tp} to ensure that the following condition is satisfied:
        \begin{enumerate}[label=(\alph*)]
        	\item\label{first condition X} $c_{l}\in\dom(\phi^{2l+1}_{z})$ and $c_{l}\in\im(\phi^{2l+2}_{z})$ for every $l\geq 0$.
        \end{enumerate}

        Such an action $\lambda$ extends $\mu$ by construction, so the satisfaction of condition \ref{universal copies} is automatic. Condition \ref{A1 copy} is likewise automatic from the definition of $\mc{X}$. 
        It also follows from the construction, together with \cref{P-graphs and actions} that one of the orbits of $\lambda$ is isomorphic to the action of $H$ on $\sg{\sigma}\backslash H$ on the right, while all the other orbits are free. What is still missing is a way to ensure that there are infinitely many free orbits, as well as to ensure that condition \ref{many orbits} is satisfied. This requires that we refine the back-and-forth procedure as follows. 
        
        Together with our increasing sequence $(\phi^{n})_{n\geq 0}\subseteq\mc{X}$, we will also construct an increasing sequence of symmetric binary relations $(R_{n})_{n\geq 0}$.
        
        The relations $R_{n}$ will be a record of pairs of elements in $M$ which we want to end up in different orbits under the global action $\lambda$. We will use this to deduce that properties \ref{free action} and \ref{many orbits} hold.        
        
        Let $((B^{}_{k},B'_{k}))_{k\geq 1}$ be a list that enumerates all the pairs of finite substructures $(B,B')$ of $\mathcal{N}$ such that $B\cap B'=\sg{\varnothing}$.
        We ensure the validity of \ref{many orbits} by guaranteeing that at every stage $n\geq 0$ the following properties are satisfied:       
       	\begin{enumerate}[label=(\alph*)] 
       		\setcounter{enumi}{1}
      		\item \label{second condition X}For all $a,b\in M$ such that $\phi^{n}_{h}(a)=b$ for some $h\in H$, we have $\{(a,b),(b,a)\}\cap R_{n}=\varnothing$.
      		\item \label{third condition X} For all $a,b,c\in M$ such that $\phi^{n}_{h}(a)=b$ for some $h\in H$ and $(a,c)\in R_{n}$ we have $(b,c)\in R_{n}$.
      		\item \label{fourth condition X} For every $n\geq 1$ there is some copy $C'_{n}$ of $B'_{n}$ over $B_{n}$ as a structure in $\mathcal{H}$ such that $(C'_{n}\setminus \sg{\emp})\times B_{n}\subseteq R_{n+1}$. 
      	\end{enumerate}
      	 Conditions \ref{second condition X} and \ref{third condition X} will ensure that $R=\bigcup_{n\geq 0}R_{n}$ is in the complement of the orbit equivalence relation by the action of $\lambda$. Together with \ref{fourth condition X} this will imply \ref{many orbits}, which together with what we already know will also imply \ref{free action} in full. 
        
        For our induction step, assume that $\phi^{n},R_{n}$ are given. 
        First take some extension $\psi\in\mc{X}$ of $\phi^{n}$ such that condition \ref{first condition X} is satisfied. Let $C$ be the union of the $\mu$-orbit of $c_{\lfloor \frac{n}{2}\rfloor}$ and $\supp^{*}(\phi^{n})$. Since $\mathcal{H}$ is a strong \Fr class, whose limit is $\mathcal{N}$, 
        we can find some element $g\in\Aut(\mathcal{N})$ fixing $C$ such that the partial action $\phi^{n+1}=g(\psi)$ obtained from conjugating $\psi$ by $g$ is such that no two elements in the same $\phi^{n+1}$ orbit are related by $R_{n}$ (note that here we use the fact that distinct $\phi^{n}$-orbits are mapped to distinct $\psi$-orbits). Note that condition \ref{second condition X} is still satisfied by $R_{n}$ with respect to the extended action $\phi^{n+1}$.  
                
        We then move on to extending $R_{n}$ to $R_{n+1}$. Write
        \begin{equation*}
        	D=B_{n}\cup\supp^{*}(\phi)\cup\{a\,|\,\exists b \,\,(a,b)\in R_{n}\}.
        \end{equation*} 
        Once again, the fact that $\mathcal{H}$ is a strong \Fr class and $\mathcal{N}$ its limit implies that we can choose some isomorphic (as a structure in $\mathcal{H}$) copy $C'_{n}$ of $B'_{n}$ over $B_{n}$ with $C'_{n}\cap D=\sg{\varnothing}$. 
        To conclude, we let $R'_{n+1}$ be the result of first adding to $R_{n}$ all pairs of the form $(d,c)$ and $(c,d)$ for $c\in C'_{n}\setminus \sg{\varnothing}$ and $d\in D$. It is clear that $R'_{n+1}$ still satisfies condition \ref{second condition X} with respect to $\phi^{n+1}$. To conclude we let
        $R_{n+1}\supseteq R'_{n+1}$ be the closure of $R_{n+1}'$ under the action of $\phi^{n+1}$, i.e., the result of adding to $R'_{n+1}$ all pairs of the form $(\phi^{n+1}_{h}(a),\phi^{n+1}_{h'}(b))$, where $h,h'\in H$ and $(a,b)\in R'_{n+1}$, $a\in\dom(\phi^{n+1}_{h})$, $b\in\dom(\phi^{n+1}_{h'})$. It is easy to check that now $(\phi^{n+1},R_{n+1})$ satisfies \ref{second condition X}, \ref{third condition X}, while \ref{fourth condition X} follows from the construction of $R'_{n+1}$.
    \end{subproof}
  
\begin{figure} 
\centering
\begin{tikzpicture}[dot/.style={circle, minimum size=2pt,inner sep=0pt, }, >=Stealth,
loop/.style={looseness=15,in=120,out=60}]

\node[dot] (A)  at ( 1.25,  1.5) {$a_{0,0}$};
\node[dot] (B)  at (-1,  3) {$a_{0,1}$};
\node[dot] (C)  at (-1,  0) {$a_{0,2}$};

\node[dot] (A')  at (3.5,  1.5) {$a_{1,0}$};
\node[dot] (B')  at (5.75,  3) {$a_{1,2}$};
\node[dot] (C')  at (5.75,  0) {$a_{1,1}$};
\node[dot] (X)  at (7.75,  0.4) {$b_{1}$};
\node[dot] (Y)  at (7.75,  2.6) {$b_{2}$};

\draw[->] (A)  to[loop]  node[above] {$\sigma$} (A);

\draw[->] (A)  to[bend right=0]  node[above] {$\tau$} (B);
\draw[->] (B)  to[bend right=0]  node[left] {$\tau$} (C);
\draw[->] (C)  to[bend right=0]  node[below] {$\tau$} (A);

\draw[->] (B)  to[bend left=35]  node[right] {$\sigma$} (C);
\draw[->] (C)  to[bend left=35]  node[left] {$\sigma$} (B);

\draw[->] (A)  to[bend left=0]  node[below] {$z$} (A');
\draw[->] (C') to[bend right=0]  node[right] {$\tau$} (B');
\draw[->] (B') to[bend right=0]  node[left, xshift=2mm, yshift=1.8mm] {$\tau$} (A');
\draw[->] (A') to[bend right=0]  node[below] {$\tau$} (C');
\draw[->] (Y)  to[bend right=0]  node[below] {$z$} (B');
\draw[->] (X)  to[bend left=0]  node[below] {$z$} (C');

\draw[->] (X)  to[bend left=25]  node[left] {$\sigma$} (Y);
\draw[->] (Y)  to[bend left=25]  node[right] {$\sigma$} (X);
\end{tikzpicture}
\caption{An illustration of \cref{l:X existence}. }
\label{fig:basic X partial action}
\end{figure}

    \begin{lem}\label{l:X existence}
    	 $\mc{X} \neq \varnothing$.
    \end{lem}
    \begin{subproof}
        We construct $\phi\in\mathcal{X}$ as follows. For $i=0,1$ we enumerate $A_{i}=\{a_{i,l}\}_{l=0}^{2}$, where $a_{0,0}$ is the point fixed by $\lambda^{0}_{\sigma}=\mu_{\sigma}$ and $a_{1,0}\in A_{1}$ an arbitrary point in $A_{1}$ to be fixed by $z^{-1}\sigma z$.   
     	  
     	 Let $C=A_{0}\cup\bigcup_{s\in S}\mu_{s}(A_{1})$. By existence, there are $b_{1},b_{2}\in M$ such that
     	\begin{itemize}
     		\item $a_{0,0}b_{1}b_{2}\cong a_{1,0}a_{1,1}a_{1,2} $,
     		\item and $C\ind_{a_{0,0}} b_{1}b_{2}$.
     	\end{itemize}
       It follows from (Sta) that $(\mu_{\sigma})_{\restriction C}\cup\{(b_{1},b_{1}),(b_{2},b_{1})\}$ is a partial isomorphism. 
       
       The universality of $\mu_{\restriction M\times \sg{\sigma}}$ over the fixed point $A_{0}$, together with the fact that $\mu$ is orbit-rich with respect to $\mathcal{N}_{\sg{\sigma}}$ allow us to choose $b_{1}$ and $b_{2}$ in such a way that:
       \begin{itemize}
       	\item $\mu_{\sigma}$ exchanges $b_{1}$ and $b_{2}$;
       	\item the orbit of $b_{i}$ under $\mu$ is disjoint from $C$. 
       \end{itemize}
       We conclude that the partial map $\phi_{z}=\{(a_{0,0},a_{1,0}),(b_{1},a_{1,1}),(b_{2},a_{1,2})\}$, determines a member $\phi$ of $\mathcal{X}$.
     \end{subproof}
     
     We divide the proof of property \ref{tp} into two cases, dealt with in the sublemmas \ref{l:case 1} and \ref{l:case -1} below. For that purpose, note that as long as one is able to find an extension $\psi\in\mathcal{X}$ of $\phi$ containing $a$ in the domain of $\psi_{z}$ (resp. its codomain) and mapping distinct orbits of $\supp^{*}(\phi)\cup\{a\}$ under the partial action $\phi$ to distinct orbits under the action of $\psi$ we are done, since the condition $\supp^{*}(\psi)\cap B\subseteq\supp^{*}(\phi)$ can be always ensured a posteriori by conjugating $\psi$ by an element of the group fixing $\supp^{*}(\phi)\cup a\cdot_{\mu}S$, using the strong universality of $\mu$.

     \begin{lemma}
     	\label{l:case 1} If $\epsilon=1$, then the extension in \ref{tp} exists. 
     \end{lemma} 
     \begin{subproof}
     	We follow the diagram in \cref{fig:diagram epsilon=1}.	Write $a_{0}=a$. Notice that our assumption $\phi\in\mc{X}$ implies that $a'_{0}=\phi_{\sigma}(a_{0})\notin\dom(\phi_{z})$. Let $D=\supp^{*}(\phi)\cup a\cdot_{\mu}S$.
     	
     	We first claim that we can find new elements $a_{i},a'_{i}$, $i=1,2$
     	such that the following holds:
     	\begin{enumerate}[label=(\roman*)]
     		\item\label{condition 1} $\mu_{\sigma}(a_{i})=a'_{i}$ for $i=1,2$; 
     		\item\label{condition 2} the partial map  
     		\begin{equation*}
     			\phi_{\tau^{z^{-1}}}\cup\{(a_{0},a_{1}),(a_{1},a_{2}),(a_{2},a_{0}),(a'_{1},a'_{0}),(a'_{2},a'_{1}),(a'_{0},a'_{2})\}
     		\end{equation*}
     		is a partial isomorphism;
     		\item \label{condition 3} $a_{1}$ and $a_{2}$ are in distinct $\mu$-orbits.
     	\end{enumerate}
     
       To do this, we proceed as follows. Let $D_{0}=\dom(\phi_{z})$, and let 
       $S^{*}=(S')^{z^{-1}}:= \sg{\sigma, \tau^{z^{-1}}}$ on $\dom(\phi_{z})$ and note that the fact that $\phi$ is in $\mathcal{X}$ implies that $\phi$ restricts to a full action of $(S')^{z^{-1}}:= \sg{\sigma, \tau^{z^{-1}}}$ on $D_{0}$.
      
      Then take a neat extension of the action of $S^{*}$ on $D_{0}$ and 
      $\mu_{\restriction (D_{0}\cup\{a_{0},a'_{0})\times\sg{\sigma}\}}$. 
      This yields $a_{i},a'_{i}$, $i=1,2$ in $M$, as well as isomorphisms $\theta_{\tau^{z^{-1}}}$ and $\theta_{\sigma}$ of $D'_{0}=D_{0}\cup\{a_{i},a'_{i}\}_{i=1,2}$ inducing an action of $S^{*}$ on $D'_{0}$. At this point condition \ref{condition 2} is verified by any tuple in 
      the orbit of $a_{1}a_{2}a'_{1}a'_{2}$ over $D_{0}$. The other two will follow from a careful choice of a tuple within that orbit. 
      
      Note that we may require 
      $D\ind_{D_{0}}\{a_{i},a'_{i}\}_{i=1,2}$, which ensures by (Sta) that 
      $\rho_{\sigma}=\mu_{\sigma}\cup\theta_{\sigma}$ is a partial isomorphism of order two. To conclude, we note that the  universality of $\mu_{\restriction \sg{\sigma}}$ and the fact that $\mathcal{N}$ is orbit-rich with respect to $\mathcal{N}_{\sg{\sigma}}$, respectively, can be used to find a conjugate of $(a_{i}a_{i}')_{i=1,2}$ over $D$ so that:
      \begin{itemize}
      	\item $\rho_{\sigma}\subseteq\mu_{\sigma}$, which implies condition \ref{condition 1};
      	\item $a_{1}$ and $a_{2}$ lie in different $\mu$-orbits, i.e. condition \ref{condition 3}.
      \end{itemize}
      
      Let $D'=D\cup\{a_{i},a'_{i}\}_{i=1,2}$.     
      Finally, to conclude we need to find new elements $b_{i},b'_{i}$, $i\in\mathbf{3}$ such that: 
      \begin{enumerate}[label=(\alph*)]
      \item \label{t iso} $\psi_{z}=\phi_{z}\cup\{(a_{i},b_i),(a'_{i},b'_{i})\}$ is a partial isomorphism,
      \item \label{b rot} the map $\zeta=\{(b_{i},b_{i+1}),(b'_{i},b'_{i-1})\}_{i\in \mathbf{3}}$, where the indices $i$ are taken modulo $3$, is contained in $\mu_{\tau}$. 
      \item \label{distinct orbits} $b_{i}$ and $b'_{i}$ are in distinct $\mu$-orbits.
      \end{enumerate}
      Notice that we may satisfy condition \ref{t iso} by taking $\psi_{z}$ to be a $(+)$-independent extension of 
      $\phi_{z}$ from $D'$.\footnote{Recall that this means that $D\ind_{\im(\phi_{z})}\{b_{i},b'_{i}\}_{i\in\mathbf{3}}$.}
      This implies by (Sta) that the map $\zeta\cup(\mu_{\tau})_{\restriction D'}$ is a partial isomorphism. Another use of the universality of $\mu_{\restriction \sg{\tau}}$ allows us to assume that $\zeta\subseteq\mu_{\tau}$, so that \ref{b rot} holds, while orbit-richness of $\mu$ with respect to $\mathcal{N}_{\sg{\tau}}$ allows us to further assume that \ref{distinct orbits} is satisfied.
       
      It is easy to check from the diagram that the resulting $\psi_{z}$ determines a member of $\mathcal{X}$.
     \end{subproof}  
     
\begin{figure}[t]
\begin{tikzpicture}[
dot/.style={circle, minimum size=2pt,inner sep=0pt},
>=Stealth
]
\node[dot] (T)  at ( 2.25,  1.5) {$b^{\vphantom{\prime}}_{1}$};
\node[dot] (L)  at (0,  3) {$b^{\vphantom{\prime}}_{0}$};
\node[dot] (R)  at (0,  0) {$b^{\vphantom{\prime}}_{2}$};

\node[dot] (T')  at ( 9.75,  1.5) {$b'_{1}$};
\node[dot] (L')  at (7.5,  3) {$b'_{0}$};
\node[dot] (R')  at (7.5,  0) {$b'_{2}$};

\node[dot] (T1)  at ( 4.25,  1.5) {$a^{\vphantom{\prime}}_{1}$};
\node[dot] (L1)  at (2,  3) {$a^{\vphantom{\prime}}_{0}$};
\node[dot] (R1)  at (2,  0) {$a^{\vphantom{\prime}}_{2}$};

\node[dot] (T2)  at ( 6.25,  1.5) {$a'_{1}$};
\node[dot] (L2)  at (4,  3) {$a'_{0}$};
\node[dot] (R2)  at (4,  0) {$a'_{2}$};

\draw[->] (T)  to[bend left=0]  node[above] {$\tau$} (R);
\draw[->] (R)  to[bend left=0]  node[above, xshift=2mm] {$\tau$} (L);
\draw[->] (L)  to[bend left=0]  node[below] {$\tau$} (T);

\draw[->] (T')  to[bend right=0]  node[above] {$\tau$} (L');
\draw[->] (L')  to[bend right=0]  node[above, xshift=-2mm] {$\tau$} (R');
\draw[->] (R')  to[bend right=0]  node[below] {$\tau$} (T');

\draw[->] (T1)  to  node[below] {$z$} (T);
\draw[->] (T2)  to  node[below] {$z$} (T');
\draw[->] (T1)  to[bend left=25]  node[above] {$\sigma$} (T2);
\draw[->] (T2)  to[bend left=25]  node[below] {$\sigma$} (T1);

\draw[->] (L1)  to  node[below] {$z$} (L);
\draw[->] (L2)  to  node[below] {$z$} (L');
\draw[->] (L1)  to[bend left=25]  node[above] {$\sigma$} (L2);
\draw[->] (L2)  to[bend left=25]  node[below] {$\sigma$} (L1);

\draw[->] (R1)  to  node[below] {$z$} (R);
\draw[->] (R2)  to  node[below] {$z$} (R');
\draw[->] (R1)  to[bend left=25]  node[above] {$\sigma$} (R2);
\draw[->] (R2)  to[bend left=25]  node[below] {$\sigma$} (R1);

\end{tikzpicture}

\caption{Illustration of \cref{l:case 1}. It serves also as an illustration of \cref{l:case -1} after exchanging the role of the letters $a$ and $b$.}
\label{fig:diagram epsilon=1}
\end{figure}
     
     \begin{lemma}
     	\label{l:case -1} If $\epsilon=-1$, then the extension in \ref{tp} exists. 
     \end{lemma} 
     \begin{subproof}
      
  	  Let $a=a_{0}$, $a_{1}=\mu_{\tau}(a_{0})$, $a_{2}=\mu_{\tau^{2}}(a_{0})$ and $A=\{a_{i}\}_{i\in\mathbf{3}}$. Write $D=\supp^{*}(\phi)\cup a\cdot_{\mu}S$. 
    	First, claim that we can find a triple of points $A'=\{a'_{i}\}_{i\in\mathbf{3}}$ such that:
    	\begin{itemize}
    		\item  $\phi_{\sigma^{z^{-1}}}\cup\{(a_{i},a'_{i})\}_{i\in\mathbf{3}}$ is a partial isomorphism,
    		\item $\{(a'_{1},a'_{0}),(a'_{2},a'_{1}),(a'_{0},a'_{2})\}\subseteq\mu_{\tau}$. 
    	\end{itemize}
    	 For this we may write $D_{0}=\im(z)$ and consider a neat extension of 
    	 $\mu_{\restriction  D_{0}\cup\{a_{i}\}_{i\in\mathbf{3}}}$ and the action of $S'$ on $D_{0}$ given by $\phi$. This is an action of $S'$ on $D_{0}a_{0}'a_{1}'a_{2}'$ which ensures that the first condition above is satisfied up to replacing $a'_{0}a'_{1}a'_{2}$ by any conjugate over $D_{0}$. 
    	 
    	 By first taking $D\ind_{D_{0}}a'_{0}a'_{1}a'_{2}$ and then replacing $a'_{0}a'_{1}a'_{2}$ by a suitable conjugate over $D$ using the universality of $\mu_{\restriction \sg{\tau}}$, as in the proof of \cref{l:case 1}, we can ensure that the second condition above is also satisfied. 
    	
    	Let $D'=D\cup\{a'_{i}\}_{i\in\mathbf{3}}$. To conclude, we only need to find $b_{i},b'_{i}, i\in\mathbf{3}$ in such a way that 
    	\begin{enumerate}[label=(\roman*)]
    		\item \label{first} $\mu_{\sigma}(b_{i})=b'_{i}$ for all $i\in\mathbf{3}$,
    		\item \label{second} the map 	$\psi_{z}=\phi_{z}\cup\{(b_{i},a_{i}),(b'_{i},a'_{i})\}_{i\in\mathbf{3}}$ is an isomorphism.
    		\item \label{third} $b_{i}$ and $b_{j}$ belong to different $\mu$-orbits for distinct $i,j\in\mathbf{3}$. 
    	\end{enumerate}
     
     The argument is again very similar to the one in \cref{l:case 1}. 
     To begin with, note that we may first choose the new points so that $\psi_{z}$ is a $(+)$-independent extension of $\phi_{z}$ from $D'$. Then not only we have 
     \ref{second}, but also by (Sta) that $(\mu_{\sigma})_{\restriction D'}\cup\{(b_{i},b'_{i}),(b'_{i},b_{i})\}_{i\in\mathbf{3}}$ is a partial isomorphism.
     Conditions \ref{first} and \ref{third} can be then ensured using the universality of $\mu_{\restriction \sg{\sigma}}$ and the fact that $\mu$ is orbit-rich with respect to $\mathcal{N}_{\sg{\sigma}}$, respectively.
     \end{subproof}
      This concludes the proof of \cref{p:general s3}.   
    \end{proof}

\section{The main theorem for sharply \texorpdfstring{$k$}{k}-homogeneous actions} \label{s:main theorem}
 
\begin{defn}
    Let $\mc{M}$ be a $(\Theta, r)$-structure, $H$ a compatible seed group and $(G, T)$ a completion of $H$. We define the following spaces of structural actions:
    \begin{align*}
        \Sh_k(\mc{M}, G) &= \{\mu \in \Act(\mc{M}, G) \mid \mu \text{ is } k\text{-sharp}\},\\
        \ShHomog_k(\mc{M}, G) &= \{\mu \in \Act(\mc{M}, G) \mid \mu \text{ is sharply } k\text{-homogeneous}\}.
    \end{align*}
    In the below Theorem \ref{t:main}, we consider the following subspaces of the above:
    \begin{itemize}
        \item in the case $\Theta = \sym_2$: let $\ShChar_2(\mc{M}, G) \sub \Sh_2(\mc{M}, G)$ consist of the actions in $\Sh_2(\mc{M}, G)$ where each involution of $G$ has no fixed points, and let $\ShHChar_2(\mc{M}, G) = \ShChar_2(\mc{M}, G) \cap \ShHomog_2(\mc{M}, G)$;
        \item in the case $\Theta = \sym_3$: let $\ShChar_3(\mc{M}, G) \sub \Sh_3(\mc{M}, G)$ consist of the actions in $\Sh_3(\mc{M}, G)$ where each involution has exactly one fixed point and where each element of $G$ of order $3$ has no fixed points, and let $\ShHChar_3(\mc{M}, G) = \ShChar_3(\mc{M}, G) \cap \ShHomog_3(\mc{M}, G)$.
    \end{itemize}
\end{defn}
 
In the below Theorem \ref{t:main}, recall the definition of a $(\Theta, r)$-structure (see \cref{d:theta r structure}), and recall the definition of freeness of a SWIR (\cref{d:freedom}). Theorem \ref{t:main} is a stronger, more precise version of Theorem \ref{t:main_intro}: it gives additional genericity results, and includes the additional case \ref{sir 3, 1}.

\begin{thm} \label{t:main}
    Let $\mc{M}$ be a relational \Fr structure with strong amalgamation. 
    \begin{enumerate}[label=(\Roman*), ref=(\Roman*)]
        \item \label{case swir} Let $k \geq 1$. Suppose $\M$ has a SWIR $\ind$ and substructures of $\M$ of size $k$ are rigid. Let $G$ be a finitely generated non-abelian free group. Then the following hold.
        \begin{enumerate}[label=(\roman*), ref=(\Roman{enumi}.\roman*)]
            \item \label{swir general} $G$ admits a sharply $k$-homogeneous action on $\M$.
            \item \label{swir freeness} If $\ind$ satisfies (Free), then $\ShHomog_k(\mc{M}, G)$ is a comeagre subset of $\Sh_k(\mc{M}, G)$. 
        \end{enumerate} 	
        \item \label{case sir} Suppose $\mc{M}$ has a SIR. Then the following hold. 
        \begin{enumerate}[label=(\roman*), ref=(\Roman{enumi}.\roman*)]
            \item \label{sir 2} For any non-abelian finitely generated free groups $F$, $F'$, the group $G=(\cyc_{2} \times F) \ast F'$ admits a sharply $2$-homogeneous action on $\mc{M}$. If additionally $\ind$ satisfies (Free), then $\ShHChar_2(\mc{M}, G)$ is a comeagre subset of $\ShChar_2(\mc{M}, G)$.
            \item \label{sir 3, 1} If $\mc{M}$ is a transitive $(\sym_3, 1)$--structure, then any completion $G$ of $H = \sym_3$ (see Example \ref{ex:s2 s3}) admits a sharply $3$-homogeneous action on $\mc{M}$. If additionally $\ind$ satisfies (Free), then $\ShHChar_3(\mc{M}, G)$ is a comeagre subset of $\ShChar_3(\mc{M}, G)$.
            \item \label{sir 3} If $\mc{M}$ is a transitive $(\sym_{3},r)$-structure for some $r > 1$, then any completion $G$ of the standard $(\sym_{3},r)$-seed group $H_r$ (see \cref{d:standard s3 seed}) admits a sharply $3$-homogeneous action on $\M$.
        \end{enumerate}
        \item \label{case 2 tournament} Suppose $\M$ is the random tournament. Then for any non-abelian finitely generated free groups $F$, $F'$, the group $G=(\cyc_{3}\times F)\ast F'$ admits a sharply $3$-homogeneous action on $\M$.
        \item \label{case reducts of Q} Let $k \geq 1$. The following holds for the reducts of $(\mathbb{Q},<)$: 
        \begin{enumerate}[label=(\roman*), ref=(\Roman{enumi}.\roman*)]
            \item \label{case Q} Let $G$ be a finitely generated non-abelian free group. Then $G$ admits a sharply $k$-homogeneous action on $(\Q, <)$ and $\ShHomog_k((\Q, <), G)$ is a comeagre subset of $\Sh_k((\Q, <), G)$.
            \item \label{case betweenness} Let $\M=(\mb{Q},B^{(3)})$ be the generic betweenness structure. Then:
            \begin{enumerate}[label=(\alph*), ref=(\Roman{enumi}.\roman{enumii}.\alph*)]
                \item \label{case betw 1} any finitely generated non-abelian free group admits a sharply $1$-homogeneous action on $\M$;
                \item \label{case betw 2} given $k \geq 2$ and $G$ a completion of $\cyc_{2}$, where $\cyc_2 \leq \sym_k$ has maximum support in the permutation action, the group $G$ admits a sharply $k$-homogeneous action on $\M$.
            \end{enumerate} 
            \item \label{case cyc} Let $\M=(\mb{Q},C^{(3)})$ be the generic cyclic order. Then:
            \begin{enumerate}[label=(\alph*), ref=(\Roman{enumi}.\roman{enumii}.\alph*)]
                \item \label{case cyc 1} any finitely generated non-abelian free group admits a sharply $1$-homogeneous action on $\M$;
                \item \label{case cyc 2} given $k \geq 2$ and $G$ a completion of $\Theta=\cyc_{k}$, the group $G$ admits a sharply $k$-homogeneous action on $\M$.
            \end{enumerate} 
            \item \label{case separation} Let $\M=(\mb{Q},S^{(4)})$ be the generic separation structure. Then:
            \begin{enumerate}[label=(\alph*), ref=(\Roman{enumi}.\roman{enumii}.\alph*)]
                \item \label{case sep 1} any finitely generated non-abelian free group admits a sharply $1$-homogeneous action on $\mc{M}$;
                \item \label{case sep 2} any completion of $\Theta = \cyc_2$ admits a sharply $2$-homogeneous action on $\mc{M}$;
                \item \label{case sep 3} for odd $k \geq 3$, any completion of $\dih_k$ admits a sharply $k$-homogeneous action on $\M$;
                \item \label{case sep 4} for even $k \geq 4$, there is no sharply $k$-homogeneous action of a group on $\mc{M}$.
            \end{enumerate}
        \end{enumerate}
        \item \label{case k-tournament} Let $3 \leq k \leq 5$, and let $\M$ be the generic $k$-tournament. Then any completion $G$ of $H=\alt_{k}$ admits a sharply $k$-homogeneous action on $\M$.  
    \end{enumerate}
\end{thm}

\begin{proof}
    We first observe that in each of the cases above (except the negative result \ref{case sep 4}), we have that $\mc{M}$ is a $(\Theta, r)$-structure for some $r \geq 1$ for the relevant $\Theta$: in \ref{case swir} with $\Theta = \mathtt{1}$ this is straightforward (see the comments in the proof of Corollary \ref{c: pleasant ssa SWIR rigid}); in \ref{case sir} this follows by Lemma \ref{l: S2 S3 str} (with $\Theta = \sym_2$ in \ref{sir 2}) or by assumption; case \ref{case 2 tournament} is by the same argument as \ref{case swir}; and cases \ref{case reducts of Q}, \ref{case k-tournament} are easily checked. In all cases except \ref{sir 3}, we have $H = \Theta$; in \ref{sir 3} we have $H = H_r$. So in all cases $H$ is a compatible seed group for the $(\Theta, r)$-structure $\mc{M}$. In all cases, the groups $G$ specified in Theorem \ref{t:main} are completions of the corresponding $H$.
    
    We now consider cases separately.

    \ref{case swir} and \ref{case Q}: By Corollary \ref{c: pleasant ssa SWIR rigid}, the trivial action $\lambda : \mc{M} \curvearrowleft \mathtt{1}$ is a pleasant structural seed action on $\mc{M}$, and also generous if $\ind$ satisfies (Free) or $\mc{M} = (\Q, <)$. Let $\mc{F}$ be the corresponding arc-extensive family for $\lambda$, where if $\lambda$ is generous the family $\mc{F}$ consists of all finite taut extensions. By Proposition \ref{p:sharply k-homog actions on structures} we have $|\TaF| = 2^{\aleph_0}$ and $\SHTaF$ is dense $\mathrm{G}_\delta$ in $\TaF$, so $\SHTaF \neq \emp$, giving \ref{swir general}. For \ref{swir freeness} and \ref{case Q}: we have that $\mc{F}$ consists of all finite taut extensions, so $\SHTa_{k, \lambda}(\mc{M}, G)$ is dense $\mathrm{G}_\delta$ in $\Ta_\lambda(\mc{M}, G)$. Considering the definition of a taut extension (Definition \ref{d:taut partial actions}), we have that any $k$-sharp action $\mu : \mc{M} \curvearrowleft G$ is taut: \ref{c-fullness}, \ref{c-centraliser acts trivially} are immediate, and \ref{c-no invariant sets} follows by rigidity of $k$-substructures and $k$-sharpness. So $\Ta_\lambda(\mc{M}, G) = \Sh_k(\mc{M}, G)$ and $\SHTa_{k, \lambda}(\mc{M}, G) = \ShHomog_k(\mc{M}, G)$, and we are done.

    \ref{sir 2}, \ref{sir 3, 1}: The existence of the corresponding sharply $k$-homogeneous action follows from Corollary \ref{c:seed actions SIR} and Proposition \ref{p:sharply k-homog actions on structures}. Suppose that $\ind$ additionally satisfies (Free), and let $k \in \{2, 3\}$ and $H = \Theta = \sym_k$ (the argument is the same in both cases). By Corollary \ref{c:seed actions SIR}, there is a comeagre subset $\Lambda \sub \Se_k(\mc{M}, \Theta)$ consisting of generous pleasant structural seed actions. It is straightforward to check that the restriction map $\rho : \mu \mapsto \mu|_\Theta$ is a continuous open map $\rho : \ShChar_k(\mc{M}, G) \to \Se_k(\mc{M}, \Theta)$. By Lemma \ref{l: action of fin subgp of G} we have $\Ta_\lambda(\mc{M}, G) \sub \ShChar_k(\mc{M}, G)$ for each $\lambda \in \Lambda$. By Lemma \ref{l: taut transitive first three conds}, it is straightforward to see that for each $\lambda \in \Lambda$ we have $\rho^{-1}(\lambda) = \Ta_\lambda(\mc{M}, G)$. By Proposition \ref{p:sharply k-homog actions on structures} we have that $\SHTa_{k, \lambda}(\mc{M}, G)$ is a dense $\mathrm{G}_\delta$ set in $\Ta_\lambda(\mc{M}, G)$ for each $\lambda \in \Lambda$, and so $\ShHChar_k(\mc{M}, G) \cap \rho^{-1}(\lambda)$ is comeagre in $\rho^{-1}(\lambda)$ for each $\lambda \in \Lambda$. It is straightforward to check that $\ShHChar_k(\mc{M}, G)$ has the Baire Property. Thus by \cite[Theorem 1.33]{Mel26} (a generalised version of the Kuratowski-Ulam theorem) we have that $\ShHChar_k(\mc{M}, G)$ is comeagre in $\ShChar_k(\mc{M}, G)$ as required.

    \ref{case betw 1}, \ref{case cyc 1}, \ref{case sep 1}: these follow from \ref{case Q} via the following claim $(\ast$), whose proof is immediate.
    \begin{enumerate}
        \item[$(\ast)$] Let $\mu : \mc{M} \curvearrowleft G$ be sharply $k$-homogeneous and let $\mc{M}'$ be a reduct of $\mc{M}$ with the property that, for each bijection $f$ of $k$-sets of $M$, the map $f$ is a partial isomorphism of $\mc{M}$ if and only if it is a partial isomorphism of $\mc{M}'$. Then $\mu$ is a sharply $k$-homogeneous action on $\mc{M}'$.
    \end{enumerate}

    \ref{case betw 2}, \ref{case cyc 2}, \ref{case sep 3}: these follow from Proposition \ref{p:sharply k-homog actions on structures} together with \cref{c:reducts Q}. \ref{case sep 2} follows from \ref{case cyc 2} with $k = 2$ and claim $(\ast)$. \ref{case sep 4} follows from \cref{p: Theta not robust}\ref{i: no sh k-homog} and \cref{l:examples}\ref{robust Dk}. 

    The remaining cases \ref{sir 3}, \ref{case 2 tournament}, \ref{case k-tournament} follow by Proposition \ref{p:sharply k-homog actions on structures} together with: \cref{p:general s3} in case \ref{sir 3}, Lemma \ref{l:3 seed action 2tournament} in case \ref{case 2 tournament} and \cref{l:seed actions on tournaments} in case \ref{case k-tournament}.
\end{proof}

\begin{eg}
    See \cref{ex: examples for main str thm} for examples of \Fr structures to which Theorem \ref{t:main} can be applied. The additional case \ref{sir 3, 1} of Theorem \ref{t:main} (not included in Theorem \ref{t:main_intro} in the Introduction, which is a summarised version of Theorem \ref{t:main}) applies in particular to \Fr structures with free amalgamation where all relations are of arity $\geq 4$: for example, the random $4$-hypergraph.
\end{eg}

\section{Further questions}
\label{s:questions}

We have the following further questions regarding actions on sets:

\begin{question}
    Let $k = 2, 3$. Do there exist sharply $k$-transitive actions of non-split finitely presented groups on an infinite set in characteristics other than $2$? Do there exist sharply $k$-transitive actions of non-split simple finitely presented groups on an infinite set? 
\end{question}

\begin{definition} \label{d:tau numbers} 
    Let $U$ be a countably infinite set. For $k\geq 2$ write:
	\begin{itemize}
		\item $\tau(k)$ for the minimum number of orbits of the diagonal action on $(U)^{k}$ induced by a $k$-sharp action of a group on $U$,
		\item $\tau'(k)$ for the minimum number of orbits of the diagonal action on $(U)^{k}$ induced by a $k$-sharp action $\lambda$  of a group on $U$, under the condition that $\lambda$ induces a transitive action on $[U]^{k}$ (in other words, this is asking for the maximum size of the stabiliser $\Theta$ of $k$-tuples for such an action). 
	\end{itemize}
\end{definition}

\begin{question}
	What is the asymptotic behavior of $\tau(k)$ and $\tau'(k)$ as $k$ goes to infinity? Is there any infinite set $X\subseteq\mathbb{N}$ on which $\frac{k!}{\tau'(k)}$ is bounded on $X$ from below by some positive multiple of the function $k^{c}$ for $c>1$?
\end{question}

We believe there is a considerable amount of scope for further investigation of sharply $k$-homogeneous actions on structures.

\begin{question}
    Is there a relational \Fr structure that does \emph{not} admit a sharply $2$-homogeneous action? Is there a relational \Fr structure with rigid $k$-subsets which does not admit a sharply $k$-homogeneous action?
\end{question}

\begin{question}
    Is it possible to extend Theorem \ref{t:main_intro} to $(\Theta, r)$-structures $\mc{M}$ for robust $\Theta$ in general, under suitable assumptions on $\mc{M}$ (e.g.\ the presence of a SIR)? 
\end{question}

A key class of \Fr structures which we did not investigate are those that only admit \emph{local} SWIRs: SWIRs where $B \ind_A C$ is defined only when the base $A$ is non-empty. The central example here is the rational Urysohn space, but there are many other well-known structures of this kind (see \cite{KSW26}).

\begin{question}
    Does there exist a sharply $k$-homogeneous action on the rational Urysohn space for $k\in\{2,3\}$? 
\end{question}

Also note that, for $n \geq 3$, the generic $n$-anticlique-free oriented graph has a SWIR but has some non-rigid $2$-substructures and $3$-substructures, and so is not covered by Theorem \ref{t:main}\ref{case swir}:

\begin{question}
    Let $n \geq 3$. Does the generic $n$-anticlique-free oriented graph admit a sharply $k$-homogeneous action for $k = 2, 3$?
\end{question}

It may be that the approach we took for $k$-tournaments could be applied to this structure.

We restricted attention in Definition \ref{d:theta r structure} and Subsection \ref{subs:larger seeds} to transitive structures in the case $\Theta = \sym_3$. We conjecture that this restriction is not necessary:

\begin{conj}
    Let $\mc{M}$ be a relational \Fr structure with strong amalgamation and a SIR, where $\mc{M}$ is not transitive. We conjecture that there exists a sharply $3$-homogeneous action on $\mc{M}$ by a finitely generated non-abelian virtually free group.  
\end{conj}

Note that even for the generic $m$-tournament we do not have a full answer to the following:

\begin{question}
    For what values of $m\geq 2$ and $k\geq 2$ does the generic $m$-tournament admit a sharply $k$-homogeneous action? 
\end{question}

One could also investigate further which groups admit sharply $k$-homogeneous actions on certain \Fr structures:

\begin{question}
	Which finitely generated (resp. countable) orderable groups admit a sharply $k$-homogeneous action on $(\Q, <)$?
	Does the property hold for all non-cyclic right angled Artin groups which do not decompose as a direct product of two infinite groups? Does the property hold for every torsion-free one-relator group?
\end{question} 

\begin{question}
    Let $\mc{M}$ be a strong relational \Fr structure and $k\geq 1$. What group operations is the collection of all finitely generated groups $G$ that admit a sharply $k$-homogeneous action on $\mc{M}$ closed under? For instance, under which kinds of amalgamated products is the said class closed?
\end{question}

\begin{question}
    Can a simple finitely generated (or finitely presented) group act sharply $k$-homogeneously on some relational \Fr structure which is not a pure set? Can it act on $(\mathbb{Q},<)$ or any of its reducts?  
\end{question}

One could also generalise from \Fr structures to $k$-homogeneous structures:

\begin{question}
    When do sharply $k$-homogeneous actions exist for $k$-homogeneous $\mc{M}$? What about the case where $G$ and $\mc{M}$ are finite?
\end{question}
 
\bibliographystyle{alpha}
\bibliography{references}

\end{document}